\def\ball#1,#2.{B(#1,#2)} 
\def\epsi{\varepsilon}
\def\metric{d} 
\def\dist#1,#2.{\metric(#1,#2)} 
\def\setdist#1,#2.{{\rm dist}(#1,#2)}
\def\hdist#1,#2.{\metric_H(#1,#2)}
\def\natural{{\mathbb N}} 
\def\clball#1,#2.{\bar B(#1,#2)} 
\def\albrep#1.{{\mathcal A}_{#1}} 
\def\mpush#1.{{#1}_{\sharp}} 
\def\glip#1.{{\bf L}(#1)} 
\def\elleone#1.{{ L}^1(#1)} 
\def\elleinfty#1.{{L}^\infty(#1)} 
\def\bborel#1.{{\mathcal B}^{\infty}(#1)} 
\def\lipalg#1.{{\rm Lip}_{\text{\normalfont b}}(#1)} 
\def\lipfun#1.{{\rm Lip}(#1)} 
\def\onenorm#1.{\left\|#1\right\|_{\elleone\mu.}} 
\def\real{{\mathbb{R}}}
\def\mrest{{\,\evansgariepyrest}}
\def\arenseels#1.{{\rm AE}[#1]}
\def\cone{{\mathcal C}} 
\def\lebmeas{{\mathcal L}^1} 
\def\nlebmeas#1.{\mathscr{L}^{\setbox0=\hbox{#1\unskip}\ifdim\wd0=0pt 1
    \else #1 \fi}} 
\def\pathfrag#1.{{\rm Frag}(#1)} 
\def\hmeas#1.{\mathscr{H}^{#1}} 
\def\pullb#1.{{#1}^{\sharp}} 
\def\cmass#1.{{\|#1\|}}
\def\crest{{\,\evansgariepyrest}}
\def\mcurr#1,#2.{{\bf M}_{#1}(#2)} 
\def\mcurr#1,#2.{{\bf M}_{#1}(#2)} 
\def\ncurr#1,#2.{{\bf N}_{#1}(#2)} 
\def\pcurr#1,#2.{{\bf P}_{#1}(#2)} 
\def\mfunc#1,#2.{{\rm MF}_{#1}(#2)} 
\def\convgeo{{\mathscr{K}}}
\def\QG#1,#2.{{\rm QG}(#1,#2)}
\def\tnorm#1.{{{\left\|#1\right\|_{TX}}}}
\def\cotnorm#1.{{{\left\|#1\right\|_{T^*X}}}}
\def\metuple#1,#2.{{\mathscr{D}}^{#1}({#2})}
\def\frest#1.{{\rm Red}_{#1}}
\newcommand{\chartfun}[1][j]{x_\alpha^j}
\DeclareMathOperator\spt{spt} 
\DeclareMathOperator\dom{dom} 
\DeclareMathOperator\metdiff{md} 
\DeclareMathOperator\paths{Path} 
\DeclareMathOperator\frags{Frag} 
\DeclareMathOperator\diam{diam}
\DeclareMathOperator\sgn{sgn}
\DeclareMathOperator\curves{Curves}
\DeclareMathOperator\pieces{Pieces}
\DeclareMathOperator\rprm{Rep}
\def\balt#1,#2.{{\mathcal{BA}\left(#1,#2\right)}}
\def\balta#1,#2,#3.{{\mathcal{BA}_{(#3)}\left(#1,#2\right)}}
\def\boundlin#1,#2.{{\mathcal{BL}\left(#1,#2\right)}}
\def\extpow#1.{\widehat\bigwedge^{#1}}
\def\extpowm#1,#2.{\widehat\bigwedge^{#1}_{#2}}
\def\frest#1.{{\rm Red}_{#1}}
\def\dualmod#1.{\hom\left({#1},L^\infty(\mu)\right)}
\def\boundlin#1,#2.{{\mathcal{BL}\left(#1,#2\right)}}
\def\locdernorm#1,#2.{{\left|{#1}\right|_{{\rm  Der}({#2}),\text{loc}}}}
\def\grass#1,#2.{{\normalfont\rm Gr}({#1},{#2})}
\newcommand{\evansgariepyrest}{\mathbin{\vrule height 1.3ex depth%
    0pt width 0.08ex\vrule height 0.08ex depth 0pt width 1.0ex}}
\def\balt#1,#2.{{\mathcal{BA}\left(#1,#2\right)}}
\def\balta#1,#2,#3.{{\mathcal{BA}_{(#3)}\left(#1,#2\right)}}
\def\boundlin#1,#2.{{\mathcal{BL}\left(#1,#2\right)}}
\def\extpow#1.{{\widehat\bigwedge^{#1}}}
\def\extpowm#1,#2.{\widehat\bigwedge^{#1}_{#2}}
\def\strip#1,#2.{{\rm St}\left({#1},{#2}\right)}
\def\wder#1.{{\mathscr{X}}({#1})}
\def\wform#1.{{\mathscr{E}}({#1})}
\def\hwder#1,#2.{{\mathscr{X}^{#1}}({#2})} 
\def\hwform#1,#2.{{\mathscr{E}^{#1}}({#2})} 
\def\locnorm#1,#2.{\left|{#1}\right|_{{#2},\text{\normalfont loc}}}
\def\elocnorm#1,#2.{\left|{#1}\right|^{(\varepsilon)}_{{#2},\text{\normalfont loc}}}
\def\rational{{\mathbb Q}}
\def\ccgengroup#1.{{\mathbb {#1}}}
\def\liegenalg#1.{{\mathfrak{#1}}}
\def\Der#1.{\text{\normalfont Der}_{#1}}
\def\Cur#1.{\text{\normalfont Cur}_{#1}}
\def\alt#1,#2,#3.{\text{\normalfont Alt}_{#1}(#2; #3)} 
\def\createcat#1#2{\expandafter\def\csname
  #1cat\endcsname{#2}} 
\def\pcreatedst#1#2{\expandafter\def\csname
  #1dst\endcsname##1,##2.{\setbox0=\hbox{##1\unskip}\ifdim\wd0=0pt {#2}
  \else {#2(##1,##2)}\fi}}
\DeclareMathOperator{\Ext}{Ext}
\def\banext#1,#2.{\Ext^{#1}#2} 
\def\lmodext#1,#2,#3.{\Ext^{#1}_{#2}#3} 
\def\lnmodext#1,#2,#3.{\Ext^{#1}_{#2,\text{\normalfont loc}}#3}  
\def\syncref#1{\ifcase#1 2.46 \or 2.64 \or 3.1 \or 4.1\fi}
\def\makeunderscoreletter{\catcode`_=11}
\def\makecolonletter{\catcode`:=11}
\def\unmakecolonletter{\catcode`:=12}
\def\makeunderscoresub{\catcode`_=8}
\def\sync #1.{\makeunderscoreletter\makecolonletter\expandafter\ifx\csname
  sync#1\endcsname\relax Undefined\else \csname
  sync#1\endcsname\fi\makeunderscoresub\unmakecolonletter}
\def\syncalberti_rep_prod{Theorem
  2.67}\makeunderscoresub
\def\syncalb_glue{Theorem
  2.49}\makeunderscoresub
\def\syncrem:derivation_extension{Remark
  2.115}\makeunderscoresub\unmakecolonletter
\def\synconedimapprox_multi{Theorem
  3.66}\makeunderscoresub\unmakecolonletter
\def\synclem:loc_estimate_dist{Lemma
  3.69}\makeunderscoresub\unmakecolonletter
\def\synclem:loc_estimate_fnull{Lemma
  3.76}\makeunderscoresub\unmakecolonletter
\def\syncthm:alb_derivation{Theorem
  3.11}\makeunderscoresub\unmakecolonletter
\def\syncthm:taira{Theorem
  3.60}\makeunderscoresub\unmakecolonletter
\def\synccor:mu_arb_cone{Corollary
  3.95}\makeunderscoresub\unmakecolonletter
\def\syncbiLip_dis{Lemma
  2.59}\makeunderscoresub\unmakecolonletter
\def\synccor:assouad_bound{Corollary
  4.6}\makeunderscoresub\unmakecolonletter
\def\synclem:meas_fix{Lemma
  3.1}\makeunderscoresub\unmakecolonletter
\def\syncthm:weak*density{Theorem
  3.97}\catcode`*=12\makeunderscoresub\unmakecolonletter
\def\syncborel_rest{Lemma
  2.22}\makeunderscoresub\unmakecolonletter
\def\synccompact_reduction{Theorem
  2.15}\makeunderscoresub\unmakecolonletter
\def\synclem:vector_alberti{Lemma
  3.125}\makeunderscoresub\unmakecolonletter
\numberwithin{equation}{section}
\theoremstyle{plain} 
\newtheorem{lem}[equation]{Lemma}
\newtheorem{thm}[equation]{Theorem}
\newtheorem{cor}[equation]{Corollary}
\theoremstyle{definition}
\newtheorem{defn}[equation]{Definition}
\theoremstyle{remark}
\newtheorem{exa}[equation]{Example}
\newtheorem{rem}[equation]{Remark}
\begin{document}
\title{Metric Currents and Alberti representations}
\author{Andrea Schioppa}
\address{ETHZ}
\email{andrea.schioppa@math.ethz.ch}
\keywords{Weaver derivations, metric currents, Rademacher's Theorem}
\subjclass[2010]{53C23, 49Q15}
\begin{abstract}
  We relate Ambrosio-Kirchheim metric currents to Alberti
  representations and Weaver derivations. In particular, given a
  metric current $T$, we show that if the module $\wder{\cmass T.}.$
  of Weaver derivations is finitely generated, then $T$ can be
  represented in terms of derivations; this extends previous results
  of Williams. Applications of this theory include an approximation of
  $1$-dimensional metric currents in terms of normal currents and the
  construction of Alberti representations in the directions of vector
  fields.
\end{abstract}
\maketitle
\tableofcontents 

\section{Introduction}
\label{sec:intro}
\subsection*{Overview}
\label{subsec:overview}
The goal of this paper is to relate metric currents to Alberti
representations and Weaver derivations. In particular, it seems that
metric currents carry a weak notion of a differentiable structure
which we try to describe by using Alberti representations and Weaver
derivations. As a first application we prove an approximation result
in which a $1$-dimensional metric current is approximated by a
sequence of normal currents. As a second application we show how to
use $1$-dimensional normal currents to produce Alberti representations
in the directions of vector fields.
\subsection*{Metric currents}
\label{subsec:met_currs}
\par Federer and Fleming \cite{federer_fleming_currents} introduced the
theory of currents to study the Plateau problem in Euclidean spaces of
dimension higher than $2$, and overtime currents have proven useful to
attack a wide range of problems, see
\cite{almgren_liquid_crystals,lin_gradient_harmonic,giaquinta_modica_dirichlet}
to cite some examples. In order
to study similar problems in general metric spaces, it became
desirable to have an analogue of the Federer-Fleming currents and a
major obstacle was that the classical definition of currents uses the
differentiable structure of $\real^N$. In \cite{ambrosio-kirch}
Ambrosio and Kirchheim, inspired by an idea of de Giorgi
\cite{de_giorgi_metric_currents}, developed a theory of metric currents starting
by circumventing the lack of a differentiable structure.
Essentially, $k$-dimensional metric currents are defined by duality
with $(k+1)$-tuples of Lipschitz functions $(f,\pi_1,\cdots,\pi_k)$,
where the first function $f$ is also bounded. The axioms that currents
satisfy are then designed so that one can formally treat, to some extent, the
$(k+1)$-tuple $(f,\pi_1,\cdots,\pi_k)$ as a \emph{$k$-dimensional differential form}
$fd\pi_1\wedge\cdots\wedge d\pi_k$. In \cite{williams-currents} Williams
showed that in a differentiability space $(X,\mu)$, those metric currents whose masses are absolutely continuous
with respect to $\mu$ are dual to the differential $k$-forms defined
using the differentiable structure. This result was the starting
point of the present work in which, roughly speaking, we remove the
assumption that $(X,\mu)$ is a differentiability space.
\par For a treatment of metric currents we refer the reader to
\cite{ambrosio-kirch}; some basic facts are recalled in Subsection
\ref{subsec:metcurr}. Note that Lang \cite{lang_local_currents} has formulated an alternative
theory of metric currents in which the finite mass axiom is removed;
our results have natural counterparts in that setting.
\subsection*{Alberti representations}
\label{subsec:alb_rep}
\par Alberti representations were introduced in
\cite{alberti_rank_one} to prove the rank-one property for BV
functions; they were later applied to study the differentiability
properties of Lipschitz functions $f:\real^N\to\real$
\cite{acp_plane,acp_proceedings} and have recently been used to obtain a
description of measures in differentiability spaces
\cite{bate-diff}. We give here an informal definition and refer the
reader to \cite{bate-diff,deralb} and Subsection \ref{subsec:alberti} for further details.
\par An \textbf{Alberti representation} of a Radon measure $\mu$ is a
generalized Lebesgue decomposition of $\mu$ in terms of rectifiable
measures supported on path fragments; a \textbf{path fragment} in $X$
is a Lipschitz map $\gamma:K\to X$ where $K\subset\real$ is compact;
the set of fragments in $X$ will be denoted by $\frags(X)$ and
topologized as a subspace of $K(X)$, the set of compact subsets of $X$
with the {topology induced by the Hausdorff metric. An Alberti
representation of $\mu$ is then a decomposition:
\begin{equation}
\mu=\int_{\frags(X)}\nu_\gamma\,dP(\gamma),
\end{equation}
where $P$ is a regular Borel probability measure on $\frags(X)$, and
$\nu$ associates to each fragment $\gamma$ a finite Radon measure
$\nu_\gamma$ which is absolutely continuous with respect to the
$1$-dimensional Hausdorff measure $\hmeas 1._\gamma$ on the image of
$\gamma$. Examples of an Alberti representation are offered by
Fubini's Theorem; however, in general it is necessary to work with
path fragments instead of Lipschitz curves because the space $X$ on
which $\mu$ is defined might lack any rectifiable curve.
\subsection*{Weaver derivations and their relationship with Alberti representations}
\label{subsec:weav_der}
\par Weaver derivations, hereafter simply called derivations, were
introduced in \cite{weaver00} and provide a quite broad framework to
formulate a notion of differentiability on metric measure spaces. To
fix the ideas, let $\lipfun X.$ denote the set of real-valued
Lipschitz functions defined on $X$ and let $\lipalg X.\subset\lipfun
X.$ denote the subset of bounded Lipschitz functions. The vector space
$\lipalg X.$ becomes a Banach algebra with norm:
\begin{equation}
  \label{eq:banlip}
  \|f\|_{\lipalg X.}=\max(\|f\|_\infty,\glip f.),
\end{equation}
where $\glip f.$ denotes the Lipschitz constant of $f$. It is a fact
\cite[Ch.~2]{weaver_book99} that the Banach algebra $\lipalg X.$ is a dual
Banach space and so it has a weak* topology; for the present work, it
is sufficient to consider sequential convergence which is
characterized as follows: $f_n\xrightarrow{\text{w*}}f$ if and only if
the global Lipschitz constants of the $f_n$ are uniformly bounded and
$f_n\to f$ pointwise.
\par Having fixed a Radon measure $\mu$ on $X$, derivations are weak*
continuous bounded linear maps $D:\lipalg X.\to L^\infty(\mu)$ which
satisfy the product rule $D(fg)=fDg+gDf$. Intuitively, derivations can
be interpreted as \emph{measurable vector fields} and depend only on the
measure class of $\mu$. For example, if $\nlebmeas n.$ denotes the
Lebesgue measure on $\real^n$, one obtains a derivation
$\frac{\partial}{\partial x_i}:\lipalg X.\to L^\infty(\nlebmeas n.)$
by taking the partial derivatives of Lipschitz functions in the
$x_i$-direction. Note that the set of derivations is an $L^\infty(\mu)$-module.
\par Even for metric measure spaces $(X,\mu)$ which cannot admit a
differentiable structure the module $\wder\mu.$ can be
nontrivial. Moreover, one can also study the modules $\wder\mu.$ and
$\wder\mu'.$ for mutually singular measures $\mu$ and $\mu'$ on the
\emph{same space} $X$. Derivations provide thus a broad definition of
differentiability for Lipschitz functions and it is desirable to
obtain a characterization of derivations for general metric measure
spaces. In \cite{deralb} the author showed that there is a
correspondence between Alberti representations and Weaver derivations
which implies, roughly speaking, that derivations are obtained by
taking derivatives along fragments. Some results in \cite{deralb}
relevant for the present work are recalled in Subsection \ref{subsec:corresp}.
\subsection*{Main results}
\label{subsec:main_res}
\par We now describe the main results of this paper and refer the
reader to the following sections for an explanation of the
terminology; we denote by $\mcurr k,X.$ the Banach space of
$k$-dimensional metric currents in the metric space $X$.
\par It is an observation\footnote{Gong \cite[pg.~3]{gong_plane}
  attributes it to Wenger} that there is a close similarity between
Weaver derivations and $1$-dimensional metric currents (see
Sec.~\ref{sec:onedimcurr}). In the light of \cite{deralb}
it is thus natural to ask how this similarity relates to the existence
of Alberti representations. We show that the mass $\cmass T.$ of a
$k$-dimensional metric current $T$ posseses Alberti representations in
the directions of $k$-dimensional cone fields. Specifically, in Section
\ref{sec:currents_and_alberti} we prove the following:
\begin{thm}\label{thm:currents_arbitrary_cones}
  Let $X$ be a complete separable metric space and let $T\in\mcurr
  k,X.\setminus\{0\}$ for $k>0$. Then there are
  disjoint Borel sets $\{V_j\}_j$ and $1$-Lipschitz functions
  $\pi^j:X\to\real^k$ (on $\real^k$ we consider the $l^\infty$ norm) such that:
  \begin{enumerate}
  \item $\cmass T.\left(X\setminus\bigcup_jV_j\right)=0$.
  \item For all $\epsi>0$ and for any $k$-dimensional cone field $\cone$, the measure $\cmass T.$
    admits a $(1,1+\epsi)$-biLipschitz Alberti representation
    $\albrep.$ with $\albrep.\mrest V_j$ in the $\pi^j$-direction of
    $\cone$.
  \end{enumerate}
  \par In particular, the module $\wder{\cmass T.}.$ contains
  $k$ independent derivations.
\end{thm}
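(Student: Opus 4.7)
The plan is to combine the Ambrosio--Kirchheim mass representation of $T$ with the correspondence between Weaver derivations and Alberti representations from \cite{deralb}. Writing
\[
 T(f,\pi_1,\dots,\pi_k) \;=\; \int_X f\,\Lambda(\pi_1,\dots,\pi_k)\,d\cmass T.,
\]
with $\Lambda$ the $\cmass T.$-essentially defined multilinear, alternating density furnished by the mass theorem, the Leibniz rule for metric currents in any fixed slot implies that freezing $k-1$ of the $\pi_i$'s produces a Weaver derivation $\lipalg X.\to\elleinfty{\cmass T.}.$. Thus $T$ automatically supplies a large family of derivations; the task is to extract $k$ independent ones locally and feed them into the cone-direction, biLipschitz Alberti representation machinery developed earlier in the paper and in \cite{deralb}.

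The first step is an exhaustion of $\cmass T.$ by Borel pieces witnessed by a single $1$-Lipschitz test map. Using the separability of $X$, fix a countable family $\mathcal F$ of $1$-Lipschitz maps $\pi:X\to\real^k$ which is weak* dense on bounded subsets; for each $n\in\natural$ and $\pi\in\mathcal F$ set $A_{n,\pi}=\{x\in X : |\Lambda(\pi)(x)|\ge 1/n\}$. Nontriviality of $T$, combined with the continuity of $\Lambda$ in its arguments, implies that $\bigcup_{n,\pi}A_{n,\pi}$ has full $\cmass T.$-measure. A maximal disjoint Borel refinement of this countable family yields a disjoint partition $\{V_j\}$ of $\cmass T.$-almost all of $X$, with associated $\pi^j\in\mathcal F$ and a constant $c_j>0$ such that $|\Lambda(\pi^j)|\ge c_j$ on $V_j$. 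This produces item (1) and the $\pi^j$ required in item (2).

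Next, the bound $|\Lambda(\pi^j)|\ge c_j$ on $V_j$ encodes a $k$-fold nondegeneracy: by the alternating property of $\Lambda$, the $k$ derivations
\[
 D_i^j\;:\;\phi\mapsto \Lambda(\pi^j_1,\dots,\pi^j_{i-1},\phi,\pi^j_{i+1},\dots,\pi^j_k)\quad(1\le i\le k)
\]
are linearly independent in $\wder{\cmass T.\mrest V_j}.$, and in particular $\wder{\cmass T.}.$ contains $k$ independent derivations, which is the final assertion of the theorem. Feeding the independent $k$-tuple $(D_1^j,\dots,D_k^j)$ into the derivation-to-Alberti correspondence of \cite{deralb}, in the sharp cone-refined and biLipschitz form developed in the earlier sections, produces, for any prescribed $\epsi>0$ and any $k$-dimensional cone field $\cone$, a $(1,1+\epsi)$-biLipschitz Alberti representation of $\cmass T.\mrest V_j$ in the $\pi^j$-direction of $\cone$. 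Gluing across the countable disjoint family $\{V_j\}$ then produces the global $\albrep.$.

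The main obstacle I expect is the quantitative directional refinement. Producing the pointwise lower bound $|\Lambda(\pi^j)|\ge c_j$ is soft, but upgrading this to fragments whose velocities sit inside an arbitrarily prescribed cone $\cone$, with sharp biLipschitz constant $(1,1+\epsi)$, requires invoking the product and gluing constructions for Alberti representations with some care: the $\pi^j$ are chosen once, from the countable family $\mathcal F$, before $\cone$ and $\epsi$ are revealed, so the reduction to those constructions must be purely local on each $V_j$ and must not require further subdivision depending on $\cone$.
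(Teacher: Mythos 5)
Your approach is genuinely different from the paper's. The paper proves a direct current-theoretic estimate (Lemma~\ref{lem:curr_cone_estimate}), applying Rainwater's lemma and the approximation scheme of Lemma~\ref{lem:dst_appx} to the current itself and killing the auxiliary terms of the perturbed distance $d_{\delta,\alpha}$ with the alternating axiom in~\eqref{eq:curr_cone_estimate_p2}; iterating over $e_1,\dots,e_k$ and invoking Corollary~\ref{cor:mu_arb_cone} then finishes. You instead pass to the Riesz density $\Lambda$, manufacture derivations $D_i^j$ by freezing $k-1$ slots, and reduce to the derivation-to-Alberti machinery of Subsection~\ref{subsec:corresp}. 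Both proofs ultimately exploit the alternating axiom, just in different places; yours is arguably more transparent because it makes the role of $\wder{\cmass T.}.$ explicit, and your exhaustion step producing the $V_j$ is (modulo care with Borel representatives) precisely the content of Lemma~\ref{lem:good_ktuples}.

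The one genuine gap is the claim that feeding the ``independent $k$-tuple'' $(D_1^j,\dots,D_k^j)$ into the derivation-to-Alberti correspondence suffices. Independence is not the hypothesis that drives that machinery: Theorem~\ref{thm:taira} and Corollary~\ref{cor:mu_arb_cone_special} require the derivations to be \emph{pseudodual to the components of the prescribed direction function} $\pi^j$. If you only know independence, the general construction of Lemma~\ref{lem:quantitative_pseudodual} will produce a fresh $k$-tuple of dual functions $g_{\alpha,l}$, giving Alberti representations in the $g$-direction rather than in the $\pi^j$-direction the theorem demands. What closes the argument is the stronger identity already latent in your alternating computation: $D_i^j(\pi^j_l)=\Lambda(\pi^j_1,\dots,\pi^j_{i-1},\pi^j_l,\pi^j_{i+1},\dots,\pi^j_k)$ vanishes for $i\ne l$, since $\pi^j_l$ then appears twice in the tuple, and equals $\Lambda(\pi^j)$ for $i=l$. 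Since $|\Lambda(\pi^j)|\ge c_j>0$ on $V_j$, the normalized derivations $D_i^j/\Lambda(\pi^j)$ are therefore pseudodual to $(\pi^j_l)_{l=1}^k$ on $V_j$. Once you state this, Corollary~\ref{cor:mu_arb_cone_special} applies on each $V_j$, Theorem~\ref{thm:alberti_rep_prod} upgrades the representation to $(1,1+\epsi)$-biLipschitz, and Theorem~\ref{thm:alb_glue} glues across the $V_j$; so the route is sound, but ``independent'' must be replaced by the pseudoduality $D_i^j\pi^j_l=\delta_{il}\Lambda(\pi^j)$.
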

\par Note that the proof of Theorem \ref{thm:currents_arbitrary_cones}
actually does not take full advantage of the \emph{joint continuity} of
$T$ in its last arguments $(\pi_1,\cdots,\pi_k)$ and so applies to a
larger class of metric functionals. It might be worth mentioning a
connection between Theorem \ref{thm:currents_arbitrary_cones} and the
classical Rademacher Theorem, which asserts that a Lipschitz function
$f:\real^n\to\real$ is differentiable at $\hmeas n.$-a.e.~point, where
$\hmeas n.$ denotes the Lebesgue measure. Given a top dimensional
current $T\in\mcurr n,\real^n.$, Theorem
\ref{thm:currents_arbitrary_cones} implies that  the mass measure $\cmass T.$ 
posseses $n$-independent Alberti representations, and then it follows
that the conclusion of Rademacher's Theorem holds for the measure
$\cmass T.$. A detailed argument which uses normal currents can be
found in the recent work of Alberti and Marchese~\cite{alberti_marchese}. However, we provide the
sketch of two alternative arguments.
First of all, having fixed a real-valued
  Lipschitz function $f$, one can use
  the $n$-independent Alberti representations to show that at $\cmass
  T.$-a.e.~point $p$ the function $f$ has partial derivatives in $n$-independent
  directions $\{e_i(p)\}_{i=1}^n$. From this one can proceed in two
  different ways.
The first uses a porosity argument like
  in~\cite[Sec.~9]{bate-diff} by showing that the partial
  derivatives constructed above give a linearization of $f$ at $p$. An alternative 
  geometric argument uses the fact that, if $\cmass T.$
  has $n$-independent Alberti representations, then at a generic point $p$ one
  can follow the fragments in $n$-independent directions to get close to any point in
  $B(p,r)$ like in \cite[Subsec.~5.2]{deralb}. Specifically, for any $q\in B(p,r)$ one can follow
  $n$-fragments $\gamma_1,\cdots,\gamma_n$ such that $\gamma_1$ starts
  at $p$, $\gamma_{i+1}$ starts at the end point of $\gamma_i$, the
  end point of $\gamma_n$ is $q'$ with $d(q,q')=o(r)$, along
  $\gamma_i$ the unit tangent vector is at distance $O(r)$ from
  $e_i(p)$, and the total
  length of the fragments $\gamma_i$'s is $\le Cd(p,q)$. Moreover, one can
  assume that for $r$ sufficiently small the fragments $\gamma_i$ are almost
  paths, i.e.~that there are paths $\tilde\gamma_i$ which extend
  $\gamma_i$, and such that the domain of $\gamma_i$ has Lebesgue
  measure at least $(1-O(r))$ times that of the domain of
  $\gamma_i$. Choosing $p$ to be also an approximate continuity point
  of the partial derivatives of $f$ in the directions given by the
  vector fields $p\mapsto e_i(p)$, 
  one concludes that $f$ is differentiable at $p$.
 \par In 2011 M.~Cs\"ornyei and P.~Jones have announced very deep and
 very difficult results in
 Geometric Measure Theory and Harmonic Analysis which imply that 
 Rademacher's Theorem is \emph{sharp} in the sense that, if
its \emph{conclusion} holds for the metric measure space
$(\real^n,\mu)$, then $\mu$ must be absolutely continuous with respect
to the Lebesgue measure. One can then conclude that an
$n$-dimensional metric current $T$ in $\real^n$ must have $\cmass
T.\ll{\mathcal L}^n$. Since the first version of this preprint
appeared in April 2014, G.~De Philippis and F.~Rindler~\cite{rindler-afree} have provided a nice
and elegant proof of the sharpeness of Rademacher's Theorem that follows from
remarkable and deep results on the structure of ${\mathscr A}$-free measures.
\par Note also that Theorem \ref{thm:currents_arbitrary_cones}
suggests that metric currents come with some weak notion of a
\emph{differentiable structure}. To make this intuition precise, we prove a
representation formula for metric currents in terms of Weaver
derivations; essentially, a $k$-dimensional metric current $T$ is of
the form $\omega_T\,\cmass T.$, where $\omega_T$ is a measurable
$k$-dimensional vector field (see the next Subsection) and the formal
$k$-form $(f,\pi_1,\cdots,\pi_k)$ can be interpreted as a $k$-form in
the $k$-th exterior power of the Weaver's cotangent bundle (see also
the next
Subsection). Specifically, in Section \ref{sec:rep_formula} we prove:
\begin{thm}\label{thm:gl_curr_rep}
  Let $T\in\mcurr k,X.$ and assume that $\wder{\cmass T.}.$ is
  finitely generated with $N$ generators. Then there is $\omega_T\in\hwder k,{\cmass T.}.$
  such that:
  \begin{equation}
    \label{eq:gl_curr_rep_s1}
    T(f,\pi_1,\cdots,\pi_k)=\int_Xf\langle\omega_T,d\pi_1\wedge\cdots\wedge d\pi_k\rangle\,d{\cmass T.}.
  \end{equation} 
  Alternatively, one might take $\omega_T$ to be an element of $\lmodext k,{\cmass T.},{\wder{\cmass T.}.}.$ or
  $\banext k,{\wder{\cmass T.}.}.$, see
  Subsection~\ref{subsec:ext_prod} for different definitions of
  exterior products.
  \par Moreover, $\omega_T$ has norm at most $(C(N))^k\binom{N}{k}$.
\end{thm}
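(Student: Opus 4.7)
The plan is to extract from $T$ a bounded multilinear alternating form on Lipschitz functions satisfying the Leibniz rule in each argument, and then to promote it to an element of the $k$-th exterior power of $\wder{\cmass T.}.$ by duality. The finite generation hypothesis guarantees that the resulting object lives in a module of rank $\binom{N}{k}$, which gives the claimed norm bound.

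For any fixed $(\pi_1,\ldots,\pi_k)\in\lipfun X..^k$, the linear functional $f\mapsto T(f,\pi_1,\ldots,\pi_k)$ on $\lipalg X..$ is dominated by $\bigl(\prod_i\glip{\pi_i}..\bigr)\int|f|\,d\cmass T..$, so by Radon-Nikodym it is represented by some $h_{\pi_1,\ldots,\pi_k}\in L^\infty(\cmass T..)$ with $\|h_{\pi_1,\ldots,\pi_k}\|_\infty\le\prod_i\glip{\pi_i}..$. Multilinearity, alternation and weak* continuity of $h$ follow from the corresponding axioms for $T$, while the Leibniz rule for metric currents translates, after testing against $f\in\lipalg X..$, into the product rule
\begin{equation*}
h_{\pi_1\pi_1',\pi_2,\ldots,\pi_k}=\pi_1\,h_{\pi_1',\pi_2,\ldots,\pi_k}+\pi_1'\,h_{\pi_1,\pi_2,\ldots,\pi_k}
\end{equation*}
in each argument (with the analogous identities in the remaining slots by alternation).

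With the other slots held fixed, $\pi\mapsto h_{\ldots,\pi,\ldots}$ is therefore a weak* continuous derivation $\lipalg X..\to L^\infty(\cmass T..)$, i.e.\ an element of $\wder{\cmass T.}.$. Choosing generators $D_1,\ldots,D_N$ of $\wder{\cmass T.}.$, a single-slot decomposition writes $h$ in that argument as an $L^\infty$-linear combination of $D_1\pi,\ldots,D_N\pi$; iterating through the remaining slots and antisymmetrizing yields coefficients $\omega^{i_1\cdots i_k}\in L^\infty(\cmass T..)$ such that
\begin{equation*}
h_{\pi_1,\ldots,\pi_k}=\sum_{i_1<\cdots<i_k}\omega^{i_1\cdots i_k}\det\bigl(D_{i_s}\pi_t\bigr)_{s,t=1}^{k}.
\end{equation*}
This is precisely the pairing $\langle\omega_T,d\pi_1\wedge\cdots\wedge d\pi_k\rangle$ for $\omega_T=\sum\omega^{i_1\cdots i_k}D_{i_1}\wedge\cdots\wedge D_{i_k}\in\hwder k,{\cmass T.}.$, so integrating against $f\,d\cmass T..$ recovers \eqref{eq:gl_curr_rep_s1}; the analogous elements of $\lmodext k,{\cmass T.},{\wder{\cmass T.}.}.$ and $\banext k,{\wder{\cmass T.}.}.$ are obtained by routine identification via the constructions of Subsection~\ref{subsec:ext_prod}.

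The norm bound $(C(N))^k\binom{N}{k}$ follows by summing over the $\binom{N}{k}$ wedge basis elements, each coefficient $\omega^{i_1\cdots i_k}$ controlled by iterating $k$ times the single-slot constant $C(N)$ measuring how tightly $D_1,\ldots,D_N$ generate $\wder{\cmass T.}.$. The main obstacle is to verify that the slot-by-slot decomposition is carried out coherently: the coefficients produced by decomposing in one argument must remain weak* continuous in the other arguments and must depend on them only through their differentials $d\pi_i\in\hwform 1,{\cmass T.}.$. This consistency is what forces $\omega_T$ to live in the exterior power rather than in a larger mixed tensor product, and it is here that finite generation of $\wder{\cmass T.}.$ is genuinely used, through the identification of the module of $L^\infty(\cmass T..)$-multilinear alternating maps on $\hwform 1,{\cmass T.}.$ with the finitely generated module $\hwder k,{\cmass T.}.$.
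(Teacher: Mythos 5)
There is a genuine gap, and you have in fact located it yourself in your closing paragraph: the ``coherence'' of the slot-by-slot decomposition is not a loose end to tidy up but the entire content of the proof. Once you fix $\pi_2,\cdots,\pi_k$ and write the derivation $\pi_1\mapsto h_{\pi_1,\cdots,\pi_k}$ as an $L^\infty(\cmass T.)$-combination $\sum_i\lambda_i(\pi_2,\cdots,\pi_k)\,D_i\pi_1$, there is no reason \emph{a priori} why the coefficient $\lambda_i$ should itself be a derivation in $\pi_2$; indeed the coefficients are not even unique unless the module is free, and the hypothesis is only that $\wder{\cmass T.}.$ is finitely generated. So the ``iterate through the remaining slots and antisymmetrize'' step never gets off the ground as stated. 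The paper's proof resolves exactly this obstruction by first reducing to the free case via Theorem~\ref{thm:free_dec} and then invoking Lemma~\ref{lem:quantitative_pseudodual} to obtain, on a partition $\{V_\alpha\}$, a basis $\{D_{\alpha,i}\}$ of controlled norm together with $1$-Lipschitz \emph{pseudodual} functions $\{g_{\alpha,j}\}$ with $D_{\alpha,i}g_{\alpha,j}=\delta_{ij}\chi_{V_\alpha}$. These pseudoduals pin the coefficients down explicitly as $\lambda_i=D_{T\crest\omega}g_i$ with $\omega=(f,\pi_1,\cdots,\pi_{k-1})$, and — crucially — allow the coefficient to be absorbed back into the current via the identity (from Lemma~\ref{lem:loc_curr_rep})
\begin{equation*}
  T(f,\pi_1,\cdots,\pi_k)=\sum_{j}T\bigl(fD_j\pi_k,\pi_1,\cdots,\pi_{k-1},g_j\bigr).
\end{equation*}
Each application of this identity exchanges one running argument for a fixed pseudodual $g_j$; after $k$ iterations all running arguments are gone, alternation cuts the sum down to the ordered index set $\Lambda_{k,N}$, and a Riesz representation of $\psi\mapsto T(\psi,g_{a_1},\cdots,g_{a_k})$ produces the coefficient $\lambda_a\in L^\infty(\cmass T.)$. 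This absorption mechanism is precisely what your argument is missing. The norm bound also has a different anatomy than you suggest: the factor $\binom{N}{k}$ counts the wedge monomials, the factor $(C(N))^k$ comes from the uniform norm bound on the basis derivations supplied by Lemma~\ref{lem:quantitative_pseudodual}, and each $\lambda_a$ separately has $L^\infty$-norm at most $1$ — obtained by substituting $\pi_i=g_{\alpha,a_i}$ into the local representation formula and using mass monotonicity — not by ``iterating a single-slot constant.''
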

\par Note that the assumption that $\wder\mu.$ is finitely generated
is not very restrictive as it holds if the restriction of $\cmass T.$
to its support is doubling or if the support of $\cmass T.$ is
doubling \cite{deralb}. Note also how Theorem \ref{thm:gl_curr_rep}
parallels the representation of classical currents
(\cite[Sec.~7.2]{krantz_geometric_integration},
\cite[Sec.~4.1]{federer_gmt}).
\par In Section \ref{sec:applications} we provide two applications of
this theory. The first application provides an approximation of
$1$-dimensional metric currents in terms of normal currents:
\begin{thm}\label{thm:amb_kirch_one}
  If $T\in\mcurr 1,Z.$ where $Z$ is a Banach space and if the module $\wder{\cmass
    T.}.$ is finitely generated, then there is a sequence of normal
  currents $\{N_n\}\subset\ncurr 1,Z.$ such that:
  \begin{equation}
    \label{eq:amb_kirch_one_s1}
    \lim_{n\to\infty}\|T-N_n\|_{\mcurr 1,Z.}=0.
  \end{equation}
\end{thm}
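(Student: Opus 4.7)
The plan is to combine the structural results proved earlier in the paper in order to represent $T$ as an integral over fragments, and then to discretize this integral so that each approximating finite sum is a normal current.

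By Theorem \ref{thm:gl_curr_rep} applied with $k=1$,
\begin{equation*}
T(f,\pi)=\int_Z f\,\langle\omega_T,d\pi\rangle\,d\cmass T.
\end{equation*}
for some $\omega_T\in\wder{\cmass T.}.$. Theorem \ref{thm:currents_arbitrary_cones} yields disjoint Borel sets $V_j$ covering $\cmass T.$-a.e.~point together with $(1,1+\epsi)$-biLipschitz Alberti representations $\albrep j.$ of $\cmass T.\mrest V_j$ by fragments whose directions may be chosen in any prescribed $1$-dimensional cone field. Using the derivation/Alberti correspondence recalled in Subsection \ref{subsec:corresp}, I would choose those cone fields so that, on $V_j$, the derivation $\omega_T$ is realized, up to a bounded measurable density $h_\gamma$, as differentiation along the fragments of $\albrep j.$.

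With this choice the two representations combine into an integral formula
\begin{equation*}
T=\int_{\frags(Z)} h_\gamma\,[\gamma]\,dP(\gamma),
\end{equation*}
where $[\gamma]$ denotes the $1$-current carried by the fragment $\gamma$ and $P$ is a probability measure on $\frags(Z)$. Because $Z$ is a Banach space, each fragment $\gamma\colon K\to Z$ extends by linear interpolation on $[a,b]\setminus K$ (with $[a,b]$ the convex hull of $K$) to a Lipschitz path $\tilde\gamma\colon[a,b]\to Z$; for any Lipschitz $h$ the pushforward $\tilde\gamma_\sharp(h\,\lebmeas\mrest[a,b])$ has finite mass, and its boundary has mass at most $|h(a)|+|h(b)|+\varlip(h)$, hence defines a normal current.

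To build $N_n$, I would truncate $h_\gamma$ to a subset of $\frags(Z)$ on which $\|h_\gamma\|_\infty$ is uniformly bounded, replace the product $\chi_K h_\gamma$ by a Lipschitz function supported on a slight thickening of $K$ inside $[a,b]$, and then discretize $P$ by a finitely supported measure. Each $N_n$ is then a finite sum of pushforwards of Lipschitz weights along Lipschitz paths and hence lies in $\ncurr 1,Z.$; convergence in mass norm follows from dominated convergence, the truncation and regularization errors on a single fragment vanishing and the discretization error of the Bochner integral tending to zero. The main obstacle is the first step: extracting from Theorem \ref{thm:gl_curr_rep} a fragment-wise representation with uniformly $L^\infty$ weights $h_\gamma$ depending Borel-measurably on $\gamma$, so that the integral over $\frags(Z)$ is a bona fide Bochner integral in $\mcurr 1,Z.$. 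This amounts to careful bookkeeping with the derivation/Alberti correspondence, but it is the non-trivial analytic ingredient.
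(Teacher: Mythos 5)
Your proposal correctly identifies the shape of the argument — represent $T$ as a superposition of curve currents, use the Banach structure to fill in fragments, then approximate — but it has a genuine gap at the first step, and it chooses a needlessly hard final step.

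The gap: you claim that one can choose the cone fields so that, on $V_j$, the derivation $\omega_T$ is ``realized, up to a bounded measurable density, as differentiation along the fragments'' of a \emph{single} Alberti representation $\albrep j.$. This is false in general. A cone field only pins down an open cone in which $D_{\albrep j.}F$ lives pointwise (Definition~\ref{defn:alberti_direction}), not an exact direction, and in fact $\wder{\cmass T.\mrest V_j}.$ typically has rank $N>1$, so the derivation $D_T=\Der{\cmass T.}.(T)$ of Theorem~\ref{thm:one_currents_derivations} need not be an $L^\infty$-multiple of any single $D_{\albrep j.}$. Consequently the identity $T=\int h_\gamma[\gamma]\,dP(\gamma)$ for one Alberti representation does not hold, and everything downstream rests on it. The paper's proof handles this by first reducing (via Theorem~\ref{thm:free_dec} together with the trivial observation that a mass-convergent sum of flat currents is flat) to the case where $\wder{\cmass T.}.$ is free of rank $N$; it then produces $N$ Alberti representations in $N$ independent cone directions so that the associated derivations $\{D_{\albrep i.}\}_{i=1}^N$ form a basis, writes $D_T=\sum_i\lambda_i D_{\albrep i.}$, and approximates each $\Cur{\cmass T.}.(\lambda_iD_{\albrep i.})$ separately. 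You need some version of this decomposition.

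The second divergence is the approximation itself. You propose to truncate the densities, mollify them to Lipschitz functions on each fragment, and discretize $P$ by a finitely supported measure — a Bochner-integral argument whose measurability and convergence details you flag as the ``non-trivial analytic ingredient.'' The paper avoids it. Lemma~\ref{lem:pseudodual_banach} arranges for the pseudodual functions to be \emph{linear} functionals $z^*_j\in Z^*$; this is what permits the application of Theorem~\ref{thm:alberti_banach}, which upgrades the Alberti representation so that $P$ is concentrated on genuine Lipschitz \emph{curves} $\gamma:[0,1]\to Z$ and $\nu_\gamma=h\,\Psi_\gamma$ for a single Borel $h$ on $Z$. The superposition current $N=\int_{\curves(Z)}[\gamma]\,dP(\gamma)$ is then one fixed normal current, $\mu=\int\Psi_\gamma\,dP(\gamma)$ dominates $\cmass T.$, and the reduced piece of $T$ equals $N\crest(h\lambda)$ with $h\lambda\in L^1(\mu)$. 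Approximation collapses to density of $\lipalg Z.$ in $L^1(\mu)$: pick Lipschitz $g$ with $\|g-h\lambda\|_{L^1(\mu)}\le\epsi$; then $N\crest g$ is normal (restriction of a normal current by a Lipschitz function is normal) and $\|N\crest g - T\|_{\mcurr 1,Z.}\le C\|g-h\lambda\|_{L^1(\mu)}$. No discretization of $P$ is needed. Your linear interpolation of fragments is in the same spirit — both proofs exploit the Banach structure to extend fragments to curves — but the paper's version encodes the linearity in the direction functionals (so that the Alberti direction constraint survives the extension), which is what makes the reduction to curves, and hence to a single ambient normal current $N$, go through cleanly.
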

This provides an affirmative answer to the $1$-dimensional case of a
question raised in \cite[pg.~68]{ambrosio-kirch}. The question of
Ambrosio and Kirchheim is whether their metric currents coincide, in
$\real^N$, with the Federer-Fleming flat chains of finite mass. We
answer this question affirmatively for $1$-dimensional current, but
our result is more general. In fact,
even though we prove the result in Banach spaces, the proof can be
adapted to spaces where fragments can be \emph{filled-in} to give
Lipschitz curves. In particular, the structure of $1$-dimensional
metric currents seems very close to that of normal currents. Note that
this is not the case for classical currents. We also
  mention that since the preprint of this work appeared in April 2014,
  recently also $N$-dimensional metric currents in $\real^N$ have
  been shown to be Federer-Fleming flat chains. This follows from the
  recent beautiful work of G.~De Philippis and F.~Rindler~\cite{rindler-afree}
  combined with Theorem~\ref{thm:currents_arbitrary_cones}, see
  also the previous discussion on the sharpeness
  of Rademacher's Theorem.
\par As a second application we provide a different method to produce
Alberti representations which is based on results of Paolini and
Stepanov \cite{paolini_acyclic,paolini_one_normal} on the structure of
$1$-dimensional normal currents. This approach allows to gain a better
control on the direction of the Alberti representations; in fact,
instead of obtaining Alberti representations in the $\psi$-direction
of a finite dimensional cone field $\cone$, one obtains Alberti
representations in ther $\psi$-direction of a \emph{vector field}
$v$. Moreover, the Lipschitz function $\psi$ can be taken to be
$l^2$-valued, allowing to control countably many functions. The
precise result is Theorem \ref{thm:const_dir}, which is proved in
Subsection \ref{subsec:const_dir}. This result is based on identifying
a special class of derivations, which we call \textbf{normal
  derivations}, which have properties closely related to those of
normal currents. A further direction related to this result is to
extend the action of derivations to Lipschitz functions which take
values in Banach spaces with the Radon-Nikodym property: this will be
pursued elsewhere.
\subsection*{Technical tools}
\label{subsec:tools}
Section \ref{sec:technical} contains some technical results. In
Subsection \ref{subsec:ext_prod} we discuss exterior powers in the
categories of Banach spaces, $L^\infty(\mu)$-modules and
$L^\infty(\mu)$-normed modules. This material is just an adaptation of
the treatment in \cite[Ch.~2 and 3]{cigler_banach_modules} of tensor
products. The motivation is to give a precise meaning to an exterior
product of derivations $D_1\wedge\cdots\wedge D_k$; as $\wder\mu.$ is
an $L^\infty(\mu)$-normed module, the construction can be done in the
three aforementioned categories and the results are different. In the
author's opinion, the most natural choice is probably that of
$L^\infty(\mu)$-normed modules.
\par In Subsection \ref{subsec:alberti_banach} we prove Theorem
\ref{thm:alberti_banach} which is a criterion to produce Alberti
representations for measures in Banach spaces when the direction and
the speed are specified by linear maps. This result is used in the
proof of Theorem \ref{thm:amb_kirch_one}.
\par In Subsection \ref{subsec:renorming} we discuss Theorem
\ref{thm:renorm}, which is a renorming trick which allows to obtain a
strictly convex local norm on $\wder\mu.$ by taking a biLipschitz
deformation of the metric on the ambient metric space. This result is
used in the proof of Theorem \ref{thm:const_dir} and might be of
independent interest. It is worth to point out that Theorem
\ref{thm:renorm}, when specialized to the context of differentiability
spaces, gives a stronger conclusion than Cheeger's renorming Theorem
\cite[Sec.~12]{cheeger99} for PI-spaces. In fact, Theorem
\ref{thm:renorm} works in general differentiability spaces, does
require only a small perturbation of the distance function, and works
globally (while Cheeger's argument works only on a single chart).
\subsection*{Acknowledgements} The author thanks J.~Cheeger for
comments on the renorming trick, B.~Kleiner for
raising the question of whether it is possible to obtain Alberti
representations in the directions of vector fields, and U.~Lang for
pointing out Z\"ust's bound \cite{zust_phd}.
\par Finally, I want to express my gratitude to the referee for
carefully reading the manuscript at suggesting a simplification for
the proof of Lemma~\ref{lem:loc_curr_rep}.
\par The mathematical content of the paper originated when the author
was a PhD student at NYU. During the revision phase of the paper the
author was supported by the 
``ETH Zurich Postdoctoral Fellowship Program and the Marie Curie Actions
 for People COFUND Program''.
\section{Preliminaries}
\label{sec:prelim}

\subsection{Metric currents}
\label{subsec:metcurr}
We recall here some definitions and facts about metric currents and
refer the reader to \cite{ambrosio-kirch,lang_local_currents} for more
information. 
\par Let $\metuple k,X.$ denote the set of $\lipalg X.\times\left(\lipfun
    X.\right)^k$\footnote{for $k=0$ we let $\metuple 0,X.=\lipalg X.$} of $(k+1)$-tuples of Lipschitz
  functions where the first one is bounded.
Intuitively, we want to think of a $(k+1)$-tuple
$(f,\pi_1,\cdots,\pi_k)$ as a $k$-differential form
$fd\pi_1\wedge\cdots\wedge d\pi_k$. A map $T:V\to\real$, where $V$ is
a vector space over $\real$ is called \textbf{subadditive} if for each
$v_1,v_2\in V$ one has:
\begin{equation}
  \label{eq:subadd}
  \left|T(v_1+v_2)\right|\le\left|T(v_1)\right|+\left|T(v_2)\right|;
\end{equation}
the map $T$ is called \textbf{positively $1$-homogeneous} if for all $(v,\lambda)\in
V\times[0,\infty)$ one has:
\begin{equation}
  \label{eq:1hom}
  \left|T(\lambda v)\right|=\lambda\left|T(v)\right|.
\end{equation}
\begin{defn}
  \label{defn:metric_functional}
  A \textbf{$k$-dimensional metric functional} $T$ on the metric space $X$ is a
  map $T:\metuple k,X.\to\real$ which is subadditive and positively
  $1$-homogeneous in each of its arguments
  $(f,\pi_1,\cdots,\pi_k)$. The \textbf{boundary} $\partial T$ of a
  $k$-dimensional metric functional ($k\ge1$) is the $(k-1)$-dimensional metric
  functional defined by:
  \begin{equation}
    \label{eq:boundary}
    \partial T(f,\pi_1,\cdots,\pi_{k-1})=T(1,f,\pi_1,\cdots,\pi_{k-1}).
  \end{equation}
  For $0$-dimensional metric functionals we convene that the boundary
  is $0$.
\end{defn}
\begin{defn}
  \label{defn:mass}
  A \textbf{$k$-dimensional metric functional} $T$ has finite mass if there is
  a finite Radon measure $\mu$ such that for each
  $(f,\pi_1,\cdots,\pi_{k})\in\metuple k,X.$:
  \begin{equation}
    \label{eq:massdef}
    \left|T(f,\pi_{1},\cdots,\pi_{k})\right|\le\prod_{i=1}^k\glip\pi_i.\,\int_X|f|\,d\mu.
  \end{equation}
  In this case there is a \emph{minimal} $\mu$ satisfying \eqref{eq:massdef}, called the \textbf{mass} of $T$ and denoted by
$\cmass T.$.
\end{defn}
\begin{rem}
  Note that any metric functional $T$ with finite mass can be uniquely
  extended to a map $T:\bborel X.\times\left(\lipfun X.\right)^k$ so
  that the first argument $f$ can be taken to be a bounded Borel function.
\end{rem}
\begin{defn}
  \label{defn:restriction}
  Let $T$ be a $k$-dimensional metric functional with finite
  mass. Suppose that $l\le k$ and that
  \begin{equation}
    \label{eq:restriction1}
    \omega=(\psi,\pi_1,\cdots,\pi_l)\in\bborel X.\times\left(\lipfun X.\right)^l;
  \end{equation}
  the \textbf{restriction} $T\crest\omega$ is the $(k-l)$-dimensional metric
  functional defined by:
  \begin{equation}
    \label{eq:restriction2}
    T\crest\omega(f,\tilde\pi_1,\cdots,\tilde\pi_{k-l})=T(f\psi,\pi_1,\cdots,\pi_l,\tilde\pi_1,\cdots,\tilde\pi_{k-l}).
  \end{equation}
\end{defn}
\par In the Introduction we recalled the notion of weak* convergence
for sequences in $\lipalg X.$. We now introduce a notion of
convergence for sequences in $\lipfun X.$ which plays a fundamental
r\^ole in the definition of metric currents: if $\{f_n\}\subset\lipfun
X.$ and $f\in\lipfun X.$, we write $f_n\xrightarrow{\text{w*}}f$ if
$f_n\to f$ pointwise and $\sup_n\glip f_n.<\infty$.
\begin{defn}
  \label{defn:metric_currents}
  A $k$-dimensional metric functional $T$ of finite mass is called a
  \textbf{metric current} if it satisfies the following additional
  properties\footnote{in this formulation some axioms are redundant, see
  \cite[Sec.~3]{ambrosio-kirch}.}:
  \begin{enumerate}
  \item $T$ is multilinear in its arguments $f,\pi_1,\cdots,\pi_k$;
  \item $T$ is alternating in its last $k$-arguments $\pi_1,\cdots,\pi_k$;
  \item $T$ is \textbf{local} in the sense that if some $\pi_i$ is
    constant on the set $\{x: f(x)\ne0\}$, then
    \begin{equation}
      \label{eq:currloc}
      T(f,\pi_{1},\cdots,\pi_{k})=0;
    \end{equation}
  \item if one has that $f_n\xrightarrow{\text{w*}}f$ and for $i\in\{1,\cdots,k\}$
    one also has $\pi_{i,n}\xrightarrow{\text{w*}}\pi_i$, then, under
    the assumption that $\sup_n\|f_n\|_\infty,\|f\|_\infty<\infty$, it
    follows that:
    \begin{equation}
      \label{eq:currcont}
      \lim_{n\to\infty}T(f_n,\pi_{i,1},\cdots,\pi_{i,k})=T(f,\pi_{1},\cdots,\pi_{k}).
    \end{equation}
  \end{enumerate}
\end{defn}
\par The set of $k$-dimensional metric currents is denoted by $\mcurr
k,X.$ and is a Banach space with norm $\|T\|_{\mcurr k,X.}=\cmass
T.(X)$. An important class of metric currents consists of the normal
currents:
\begin{defn}
  \label{defn:normcurrs}
  A $k$-dimensional metric current is a \textbf{normal current} if the
  boundary $\partial T$ is a metric current. The set of
  $k$-dimensional normal currents is denoted by $\ncurr k,X.$ and is a
  Banach space with norm:
  \begin{equation}
    \label{eq:normnorm}
    \|T\|_{\ncurr k,X.}=\cmass T.(X)+\cmass\partial T.(X).
  \end{equation}
\end{defn}
\subsection{Alberti representations}
\label{subsec:alberti}
\par In this Subsection we recall some facts about Alberti
representations. We next give the definition of Alberti representation
after elaborating on the definition of fragment given in the introduction.
\begin{defn}\label{defn:Alberti_rep}
  A fragment in $X$ is a Lipschitz map $\gamma:K\to X$ where
  $\dom\gamma=K$ is a nonempty compact subset of $\real$.
  We denote the set of fragments by $\frags(X)$ and
  topologize it with the Hausdorff distance between their graphs:
  $d(\gamma_1,\gamma_2)$ is the infimum of those $\varepsilon>0$ such
  that for each $(i,j)\in\{(1,2), (2,1)\}$ one has that for each 
  $t_i\in\dom\gamma_i$ there is a $t_j\in\dom\gamma_j$ with
  $d(\gamma_i(t_i),\gamma_j(t_j))\le\varepsilon$ and $|t_i-t_j|\le\varepsilon$.
  \par  Let $\mu$ be a Radon measure on a metric space $X$ and $M(X)$ denote
  the set of finite Radon measures on $X$; an Alberti
  representation of $\mu$ is a pair $(P,\nu)$:
  \begin{enumerate}
  \item The measure $P$ is a regular Borel probability measure on $\frags(X)$;
  \item The map $\nu:\frags(X)\to M(X)$ is Borel \footnote{on $M(X)$
one takes the weak* topology} and
    $\nu_\gamma\ll\hmeas 1._\gamma$, where $\hmeas 1._\gamma$ denotes
    the $1$-dimensional Hausdorff measure on the image of $\gamma$;
  \item  The measure $\mu$ can be represented as
    $\mu=\int_{\frags(X)}\nu_\gamma\,dP(\gamma)$;
  \item For each Borel set $A\subset X$ and for all real numbers $a\le b$,
    the map
    $\gamma\mapsto\nu_\gamma\left(A\cap\gamma(\dom\gamma\cap[a,b])\right)$
    is Borel.
  \end{enumerate}
Finally  to deal with the operation of restriction, one is led to introduce the \textbf{restriction of
  $\albrep.=(P,\nu)$ to a Borel set $U$}: $\albrep.\mrest
U=(P,\nu\mrest U)$ \cite[Lem.~2.4]{bate-diff}. Note that  $\albrep.\mrest
U$ yields an Alberti representation of $\mu\mrest U$.
\end{defn}
\begin{rem}
  \label{rem:new_frags}
  Note that in this paper the definition of fragments is different
  from that used in \cite{deralb} because, for a frament
  $\gamma:K\to X$, we do not require $\gamma$ to be biLipschitz or
  $\dom\gamma$ to have positive Lebesgue measure. However, an
  application of the area formula \cite[Cor.~8]{kirchheim_metric_diff} shows
  that the results that we cite from \cite{deralb} are still
  valid in this setting. For the reader's convenience we provide more
  details here.
  \par First, note that if $\dom\gamma$ has $0$ Lebesgue measure, then
  the image of $\gamma$ has $1$-dimensional Hausdorff measure equal to
  $0$ and so $\nu_\gamma=0$ by Axiom (2) in the definition of an
  Alberti representation. Thus, we can just consider fragments where
  $K=\dom\gamma$ has positive Lebesgue measure. Now we can partition
  $K=K_{-1}\cup K_0 \cup K_{1}$ where (a) $\lebmeas(K_{-1})=0$, (b)
  the metric differential (see Definition~\ref{def:met_diff})
  $\metdiff \gamma$ exists and is approximately continuous on $K_{0}$
  and $K_{1}$, and (c) $\metdiff\gamma = 0$ on $K_{0}$ and
  $\metdiff\gamma>0$ on $K_1$. Then $\hmeas 1._\gamma(\gamma(K_0))=0$
  by the area formula and by metric differentiation one can find a
  countable partition $K_{1}=\bigcup S_\alpha$ such that
  $\gamma|S_\alpha$ is biLipschitz onto $\gamma(S_\alpha)$. In this
  way the part of $\gamma$ that contributes to the Alberti
  representations can be represented as a countable union of
  biLipschitz fragments.
\end{rem}
In order to define notions of speed and direction for Alberti
representations we recall the definitions of Euclidean cone and of the
metric differential of a fragment.
\begin{defn}\label{defn:cone}
  Let $\alpha\in(0,\pi/2)$, $w\in{\mathbb S}^{n-1}$; the \textbf{open
    cone} $\cone(w,\alpha)\subset\real^n$ with axis $w$ and opening
  angle $\alpha$ is:
\begin{equation}
    \cone(w,\alpha)=\{u\in\real^q: \tan\alpha\langle
    w,u\rangle>\|\pi_w^\perp u\|_2\},
  \end{equation} where $\pi_w^\perp$ denotes the orthogonal projection
  on the orthogonal complement of the line $\real w$.
\end{defn}
\begin{defn}\label{def:met_diff}
  For a fragment $\gamma\in\frags(X)$, the metric
  differential $\metdiff\gamma(t)$ of $\gamma$ at $t\in\dom\gamma$ is the limit
  \begin{equation}
    \lim_{\dom\gamma\ni t'\to t}\frac{\dist\gamma(t'),\gamma(t).}{|t'-t|}
  \end{equation}
  whenever it exists; if  $t$ is an isolated
    point of $\dom\gamma$ we convene that the limit is $0$.
\end{defn}
\par In order to measure the direction of a fragment $\gamma$, one uses a 
 Lipschitz function $f:X\to\real^q$ and studies the direction of
 $(f\circ\gamma)'$ using cones.
 \begin{defn}
   \label{defn:alberti_direction}
   An $n$-dimensional \textbf{cone field} $\cone$ is a Borel map from
   $X$ to the set of open cones in $\real^n$. Alternatively, an $n$-dimensional
   cone-field $\cone$ is specified by a pair of Borel maps
   $\alpha:X\to(0,\pi/2)$ and $w:X\to{\mathbb S}^{n-1}$ by letting
   $\cone(x)=\cone(\alpha(x),w(x))$.
   \par Given a Lipschitz function $f:X\to\real^n$, an Alberti
   representation $\albrep.=(P,\nu)$ is said to be \textbf{in the
     $f$-direction of the $n$-dimensional cone-field $\cone$} if for
   $P$-a.e.~$\gamma\in\frags(X)$ and
   $\lebmeas\mrest\dom\gamma$-a.e.~$t$ one has $(f\circ\gamma)'(t)\in\cone(\gamma(t))$.
 \end{defn}
 \begin{defn}
   \label{defn:alberti_speed}
   Let $\sigma:X\to[0,\infty)$ be Borel and $f:X\to\real$ be
   Lipschitz. An Alberti representation $\albrep.=(P,\nu)$  is said to
   be \textbf{have $f$-speed $\ge\sigma$ (resp.~$>\sigma$)} if for $P$-a.e.~$\gamma\in\frags(X)$ and
   $\lebmeas\mrest\dom\gamma$-a.e.~$t$ one has
   $(f\circ\gamma)'(t)\ge\sigma(\gamma(t))\metdiff\gamma(t)$ (resp.~$(f\circ\gamma)'(t)>\sigma(\gamma(t))\metdiff\gamma(t)$).
 \end{defn}
 \par One finally needs also to control the Lipschitz constant of the
 fragments used to produce Alberti representations.
 \begin{defn}
   \label{defn:alberti_constants}
   An Alberti representation $\albrep.=(P,\nu)$ is said to be
   \textbf{$C$-Lipschitz (resp.~$(C,D)$-biLipschitz)} if
   $P$-a.e.~$\gamma$ is $C$-Lispchitz (resp.~$(C,D)$-biLipschitz).
 \end{defn}
\par Alberti representations are produced using Rainwater's Lemma
\cite{rainwater-note}, which can be regarded as a generalization of the
 Radon-Nikodym Theorem. In particular, one studies a
 notion of \emph{nullity for sets} with respects to a \emph{family of measures}.
 
 \begin{defn}
   \label{defn:frags_null}
   Let $S\subset X$ and $\Omega\subset\frags(X)$. The set $S$ is said
   to be \textbf{$\Omega$}-null if for each $\gamma\in\Omega$ one has $\hmeas 1._\gamma(S)=0$.
 \end{defn}
 We will use the previous notion of nullity mainly for the following
 families of fragments:
 \begin{defn}
   \label{defn:frags_sets}
   Let $f:X\to\real^n$ and $g:X\to\real$ be Lipschitz functions,
   $\sigma:X\to[0,\infty)$ a Borel function and $\cone$ an
   $n$-dimensional cone field. We denote by
   $\frags(X,f,\cone,g,>\sigma)$ the set of those $\gamma\in\frags(X)$
   satisfying:
   \begin{align}
     \label{frags_sets1}
     (f\circ\gamma)'(t)&\in\cone(\gamma(t))\quad\text{for
       $\lebmeas\mrest\dom\gamma$-a.e~$t$;}\\
     \label{frags_sets2}
     (g\circ\gamma)'(t)&>\sigma(\gamma(t))\metdiff\gamma(t)\quad\text{for
       $\lebmeas\mrest\dom\gamma$-a.e~$t$;}
   \end{align}
   the set $\frags(X,f,\cone,g,\ge\sigma)$ is defined by changing the
   strict inequality in \eqref{frags_sets2} to a non-strict inequality.
 \end{defn}
 The following Theorem (\sync alberti_rep_prod.\ in \cite{deralb}) is a
 standard criterion to produce Alberti representations:
 \begin{thm}
   \label{thm:alberti_rep_prod}
   Let $X$ be a complete separable metric space and $\mu$ a Radon
   measure on $X$. Then the following are equivalent:
   \begin{enumerate}
   \item The measure $\mu$ admits an Alberti representation in the
     $f$-direction of $\cone$ with $g$-speed $>\sigma$;
   \item For each $\varepsilon>0$ the measure $\mu$ admits a
     $(1,1+\varepsilon)$-biLipschitz Alberti representation in the
     $f$-direction of $\cone$ with $g$-speed $>\sigma$;
   \item Any Borel set $S\subset X$ which is
     $\frags(X,f,\cone,g,>\sigma)$-null is also $\mu$-null.
   \end{enumerate}
 \end{thm}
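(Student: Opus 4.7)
The plan is to prove $(2)\Rightarrow(1)\Rightarrow(3)\Rightarrow(2)$, with the two forward implications essentially tautological; the substance of the argument lies in $(3)\Rightarrow(2)$, which I would obtain by a Rainwater-type exhaustion. For $(2)\Rightarrow(1)$ there is nothing to prove, since a $(1,1+\varepsilon)$-biLipschitz Alberti representation in the $f$-direction of $\cone$ with $g$-speed $>\sigma$ is in particular an Alberti representation of the required form. For $(1)\Rightarrow(3)$, let $(P,\nu)$ be such a representation and $S$ a Borel $\frags(X,f,\cone,g,>\sigma)$-null set; since $P$-a.e.\ $\gamma$ lies in $\frags(X,f,\cone,g,>\sigma)$ and $\nu_\gamma\ll\hmeas 1._\gamma$, one has $\nu_\gamma(S)=0$ for $P$-a.e.\ $\gamma$, and integrating in $P$ gives $\mu(S)=0$.

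For $(3)\Rightarrow(2)$, fix $\varepsilon>0$ and let $\mathcal{F}_\varepsilon$ denote the class of $(1,1+\varepsilon)$-biLipschitz fragments that lie in $\frags(X,f,\cone,g,>\sigma)$. After stratifying by domain length and diameter one may regard $\mathcal{F}_\varepsilon$ as a countable union of compact subsets of $\frags(X)$, which takes care of Borel measurability in $\gamma$ throughout. Let $\mathcal{M}_\varepsilon$ be the convex cone of finite Radon measures of the form $\gamma_\sharp(\phi\cdot\lebmeas\mrest\dom\gamma)$ with $\gamma\in\mathcal{F}_\varepsilon$ and $\phi\geq 0$ bounded Borel. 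I then apply Rainwater's Lemma to the pair $(\mu,\mathcal{M}_\varepsilon)$ to obtain a decomposition $\mu=\mu_1+\mu_2$ in which $\mu_1$ admits a $(1,1+\varepsilon)$-biLipschitz Alberti representation $(P,\nu)$ whose fragments lie $P$-a.e.\ in $\mathcal{F}_\varepsilon$, and in which $\mu_2$ is singular to every member of $\mathcal{M}_\varepsilon$; the latter is equivalent to $\mu_2$ being concentrated on a Borel $\mathcal{F}_\varepsilon$-null set $E$.

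The main obstacle is to upgrade this $\mathcal{F}_\varepsilon$-nullity of $E$ to genuine $\frags(X,f,\cone,g,>\sigma)$-nullity, so that hypothesis (3) forces $\mu_2=0$. For this I would invoke exactly the decomposition recalled in Remark~\ref{rem:new_frags}: any $\gamma\in\frags(X,f,\cone,g,>\sigma)$ splits into a Lebesgue-null part, a part on which $\metdiff\gamma$ vanishes (whose image carries no $\hmeas 1._\gamma$-mass by Kirchheim's area formula), and a countable union of biLipschitz pieces $\gamma|S_\alpha$ produced by approximate continuity of $\metdiff\gamma$. Because the cone condition on $(f\circ\gamma)'$ and the speed condition $(g\circ\gamma)'>\sigma(\gamma(t))\metdiff\gamma(t)$ are invariant under positive time reparametrization, after one further subdivision on which $\metdiff\gamma$ is almost constant each $\gamma|S_\alpha$ can be rescaled to become $(1,1+\varepsilon)$-biLipschitz while still satisfying the cone and speed conditions on its new domain; thus every such piece belongs to $\mathcal{F}_\varepsilon$, so $\hmeas 1._\gamma(E)$ vanishes, and (3) yields $\mu_2=0$. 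I expect the bulk of the technical work to be the measurability bookkeeping for $\gamma\mapsto\nu_\gamma$ and the compactness/Polish structure of $\mathcal{F}_\varepsilon$ needed to legitimately invoke Rainwater, rather than the conceptual steps above.
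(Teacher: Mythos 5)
This theorem is not proved in the paper: it is recalled verbatim from the author's earlier work \cite{deralb} (cited as "\sync alberti_rep_prod." there), so there is no proof here to compare your argument against line by line. That said, your outline is consistent with, and uses, exactly the toolbox the present paper sets up for this circle of ideas: the Rainwater-based decomposition (compare Lemma~\ref{lem:rainw_part}, which is precisely the "$\mu = \mu_1 + \mu_2$ with $\mu_2$ concentrated on a $\Omega$-null set" step you need, stated there for a fixed cone $\cone(w,\alpha)$), and the fragment decomposition of Remark~\ref{rem:new_frags}, which itself rests on Kirchheim's biLipschitz decomposition \cite[Cor.~8]{kirchheim_metric_diff}.

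Your $(2)\Rightarrow(1)$ and $(1)\Rightarrow(3)$ steps are indeed immediate from the definitions: in an Alberti representation in the $f$-direction of $\cone$ with $g$-speed $>\sigma$, $P$-a.e.\ $\gamma$ lies in $\frags(X,f,\cone,g,>\sigma)$, so $\nu_\gamma(S)=0$ for $P$-a.e.\ $\gamma$ by $\nu_\gamma\ll\hmeas 1._\gamma$, and integration gives $\mu(S)=0$. For $(3)\Rightarrow(2)$, the Rainwater exhaustion and the upgrade from $\mathcal{F}_\varepsilon$-nullity to $\frags(X,f,\cone,g,>\sigma)$-nullity are the right two steps, and the upgrade itself is correct: the $K_{-1}$ and $K_0$ parts contribute nothing to $\hmeas 1._\gamma$ (the former because $\gamma$ is Lipschitz and $\lebmeas(K_{-1})=0$, the latter by the area formula), and the $(1,1+\varepsilon)$-rescaling on the remaining pieces is legitimate because Kirchheim's decomposition yields pieces on which $\gamma$ is $(c,c(1+\delta))$-biLipschitz, not merely pieces where $\metdiff\gamma$ is approximately constant (the latter alone would not control the global lower biLipschitz bound, which is the only place where I would insist on quoting the stronger statement rather than "approximate continuity of $\metdiff\gamma$"). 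The invariance of the cone and speed conditions under affine time-reparametrization is as you say, since both are $1$-homogeneous in the time variable. The remaining work is indeed the measurability of $\gamma\mapsto\nu_\gamma$ and the $\sigma$-compactness of $\mathcal{F}_\varepsilon$ after stratifying by domain and image, which you correctly flag as bookkeeping rather than a conceptual gap.
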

\par In the following we will also use a gluing principle for Alberti
 representations (compare \sync alb_glue.\ in \cite{deralb}). 
 \begin{defn}\label{defn:Linfty_partitions}
  A countable collection $\{U_\alpha\}$ of $\mu$-measurable and
  pairwise disjoint sets with
  positive $\mu$-measure is called an \textbf{$L^\infty(\mu)$-partition of
  unity} if\\ $\mu\bigl(\bigl(\bigcup_\alpha
      U_\alpha\bigr)^{c}\bigr)=0$; note that in this case
  \begin{equation}
    \sum_{\alpha}\chi_{U_\alpha}=1
  \end{equation} where convergence of the series is understood in the
  weak* sense. If the sets $U_\alpha$ are Borel (resp.~compact) the
  $L^\infty(\mu)$-partition of unity is called \textbf{Borel (resp.~compact)}.
\end{defn}
 \begin{thm}
   \label{thm:alb_glue}
   Let $X$ be a complete separable metric space and $\mu$ a Radon
   measure on $X$ and $\{U_\alpha\}$ a Borel $L^\infty(\mu)$-partition
   of unity. If for each $\alpha$ the measure $\mu\mrest U_\alpha$
   admits an Alberti representation in the $f_\alpha$-direction of an
   $N_\alpha$-dimensional cone field $\cone_\alpha$ with $g_\alpha$-speed $\ge\sigma_\alpha$, then $\mu$ admits an Alberti
   representation $\albrep.$ such that each restriction $\albrep.\mrest U_\alpha$
   is  in the $f_\alpha$-direction of an
   $N_\alpha$-dimensional cone field $\cone_\alpha$ with $g_\alpha$-speed $\ge\sigma_\alpha$. Moreover, for each $\varepsilon>0$ the
   Alberti representation $\albrep.$ can be assumed to be $(1,1+\varepsilon)$-biLipschitz.
 \end{thm}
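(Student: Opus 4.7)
The strategy I would adopt is to reduce the construction of $\albrep.$ to the null-set criterion in Theorem~\ref{thm:alberti_rep_prod}, applied to a \emph{glued family} of fragments. Let $\Omega$ be the set of $\gamma\in\frags(X)$ such that, for every index $\alpha$,
\begin{equation*}
(f_\alpha\circ\gamma)'(t)\in\cone_\alpha(\gamma(t)) \quad\text{and}\quad (g_\alpha\circ\gamma)'(t)\ge\sigma_\alpha(\gamma(t))\metdiff\gamma(t)
\end{equation*}
hold for $\lebmeas$-a.e.~$t\in\gamma^{-1}(U_\alpha)$. The Rainwater-style argument that proves Theorem~\ref{thm:alberti_rep_prod} in~\cite{deralb} extends from a single tuple $(f,\cone,g,\sigma)$ to a family indexed by a Borel $L^\infty(\mu)$-partition of unity, yielding a ``glued'' criterion: $\mu$ admits an Alberti representation concentrated on $\Omega$ if and only if every Borel $\Omega$-null set $S\subset X$ is $\mu$-null.

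To verify this criterion, suppose $\mu(S)>0$. Since $\{U_\alpha\}$ is an $L^\infty(\mu)$-partition of unity, some $\alpha$ satisfies $\mu(S\cap U_\alpha)>0$. Applying Theorem~\ref{thm:alberti_rep_prod}(3) to $\mu\mrest U_\alpha$ with the data $(f_\alpha,\cone_\alpha,g_\alpha,\sigma_\alpha)$ supplied by hypothesis, I obtain a fragment $\gamma\in\frags(X,f_\alpha,\cone_\alpha,g_\alpha,\ge\sigma_\alpha)$ with $\hmeas 1._\gamma(S\cap U_\alpha)>0$. By inner regularity of $\lebmeas$ on the Borel set $\gamma^{-1}(U_\alpha)$, I select a compact $K\subset\gamma^{-1}(U_\alpha)$ for which $\hmeas 1._{\gamma\mrest K}(S\cap U_\alpha)>0$ still holds. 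The fragment $\gamma\mrest K$ has image inside $U_\alpha$, so the defining conditions on $\gamma^{-1}(U_{\alpha'})$ for $\alpha'\ne\alpha$ become vacuous, while the $\alpha$-conditions are inherited from $\gamma$; hence $\gamma\mrest K\in\Omega$, contradicting $\Omega$-nullity of $S$.

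The glued criterion then yields the desired $\albrep.$, and the equivalence of parts (1) and (2) of Theorem~\ref{thm:alberti_rep_prod} upgrades it to be $(1,1+\varepsilon)$-biLipschitz. For each $\alpha$, the restriction $\albrep.\mrest U_\alpha$ automatically satisfies the prescribed direction/speed conditions because $P$-a.e.~$\gamma\in\Omega$. The principal technical obstacle is the initial extension of Theorem~\ref{thm:alberti_rep_prod} from the single-tuple case to the glued family $\Omega$: one must check that $\Omega$ is Borel in $\frags(X)$ and that the Rainwater-Lemma and measurable-selection machinery used in~\cite{deralb} still goes through when the direction/speed specification varies across a Borel partition of unity. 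Once those descriptive-set-theoretic matters are in place, the argument reduces to the elementary nullity step above.
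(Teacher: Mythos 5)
The paper does not reproduce a proof of this result; it cites Theorem 2.49 of~\cite{deralb}. Evaluating your argument on its own terms, the overall strategy---reducing to a nullity criterion for the glued family $\Omega$ and then verifying that criterion---is reasonable, and the verification step you give is correct and contains the right idea: given Borel $S$ with $\mu(S)>0$, choose $\alpha$ with $\mu(S\cap U_\alpha)>0$, extract from the single-$\alpha$ criterion a fragment $\gamma$ hitting $S\cap U_\alpha$ in positive $\hmeas 1.$-measure, and restrict $\gamma$ to a compact subset of $\gamma^{-1}(U_\alpha)$ so that its image lies in $U_\alpha$, making the conditions for every $\alpha'\ne\alpha$ vacuous because the $U_\alpha$ are pairwise disjoint.

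However, the load-bearing step---that ``$\Omega$-nullity implies $\mu$-nullity'' yields a $(1,1+\varepsilon)$-biLipschitz Alberti representation concentrated on $\Omega$---is essentially the content of the theorem, and you flag it yourself as ``the principal technical obstacle'' without filling it. This is a genuine gap: you would need to verify that $\Omega$ (a countable conjunction of almost-everywhere conditions on $\gamma^{-1}(U_\alpha)$) is a suitable Borel family and re-run the Rainwater-lemma and measurable-selection machinery of~\cite{deralb} for a specification that varies across the partition. A second, unmentioned mismatch is that Theorem~\ref{thm:alberti_rep_prod} as quoted is stated for speeds $>\sigma$, whereas the gluing hypothesis and conclusion both use $\ge\sigma_\alpha$, so even the single-$\alpha$ criterion you invoke must be adapted. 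A more elementary route that sidesteps the glued criterion altogether is to push each $P_\alpha$ forward under the Borel restriction map $\gamma\mapsto\gamma|\gamma^{-1}(U_\alpha)$ (discarding empty-domain images), observe that the resulting push-forwards have pairwise disjoint supports because the $U_\alpha$ are disjoint, and take $P=\sum_\alpha c_\alpha P'_\alpha$ with $\nu_\gamma=c_\alpha^{-1}\nu'_{\alpha,\gamma}$ on the $\alpha$-th piece, for a summable $(c_\alpha)$ with $\sum_\alpha c_\alpha=1$; the biLipschitz upgrade is then obtained by applying the equivalence in Theorem~\ref{thm:alberti_rep_prod} on each $U_\alpha$ separately before gluing, and the only measurability work that remains is for the restriction map itself.
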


 \subsection{Derivations}
 \label{subsubsec:derivations}
\par An \textbf{$L^\infty(\mu)$-module} $M$ is a Banach space $M$ which
is also an $L^\infty(\mu)$-module and such that for all
$(m,\lambda)\in M\times L^\infty(\mu)$ one has:
\begin{equation}
  \label{eq:boundedaction}
  \|\lambda m\|_M\le\|\lambda\|_{L^\infty(\mu)}\,\|m\|_M.
\end{equation}
Among $L^\infty(\mu)$-modules a special r\^ole is played by
\textbf{$L^\infty(\mu)$-normed modules}:
\begin{defn}\label{defn:local_norm}
  An $L^\infty(\mu)$-module $M$ is said to be an
  \textbf{$L^\infty(\mu)$-normed module} if there is a map
  \begin{equation}
    |\cdot|_{M,{\text{loc}}}:M\to L^\infty(\mu) 
  \end{equation}
such that:
\begin{enumerate}
\item For each $m\in M$ one has $|m|_{M,{\text{loc}}}\ge0$;
\item For all
  $c_1,c_2\in\real$ and $m_1,m_2\in M$ one has:
  \begin{equation}
        |c_1m_1+c_2m_2|_{M,{\text{loc}}}\le|c_1||m_1|_{M,{\text{loc}}}+|c_2||m_2|_{M,{\text{loc}}};
  \end{equation}
\item For each $\lambda\in L^\infty(\mu)$ and each $m\in M$, one has:
  \begin{equation}
    |\lambda m|_{M,{\text{loc}}}=|\lambda|\,|m|_{M,{\text{loc}}};
  \end{equation}
\item The local seminorm $|\cdot|_{M,{\text{loc}}}$ can be used to
  reconstruct the norm of any $m\in M$:
  \begin{equation}
    \|m\|_M=\|\,|m|_{M,{\text{loc}}}\,\|_{L^\infty(\mu)}.
  \end{equation}
\end{enumerate}
\end{defn}
\par Let $\mu$ be a Radon measure on the metric space $X$ and denote
by $\mcurr k,\mu.$ the set of $k$-dimensional metric currents whose
mass in absolutely continuous with respect to $\mu$.
\begin{lem}\label{lem:mcurr_module}
  The set $\mcurr k,\mu.$ is a Banach space  and an $L^\infty(\mu)$-module. It is
  not an $L^\infty(\mu)$-normed module if
  \begin{enumerate}
  \item $k>0$ and $\mcurr k,\mu.\ne\{0\}$;
  \item $k=0$ and $\mu$ is not a Dirac measure.
  \end{enumerate}
\end{lem}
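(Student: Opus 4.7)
The plan is to establish the Banach space and module structures directly, then contradict the local-norm hypothesis.

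First I would check that $\mcurr k,\mu.$ is a closed subspace of $\mcurr k,X.$: if $T_n\to T$ in mass norm with each $\cmass T_n.\ll \mu$, then for any $\mu$-null Borel $A$ one has $\cmass T.(A)\le \cmass T_n.(A)+\|T-T_n\|_{\mcurr k,X.}\to 0$, hence $\cmass T.\ll\mu$. The module action is defined by $(\lambda T)(f,\pi_1,\dots,\pi_k):=T(\lambda f,\pi_1,\dots,\pi_k)$, where $\lambda f$ is interpreted through the Borel extension of $T$; this is well-posed because $\cmass T.\ll\mu$ identifies $\lambda f$ modulo $\cmass T.$-null sets. A routine verification shows that $\lambda T$ satisfies the metric-current axioms, and the mass bound $\cmass \lambda T.\le |\lambda|\cmass T.$ yields $\lambda T\in\mcurr k,\mu.$ with $\|\lambda T\|\le\|\lambda\|_{\infty}\|T\|$.

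For the non-normed claim I argue by contradiction: assume there is a local norm $|\cdot|_{\text{loc}}$ on $\mcurr k,\mu.$. For every Borel $U\subset X$, Definition~\ref{defn:local_norm}(3) gives $|\chi_U T|_{\text{loc}}=\chi_U|T|_{\text{loc}}$, and since $\chi_U|T|_{\text{loc}}$ and $\chi_{U^c}|T|_{\text{loc}}$ have disjoint essential supports in $L^\infty(\mu)$ one obtains
$$\|T\|_{\mcurr k,\mu.}=\bigl\|\chi_U|T|_{\text{loc}}+\chi_{U^c}|T|_{\text{loc}}\bigr\|_{L^\infty(\mu)}=\max\bigl(\|\chi_U T\|,\|\chi_{U^c}T\|\bigr).$$
The key computation is $\cmass \chi_U T.=\chi_U\cmass T.$: the inequality $\le$ is immediate from the mass axiom applied to $(\chi_U T)(f,\pi)=T(\chi_U f,\pi)$, and the reverse follows from the minimality of $\cmass T.$ by testing against the candidate $\chi_U\nu+\chi_{U^c}\cmass T.$ for any $\nu\ge\cmass \chi_U T.$. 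Substituting $\|\chi_U T\|=\cmass T.(U)$ into the display yields $\cmass T.(X)=\max(\cmass T.(U),\cmass T.(U^c))$, so $\min(\cmass T.(U),\cmass T.(U^c))=0$ for every Borel $U$. Choosing $x_0\in\spt\cmass T.$ and testing on $U=B(x_0,r)$ as $r\downarrow 0$ then forces $\cmass T.$ to be a Dirac mass.

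It remains to exhibit a $T\in\mcurr k,\mu.\setminus\{0\}$ whose mass is not Dirac. In case (2), since $\mu$ is not Dirac I can choose a Borel set $U_0$ of finite $\mu$-measure such that $\mu\mrest U_0$ has at least two points in its support; then $T(f):=\int_{U_0}f\,d\mu$ lies in $\mcurr 0,\mu.$ with $\cmass T.=\mu\mrest U_0$, contradicting the Dirac conclusion. In case (1) I take any nonzero $T\in\mcurr k,\mu.$ and invoke the Ambrosio--Kirchheim fact that for $k\ge 1$ the mass $\cmass T.$ has no atoms; this is derived by combining locality (after shifting each $\pi_i$ so that $\pi_i(x_0)=0$ and multiplying by shrinking cutoffs at a putative atom $x_0$) with the weak* continuity axiom, which drives the cut-off $\pi_i$'s to zero and so annihilates $T\crest \chi_{\{x_0\}}$. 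Justifying this non-atomicity is the main obstacle, as it requires careful manipulation of the current axioms beyond the elementary module-theoretic arguments above; once it is in hand, the contradiction with the Dirac conclusion is immediate.
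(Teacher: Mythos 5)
Your proof is correct and, once unwrapped, follows the same route as the paper's: the engine is the identity $\cmass{\chi_U T}. = \chi_U\cmass T.$ combined with the local-norm equality $\|T\| = \max(\|\chi_U T\|,\|\chi_{U^c}T\|)$, which together force the mass of every nonzero $T$ to be a Dirac measure. The paper states this directly (exhibit $U$ with both $\cmass T.(U)>0$ and $\cmass T.(U^c)>0$, so $\cmass T.(X) > \max(\cdot,\cdot)$), while you run it as a proof by contradiction; the two are equivalent.

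One point worth flagging: you describe the non-atomicity of $\cmass T.$ for $k\ge 1$ as ``the main obstacle'' and sketch a re-derivation from locality and weak* continuity. This is unnecessary. You only need that $\cmass T.$ is not a Dirac measure, and that is immediate from \cite[Thm.~3.5(iii)]{ambrosio-kirch}, which asserts $\mcurr k,\delta_x.=\{0\}$ for $k>0$: if $\cmass T.$ were concentrated at a point $x_0$ then $T$ would be a nonzero element of $\mcurr k,\delta_{x_0}.$, contradiction. (The same observation also gives full non-atomicity if you want it, by applying it to $T\crest\chi_{\{x_0\}}$, but the Dirac case alone closes your argument.) So the step you worried about costs nothing once the right citation is in place; with that fix, your proof matches the paper's.
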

\begin{proof}
  The space $\mcurr k,X.$ is a Banach space with the mass
  norm. Suppose that
  \begin{equation}\lim_{k\to\infty}\cmass T_k-T.(X)=0,
  \end{equation} and that
 for each $k$ one has $\cmass
  T_k.(A)=0$; then one has $\cmass T.(A)=0$. Thus, $\mcurr k,\mu.$ is a closed
  subspace of $\mcurr k,X.$ and hence a Banach space.
  \par The action of $L^\infty(\mu)$ on $\mcurr
  k,\mu.$ is given by
  \begin{equation}
    \lambda.T=T\crest\lambda,
  \end{equation}
  and $\cmass T\crest\lambda.(X)\le\|\lambda\|_{L^\infty(\mu)}\,\cmass
  T.(X)$; thus $\mcurr k,\mu.$ is an $L^\infty(\mu)$-module.
  \par Let $\delta_x$ denote the Dirac measure concentrated at
  $x$. Using \cite[(iii) in Thm.~3.5]{ambrosio-kirch} it follows that
  $\mcurr k,\delta_x.=0$ for $k>0$. Thus, if $T\in\mcurr k,\mu.$ is
  nontrivial, there is a Borel $U\subset X$ with
  \begin{equation}
    \cmass T.(U),\cmass T.(X\setminus U)>0;
  \end{equation}
  in particular,
  \begin{equation}
    \cmass T.(X)>\max(\cmass T\crest\chi_U.(X),\cmass T\crest(1-\chi_U).(X))
  \end{equation}
  and so $\mcurr k,\mu.$ is not an $L^\infty(\mu)$-normed module.
  \par The same argument can be applied if $k=0$ and $\mu$ is not a
  Dirac measure.
\end{proof}
We now introduce the notion of derivations. In the Introduction we
described sequential convergence for the weak* topology on $\lipalg
X.$; for further information we refer the reader to
\cite[Ch.~2]{weaver_book99}.
\begin{defn}\label{defn:derivations}
    A \textbf{derivation $D:\lipalg X.\to L^\infty(\mu)$} is a weak*
    continuous, bounded linear map satisfying the product rule:
    \begin{equation}
      D(fg)=fDg+gDf.
    \end{equation}
  \end{defn}
  \par Note that the product rule implies that $Df=0$ if $f$ is
  constant. The collection of all derivations $\wder\mu.$ is an
  $L^\infty(\mu)$-normed module \cite[Thm.~2]{weaver00} and the
corresponding  local norm will be denoted by
$\locnorm\,\cdot\,,{\wder\mu.}.$. Note also that $\wder\mu.$ depends
only on the measure class of $\mu$.
\par Observe that the norm of $D\in{\wder\mu.}$ is the supremum of
$\|Df\|_{L^\infty(\mu)}$ for $f$ bounded and $1$-Lipschitz. One can
then give a ``variational'' characterization of $\locnorm
D,{\wder\mu.}.$ as the infimum of $\lambda\in L^\infty(\mu)$ with
$\lambda\ge0$ and such that for each bounded $1$-Lipschitz function
$g$ one has $|Dg|\le\lambda$ (note that in $L^\infty(\mu)$
comparisons like $f_1\le f_2$ mean $f_1(x)\le f_2(x)$ for
$\mu$-a.e.~$x$).
\par  Finally recall that a free module is a module that has a basis,
i.e.~a generating set consisting of linearly independent elements.
\begin{rem}
  \label{rem:restr}
  Consider a Borel set $U\subset X$ and a derivation
  $D\in\wder\mu\mrest U.$. The derivation $D$ can be also regarded as
  an element of $\wder\mu.$ by extending $Df$ to be $0$ on $X\setminus
  U$ (compare Lemma \ref{lem:locality_derivations}). In particular,
  the module $\wder\mu\mrest U.$ can be naturally identified with the
  submodule $\chi_U\wder\mu.$ of $\wder\mu.$.
\end{rem}
\par Derivations are local in the following sense (\cite[Lem.~27]{weaver00}):
\begin{lem}\label{lem:locality_derivations}
  If $U$ is $\mu$-measurable and if $f,g\in\lipalg X.$ agree on $U$,
  then for each $D\in\wder\mu.$, $\chi_UDf=\chi_UDg$.
\end{lem}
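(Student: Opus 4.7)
The plan is to reduce to showing that if $h = f - g \in \lipalg X.$ vanishes on $U$, then $\chi_U Dh = 0$ in $L^\infty(\mu)$, and then to exploit the weak* continuity of $D$ together with the product rule applied to bump functions.

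First I would approximate $h$ by functions that vanish on an open neighbourhood of $U$. Define $\phi_n\colon\real\to\real$ by $\phi_n(t) := \sgn(t)\max(|t|-1/n,0)$; each $\phi_n$ is $1$-Lipschitz and satisfies $\phi_n(t)\to t$ pointwise. Setting $h_n := \phi_n\circ h \in \lipalg X.$, one gets $\glip h_n. \le \glip h.$ and $\|h_n\|_\infty \le \|h\|_\infty$, so $h_n\xrightarrow{\text{w*}}h$. Crucially, $h_n$ vanishes on the open set $V_n := \{x : |h(x)| < 1/n\}$, which contains $U$ (by continuity of $h$ and the hypothesis $h|_U \equiv 0$).

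The heart of the argument is then to show that whenever $\tilde h\in\lipalg X.$ vanishes on an open set $V$, one has $D\tilde h = 0$ $\mu$-a.e. on $V$. Given $x_0\in V$ with $\ball x_0,2r. \subset V$, form the Lipschitz bump $\phi(x) := \max\!\bigl(1 - \metric(x,\ball x_0,r.)/r,\,0\bigr)$, which equals $1$ on $\ball x_0,r.$ and is supported in $\ball x_0,2r. \subset V$. Since $\tilde h\phi \equiv 0$ as a function, linearity gives $D(\tilde h\phi)=0$, and the product rule yields
\begin{equation*}
0 = \tilde h\,D\phi + \phi\,D\tilde h.
\end{equation*}
On $\ball x_0,r.$ we have $\phi = 1$ and $\tilde h = 0$ pointwise, so $D\tilde h = 0$ $\mu$-a.e.\ there. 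Covering $V$ by countably many such balls (possible because the support of $\mu$ is separable and Lindel\"of, as $\mu$ is Radon) gives $\chi_V D\tilde h = 0$.

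Applying this to $h_n$ on $V_n$ yields $\chi_U Dh_n = 0$ for every $n$. By the weak* continuity of $D\colon\lipalg X.\to L^\infty(\mu)$, $Dh_n\to Dh$ weak* in $L^\infty(\mu)$, so for any $\rho\in L^1(\mu)$,
\begin{equation*}
\int_X \rho\,\chi_U\,Dh\,d\mu = \lim_{n\to\infty}\int_X \rho\,\chi_U\,Dh_n\,d\mu = 0,
\end{equation*}
proving $\chi_U Dh = 0$, and hence $\chi_U Df = \chi_U Dg$. The main obstacle is the middle step: one must check that the bump-function construction, elementary in $\real^N$, carries over verbatim using only the metric (which it does, via the distance to a ball), and that countably many balls suffice to exhaust $V$ modulo $\mu$-nullsets, both of which are routine under the standing Radon hypothesis on $\mu$.
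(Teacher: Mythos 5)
Your proof is correct. Note that the paper itself does not prove this lemma but cites it to Weaver (\cite[Lem.~27]{weaver00}), so there is no in-paper argument to compare against; your argument is the standard one for locality of derivations and, to the best of my knowledge, is in the same spirit as Weaver's. The two-stage structure is exactly right: the $1$-Lipschitz truncations $\phi_n$ reduce a measurable zero set to an open neighbourhood $V_n=\{|h|<1/n\}$ of it; the Leibniz rule applied to a bump function $\phi$ with $\{\phi>0\}\subset B(x_0,2r)\subset V_n$ (so that $h_n\phi\equiv 0$ \emph{globally}) kills $Dh_n$ on $B(x_0,r)$; a countable cover — available because $\mu$ is Radon, hence $\mu\mrest V_n$ is concentrated on a $\sigma$-compact, so separable, subset of $V_n$ — gives $\chi_{V_n}Dh_n=0$; and weak* continuity of $D$ passes this to $\chi_U Dh=0$. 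One minor phrasing issue: you say $\phi$ is ``supported in $\bar B(x_0,2r)\subset V_n$,'' but what the argument actually needs and what you actually verify is only that $\{\phi\ne 0\}\subset B(x_0,2r)\subset V_n$; the closed support need not lie in $V_n$ and it does not matter.
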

Note that locality allows to extend the action of derivations on
Lipschitz functions so that if $f\in\lipfun X.$ and $D\in\wder\mu.$,
$Df$ is well-defined (see \sync rem:derivation_extension.\ in \cite{deralb}). We now
pass to consider some algebraic properties of $\wder\mu.$.
\par In general, even if the module $\wder\mu.$ is finitely generated,
it is not free. Nevertheless, it is possible to obtain a decomposition
into free modules over \emph{smaller rings} \cite{weaver00,derivdiff}:
\begin{thm}\label{thm:free_dec}
  Suppose that the module $\wder\mu.$ is finitely generated with $N$ generators. Then
  there is a Borel partition $X=\bigcup_{i=0}^N X_i$ such that, if
  $\mu(X_i)>0$, then $\wder\mu\mrest X_i.$ is free of rank $i$ as an
  $L^\infty(\mu\mrest X_i)$-module. A
  basis of $\wder\mu\mrest X_i.$ will be called a {\normalfont\textbf{local basis of
    derivations}}.
\end{thm}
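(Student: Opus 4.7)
The plan is to partition $X$ according to the local rank of the generators. Let $D_1,\dots,D_N$ be generators of $\wder\mu.$.

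For each subset $I\subseteq\{1,\dots,N\}$, call a Borel set $V$ an \emph{$I$-independence set} if $\{D_i\}_{i\in I}$, regarded as elements of $\wder{\mu\mrest V}.$, are linearly independent over $L^\infty(\mu\mrest V)$. This family is closed under countable unions: if $V=\bigcup_k V^{(k)}$ and $\sum_{i\in I}\lambda_iD_i=0$ on $V$, then restricting to each $V^{(k)}$ forces $\lambda_i\chi_{V^{(k)}}=0$, hence $\lambda_i\chi_V=0$. Taking a supremum of $\mu$-measures by exhaustion yields a $\mu$-essentially maximal $I$-independence set $V_I$ (with $V_\emptyset:=X$).

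Next, set $W_I:=V_I\setminus\bigcup_{J\supsetneq I}V_J$; these are pairwise disjoint Borel sets whose union is $\mu$-conull, and on each $W_I$ with $|I|=i$ the system $\{D_j\}_{j\in I}$ is independent while no strictly larger collection is independent on any positive-measure subset of $W_I$. The main step is to show that $\{D_j\}_{j\in I}$ \emph{generates} $\wder{\mu\mrest W_I}.$. Fix $m\notin I$. By maximality of $V_I$, on every Borel $U\subseteq W_I$ with $\mu(U)>0$ there is a nontrivial relation $\lambda_mD_m+\sum_{i\in I}\lambda_iD_i=0$ on $U$; independence of $\{D_i\}_{i\in I}$ on $W_I$ forces $\lambda_m$ to be nonzero on a subset of $U$ of positive $\mu$-measure. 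A maximal countable disjoint family of such subsets exhausts $W_I$ up to $\mu$-null, and piecing the local coefficients together with characteristic functions produces $\lambda,\mu_i\in L^\infty(\mu\mrest W_I)$ with $\lambda\neq0$ $\mu$-a.e.\ on $W_I$ and $\lambda D_m=-\sum_{i\in I}\mu_iD_i$. Dividing by $\lambda$ expresses $D_m$ as an $L^\infty$-combination of $\{D_j\}_{j\in I}$ on $W_I$; since $D_1,\dots,D_N$ generate $\wder\mu.$, so do $\{D_j\}_{j\in I}$ on $W_I$.

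Finally, set $X_i:=\bigcup_{|I|=i}W_I$; the $X_i$ form the required Borel partition. A single basis for $\wder{\mu\mrest X_i}.$ is obtained by assembling the $k$-th basis vector as $\sum_{|I|=i}\chi_{W_I}D_{j_k(I)}$, where $j_k(I)$ denotes the $k$-th element of $I$ in increasing order. The set $X_0$ absorbs the $\mu$-null leftovers and the piece where $\wder{\mu\mrest X_0}.=0$. The main obstacle is the generation step: converting the abundance of local nontrivial relations provided by maximality into a single global relation on $W_I$ with coefficient of $D_m$ nonzero $\mu$-a.e. This is handled by the exhaustion plus gluing described above, with independence of $\{D_i\}_{i\in I}$ preventing cancellation when the coefficients are assembled via characteristic functions.
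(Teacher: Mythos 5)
Your strategy---stratify $X$ by $\mu$-essentially maximal sets on which a subfamily of the given generators is independent, and then argue that the independent subfamily also generates---is a natural starting point, but two of its steps fail.

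First, the $W_I$ are not pairwise disjoint: $V_J$ is only subtracted from $V_I$ when $J\supsetneq I$, so $W_I$ and $W_{I'}$ can overlap when neither index set contains the other. Take $X=[0,1]$ with Lebesgue measure and generators $D_1=\partial_x$, $D_2=-\partial_x$. Each singleton is independent on all of $X$ while $\{D_1,D_2\}$ is independent nowhere, so $W_{\{1\}}=W_{\{2\}}=X$. Your assembled element $E_1=\chi_{W_{\{1\}}}D_1+\chi_{W_{\{2\}}}D_2=D_1+D_2=0$ is then not a basis. This is repairable by first disjointifying the $W_I$ (fix an order on the $I$'s and subtract earlier pieces), but as written the assembly formula is incorrect.

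Second, and more seriously, the generation step does not go through. Having produced $\lambda,\mu_i\in L^\infty(\mu\mrest W_I)$ with $\lambda\neq0$ $\mu$-a.e.\ and $\lambda D_m=-\sum_{i\in I}\mu_iD_i$, you cannot ``divide by $\lambda$'': a function that is nonzero a.e.\ need not be bounded away from zero, so $\mu_i/\lambda$ may fail to lie in $L^\infty$, and in fact the conclusion that $\{D_j\}_{j\in I}$ generates $\wder{\mu\mrest W_I}.$ is false. Concretely, on $X=[0,1]$ with Lebesgue measure take $D_1=x\partial_x$ and $D_2=\partial_x$. These generate $\wder\mu.$, the singleton $\{D_1\}$ is independent on all of $X$, and $\{D_1,D_2\}$ is dependent everywhere via $xD_2-D_1=0$, so $W_{\{1\}}=X$. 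But the only $\alpha$ with $\alpha D_1=D_2$ is $\alpha=1/x\notin L^\infty[0,1]$, so $D_2$ does not lie in the $L^\infty$-span of $D_1$ on $W_{\{1\}}$. Your exhaustion only shows that $D_m$ lies in the $L^\infty$-span of $\{D_j\}_{j\in I}$ after a further countable partition of $W_I$, with local coefficient bounds that degenerate along the partition; upgrading this to genuine generation requires controlling the ``condition number'' of the local bases, which is not automatic. This is exactly what the quantitative constructions in the cited references accomplish, by producing normalized bases together with pseudodual Lipschitz functions $g_j$ with $D_ig_j=\delta_{ij}$ (compare Lemma~\ref{lem:quantitative_pseudodual} in this paper): the coefficient of $D$ along $D_j$ is then forced to equal $Dg_j$, hence is bounded by $\|D\|\glip g_j.$. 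Without some device of this kind the proof has a genuine gap.
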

\par In many applications in Analysis on metric spaces the assumption
that $\wder\mu.$ is finitely generated is not restrictive: for example
it holds if either $\mu$ or $X$ are doubling (\sync derbound.\ in \cite{deralb}).
\par In practice, to explicitly use the linear independence of some derivations
it is useful to construct \emph{pseudodual} Lipschitz functions:
\begin{defn}\label{defn:pseudodual_funcs}
  We say that
  Lipschitz functions $\{g_j\}_{j=1}^k\subset\lipalg X.$ are
  \textbf{pseudodual to  $\{D_i\}_{i=1}^k\subset\wder\mu.$ on a Borel
    set $U$}, if $\chi_U(D_ig_j-\delta_{i,j})=0$ and $\mu(U)>0$. In
  this case, note that the derivations $\{\chi_UD_i\}_{i=1}^k\subset\wder\mu\mrest U.$
  are independent\footnote{we consider the ring $L^\infty(\mu\mrest U)$}.
\end{defn}
\par The following Lemma constructs pseudodual functions given
independent derivations. However, it is a slight improvement of
similar results \cite{gong11-revised,derivdiff} because it controls the
norm of the derivations obtained. This improvement is used in the proof
of Theorem \ref{thm:gl_curr_rep}.
\begin{lem}\label{lem:quantitative_pseudodual}
  Suppose that the derivations $\{D_i\}_{i=1}^k\subset\wder\mu.$ are
  independent. Then there are a Borel $L^\infty(\mu)$-partition of
  unity $V_\alpha$ and there are, for each $\alpha$, derivations
  $\{D_{\alpha,i}\}_{i=1}^k\subset\chi_{V_\alpha}\wder\mu.$ and
  $1$-Lipschitz functions $\{g_{\alpha,j}\}_{j=1}^k\subset\lipalg X.$
  such that:
  \begin{enumerate}
  \item The submodule of $\wder\mu.$ generated by the derivations
    $\{D_{\alpha,i}\}_{i=1}^k$ contains the submodule generated by the derivations
    $\{\chi_{U_\alpha}D_i\}_{i=1}^k$;
  \item The derivations $\{D_{\alpha,i}\}_{i=1}^k$ have norm
 at most $C(k)$, a universal constant depending only on $k$;
  \item The functions $\{g_{\alpha,j}\}_{j=1}^k$ are pseudodual to the derivations
    $\{D_{\alpha,i}\}_{i=1}^k$ on $V_\alpha$.
  \end{enumerate}
\end{lem}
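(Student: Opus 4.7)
The plan is to combine a maximum-determinant selection among submatrices of derivative values with the weak*-density of a countable family of test functions. First, fix a countable family $\{f_n\}_{n\ge 1}\subset\lipalg X.$ of $1$-Lipschitz bounded functions that is weak*-dense in the unit ball of $\lipalg X.$ (available because $\spt\mu$ is separable) and consider the pointwise $k\times\infty$ array $\Phi(x)=(D_lf_n(x))_{l,n}$. The $L^\infty(\mu)$-independence of $\{D_l\}_{l=1}^k$ implies that the rows of $\Phi(x)$ are linearly independent over $\real$ for $\mu$-a.e.~$x$: if pointwise dependence held on a set $A$ of positive measure, a Kuratowski--Ryll-Nardzewski selection of a nonzero $\lambda\in L^\infty(\mu;\real^k)$ supported in $A$ with $\sum_l\lambda_l D_lf_n=0$ for each $n$ would, by weak*-continuity of derivations and density of $\{f_n\}$, force $\sum_l\lambda_l D_l=0$, violating independence.

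Enumerate the $k$-tuples $J=(j_1<\cdots<j_k)$ of indices, let $\Phi_J(x)$ be the corresponding $k\times k$ submatrix, and set $\Delta(x)=\sup_J|\det\Phi_J(x)|$, which is finite by Hadamard's inequality and positive a.e.~by the previous step. Define $V_\alpha$ to be the Borel set on which the $\alpha$-th tuple $J_\alpha$ in the enumeration is the first one achieving $|\det\Phi_{J_\alpha}(x)|\ge\Delta(x)/2$; the $\{V_\alpha\}$ form a Borel $L^\infty(\mu)$-partition of unity. On $V_\alpha$ write $M_\alpha(x)=\Phi_{J_\alpha}(x)$, set $g_{\alpha,j}=f_{j_{\alpha,j}}$ (automatically $1$-Lipschitz), and define
\begin{equation*}
D_{\alpha,i}=\chi_{V_\alpha}\sum_{l=1}^k(M_\alpha^{-1})_{il}\,D_l.
\end{equation*}
The identity $M_\alpha^{-1}M_\alpha=I$ yields $D_{\alpha,i}g_{\alpha,j}=\delta_{ij}$ on $V_\alpha$, giving condition~(3), and invertibility of $M_\alpha$ over $L^\infty(V_\alpha,\mu)$ makes the submodules generated by $\{D_{\alpha,i}\}$ and by $\{\chi_{V_\alpha}D_l\}$ coincide, giving condition~(1).

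For the norm bound~(2), the near-maximality of $|\det M_\alpha|$ is decisive. For any $n$, write $a(x)=M_\alpha^{-1}(x)\Phi_{:n}(x)\in\real^k$; replacing the $t$-th column of $M_\alpha$ by $\Phi_{:n}$ and expanding by multilinearity produces a $k\times k$ submatrix of $\Phi$ whose determinant equals $a_t(x)\det M_\alpha(x)$, so $|a_t(x)|\le\Delta(x)/|\det M_\alpha(x)|\le 2$. Therefore $|D_{\alpha,i}f_n|(x)=|(M_\alpha^{-1}\Phi_{:n})_i(x)|\le 2$ for every $n$ and every $x\in V_\alpha$. The weak*-density of $\{f_n\}$ in the unit ball of $\lipalg X.$, the weak*-continuity of $D_{\alpha,i}$, and weak*-closedness of the ball of radius $2$ in $L^\infty(\mu)$ together propagate the bound to $|D_{\alpha,i}f|\le 2$ for every $1$-Lipschitz bounded $f$, yielding $\|D_{\alpha,i}\|_{\wder\mu.}\le 2=:C(k)$. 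The most delicate step is precisely this final propagation, which requires combining the weak*-continuity of derivations with the weak*-closedness of bounded sets in $L^\infty(\mu)$ to avoid any implicit pointwise-everywhere claim on the dense family.
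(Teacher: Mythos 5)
Your approach is genuinely different from the paper's and in some ways cleaner. The paper proceeds by induction on $k$, normalizing each new derivation (Lemma~\ref{lem:normalization_derivation}) and performing a Gram--Schmidt-type reduction against the pseudoduals already obtained, which yields an upper-triangular matrix with diagonal $\ge 1-\varepsilon$ and hence an inverse with entries controlled by $(k-1)!(1-\varepsilon)^{-k}$. You instead look at the $k\times\infty$ array $\Phi(x)=(D_l f_n(x))$ of values on a countable weak*-dense family, select the first $k\times k$ minor with near-maximal determinant, and use a Cramer-ratio estimate to bound the coefficients. This gives the sharper, $k$-independent constant $C(k)=2$.

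There is, however, a gap in the step where you form $D_{\alpha,i}=\chi_{V_\alpha}\sum_l(M_\alpha^{-1})_{il}D_l$. The only lower bound you have on $|\det M_\alpha(x)|$ is $\Delta(x)/2$, and $\Delta(x)$ can be arbitrarily small as $x$ ranges over $V_\alpha$; since the cofactors of $M_\alpha$ are merely bounded above, the coefficients $(M_\alpha^{-1})_{il}$ need not lie in $L^\infty(\mu\mrest V_\alpha)$. Consequently $D_{\alpha,i}$ is, as defined, only a formal linear combination, and its membership in $\wder\mu.$—in particular its weak*-continuity, which you invoke in the ``final propagation'' step—is precisely what is not yet established. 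The Cramer argument gives $|D_{\alpha,i}f_n|\le 2$ for each $n$, but that pointwise bound alone does not certify weak*-continuity nor even that $D_{\alpha,i}f\in L^\infty(\mu)$ for a general bounded $1$-Lipschitz $f$ (for $k>1$ the column $(D_l f)_l$ is not a column of $\Phi$, so the determinant-ratio bound does not apply to it directly). Note also that condition~(1), as you argue it, uses ``invertibility of $M_\alpha$ over $L^\infty$,'' which is exactly the unbounded-inverse issue; what you actually get without further work is only $\chi_{V_\alpha}D_l=\sum_i(M_\alpha)_{li}D_{\alpha,i}$ with bounded coefficients, which suffices for~(1) \emph{provided} the $D_{\alpha,i}$ are derivations.

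The fix is straightforward and keeps your approach intact: refine the partition by intersecting each $V_\alpha$ with level sets of $\Delta$, say $V_{\alpha,m}=V_\alpha\cap\{\Delta\in(2^{-m-1},2^{-m}]\}$ (after scaling). On $V_{\alpha,m}$ one has $|\det M_\alpha|\ge 2^{-m-2}$, so $(M_\alpha^{-1})_{il}\in L^\infty(\mu\mrest V_{\alpha,m})$, the $D_{\alpha,m,i}:=\chi_{V_{\alpha,m}}\sum_l(M_\alpha^{-1})_{il}D_l$ are genuine derivations, and your Cramer ratio bound applies verbatim with the same constant $2$ (it never used anything about the scale of $\Delta$). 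The weak*-density/weak*-closedness argument then runs correctly on each piece. The statement of the lemma only asks for \emph{some} Borel $L^\infty(\mu)$-partition of unity, so taking $\{V_{\alpha,m}\}$ as the final partition closes the gap without any re-gluing argument.

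Two smaller points worth tightening: the a.e.\ pointwise independence of the rows of $\Phi$ via a measurable selection of $\lambda$ should be spelled out (closed-valued Borel multifunction into $S^{k-1}$, Kuratowski--Ryll-Nardzewski), and one should state that ``weak*-dense'' means sequentially weak*-dense in the sense of Definition~\ref{defn:derivations}, i.e.\ uniformly Lipschitz with pointwise convergence, which is what the derivation axioms actually supply.
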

\par To prove Lemma \ref{lem:quantitative_pseudodual} we introduce a
notion of normalization for derivations. We first consider the set
where a given derivation vanishes:
\begin{defn}\label{defn:annih_set}
  Given a derivation $D\in\wder\mu.$, having chosen a Borel
  representative of $\locnorm D,{\wder\mu.}.$, we let
  \begin{equation}
    N_D=\left\{x: \locnorm D,{\wder\mu.}.(x)=0\right\};
  \end{equation} note that $N_D$ is well-defined up to Borel
  $\mu$-null sets and that $\lambda D=0$ iff
  $\lambda\in\chi_{N_D}L^\infty(\mu)$. If $N_D$ is $\mu$-null, we say
  that $D$ is \textbf{nowhere vanishing}.
\end{defn}
\begin{lem}\label{lem:normalization_derivation}
  For a derivation $D\in\wder\mu.$, having chosen a Borel
  representative of $\locnorm D,{\wder\mu.}.$, we let for $n\in\natural$
  \begin{equation}
    V_n=\left\{x: \locnorm D,{\wder\mu.}.\in\left(\|D\|_{\wder\mu.}/(n+1),\|D\|_{\wder\mu.}/n\right]\right\};
  \end{equation} then
  \begin{equation}
    \label{eq:normalization_derivation_s1}
    \tilde D=\sum_{\substack{n=1\\ \mu(V_n)>0}}^\infty \frac{\chi_{V_n}}{\chi_{V_n}\locnorm D,{\wder\mu.}.}D
  \end{equation} defines a derivation, {\normalfont\textbf{the normalization of
    $D$}}, with $\locnorm \tilde D,{\wder\mu.}.=\chi_{(N_D)^c}$. We
  will, with slight abuse of notation, denote the normalization of
  $D$ by $D/\locnorm D,{\wder\mu.}.$.
\end{lem}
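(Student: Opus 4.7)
The plan is to exploit the $L^\infty(\mu)$-module structure of $\wder\mu.$ to give meaning to the formal series \eqref{eq:normalization_derivation_s1}. On $V_n$ one has $\locnorm D,{\wder\mu.}. > \|D\|_{\wder\mu.}/(n+1)$, so the scalar $\lambda_n := \chi_{V_n}/(\chi_{V_n}\locnorm D,{\wder\mu.}.)$ lies in $L^\infty(\mu)$ with norm at most $(n+1)/\|D\|_{\wder\mu.}$ and vanishes outside $V_n$. Hence each summand $D_n := \lambda_n D$ is an element of $\wder\mu.$ with local norm $\chi_{V_n}$. Because the $V_n$ are pairwise disjoint and cover $(N_D)^c$ up to a $\mu$-null set, I would interpret the series pointwise: for $f\in\lipalg X.$, at each $x$ at most one term $D_nf(x)$ is nonzero, and the bound $|Df|\le\locnorm D,{\wder\mu.}.\glip f.$ shows that the resulting function $\tilde Df := \chi_{(N_D)^c}\,Df/\locnorm D,{\wder\mu.}.$ lies in $L^\infty(\mu)$ with $\|\tilde Df\|_\infty\le\glip f.$.

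Linearity and the Leibniz rule for $\tilde D$ follow at once from the corresponding properties of $D$, verified pointwise. The principal technical obstacle is weak* continuity, since the function $\chi_{(N_D)^c}/\locnorm D,{\wder\mu.}.$ is in general not in $L^\infty(\mu)$, so multiplication by it does not obviously preserve weak* limits. I would handle this by testing against $g\in L^1(\mu)$: writing $g_n := g\lambda_n$, one has $\|g_n\|_1\le (n+1)\|g\chi_{V_n}\|_1/\|D\|_{\wder\mu.}<\infty$, and for any sequence $f_k\xrightarrow{\text{w*}}f$ with uniformly bounded $\|f_k\|_\infty$,
\begin{equation}
\int g\,\tilde Df_k\,d\mu = \sum_n \int g_n\, Df_k\,d\mu.
\end{equation}
For each fixed $n$ the weak* continuity of $D$ gives $\int g_n Df_k\,d\mu\to\int g_n Df\,d\mu$ as $k\to\infty$. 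The uniform bound $\bigl|\int g_n Df_k\,d\mu\bigr|\le\sup_k\glip f_k.\int_{V_n}|g|\,d\mu$, summable in $n$ with total at most $\sup_k\glip f_k.\|g\|_1$, then allows a standard dominated-convergence argument to exchange the limit in $k$ with the sum over $n$.

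Finally, the inequality $|\tilde Df|\le\chi_{(N_D)^c}\glip f.$ gives $\locnorm\tilde D,{\wder\mu.}.\le\chi_{(N_D)^c}$. For the reverse, I would use that $\locnorm D,{\wder\mu.}.$ can be realized as the $\mu$-essential supremum, over a countable family $\{g_j\}$ of bounded $1$-Lipschitz test functions, of $|Dg_j|$; applying $\tilde D$ to the same family yields
\begin{equation}
\sup_j|\tilde Dg_j| = \chi_{(N_D)^c}\,\frac{\sup_j|Dg_j|}{\locnorm D,{\wder\mu.}.} = \chi_{(N_D)^c},
\end{equation}
forcing $\locnorm\tilde D,{\wder\mu.}.\ge\chi_{(N_D)^c}$. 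The minor ambiguity in writing $D/\locnorm D,{\wder\mu.}.$ (the quotient is undefined on $N_D$) is resolved by the convention that $\tilde D$ vanishes there, consistent with the fact that $\chi_{N_D}D=0$ by locality (Lemma \ref{lem:locality_derivations}).
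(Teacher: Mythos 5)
Your proof is correct and follows essentially the same route as the paper's: $L^\infty(\mu)$-boundedness of the truncated coefficients $\lambda_n$ makes the series well-posed as a bounded linear map, linearity and the Leibniz rule are inherited pointwise from $D$, weak* continuity is obtained from a uniform tail estimate plus continuity of the finite partial sums, and the local norm identity is computed last. Two minor points of difference: the paper invokes the Krein--\v Smulian theorem to reduce weak* continuity to bounded nets, whereas you work with sequences (your tail bound is uniform, so the argument adapts to nets verbatim, but in a non-separable ambient space you should say so explicitly); and for the identity $\locnorm \tilde D,{\wder\mu.}.=\chi_{N_D^c}$ the paper avoids the countable test-family device entirely, arguing directly from $L^\infty(\mu)$-module homogeneity that for any positive-measure $U\subset N_D^c$ one has $\chi_{U\cap V_n}\locnorm \tilde D,{\wder\mu.}.=\locnorm \chi_{U\cap V_n}\tilde D,{\wder\mu.}.=\chi_{U\cap V_n}$, which is slightly cleaner than appealing to weak*-dense test functions and the weak*-closedness of pointwise bounds in $L^\infty(\mu)$.
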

\begin{proof}
  The definition of $\tilde D$ by \eqref{eq:normalization_derivation_s1}
  is well-posed because for every $n\in\natural$ such that
  $\mu(V_n)>0$ we have that   $\chi_{V_n}/\bigl(\chi_{V_n}\*\locnorm \tilde D,{\wder\mu.}.\bigr)$ is
  a function in $L^\infty(\mu)$. Moreover, the
  $V_n$ are uniquely determined up to $\mu$-null sets and so $\tilde D$
  does not depend on the choice of a Borel representative for $\locnorm D,{\wder\mu.}.$.
  Note that for $f\in\lipalg X.$ one has
  \begin{equation}
    \chi_{V_n}|Df|\le\chi_{V_n}\locnorm D,{\wder\mu.}.\|f\|_{\lipalg X.},
  \end{equation} and that the sets $\{V_n: \mu(V_n)>0\}$ are an
  $L^\infty(\mu\mrest N_D^c)$-Borel partition of unity. Thus
  \eqref{eq:normalization_derivation_s1} provides a bounded linear map
  $\tilde D:\lipalg X.\to L^\infty(\mu)$ with norm at most $1$. Note
  also that $\tilde D$ satisfies the product rule because $D$ does. 
  \par We show that $\tilde D$ is weak* continuous; by
  the Krein-\v Smulian Theorem, if suffices to check continuity for
  bounded nets. Therefore, suppose that $g\in L^1(\mu)$ and
  $f_{\eta}\xrightarrow{\text{w*}} f$ where the set
  $\{f_{\eta}\}_\eta\cup\{f\}$ is contained in the ball of radius $M$ in
  $\lipalg X.$. For each $\epsi>0$ there is an $N$ such that for all
  $h$ of norm at most $M$ in $\lipalg X.$,
  \begin{equation}\label{eq:normalization_derivation_p1}
    \left|\sum_{\substack{n>N\\ \mu(V_n)>0}}^\infty \int g\frac{\chi_{V_n}}{\chi_{V_n}\locnorm D,{\wder\mu.}.}Dh\,d\mu\right|\le\epsi;
  \end{equation} as
  \begin{equation}
    \tilde D_N=\sum_{\substack{n\le N\\ \mu(V_n)>0}}^\infty \frac{\chi_{V_n}}{\chi_{V_n}\locnorm D,{\wder\mu.}.} D
  \end{equation} is a derivation,
  \begin{equation}
    \label{eq:normalization_derivation_p2}
    \lim_{\eta}\int g\tilde D_Nf_\eta\,d\mu=\int g\tilde D_Nf\,d\mu;
  \end{equation} combining \eqref{eq:normalization_derivation_p1} and
  \eqref{eq:normalization_derivation_p2}, we conclude that
  \begin{equation}
    \lim_{\eta}\int g\tilde D f_\eta\,d\mu=\int g\tilde D f\,d\mu,
  \end{equation} which shows that $\tilde D$ is weak* continuous.
  \par We observe that $\chi_{N_D}$ annihilates $\tilde D$; thus, to
  show that $\locnorm \tilde D,{\wder\mu.}.=\chi_{N_D^c}$, it suffices
  to show that if the subset $U\subset N_D^c$ has positive measure, then
  $\|\chi_{U}\tilde D\|_{\wder\mu.}=1$. This follows because, for some
  $n$, $\mu(U\cap V_n)>0$ and
  \begin{equation}
    \chi_{U\cap V_n}\locnorm \tilde D,{\wder\mu.}.=\locnorm \chi_{U\cap V_n} \tilde D,{\wder\mu.}.=\chi_{U\cap V_n}.
  \end{equation}
\end{proof}
\begin{proof}[Proof of Lemma \ref{lem:quantitative_pseudodual}]
  Without loss of generality, we can assume that $\mu$ is finite.  We
  first prove that for each $\epsi>0$ there is a Borel
  $L^\infty(\mu)$-partition of unity $\{V_\alpha\}$ such that:
  \begin{itemize}
  \item For each  $\alpha$ there are $1$-Lipschitz functions $\{g_{\alpha,j}\}_{j=1}^k$
    and unit norm derivations
    $\{\tilde D_{\alpha,i}\}_{i=1}^k\subset\chi_{V_\alpha}\wder\mu.$;
  \item The submodule generated by the derivations $\{\tilde
    D_{\alpha,i}\}_{i=1}^k\subset\chi_{V_\alpha}\wder\mu.$ contains
    that generated by the derivations
    $\{\chi_{V_\alpha}D_i\}_{i=1}^k$;
  \item   The matrix $(\chi_{V_\alpha}\tilde
    D_{\alpha,i}g_j)_{i,j=1}^k$, with entries in
    $L^\infty(\mu\mrest V_{\alpha})$ (with absolute value
    $\le 1$ because of the first bullet point), is upper
    triangular;
  \item Each entry $\lambda$ on the diagonal of $(\chi_{V_\alpha}\tilde
    D_{\alpha,i}g_j)_{i,j=1}^k$ satisfies $\lambda\ge 1-\epsi$ (in the ring
    $L^\infty(\mu\mrest V_{\alpha})$). 
  \end{itemize}
We will refer to this property as $P(k,\epsi)$ and it will be proved
by induction on $k$.
\par For $k=1$, we first replace $D_1$ by its normalization $\tilde D_1$ (Lemma
\ref{lem:normalization_derivation}) to have $\locnorm
\tilde D_1,{\wder\mu.}.=1$, as $D_1$ is nowhere vanishing. Note that
\eqref{eq:normalization_derivation_s1} implies that $D_1=\locnorm
D_1,{\wder\mu.}.\tilde D_1$. We know that the
class ${\mathcal C}_1$ of Borel subsets $W$ such that there is a
$1$-Lipschitz $g$ with
\begin{equation}\label{eq:quantitative_pseudodual_p1}
  D_1g\ge 1-\epsi\quad\text{$\mu$-a.e. on $W$},
\end{equation} is not empty. We choose
\begin{equation}
  \mu(V_1)\ge\frac{1}{2}\sup_{W\in{\mathcal C}_1}\mu(W)
\end{equation}
and keep going exhausting $X$ in $\mu$-measure (compare the proof of
Theorem 2.43 in \cite{derivdiff}). The functions $g_\alpha$ are
chosen accordingly to the sets $V_\alpha$ so that
\eqref{eq:quantitative_pseudodual_p1} holds. Then one lets $\tilde
D_{\alpha,1}=\chi_{V_\alpha}\tilde D_1$. The derivation $\chi_{V_\alpha}D_1$
belongs to the submodule generated by $\tilde
D_{\alpha,1}$ because $\chi_{V_\alpha}D_1=\locnorm D_1,{\wder\mu.}.\tilde
D_{\alpha,1}$.
\par We now show that $P(k+1,\epsi)$ follows from $P(k,\epsi)$.
Using $P(k,\epsi)$ for the derivations $\{D_i\}_{i=1}^k$ we can assume,
by replacing $\mu$ with a restriction $\mu\mrest V$, that there are $1$-Lipschitz functions
$\{g_j\}_{j=1}^k$ and derivations $\{\tilde D_i\}_{i=1}^k$ such that
$P(k,\varepsilon)$ holds. We let
\begin{align}
  \label{eq:quantitative_pseudodual_p1:1}
  D^{(1)}_{k+1}&=D_{k+1}-\frac{D_{k+1}g_1}{\tilde D_1g_1}\tilde D_1\\
  \label{eq:quantitative_pseudodual_p1:2}
  D^{(l)}_{k+1}&=D^{(l-1)}_{k+1}-\frac{D^{(l-1)}_{k+1}g_l}{\tilde
    D_lg_l}\tilde D_l\quad\text{(for $2\le l\leq k$)},
\end{align}
and consider the normalization $\tilde D_{k+1}$ of $D^{(k)}_{k+1}$, so
that we have:
\begin{equation}
  \tilde D_{k+1}g_j=0\quad(1\le j\le k);
\end{equation} note that $D_{k+1}$ belongs to the submodule generated
by the derivations $\{\tilde D_i\}_{i=1}^{k+1}$. We now apply the argument used in the
case $k=1$ to the derivation $\tilde D_{k+1}$ in order to complete the
proof of $P(k+1,\varepsilon)$.
\par If $M_\alpha$ denotes the matrix $(\tilde
D_{\alpha,i}g_{\alpha,j})_{i,j=1}^k$, its determinant satisfies the
bounds:
\begin{equation}
  \label{eq:quantitative_pseudop1}
  (1-\epsi)^k\le\det M_\alpha\le 1,
\end{equation} 
and its entries lie in $[-1,1]$.
In particular, letting
\begin{equation}\label{eq:quantitative_pseudop2}
  D_{\alpha,i}=\sum_{j=1}^k(M_\alpha^{-1})_{i,j}\tilde D_{\alpha,j},
\end{equation}
we have $\locnorm D_{\alpha,i},\mu\mrest V_\alpha.\le C(k,\epsi)$,
where $C(k,\varepsilon)$ is a universal constant depending only on $k$
and $\varepsilon$, and 
$D_{\alpha,i}g_{\alpha,j}=\delta_{i,j}\chi_{V_\alpha}$. In fact, the
entries of $M_\alpha^{-1}$ can be bounded from above by
$(k-1)!\det(M_\alpha)^{-1}$ using Cramer's formula for the inverse matrix.
Moreover,
solving \eqref{eq:quantitative_pseudop2} for the derivations $\{\tilde
D_{\alpha,i}\}_{i=1}^k$ shows that the derivations
$\{\chi_{V_\alpha}D_i\}_{i=1}^k$ belong to the submodule generated by
the $\{D_{\alpha,i}\}_{i=1}^k$.
\end{proof}
\par Consider a Lipschitz map $F:X\to Y$ and a Radon measure $\mu$ on
$X$; given a derivation $D\in\wder\mu.$ the \textbf{push forward
  $\mpush F.D\in\wder{\mpush F.}\mu.$} is the derivation defined by:
\begin{equation}
  \label{eq:der_push_forw}
  \int_Yg\,(\mpush F.D)f\,d\mpush F.\mu=\int_Xg\circ F\,D(f\circ
  F)\,d\mu\quad(\forall(f,g)\in\metuple 1,Y.).
\end{equation}
\par We now recall the notion of $1$-forms which are dual to
derivations.
\begin{defn}
  \label{defn:module_forms}
  The \textbf{module of $1$-forms} $\wform\mu.$ is the dual module of
  $\wder\mu.$, i.e.~it consists of the bounded module homomorphisms
  $\wder\mu.\to L^\infty(\mu)$. The module $\wform\mu.$ is an
  $L^\infty(\mu)$-normed module and the local norm will be denoted by
  $\locnorm\,\cdot\,,{\wform\mu.}.$.
\par Recall that the norm of $\omega\in{\wform\mu.}$ is the supremum of
$\|\langle D,\omega\rangle\|_{L^\infty(\mu)}$ for $D\in\wder\mu.$ of
norm $1$ (here $\langle\cdot,\cdot\rangle$ denotes the duality
pairing). One can
then give a ``variational'' characterization of $\locnorm
\omega,{\wform\mu.}.$ as the infimum of $\lambda\in L^\infty(\mu)$ with
$\lambda\ge0$ and such that for each $D\in\wder\mu.$ of
norm $1$ one has $|\langle D,\omega\rangle|\le\lambda$.
  \par To each $f\in\lipalg X.$ one can associate the $1$-form
  $df\in\wform\mu.$ by letting:
  \begin{equation}
    \label{eq:module_forms1}
    \langle df,D\rangle=Df\quad(\forall D\in\wder\mu.);
  \end{equation}
  the map $d:\lipalg X.\to\wform\mu.$ is a weak* continuous $1$-Lipschitz
  linear map satisfying the product rule $d(fg)=gdf+fdg$.
\end{defn}
Note that because of Lemma \ref{lem:locality_derivations} one can
extend the domain of $d$ to $\lipfun X.$ so that if $f$ is Lipschitz,
$df$ is a well-defined element of $\wform\mu.$ and
$\|df\|_{\wform\mu.}\le\glip f.$.
\par We also point out that while we follow a notion local norms due
to Weaver, recently Gigli has done a systematic and beautiful work~\cite{gigli-diffstru,gigli-nsdiff-geo}
on derivations and duality for $L^\infty$-modules
which deals also with other notions of norms, e.g.~those arising from
minimal upper gradients. Note also that in this paper we allow for
derivations to have the minimal degree of regularity allowing
differential calculus
and thus many notions, like minimal upper gradients, can become vacuous in
our setting.
\subsection{Correspondence between derivations and Alberti representations}
\label{subsec:corresp}
\par In this Subsection we recall some results in \cite{deralb} about
the correspondence between derivations and Alberti
representations. Throughout this Subsection $F:X\to\real^k$ denotes a
Lipschitz function, $\alpha\in(0,\pi/2)$ an angle, $\delta$ a positive
constant, $w\in{\mathbb S}^{k-1}$ a unit vector and
$\{u_i\}_{i=1}^{k-1}$ an orthonormal basis for the orthogonal
complement of $w$.
\par We first recall an \emph{approximation scheme} (\sync
onedimapprox_multi.\ in \cite{deralb}) which relates Alberti
representations and the weak* topology on $\lipalg X.$:
\begin{thm}
  \label{thm:appxscheme}
  Let $X$ be a compact metric space and $\mu$ a Radon measure on
  $X$. Suppose that $K\subset X$ is compact and
  $\frags\left(X,F,\cone(w,\alpha),\langle
    w,F\rangle,\ge\delta\right)$-null. Denoting by $d_{\delta,\alpha}$
  the distance:
  \begin{equation}
    \label{eq:appxscheme_s1}
    d_{\delta,\alpha}(x,y)=\delta d(x,y) +
    \cot\alpha\sum_{i=1}^{k-1}\left|\langle u_i,F(x)-F(y)\rangle\right|,
  \end{equation}
  there is a sequence of real-valued Lipschitz functions $\{g_n\}$
  and a Borel $S\subset K$ such that:
  \begin{enumerate}
  \item $\mu(K\setminus S)=0$;
  \item $g_n\xrightarrow{\text{w*}}\langle w,F\rangle$;
  \item for each $x\in S$ and each $n$ there is an $r_n>0$ such that
    the restriction $g_n|B(x,r_n)$ is $1$-Lipschitz with respect to the distance
    $d_{\delta,\alpha}$. 
  \end{enumerate}
\end{thm}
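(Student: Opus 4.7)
The plan is to build $g_{n}$ as an infimal convolution of $\langle w,F\rangle$ with the pseudo-distance $d_{\delta,\alpha}$ on suitable local pieces, and then invoke the $\frags$-nullity hypothesis (through Theorem \ref{thm:alberti_rep_prod}) to guarantee pointwise convergence on a full-measure subset $S$ of $K$. Concretely, for $x\in K$ and $r>0$, set
\[
g_{x,r}(y)=\inf_{z\in B(x,r)}\bigl[\langle w,F(z)\rangle+d_{\delta,\alpha}(y,z)\bigr],
\]
which is automatically $1$-Lipschitz with respect to $d_{\delta,\alpha}$ (hence globally Lipschitz with constant at most $\delta+\sqrt{k-1}\cot\alpha\cdot\glip F.$) and satisfies $g_{x,r}(x)\le\langle w,F(x)\rangle$. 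On $B(x,r/2)$ the infimum cannot escape $B(x,r)$, so the local-$1$-Lipschitz property of condition (3) holds there. The sought $g_n$ will be obtained by McShane-extending a carefully chosen sequence of such $g_{x,r}$'s globally to $X$ via $d_{\delta,\alpha}$, preserving $1$-Lipschitzness with respect to $d_{\delta,\alpha}$ and hence uniform Lipschitz bounds.

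The heart of the argument is to show that, for $\mu$-a.e.~$x\in K$, $g_{x,r}(x)\to\langle w,F(x)\rangle$ as $r\to 0$. Equality $g_{x,r}(x)=\langle w,F(x)\rangle$ fails precisely when there exists $z\in B(x,r)$ with $\langle w,F(x)-F(z)\rangle>d_{\delta,\alpha}(x,z)$; I call such a $z$ a \emph{violating chord} for $x$. Assume, for contradiction, that a Borel set $E\subset K$ of positive $\mu$-measure admits violating chords at all sufficiently small scales, quantitatively with some $\eta(x)>0$. Unpacking the definition of $d_{\delta,\alpha}$, along such a chord the increment $\langle w,F(x)-F(z)\rangle$ beats $\delta\,d(x,z)$ and simultaneously dominates $\cot\alpha\sum_i|\langle u_i,F(x)-F(z)\rangle|$, which forces the direction of $F(x)-F(z)$ into $\cone(w,\alpha)$ and forces a $\langle w,F\rangle$-speed at least $\delta$ per unit of metric length. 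A standard Vitali/compactness argument, exploiting the compactness of $X$ and the Hausdorff-compactness of $\frags(X)$, now extracts (or glues) these infinitesimal chords into a fragment $\gamma\in\frags(X,F,\cone(w,\alpha),\langle w,F\rangle,\ge\delta)$ with $\hmeas 1._\gamma(E)>0$. Since $E\subset K$, this contradicts the $\frags(X,F,\cone(w,\alpha),\langle w,F\rangle,\ge\delta)$-nullity of $K$.

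With pointwise convergence at $\mu$-a.e.~$x\in K$ in hand, a countable selection across dense choices of basepoints and radii, followed by a diagonal argument, produces the sequence $\{g_n\}$ and the Borel set $S\subset K$ with $\mu(K\setminus S)=0$. Weak$^*$ convergence $g_n\xrightarrow{\text{w*}}\langle w,F\rangle$ follows from the uniform Lipschitz bound above together with pointwise convergence on $S$; boundedness is ensured by truncating at $\pm\|\langle w,F\rangle\|_\infty$ on the compact space $X$. The main obstacle is the second paragraph: converting the scale-by-scale violating chord data at points of $E$ into a genuine rectifiable fragment with the prescribed direction and speed properties. This is the step that makes essential use of the equivalence in Theorem \ref{thm:alberti_rep_prod}, applied to the measure $\mu\mrest E$; I expect the actual paper to phrase the extraction through this criterion rather than a direct Vitali construction, but the geometric content is the same.
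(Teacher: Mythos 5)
Your outline correctly identifies the mechanism: build candidate approximants by inf- or sup-convolution against $d_{\delta,\alpha}$, reduce condition (2) to showing that ``violating chords'' disappear at small scales $\mu$-a.e.\ on $K$, and derive that vanishing from the $\frags$-nullity hypothesis. This is the right shape of argument and matches what the cited Theorem~\ref{thm:appxscheme} (from \cite{deralb}) is built on. However, the load-bearing step is stated as ``a standard Vitali/compactness argument'' that ``extracts (or glues) these infinitesimal chords into a fragment $\gamma\in\frags(X,F,\cone(w,\alpha),\langle w,F\rangle,\ge\delta)$ with $\hmeas 1._\gamma(E)>0$,'' and this is a genuine gap, not a routine citation. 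A violating chord is a two-point datum $(x,z)$; a fragment $\gamma:T\to X$ relevant to the nullity hypothesis must have $\lebmeas(T)>0$ (two-point domains contribute $\hmeas 1._\gamma=0$, cf.\ Remark~\ref{rem:new_frags}), must satisfy the cone and speed inequalities \emph{pointwise a.e.}\ in $t$ (your ``speed at least $\delta$ per unit of metric length'' is an endpoint-average statement that does not transfer to $(\langle w,F\rangle\circ\gamma)'(t)\ge\delta\metdiff\gamma(t)$ for a.e.\ $t$ without a uniform pairwise chord estimate across $T$), and — most seriously — must satisfy $\hmeas 1._\gamma(E)>0$, i.e.\ its image must occupy $E$ in positive $\hmeas 1.$ measure. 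The chord endpoint $z$ in your construction is only required to lie in $B(x,r)$, not in $E$, so naively iterating chords produces a curve that may leave $E$ immediately and never returns. Producing a biLipschitz $\gamma:T\to E$ with positive-measure $T$ and uniform directional/speed control between arbitrary pairs in $T$ — so that density points of $T$ give the required a.e.\ bounds — is the actual content of the theorem, and you have not supplied it. Applying Theorem~\ref{thm:alberti_rep_prod} to $\mu\mrest E$ does not rescue the step: since $E\subset K$ is $\frags$-null, that criterion merely confirms $\mu\mrest E$ has \emph{no} such Alberti representation, which yields no contradiction.

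A secondary, more cosmetic issue: each $g_{x,r}$ is already globally defined on $X$ and globally $1$-Lipschitz for $d_{\delta,\alpha}$, so no McShane extension is needed; but a fixed $g_{x,r}$ only sees $\langle w,F\rangle$ at the basepoint $x$, so the passage from the pointwise statement ``$g_{x,r}(x)\to\langle w,F(x)\rangle$ for $\mu$-a.e.\ $x\in K$'' to a single sequence $g_n$ converging pointwise on $X$ requires an explicit construction (e.g.\ $g_n(y)=\sup_{x\in K_n}\bigl[\langle w,F(x)\rangle-d_{\delta,\alpha}(y,x)\bigr]$ over a finite net $K_n$, with a diagonalization over scales and an Ascoli--Arzel\`a extraction), which is only gestured at. Neither of these can be patched with the level of detail given.
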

Note that here we use the $l^1$-distance in the part
  of $d_{\delta,\alpha}$ multiplied by $\cot\alpha$. In~\cite{deralb}
  we used the $l^2$-distance, but the result is still true because the
  $l^1$-distance is always $\ge$ the $l^2$-distance.
\par We will use the following consequence of Theorem \ref{thm:appxscheme}.
\begin{lem}
  \label{lem:dst_appx}
  Let $X$ be a complete separable metric measure space and $\mu$ a
  Radon measure on $X$. Suppose that the compact set $K\subset X$ is  $\frags\left(X,F,\cone(w,\alpha),\langle
    w,F\rangle,>\delta\right)$-null. Then there are bounded Lipschitz
  functions $\tilde f_n\xrightarrow{\text{w*}}\tilde f$ and a Borel subset
  $S\subset K$ having full $\mu$-measure in $K$ such that:
  \begin{enumerate}
  \item The function $\tilde f$ agrees with $\langle w, F\rangle$ on $K$;
  \item For each $n\in\natural$ there are bounded Lipschitz functions $\tilde
    f_{n,m}\xrightarrow{\text{w*}}\tilde g_{n}$  where $\tilde g_{n}$
    agrees with $\tilde f_n$ on $S$;
  \item For each $(n,m)\in\natural^2$ there are finitely many points
    $\{x_{n,m,a}\}_a\subset S$ and finitely many disjoint
    Borel sets $\{S_{n,m,a}\}_a$ with
    $S=\bigcup_{a}S_{n,m,a}$ and
    \begin{equation}
      \label{eq:dst_appx_s1}
      \tilde f_{n,m} = \tilde f_{n} +
      d_{\delta,\alpha}(\cdot,x_{n,m,a})\quad\text{on $S_{n,m,a}$.}
    \end{equation}
  \end{enumerate}
\end{lem}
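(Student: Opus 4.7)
The plan is to invoke Theorem~\ref{thm:appxscheme} to build the outer weak* approximation $\tilde f_n\xrightarrow{\text{w*}}\tilde f$, then to construct the inner approximation $\tilde f_{n,m}\xrightarrow{\text{w*}}\tilde g_n$ through an explicit ``minimum distance to a finite net'' formula that automatically delivers both the uniform Lipschitz bound needed for weak* convergence and the piecewise identity demanded by~(3).

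For the outer layer, first reconcile the strict speed condition $>\delta$ in the hypothesis with the non-strict condition $\ge\delta$ assumed in Theorem~\ref{thm:appxscheme}: for every $\delta'>\delta$, any fragment $\gamma$ with $(\langle w,F\rangle\circ\gamma)'\ge\delta'\metdiff\gamma$ a.e.\ satisfies $(\langle w,F\rangle\circ\gamma)'>\delta\metdiff\gamma$ at every parameter with $\metdiff\gamma>0$, while the area-formula reduction of Remark~\ref{rem:new_frags} rules out contributions from the $\metdiff\gamma=0$ portion of $\dom\gamma$. Hence $K$ is null for $\frags(X,F,\cone(w,\alpha),\langle w,F\rangle,\ge\delta')$ for every $\delta'>\delta$. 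Applying Theorem~\ref{thm:appxscheme} on the compact space $K$ with the restricted data yields Lipschitz functions $g_n$ on $K$ converging weak* to $\langle w,F\rangle$, together with a Borel $S\subset K$ of full $\mu\mrest K$-measure. A McShane extension to $X$ followed by multiplication by a Lipschitz cutoff supported in a bounded neighbourhood of $K$ and equal to $1$ on $K$, and finally by extraction of a weak* convergent subsequence from the resulting bounded set in $\lipalg X.$, produces bounded Lipschitz $\tilde f_n:X\to\real$ with $\tilde f_n\xrightarrow{\text{w*}}\tilde f$, where the limit may be chosen so that $\tilde f=\langle w,F\rangle$ on $K$. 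This proves~(1).

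For the inner layer, fix $n,m\in\natural$. Since $d_{\delta,\alpha}(x,y)\le\bigl(\delta+(k-1)\cot\alpha\glip F.\bigr)d(x,y)$, the compact set $S$ is totally bounded in $d_{\delta,\alpha}$: choose a finite $(1/m)$-net $\{x_{n,m,a}\}_a\subset S$ in $(S,d_{\delta,\alpha})$ and form the Borel Voronoi partition
\begin{equation*}
S_{n,m,a}:=\Bigl\{x\in S:a=\min\bigl\{b:d_{\delta,\alpha}(x,x_{n,m,b})=\min_c d_{\delta,\alpha}(x,x_{n,m,c})\bigr\}\Bigr\}.
\end{equation*}
Then define globally on $X$
\begin{equation*}
\tilde f_{n,m}(x):=\tilde f_n(x)+\chi(x)\min_a d_{\delta,\alpha}(x,x_{n,m,a}),
\end{equation*}
where $\chi$ is a bounded Lipschitz cutoff equal to $1$ on a neighbourhood of $K$. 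Being the sum of $\tilde f_n$ and the product of a bounded Lipschitz cutoff with a finite minimum of Lipschitz functions, each $\tilde f_{n,m}$ is bounded Lipschitz with norm independent of $m$; on $S_{n,m,a}$ the minimum is attained at index $a$ and $\chi\equiv 1$, so $\tilde f_{n,m}=\tilde f_n+d_{\delta,\alpha}(\cdot,x_{n,m,a})$ by construction, giving~(3); and on $S$ the minimum-term is at most $1/m$, so $\tilde f_{n,m}\to\tilde f_n$ pointwise on $S$. Weak* compactness of bounded sets in $\lipalg X.$ then extracts a subsequence $\tilde f_{n,m_k}\xrightarrow{\text{w*}}\tilde g_n$, and $\tilde g_n|_S=\tilde f_n|_S$ as required by~(2).

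The principal technical obstacle is the strict-vs-non-strict speed passage in Step~1, which relies crucially on the area-formula restriction of fragments to the $\metdiff>0$-set recalled in Remark~\ref{rem:new_frags}; once this is in place, the Voronoi-min formula for $\tilde f_{n,m}$ makes the piecewise identity of~(3) hold by construction, and the rest of the argument reduces to routine compactness, McShane extension, and Banach--Alaoglu extraction.
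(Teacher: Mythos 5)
The critical gap is in the inner approximation layer. In~\eqref{eq:dst_appx_s1} the summand $\tilde f_n$ is in fact a typo for the \emph{constant} $\tilde f_n(x_{n,m,a})$: the paper's proof sets $\tilde f_{n,m}=\max_a\bigl\{\tilde f_n(x_{n,m,a})+d_{\delta,\alpha}(\cdot,x_{n,m,a})\bigr\}$, so that on each cell $S_{n,m,a}$ the function is a constant plus $d_{\delta,\alpha}(\cdot,x_{n,m,a})$, and the two places where the Lemma is invoked (the proofs of Lemma~\ref{lem:curr_cone_estimate} and Lemma~\ref{lem:dir_density}) rely on exactly this structure: locality drops the constant, so $T(\chi_{S_{n,m,a}},\tilde f_{n,m},\pi_2,\ldots,\pi_k)$ equals $T(\chi_{S_{n,m,a}},d_{\delta,\alpha}(\cdot,x_{n,m,a}),\pi_2,\ldots,\pi_k)$, which after the $\cot\alpha$-terms are killed by the alternating property is bounded by $\delta\cmass T.(S_{n,m,a})$. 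Your Voronoi formula $\tilde f_{n,m}=\tilde f_n+\chi\min_a d_{\delta,\alpha}(\cdot,x_{n,m,a})$ matches the literal wording but not the intent: on $S_{n,m,a}$ it gives $T(\chi_{S_{n,m,a}},\tilde f_{n,m},\ldots)=T(\chi_{S_{n,m,a}},\tilde f_n,\ldots)+T(\chi_{S_{n,m,a}},d_{\delta,\alpha}(\cdot,x_{n,m,a}),\ldots)$, and if you sum over $a$, let $m\to\infty$, and use $\tilde g_n|_S=\tilde f_n|_S$ with locality, the identity collapses to $\lim_m\sum_aT(\chi_{S_{n,m,a}},d(\cdot,x_{n,m,a}),\ldots)=0$ --- a tautology that yields no bound on $T(\chi_S,\tilde f_n,\ldots)$, which is what Lemma~\ref{lem:curr_cone_estimate} actually needs.

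The tell-tale sign is that your $\tilde f_{n,m}$ never uses the locally-$1$-Lipschitz conclusion~(3) of Theorem~\ref{thm:appxscheme}; that $\min_a d_{\delta,\alpha}(\cdot,x_{n,m,a})\le 1/m$ on $S$ follows from density of the net alone. But that local Lipschitz property is the entire content of the approximation scheme: it is what lets a function built \emph{only} from the net values $\tilde f_n(x_{n,m,a})$ and the geometry of $d_{\delta,\alpha}$ --- with $\tilde f_n$ not carried along as a free summand --- converge back to $\tilde f_n$ on $S$, and that independence from $\tilde f_n$ is precisely what makes $T$ ``see'' only $d_{\delta,\alpha}$ on each cell. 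Your observations about the strict-versus-non-strict speed hypothesis and the choice of a $d_{\delta,\alpha}$-net are correct as far as they go (the paper does gloss over the $>\delta$ versus $\ge\delta$ mismatch), but they do not repair this deficiency.
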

\begin{proof}
  We apply Theorem~\ref{thm:appxscheme} using $K$ \emph{both as the
    subset and as the ambient metric space.} We thus find a sequence
  $\{f_n\}_n\subset\lipalg K.$ with $f_n\xrightarrow{\text{w*}}\langle
  w,F\rangle$, and a $\mu$-full measure Borel subset $S\subset K$ such
  that for each $x\in S$ and each $n\in\natural$ there is an $r_n(x)>0$
  such that the restriction $f_n|B(x,r_n(x))\cap K$ (we are still inside
  $K$) is $1$-Lipschitz with respect to the distance
  $d_{\delta,\alpha}$.
\par We now take a Mac Shane's extension $\tilde f_n:X\to\real$ of $f_n$
while keeping
\begin{align*}
  \label{eq:taira_1}
  \glip\tilde f_n. &= \glip f_n.\\
  \|\tilde f_n\|_\infty &= \|f_n\|_\infty.
\end{align*}
As $X$ is separable and as the $\sup_n\glip\tilde f_n.<\infty$ and the
$\tilde f_n$ converge on $K$ to $\langle w,F\rangle$, using
Ascoli-Arzel\'a and up to passing to
a  subsequence, we can assume that $\tilde
f_n\xrightarrow{\text{w*}}\tilde f$ where $\tilde f$ agrees with
$\langle w, F\rangle$ on $K$.
\par For each $(n,m)\in\natural^2$ choose a finite $1/m$-dense subset
$\{x_{n,m,a}\}_a\subset S$ and let:
\begin{equation*}
  \label{eq:taira_3}
  \tilde f_{n,m} = \max_a\left\{
\tilde f_n(x_{n,m,a}) + d_{\delta,\alpha}(\cdot,x_{n,m,a})
\right\}
\end{equation*}
so that conclusion (3) is automatically satisfied. Note that we can
also truncate $\tilde f_{n,m}$ so that:
\begin{equation*}
  \label{eq:taira_4}
  \|\tilde f_{n,m}\|_\infty=\sup_{x\in S}|\tilde f_n(x)|,
\end{equation*}
and without changing its values on points of $S$.
\par Finally, using for each $n$ Ascoli-Arzel\'a and passing to a
subsequence we can assume $\tilde
f_{n,m}\xrightarrow{\text{w*}}\tilde g_n$. To conclude that $\tilde g_n$ agrees with
$\tilde f_n$ on $S$, we pick $x\in S$ and observe that, as the
restriction $f_n|B(x,r_n(x))\cap K$ is $1$-Lipschitz with respect to the distance
  $d_{\delta,\alpha}$, for
$\frac{1}{m}<r_n(x)$ one has:
\begin{equation*}
  \label{eq:taira_5}
\left|\tilde f_{n,m}(x) - \tilde f_n(x) \right|\le C(\alpha,\delta)\frac{1}{m},
\end{equation*}
where $C(\alpha,\delta)$ is independent of $n$ and $m$.
\end{proof}
\par In \sync thm:alb_derivation.\ in \cite{deralb} it was shown that
to a $C$-biLipschitz Alberti representation $\albrep.$ of the measure $\mu$ it is
possible to associate a derivation $D_{\albrep.}\in\wder\mu.$ by using
the formula:
\begin{equation}\label{eq:derivation_alberti}
      \int_X gD_{\albrep.}f\,d\mu=\int_{\frags(X)}dP(\gamma)\int_{\dom\gamma}
      (f\circ
      \gamma)'(t)g\circ\gamma(t)\,d(\mpush\gamma^{-1}.\nu_\gamma)(t)
      \quad(g\in L^1(\mu)\cap\bborel X.)
  \end{equation}
  to define $D_{\albrep.}f$; moreover, one has the norm bound
  $\|D_{\albrep.}\|_{\wder\mu.}\le C$ and if the Alberti representation $\albrep.$ is in the
  $F$-direction of the $k$-dimensional cone field $\cone$, one has
  $D_{\albrep.}F(x)\in\cone(x)$ for $\mu$-a.e.~$x$.
Finally note that using Remark~\ref{rem:new_frags} one can
extend~(\ref{eq:derivation_alberti}) to the case of a $C$-Lipschitz
Alberti representations by doing the replacement:
$$
\int_{\dom\gamma}
      (f\circ
      \gamma)'(t)g\circ\gamma(t)\,d(\mpush\gamma^{-1}.\nu_\gamma)(t)
\mapsto \sum_\alpha\int_{S_\alpha}
      (f\circ
      \gamma)'(t)g\circ\gamma(t)\,d(\mpush\gamma^{-1}.\nu_\gamma)(t).
$$
  \par In order to compare the derivations associated to different
  Alberti representations the following notion of independence for
  cone fields is useful:
  \begin{defn}
    \label{defn:cone_indp}
    We say that the $n$-dimensional cone fields $\{\cone_i\}_{i=1}^k$
    are \textbf{independent} if for each $x\in X$ and each choice of
    $v_{i,x}\in\cone_i(x)$, the vectors $\{v_{i,x}\}_{i=1}^k$ are
    linearly independent.
  \end{defn}
  Note that if the Alberti representations $\{\albrep i.\}_{i=1}^k$
  are in the $F$-directions of independent cone fields, where
  $F:X\to\real^k$ is Lipschitz, the
  corresponding derivations $\{D_{\albrep i.}\}_{i=1}^k$ are
  independent. We will use the following results (\sync thm:taira.\
  and \sync
  cor:mu_arb_cone.\ in \cite{deralb}):
\begin{thm}\label{thm:taira}
  Let $X$ be a complete separable metric space and $\mu$ a Radon measure on $X$. Consider
  a Borel set $V\subset X$, derivations
  $\{D_1,\ldots,D_k\}\subset\wder\mu.$ and a Lipschitz function
  ${g\colon X\to\real^k}$ such that $D_ig_j=\delta_{i,j}\chi_V$. Then for each
  $\epsi>0$, unit vector $w\in{\mathbb S}^{k-1}$, angle $\alpha\in(0,\pi/2)$ and 
 speed parameter $\sigma\in(0,1)$, the measure $\mu\mrest V$ admits a
  ${(1,1+\epsi)}$\nobreakdash-\hspace{0pt}bi-Lipschitz Alberti representation in the
  $g$-direction of $\cone(w,\alpha)$ with 
  \begin{equation}\label{eq:speed_almost_optimal}
    {\text{$\langle w,g\rangle$\nobreakdash-\hspace{0pt}speed}}
    \ge\frac{\sigma}{\locnorm D_w,{\wder\mu\mrest V.}.+(1-\sigma)},
  \end{equation}
where $D_w=\sum_{i=1}^kw_iD_i$.
\end{thm}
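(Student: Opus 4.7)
The plan is to invoke Theorem \ref{thm:alberti_rep_prod}. Set $L := \locnorm D_w,{\wder{\mu\mrest V}.}.$ and $\sigma^*(x) := \sigma/(L(x) + 1 - \sigma)$; it suffices to show that every Borel set $K \subset V$ that is $\frags(X, g, \cone(w,\alpha), \langle w,g\rangle, > \sigma^*)$-null is $\mu$-null, since the $(1,1+\epsi)$-biLipschitz refinement is then automatic from the same theorem. Partitioning $V$ into countably many Borel pieces on which $L$ is confined to a narrow interval $(\lambda_0, \lambda_0 + \epsi')$, and gluing the resulting Alberti representations via Theorem \ref{thm:alb_glue}, reduces matters to the case where $\sigma^*$ is effectively a constant $\delta$ slightly greater than $\sigma/(\lambda_0 + 1 - \sigma)$.

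Assume for contradiction that such a $K$ satisfies $\mu(K) > 0$. I apply Lemma \ref{lem:dst_appx} with this $\delta$ to produce Lipschitz approximants $\tilde f_{n,m}, \tilde g_n, \tilde f_n, \tilde f$ and a Borel $S \subset K$ of full $\mu$-measure in $K$, with $\tilde f$ agreeing with $\langle w,g\rangle$ on $K$. From the max-of-distances construction in the proof of that lemma, each $\tilde f_{n,m}$ is globally $d_{\delta,\alpha}$-$1$-Lipschitz.

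The key pointwise estimate is $|D_w \tilde f_{n,m}| \le \delta L$ on $V$. Using Lemma \ref{lem:locality_derivations} to localize on the region where the maximum is attained by a single term $c_a + d_{\delta,\alpha}(\cdot, x_a)$, one expands
\begin{equation*}
  D_w d_{\delta,\alpha}(\cdot, x_a) = \delta\, D_w d(\cdot, x_a) + \cot\alpha \sum_{i=1}^{k-1} D_w\bigl|\langle u_i, g - g(x_a)\rangle\bigr|.
\end{equation*}
The first summand satisfies $|D_w d(\cdot, x_a)| \le L$ because $d(\cdot, x_a)$ is $1$-Lipschitz. For the second, the pseudoduality $D_w g_j = w_j \chi_V$ combined with $\langle u_i, w\rangle = 0$ gives $D_w \langle u_i, g\rangle = \langle u_i, w\rangle \chi_V = 0$ on $V$, and the bound $\bigl|D_w|h|\bigr| \le |D_w h|$ (valid for Lipschitz $h$ via the standard extension of derivations) preserves this vanishing after taking the absolute value.

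Now I pass to the weak$^*$ limit: for $\phi \in L^1(\mu)$ supported in $V \cap S$, testing against $\phi\,\sgn(D_w \tilde g_n) \in L^1(\mu)$ and invoking weak$^*$ continuity of $D_w$ first as $m \to \infty$ and then as $n \to \infty$, together with locality ($\tilde g_n = \tilde f_n$ on $S$ and $\tilde f = \langle w, g\rangle$ on $K$), yields $|D_w \langle w, g\rangle| \le \delta L$ on $V \cap S$. Direct computation from pseudoduality and $|w|=1$ gives $D_w \langle w, g\rangle = \sum_i w_i^2 \chi_V = \chi_V$, forcing $L \ge 1/\delta$ a.e.~on $V \cap S$. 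Combined with $L \le \lambda_0 + \epsi'$ on the chosen piece and $1/\delta \approx (\lambda_0 + 1 - \sigma)/\sigma$, this produces the contradiction $-\lambda_0 \ge 1$ once $\epsi'$ is small enough. The main obstacle is the estimate $|D_w \tilde f_{n,m}| \le \delta L$ on $V$, which requires careful use of locality and of the rule $\bigl|D_w|h|\bigr| \le |D_w h|$; the level-set partition needed to reduce a variable speed $\sigma^*$ to a constant is a secondary but essential bookkeeping step.
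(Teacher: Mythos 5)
The paper does not prove Theorem \ref{thm:taira}; it is quoted from the reference \cite{deralb}, so there is no in-text proof to compare against. Your argument is nonetheless the natural one and its engine is sound: expanding $D_w d_{\delta,\alpha}(\cdot,x_a)$ through locality (Lemma~\ref{lem:locality_derivations}) after partitioning on which term in the $\max$ is attained; killing the $\cot\alpha$-part via $u_i\perp w$ together with the pseudoduality $D_wg_j=w_j\chi_V$; bounding the remaining $\delta D_w d(\cdot,x_a)$ term by $\delta L$; using $\bigl|D|h|\bigr|=|Dh|$ (you state $\le$, but equality holds by partitioning on the sign of $h$ and invoking locality on each piece); passing twice to the weak$^*$ limit against test functions of the form $\phi\,\sgn(D_w\,\cdot)$ (first $m\to\infty$, then $n\to\infty$, via Lemma~\ref{lem:dst_appx}); and finally colliding $D_w\langle w,g\rangle=\chi_V$ with the bound $1\le\delta L$ on the level set. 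This is the same mechanism the paper itself deploys in Lemma~\ref{lem:curr_cone_estimate}, with the derivation $D_w$ playing the role there taken by the current $T$.

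The place that needs sharpening is the opening reduction. You propose to partition $V$ by level sets of $L$, argue with a constant $\delta$ on each piece, and glue. But the hypothesis you are handed is that $K$ is $\frags(X,g,\cone(w,\alpha),\langle w,g\rangle,>\sigma^*)$-null, and this does \emph{not} imply that $K\cap W_\alpha$ is $\frags(X,\ldots,>\delta)$-null for a constant $\delta\approx\sigma/(\lambda_0+1-\sigma)$: a fragment certifying the latter can leave $W_\alpha$ and run through a region where $\sigma^*>\delta$, so it need not lie in $\frags(X,\ldots,>\sigma^*)$, and you cannot conclude it misses $K$. The clean fix is to argue by contradiction on a single good compact piece rather than by partition-and-glue: pass to a compact $K_1\subset K$ of positive measure on which $L$ is pinned near $\lambda_0$, note that $\frags(K_1,\ldots,>\delta)\subset\frags(X,\ldots,>\sigma^*)$ for $\delta\ge\sup_{K_1}\sigma^*$ (since fragments in $K_1$ only visit $K_1$, where $\sigma^*\le\delta$), and then Lemma~\ref{lem:dst_appx} applies because its own proof uses $K_1$ as the \emph{ambient} space. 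With that framing and $\delta=\sigma/(\lambda_0+1-\sigma)$, your numerics give $\lambda_0+\epsi'>L\ge 1/\delta=(\lambda_0+1-\sigma)/\sigma$, i.e.\ $-\lambda_0>1-\sigma\epsi'/(1-\sigma)$, which fails for small $\epsi'$ since $\lambda_0\ge 0$ and $\sigma<1$. So your key estimate and endgame are correct; the reduction step should be recast as ``choose a good compact piece and derive the contradiction there'' rather than ``partition and glue.''
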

  \begin{cor}
    \label{cor:mu_arb_cone}
    Suppose that the measure $\mu$ admits Alberti representations in
    the $F$-direction of $k$ independent cone fields, where $F:X\to\real^k$ is Lipschitz. Then for each
    $\varepsilon>0$ and each $k$-dimensional cone field $\cone$, the
    measure $\mu$ admits a $(1,1+\varepsilon)$-biLipschitz Alberti representation
    in the $F$-direction of $\cone.$
  \end{cor}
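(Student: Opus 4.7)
The plan is to arrange things so that the given Lipschitz map $F$ itself plays the role of the pseudodual function required by Theorem~\ref{thm:taira}, and then to invoke that theorem on a suitable Borel partition of $X$.

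First, I would upgrade each of the $k$ hypothesized Alberti representations to a $(1,1+\varepsilon)$-biLipschitz one via Theorem~\ref{thm:alberti_rep_prod} and associate to it a derivation by~\eqref{eq:derivation_alberti}, obtaining $D_1,\dots,D_k\in\wder\mu.$. As noted after Definition~\ref{defn:cone_indp}, these derivations are independent and satisfy $D_iF(x)\in\cone_i(x)$ for $\mu$-a.e.~$x$, where $\cone_1,\ldots,\cone_k$ are the cone fields given in the hypothesis. Writing $F=(F_1,\dots,F_k)$, consider the Borel matrix-valued function $M(x)=(D_iF_j(x))_{i,j=1}^k$. The independence of the cone fields forces $M(x)$ to be invertible $\mu$-a.e.~on $X$, so I would partition $X$, up to a $\mu$-null set, into countably many Borel pieces $\{V_\alpha\}$ on which both $\|M\|$ and $\|M^{-1}\|$ are bounded, and define the modified derivations
\[
\tilde D_{\alpha,i} \;=\; \chi_{V_\alpha}\sum_{j=1}^k(M^{-1})_{ij}\,D_j\in\wder\mu..
\]
A direct computation yields $\tilde D_{\alpha,i}F_l=\delta_{i,l}\chi_{V_\alpha}$, so $F$ itself is pseudodual on $V_\alpha$ to the family $\{\tilde D_{\alpha,i}\}_{i=1}^k$ in the sense of Definition~\ref{defn:pseudodual_funcs}.

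Next, given $\cone$ specified by the Borel maps $w\colon X\to\mathbb S^{k-1}$ and $\alpha\colon X\to(0,\pi/2)$, I would refine each $V_\alpha$ into countably many Borel pieces $\{W_\beta\}$ such that on each $W_\beta$ some fixed cone $\cone(w_\beta,\alpha_\beta)$ is contained in $\cone(x)$ for every $x\in W_\beta$. This is obtained by pulling back a sufficiently fine countable open cover of $\mathbb S^{k-1}\times(0,\pi/2)$ via the Borel map $x\mapsto(w(x),\alpha(x))$, using that on a small neighborhood of any point $(w_0,\alpha_0)$ the cone $\cone(w_0,\alpha_0/2)$ is contained in every nearby cone $\cone(w,\alpha)$. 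On each $W_\beta$ I would then apply Theorem~\ref{thm:taira} to the derivations $\chi_{W_\beta}\tilde D_{\alpha,i}$, to $g=F$, to the vector $w_\beta$, to the angle $\alpha_\beta$, and to any fixed speed parameter (say $\sigma=1/2$), obtaining a $(1,1+\varepsilon)$-biLipschitz Alberti representation of $\mu\mrest W_\beta$ in the $F$-direction of $\cone(w_\beta,\alpha_\beta)$, hence of $\cone$.

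Finally I would glue these local representations using Theorem~\ref{thm:alb_glue} to obtain a single $(1,1+\varepsilon)$-biLipschitz Alberti representation of $\mu$ in the $F$-direction of $\cone$. The main obstacle is the simultaneous construction of Borel partitions making $\|M^{-1}\|$ uniformly bounded and making the cone field $\cone$ ``almost constant'' on each piece, but this is routine: $\det M$ is Borel with $\det M\ne 0$ $\mu$-a.e., so restricting to level sets of the form $\{\tfrac{1}{n}\le|\det M|,\ \|M\|\le n\}$ handles the first, and Borel measurability of $(w,\alpha)$ combined with a fine countable cover of $\mathbb S^{k-1}\times(0,\pi/2)$ handles the second.
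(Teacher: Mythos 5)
The paper does not prove this Corollary itself --- it cites it as Corollary~3.95 of \cite{deralb} --- so the comparison can only be against the argument as it must go. Your reconstruction is the natural one and its core is sound: associate derivations $D_1,\dots,D_k\in\wder\mu.$ to the given Alberti representations, form the Borel matrix $M(x)=(D_iF_j(x))_{i,j}$, deduce from independence of the cone fields and $D_iF(x)\in\cone_i(x)$ $\mu$-a.e.\ that $M$ is invertible, pass to a Borel partition making $M^{-1}$ bounded and set $\tilde D_{\alpha,i}=\chi_{V_\alpha}\sum_j(M^{-1})_{ij}D_j$ so that the $F_j$ become pseudodual, then reduce the target cone field to constant cones by a further countable Borel refinement, apply Theorem~\ref{thm:taira} on each piece, and glue with Theorem~\ref{thm:alb_glue}. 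The computations you verify, in particular $\tilde D_{\alpha,i}F_l=\delta_{il}\chi_{V_\alpha}$ and the two Borel-refinement steps, are correct.

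The one real gap is in your opening move. Theorem~\ref{thm:alberti_rep_prod} upgrades to $(1,1+\varepsilon)$-biLipschitz a representation \emph{with a specified speed}, but the hypothesis of the Corollary supplies only directions; and, independently, the derivation formula~\eqref{eq:derivation_alberti} gives a bounded operator $\lipalg X.\to L^\infty(\mu)$ only when the fragments carry a uniform Lipschitz bound, which the hypothesis does not guarantee. So as written neither Theorem~\ref{thm:alberti_rep_prod} nor the derivation construction applies directly. The repair is short: before building the derivations, pass to a countable Borel partition on each piece of which $\cone_i(x)\subset\cone(w,\alpha)$ for fixed $w\in{\mathbb S}^{k-1}$ and fixed $\alpha<\pi/2$. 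Since every vector in the open cone $\cone(w,\alpha)$ has strictly positive inner product with $w$, every fragment in the $F$-direction of $\cone_i$ has $\langle w,F\rangle$-speed $>0$ on such a piece, so Theorem~\ref{thm:alberti_rep_prod} applies with $g=\langle w,F\rangle$ and $\sigma\equiv 0$ and yields the $(1,1+\varepsilon)$-biLipschitz representation, and hence a derivation via~\eqref{eq:derivation_alberti}, there. Assembling the piecewise derivations via Remark~\ref{rem:restr} then produces the global $D_1,\dots,D_k$, after which the rest of your argument goes through unchanged.
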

  Combining Theorem~\ref{thm:taira} and
  Corollary~\ref{cor:mu_arb_cone} we immediately get:
  \begin{cor}
    \label{cor:mu_arb_cone_special}
    Suppose that the components $\{F_i\}_{i=1}^k$ of $F:X\to\real^k$ are pseudodual to the
    derivations $\{D_i\}_{i=1}^k$; then for any $k$-dimensional cone
    field $\cone$, the measure $\mu$ admits an Alberti representation
    in the $F$-direction of $\cone$.
  \end{cor}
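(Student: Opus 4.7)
The plan is to apply Theorem~\ref{thm:taira} $k$ times, once for each standard basis direction, in order to produce Alberti representations of $\mu$ in the $F$-direction of $k$ independent cone fields; then Corollary~\ref{cor:mu_arb_cone} upgrades these to an Alberti representation in the $F$-direction of an \emph{arbitrary} $k$-dimensional cone field.

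First, I would fix an angle $\alpha \in (0,\pi/2)$ sufficiently small so that the constant cone fields $\cone(e_1,\alpha), \ldots, \cone(e_k,\alpha)$ are independent in the sense of Definition~\ref{defn:cone_indp}. This is an elementary linear-algebra observation: any $v_i \in \cone(e_i,\alpha)$ has the form $\lambda_i e_i + r_i$ with $\lambda_i > 0$ and $\|r_i\|_2 < \tan(\alpha)\lambda_i$, so the matrix with columns $v_1,\ldots,v_k$ is, after normalizing columns, an arbitrarily small perturbation of the identity once $\alpha$ is small enough, hence invertible.

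Next, for each $i \in \{1,\ldots,k\}$ I would apply Theorem~\ref{thm:taira} with the derivations $\{D_1,\ldots,D_k\}$, the map $g = F$ (whose coordinates satisfy the hypothesis $D_j F_l = \delta_{j,l}$ on the set $V$ where pseudoduality is assumed, taken to be of full $\mu$-measure), unit vector $w = e_i$, angle $\alpha$, and any fixed speed parameter $\sigma \in (0,1)$. This yields a $(1,1+\varepsilon)$-biLipschitz Alberti representation of $\mu$ in the $F$-direction of $\cone(e_i,\alpha)$ (the speed control provided by Theorem~\ref{thm:taira} is not needed for the conclusion sought here).

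The measure $\mu$ therefore admits Alberti representations in the $F$-direction of the $k$ independent cone fields $\{\cone(e_i,\alpha)\}_{i=1}^{k}$, and Corollary~\ref{cor:mu_arb_cone} applied with the target cone field $\cone$ produces the desired Alberti representation in the $F$-direction of $\cone$. No substantive obstacle is anticipated, since the argument is just a packaging of the two cited results; the only genuine verification is the independence of the chosen coordinate cones for sufficiently small $\alpha$.
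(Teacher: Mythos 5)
Your proof is correct and is exactly the argument the paper has in mind: the paper's own justification is the one-line remark that the corollary follows by ``combining Theorem~\ref{thm:taira} and Corollary~\ref{cor:mu_arb_cone},'' and you have simply unpacked that combination -- using Theorem~\ref{thm:taira} with $w=e_1,\ldots,e_k$ and a small fixed $\alpha$ to produce Alberti representations of $\mu\mrest V$ in the $F$-directions of $k$ independent constant cone fields, then invoking Corollary~\ref{cor:mu_arb_cone} to pass to an arbitrary $k$-dimensional cone field. The elementary independence check for $\{\cone(e_i,\alpha)\}_{i=1}^k$ at small $\alpha$ is correct and is the only thing that needed verifying.
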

\newcount\orgsec                
\orgsec=0
\ifnum\orgsec>0 {
  \section{Organization}

\subsection*{Outline}
\begin{itemize}
\item choice of metric currents: Ambrosio-Kirchheim;
  metric spaces are complete and to work with Alberti representations
  assume separable;
\item efficient $k$-tuples for the mass Lemma 4.1;
\item dimensional bounds; nontrivial currents bounded by the Assouad
  dimension; if there is a uniform bound on the Hausdorff dimensions
  of the tangents one gets another bound;
\item relationship with Paolini: alberti representations with constant
  direction;
\item state in a clean way theorem 10 in Weaver; clarify how it
  relates to my decomposition result;
\item try to remove dependence on a uniform bound on the infinitesimal
  Assouad dimension of the support of the current using that
  $\wder\mu.$ has a weak* topology; in the negative case try to look
  at examples in hilbert cubes giving a negative answer;
\item Consider getting in Corollary \ref{cor:currents_assouad_bound} a
  bound in terms of the Hausdorff dimension (using ultrafilters).
\item Proof of flatness of $1$-currents in Banach spaces;
\item Notion of push-forward of a derivation;
\item Production of normal derivations.
\item Fragments: do not require positive Lebesgue measure and biLipschitz embedding.
\end{itemize}
\subsection*{Prerequisites}
\begin{itemize}
\item Alberti representations, production and gluing.
\item Metric currents.
\end{itemize}
\subsection*{Questions}
\begin{itemize}
\item Is $\mcurr k,\mu.$ an $L^\infty(\mu)$-normed module?
  \par No, but it is an $L^\infty(\mu)$-module.
\item Can we improve the map \eqref{eq:der_to_curr} to an
  $L^\infty(\mu)$-module isomorphism preserving the local norm?
  \par No.
\item Is the $\Cur\mu.(\wder\mu.)$ closed in $\mcurr 1,\mu.$?
  \par Probably not.
\item How does the Nagata dimension fit with dimensional bounds?
\item Is it possible to give an explicit description of the local norm
  in $\alt k,M,N.$?
\end{itemize}
\subsection*{Notation and Definitions}
\begin{itemize}
\item $\Lambda_{k,N}$ denotes the set of increasing $k$-tuples in the
  $\{1,\ldots,N\}$. 
\item To simplify the notation, $X^*$ will denote the dual Banach
  space and $X'$ the dual module.
\item For a metric space $X$ and a compact interval $I$ we denote by
  $\paths_I(X)$ the set of Lipschitz embeddings of $I$ in $X$.
\end{itemize}
\subsection*{Statements}
}\fi
\section{$1$-dimensional currents and derivations}
\label{sec:onedimcurr}
\par The goal of this Section is to make precise the correspondence
between $1$-dimensional metric currents and derivations via Theorem
\ref{thm:one_currents_derivations}.
\begin{lem}\label{lem:good_ktuples}
  Consider a metric functional $T\in\mfunc k,X.$ with finite mass. If $B\subset X$ is
  Borel and $\cmass T.(B)>0$, then for each $\eta\in(0,1)$ there are
  disjoint Borel sets $B_i\subset B$ and $1$-Lipschitz
  functions\footnote{with respect to the $l^\infty$-norm}
  $\pi^i:X\to\real^k$: 
  \begin{subequations}\label{eq:good_ktuples}
    \begin{align}
      \cmass T.\left(B\setminus\bigsqcup_i B_i\right)&=0;\label{eq:good_ktuples_E}\\
      \left|T(\chi_{B_i},\pi^i_1,\cdots,\pi^i_k)\right|&>\eta\cmass T.(B_i).\label{eq:good_ktuples_T}
    \end{align}
  \end{subequations}
\end{lem}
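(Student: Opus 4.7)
The plan is to reduce the statement to a single-step claim and then exhaust $B$ by a standard measure-theoretic iteration.

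The key single-step claim I would prove first is: for every Borel $A \subset B$ with $\cmass T.(A) > 0$, there exist a Borel $A' \subset A$ with $\cmass T.(A') > 0$ and a $1$-Lipschitz map $\pi\colon X \to \real^k$ (with $\real^k$ given the $l^\infty$-norm, so each component $\pi_j$ is $1$-Lipschitz) such that $|T(\chi_{A'},\pi_1,\dots,\pi_k)| > \eta\,\cmass T.(A')$. I would prove this by contradiction using the \emph{minimality} of $\cmass T.$ in Definition \ref{defn:mass}. If the claim fails, then for every $1$-Lipschitz $\pi$ and every Borel $A'\subset A$ one has $|T(\chi_{A'},\pi_1,\dots,\pi_k)| \le \eta\,\cmass T.(A')$. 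Then I would consider the candidate Radon measure
\begin{equation*}
  \mu = \cmass T.\mrest(X\setminus A) + \eta\,\cmass T.\mrest A,
\end{equation*}
which strictly dominates nothing new but has $\mu(X) < \cmass T.(X)$.

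To derive a contradiction I would show that $\mu$ still satisfies the defining inequality \eqref{eq:massdef}. Using subadditivity I first split $|T(f,\pi_1,\dots,\pi_k)| \le |T(f\chi_A,\pi_1,\dots,\pi_k)| + |T(f\chi_{X\setminus A},\pi_1,\dots,\pi_k)|$. For $f\ge 0$ and $1$-Lipschitz $\pi_j$, approximating $f\chi_A$ by nonnegative simple functions $\sum c_i\chi_{A_i}$ with $A_i\subset A$, the combination of positive $1$-homogeneity and subadditivity in the first argument yields $|T(\sum c_i\chi_{A_i},\pi_1,\dots,\pi_k)| \le \sum c_i|T(\chi_{A_i},\pi_1,\dots,\pi_k)| \le \eta \sum c_i \cmass T.(A_i)$, which in the limit gives $|T(f\chi_A,\pi_1,\dots,\pi_k)| \le \eta \int_A |f|\,d\cmass T.$. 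A matching estimate on $X\setminus A$ is immediate from the mass bound for $\cmass T.$. Finally, positive $1$-homogeneity in each $\pi_i$ transfers the bound from $1$-Lipschitz to arbitrary $\pi_i$ via the factor $\prod_i\glip\pi_i.$. The resulting inequality shows $\mu$ dominates $T$ in the sense of \eqref{eq:massdef}, contradicting the minimality of $\cmass T.$.

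Given the single-step claim, I would conclude by a standard exhaustion. Let $\mathcal{F}$ be the family of pairs $(B',\pi_{B'})$ with $B'\subset B$ Borel, $\pi_{B'}\colon X\to\real^k$ componentwise $1$-Lipschitz, and $|T(\chi_{B'},\pi_{B'}^1,\dots,\pi_{B'}^k)| > \eta\,\cmass T.(B')$. Choose disjoint sets inductively: pick $(B_1,\pi^1)\in\mathcal F$ with $\cmass T.(B_1) \ge \frac12 \sup\{\cmass T.(B') : (B',\pi_{B'})\in\mathcal F\}$, then $(B_2,\pi^2)\in\mathcal F$ with $B_2\subset B\setminus B_1$ and $\cmass T.(B_2) \ge \frac12 \sup\{\cmass T.(B') : (B',\pi_{B'})\in\mathcal F,\ B'\subset B\setminus B_1\}$, and so on. If $\cmass T.(B\setminus\bigsqcup_i B_i)$ were positive, the single-step claim applied to this leftover Borel set would yield an element of $\mathcal F$ of positive mass disjoint from $\bigsqcup_i B_i$, contradicting the choice of suprema (since $\sum_i \cmass T.(B_i) \le \cmass T.(B) < \infty$). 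This establishes \eqref{eq:good_ktuples_E}, and \eqref{eq:good_ktuples_T} holds by construction.

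The main technical obstacle is the passage from the characteristic-function inequality to the full mass-type inequality for $\mu$: I must leverage only subadditivity and positive $1$-homogeneity (not full linearity, which $T$ need not possess as a metric functional), and I need the approximation of bounded Borel $f$ by nonnegative simple functions to interact correctly with these two axioms, followed by the $\pi_i$-homogeneity to reinsert the $\prod_i\glip\pi_i.$ factor.
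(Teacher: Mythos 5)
Your argument is correct and takes a genuinely different route from the paper's. The paper's proof directly cites Prop.~2.7 of Ambrosio--Kirchheim (the characterization of $\cmass T.$ via Borel partitions and $1$-Lipschitz $k$-tuples): this yields a decomposition $B=\bigcup_iB_i$ with $\sum_i\bigl(\cmass T.(B_i)-|T(\chi_{B_i},\pi^i_1,\dots,\pi^i_k)|\bigr)<\epsi$, and one discards the $\eta$-inefficient indices (which carry mass at most $\epsi/(1-\eta)$) before exhausting. You instead re-derive the single-step existence claim from the \emph{minimality} of $\cmass T.$ in Definition~\ref{defn:mass}: if no positive-measure $A'\subset A$ admits an $\eta$-efficient $1$-Lipschitz test tuple, then $\eta\,\cmass T.\mrest A+\cmass T.\mrest(X\setminus A)$ would still satisfy~\eqref{eq:massdef}, contradicting minimality, and the half-supremum exhaustion then finishes identically. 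Your route buys self-containment: it recovers, from the bare definition of mass, precisely the content of the cited proposition that the paper needs.

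Two technical points are worth making explicit. First, the simple-function approximation needs semicontinuity of $f\mapsto T(f\chi_A,\pi)$ along increasing sequences; this follows from subadditivity plus~\eqref{eq:massdef} via the sandwich $|T(f\chi_A,\pi)|\le|T(s_n,\pi)|+|T(f\chi_A-s_n,\pi)|$ and letting $n\to\infty$. Second, you verify~\eqref{eq:massdef} for the candidate $\mu$ only when $f\ge0$; for signed $f$ one splits $f=f^+-f^-$, but then $|T(-f^-\chi_A,\pi)|$ is not controlled by your hypothesis on $|T(\chi_{A'},\pi)|$ unless $T$ is linear (or at least odd) in its first argument. That linearity is present for metric currents and is tacitly assumed in the Ambrosio--Kirchheim proposition the paper cites, so your scope coincides with the paper's; under the bare subadditivity and positive $1$-homogeneity of Definition~\ref{defn:metric_functional}, the single-step claim would need to be stated with $\max\bigl(|T(\chi_{A'},\pi)|,|T(-\chi_{A'},\pi)|\bigr)>\eta\cmass T.(A')$ and the conclusion of the lemma adjusted accordingly. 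This does not make your argument less rigorous than the original, but the implicit reliance on linearity in $f$ deserves to be flagged.
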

\begin{proof}
  The proof uses \cite[Prop.~2.7]{ambrosio-kirch} (characterization
  of mass): for each $\epsi>0$ there are disjoint Borel sets  $B_i\subset B$ and
  $1$-Lipschitz functions $\pi^i:X\to\real^k$:
  \begin{align}
    B&=\bigcup_i B_i;\\
    \sum_i\Bigl(\cmass T.(B_i)&-\left|T(\chi_{B_i},\pi^i_1,\cdots,\pi^i_k)\right|\Bigr)<\epsi;
  \end{align}
let
$J_\eta=\{i:\left|T(\chi_{B_i},\pi^i_1,\cdots,\pi^i_k)\right|\le\eta
\cmass T.(B_i)\}$; then one has:
\begin{equation}
  (1-\eta)\sum_{i\in J_\eta} \cmass T.(B_i)<\epsi;
\end{equation}so 
\begin{equation}
  \cmass T.\left(\bigsqcup_{i\in J_\eta}B_i\right)<\frac{\epsi}{1-\eta};
\end{equation} therefore the conclusion of the Lemma is true for those
$i\not\in J_\eta$ which cover all but $\frac{\epsi}{1-\eta}$ of
the $\cmass T.$-measure of $B$. The result follows by an exhaustion argument.
\end{proof}
\begin{thm}\label{thm:one_currents_derivations}
  Let $\mu$ be a finite Radon measure on $X$. There is a map
  \begin{equation}\label{eq:curr_to_der}
    \begin{aligned}
      \Der\mu.:\mcurr 1,\mu.&\to\wder\mu.\\
      T&\mapsto D_T
    \end{aligned}
  \end{equation}
  where  $D_T\in\wder{\cmass T.}.$ is the unique derivation satisfying
  \begin{subequations}
    \begin{align}
      T(f,\pi)&=\int fD_T\pi\,d\cmass T.\quad\text{\normalfont($\forall (f,\pi)\in
        L^1(\cmass T.)\times\lipfun X.$)}\label{eq:curr_def_der1}\\
      \locnorm D_T,{\wder{\cmass T.}.}.&=1.\label{eq:curr_def_der2}
    \end{align}
  \end{subequations}
Moreover, one also has:
\begin{equation}
  \label{eq:xtra+1}
   \locnorm D_T,{\wder{\mu}.}.(x) =
   \begin{cases}
     1&\text{if $\frac{d\|T\|}{d\mu}(x)\ne0$}\\
     0&\text{otherwise.}
   \end{cases}
\end{equation}
  \par Conversely, there is an $L^\infty(\mu)$-module homomorphism
  map
  \begin{equation}
    \begin{aligned}
      \label{eq:der_to_curr}
      \Cur\mu.:\wder\mu.&\to\mcurr 1,\mu.\\
      D&\mapsto T_D
    \end{aligned}
  \end{equation}
  where $T_D$ is the unique current satisfying
  \begin{subequations}
    \begin{align}
    T_D(f,\pi)&=\int f D\pi\,d\mu\quad\text{\normalfont($\forall (f,\pi)\in
      L^1(\cmass T.)\times\lipfun X.$)}\label{eq:der_def_curr1}\\
    \cmass T_D.&=\locnorm D,{\wder\mu.}.\,\mu.\label{eq:der_def_curr2}
  \end{align}
\end{subequations}
\end{thm}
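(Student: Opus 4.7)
\emph{Construction of $T_D$ from $D$.} Given $D\in\wder\mu.$, set $T_D(f,\pi):=\int_X f\,D\pi\,d\mu$ on $\metuple 1,X.$. Multilinearity, alternation (trivial in dimension one), and the locality axiom of Definition~\ref{defn:metric_currents}(3) are immediate, the last one because $\chi_{\{f\ne 0\}}D\pi=0$ whenever $\pi$ is constant on $\{f\ne 0\}$ by Lemma~\ref{lem:locality_derivations}. Joint weak* continuity follows from weak* continuity of $D:\lipalg X.\to L^\infty(\mu)$ together with dominated convergence applied to $f_n\to f$. The pointwise bound $|D\pi|\le\locnorm D,{\wder\mu.}.\,\glip\pi.$ gives $\cmass T_D.\le\locnorm D,{\wder\mu.}.\,\mu$; the reverse inequality combines the AK mass characterization \cite[Prop.~2.7]{ambrosio-kirch} with the variational description of the local norm, by decomposing a Borel set $A$ into pieces $A_i$ on each of which some $1$-Lipschitz $g_i$ realizes $|Dg_i|\ge(1-\varepsilon)\locnorm D,{\wder\mu.}.$, splitting further to fix the sign of $Dg_i$, and summing $\sum|T_D(\chi_{A_i},\pm g_i)|$. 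The $L^\infty(\mu)$-linearity of $\Cur\mu.$ is a direct check from $T_{\lambda D}(f,\pi)=T_D(f\lambda,\pi)$.

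\emph{Construction of $D_T$ from $T$.} Fix $T\in\mcurr 1,\mu.$. For each $\pi\in\lipalg X.$ the map $f\mapsto T(f,\pi)$ is a bounded linear functional on $L^1(\cmass T.)$ of norm at most $\glip\pi.$, so Radon--Nikodym (combined with the Borel extension of $T$ described after Definition~\ref{defn:mass}) produces a unique $D_T\pi\in L^\infty(\cmass T.)$ with $\|D_T\pi\|_\infty\le\glip\pi.$ and $T(f,\pi)=\int f\,D_T\pi\,d\cmass T.$ for every bounded Borel $f$. Linearity of $\pi\mapsto D_T\pi$ is the multilinearity of $T$. The Leibniz rule
\begin{equation*}
D_T(\pi_1\pi_2)=\pi_1\,D_T\pi_2+\pi_2\,D_T\pi_1
\end{equation*}
is obtained by applying the Ambrosio--Kirchheim chain rule \cite[Thm.~3.5]{ambrosio-kirch} to $(\pi_1,\pi_2)$ with $\phi(x,y)=xy$ truncated so that $\nabla\phi$ is bounded on a neighbourhood of the images of $\pi_1,\pi_2\in\lipalg X.$; this yields $T(f,\pi_1\pi_2)=T(f\pi_2,\pi_1)+T(f\pi_1,\pi_2)$ and the arbitrariness of $f$ concludes. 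Weak* continuity of $D_T$ comes from axiom (4) of Definition~\ref{defn:metric_currents}: for $\pi_n\xrightarrow{\text{w*}}\pi$ in $\lipalg X.$, specialising $f_n\equiv f\in\lipalg X.$ gives $\int f\,D_T\pi_n\,d\cmass T.\to\int f\,D_T\pi\,d\cmass T.$, which upgrades to weak* convergence in $L^\infty(\cmass T.)$ by density of $\lipalg X.$ in $L^1(\cmass T.)$ and the uniform bound $\sup_n\|D_T\pi_n\|_\infty<\infty$.

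\emph{Local norm and identity~\eqref{eq:xtra+1}.} The inequality $\locnorm D_T,{\wder{\cmass T.}.}.\le 1$ is immediate from $\|D_T\pi\|_\infty\le\glip\pi.$. For the reverse, if $A=\{\locnorm D_T,{\wder{\cmass T.}.}.<\eta\}$ had positive $\cmass T.$-measure for some $\eta<1$, picking $\eta'\in(\eta,1)$ and applying Lemma~\ref{lem:good_ktuples} to $B=A$ would give disjoint Borel $B_i\subset A$ and $1$-Lipschitz $\pi^i$ with
\begin{equation*}
\eta'\,\cmass T.(B_i)<\left|T(\chi_{B_i},\pi^i)\right|=\left|\int_{B_i} D_T\pi^i\,d\cmass T.\right|\le\int_{B_i}\locnorm D_T,{\wder{\cmass T.}.}.\,d\cmass T.\le\eta\,\cmass T.(B_i),
\end{equation*}
a contradiction. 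The formula~\eqref{eq:xtra+1} then follows by viewing $D_T\in\wder{\cmass T.}.$ inside $\wder\mu.$ via extension by zero on $N=\{d\cmass T./d\mu=0\}$: on $N^c$ the local norm transfers to $1$, while on $N$ the extension forces the action, and hence the local norm, to vanish.

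\emph{Expected obstacle.} The delicate steps are the product rule and the weak* continuity of $D_T$: both require converting axioms of $T$ (phrased in terms of varying Lipschitz tuples) into $\cmass T.$-a.e.\ pointwise statements about $D_T$. The product rule in particular depends on invoking the AK chain rule with a truncated multiplication, and the truncation must be chosen so that the identity extracted survives dividing out the arbitrary test function $f$. The local norm identity and~\eqref{eq:xtra+1} are comparatively clean once Lemma~\ref{lem:good_ktuples} is in hand.
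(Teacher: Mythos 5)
Your proposal is correct and follows essentially the same path as the paper: Riesz representation on $L^1(\cmass T.)$ to produce $D_T\pi$, the Ambrosio--Kirchheim Theorem 3.5 for the product rule, the continuity axiom for weak* continuity, Lemma~\ref{lem:good_ktuples} for $\locnorm D_T,{\wder{\cmass T.}.}.=1$, and the identification of $\wder{\cmass T.}.$ with $\chi_{U_T}\wder\mu.$ for~\eqref{eq:xtra+1}. The only stylistic difference is that you prove the local-norm identity by contradiction while the paper runs a direct exhaustion letting $\eta\nearrow 1$; both hinge on Lemma~\ref{lem:good_ktuples} and are interchangeable.
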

\begin{proof}
  Given $T\in\mcurr 1,\mu.$, for a fixed $f\in\lipalg X.$  one defines
  a linear functional on $L^1(\cmass T.)$ by:
\begin{equation}
  g\mapsto T(g,f)\quad(g\in L^1(\cmass T.));
\end{equation}
the Riesz Representation Theorem gives a unique $D_Tf\in
L^\infty(\cmass T.)$:
  \begin{equation}\label{1dim_rep}
    \int_X gD_Tf\,d\cmass T.=T(g,f);
  \end{equation}
  the map $D_T:\lipalg X.\to L^\infty(\cmass T.)$ is a derivation
  because:
  \begin{itemize}
  \item It is linear by linearity of currents;
  \item It is bounded with norm $1$ because:
    \begin{equation}
      \left|\int_X gD_Tf\,d\cmass T.\right|\le\glip f.\int_X|g|\,d\cmass T.;
    \end{equation}
  \item The product rule follows from \cite[Eq.~3.1 in Thm.~3.5]{ambrosio-kirch}; 
  \item The weak* continuity follows from the continuity axiom for
    currents ((4) in Defn.~\ref{defn:metric_currents}).
  \end{itemize}
  Note that the module $\wder{\cmass T.}.$ can be canonically
  identified with the submodule
  $\chi_{U_T}\wder\mu.$ where
  \begin{equation}
    U_T=\left\{x\in X: \frac{d\cmass T.}{d\mu}(x)>0\right\},
  \end{equation}
  so $\Der\mu.$ is well-defined and then~(\ref{eq:xtra+1}) will follow from~(\ref{eq:curr_def_der2}).
  \par By Lemma \ref{lem:good_ktuples}, for each $\eta\in(0,1)$ we can
  find disjoint Borel sets $B_i$ and $1$-Lipschitz functions
  $\pi^i\in\lipfun X.$
  with $\cmass T.(X\setminus\bigcup_iB_i)=0$ and
  \begin{equation}
    T(\chi_{B_i},\pi^i)>\eta\cmass T.(B_i);
  \end{equation}
  in particular, for each $n\in\natural$ one has
  $\chi_{S_i}D_T\pi^i\ge\frac{n}{n+1}\eta\chi_{S_i}$, where $S_i$ is a subset of
  $B_i$ of measure at least $\frac{\eta}{n+1}\cmass T.(B_i)$; using an
  exhaustion argument and then letting $\eta\to1$ and $n\nearrow\infty$, we
  conclude that \eqref{eq:curr_def_der2} holds. Note that we have used
  the fact that each derivation $D\in\wder\mu.$ can be canonically extended to a map
  $D:\lipfun X.\to L^\infty(\mu)$ (see \sync rem:derivation_extension.\ in \cite{deralb}).
  \par We now prove the second part of this Theorem; note that for $D\in\wder\mu.$
  \eqref{eq:der_def_curr1} uniquely determines a current $T_D\in\mcurr 1,\mu.$
  because the axioms of metric currents follow from the corresponding properties
  of derivations. Note also that $T_{D_1+D_2}=T_{D_1}+T_{D_2}$ and
  $T_{\lambda D}=T_D\crest\lambda$, showing that $\Cur\mu.$ is an
  $L^\infty(\mu)$-module homomorphism.
  \par As $|D\pi|\le\glip\pi.\,\locnorm D,{\wder\mu.}.$, $\cmass
  T_D.\le\locnorm D,{\wder\mu.}.\,\mu$. On the other hand, for each
  $\eta\in(0,1)$ and each Borel set $A$, we can find disjoint
  Borel sets $B_i\subset A$ and $1$-Lipschitz functions $\pi^i$ with $\cmass
  T.(A\setminus\bigcup_iB_i)=0$ and
  \begin{equation}
    \chi_{B_i}D\pi^i\ge\eta\chi_{B_i}\locnorm D,{\wder\mu.}.;
  \end{equation}
  in particular,
  \begin{equation}
    \cmass T_D.(A)\ge\eta\int_A\locnorm D,{\wder\mu.}.\,d\mu
  \end{equation}
  which implies \eqref{eq:der_def_curr2}.
\end{proof}
\begin{rem}
  From Theorem \ref{thm:one_currents_derivations} one obtains the
  following identities:
  \begin{align}
    \Cur\mu.\left(\Der\mu.(T)\right)\crest\frac{d\cmass T.}{d\mu}&=T\\
    \Der\mu.\left(\Cur\mu.(D)\right)&=\frac{D}{\locnorm D,{\wder\mu.}.}.
  \end{align}
\end{rem}
\section{Currents and Alberti representations}
\label{sec:currents_and_alberti}
\par The goal of this Section is to prove Theorem
\ref{thm:currents_arbitrary_cones}.  Throughout this Section we will
denote by $\{e_i\}_{i=1}^k$ the standard basis of $\real^k$. In the
proof of Theorem \ref{thm:currents_arbitrary_cones} we will use the
following consequence of Rainwater's Lemma \cite{rainwater-note}
(compare Corollary 5.8 in \cite{bate-diff} and \sync biLip_dis.\ in
\cite{deralb}):
\begin{lem}
  \label{lem:rainw_part}
  Let $X$ be a complete separable metric space and $\mu$ a Radon
  measure on $X$. Let $f:X\to\real^k$ be a Lipschitz map, $w\in{\mathbb
    S}^{k-1}$, $\alpha\in(0,\pi/2)$ and $\delta>0$. For any Borel
  subset $B\subset X$ there are disjoint Borel sets $A,S$ such that:
  \begin{enumerate}
  \item $A\cup S = B$;
  \item The measure $\mu\mrest A$ admits an Alberti representation in the
    $f$-direction of $\cone(w,\alpha)$ with $\langle w,f\rangle$-speed $\ge\delta$;
  \item The set $S$ is $\frags(X,f,\cone(w,\alpha),\langle w,f\rangle,\ge\delta)$-null.
  \end{enumerate}
\end{lem}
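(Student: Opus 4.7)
The plan is to apply a Rainwater-type exhaustion argument directly to the family of measures $\nu_\gamma$ supported on fragments $\gamma \in \frags(X,f,\cone(w,\alpha),\langle w,f\rangle,\ge\delta)$, using the criterion of Theorem~\ref{thm:alberti_rep_prod} together with the inclusion $\frags(\ge\delta)\subset\frags(>\delta-1/n)$.

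First I would verify that the class
\begin{equation*}
\mathcal{N}=\{N\subset B\text{ Borel} : N\text{ is }\frags(X,f,\cone(w,\alpha),\langle w,f\rangle,\ge\delta)\text{-null}\}
\end{equation*}
is closed under countable unions: if each $N_j$ is in $\mathcal{N}$ then for every admissible $\gamma$ we have $\hmeas 1._\gamma(\bigcup_j N_j)\le\sum_j\hmeas 1._\gamma(N_j)=0$. Then a standard exhaustion (choose $S_k\in\mathcal{N}$ with $\mu(S_k)$ within $1/k$ of the supremum $\sup_{N\in\mathcal{N}}\mu(N)\le\mu(B)<\infty$, set $S=\bigcup_k S_k$) produces an element $S\in\mathcal{N}$ of maximal $\mu$-measure inside $B$. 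Define $A=B\setminus S$. By maximality, no Borel subset $A'\subset A$ with $\mu(A')>0$ lies in $\mathcal{N}$.

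Next I would upgrade this to an Alberti representation. Because $\frags(X,f,\cone(w,\alpha),\langle w,f\rangle,\ge\delta)\subset\frags(X,f,\cone(w,\alpha),\langle w,f\rangle,>\delta-1/n)$ for every $n\in\natural$, any $\frags(>\delta-1/n)$-null subset of $A$ is automatically $\frags(\ge\delta)$-null and hence $\mu$-null. Theorem~\ref{thm:alberti_rep_prod} then yields, for each $n$, a $(1,1+1/n)$-biLipschitz Alberti representation $\albrep n.$ of $\mu\mrest A$ in the $f$-direction of $\cone(w,\alpha)$ with $\langle w,f\rangle$-speed $>\delta-1/n$. To obtain a single representation with speed $\ge\delta$, I would partition $A$ by using Theorem~\ref{thm:alb_glue} together with a diagonal/selection argument: for each $n$ the Borel set where the speed condition for $\albrep n.$ is sharp can be measured, and a telescoping decomposition of $A$ into Borel pieces $A_n$ (together with gluing the restrictions $\albrep n.\mrest A_n$) produces the desired representation. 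Equivalently, one can apply Rainwater's Lemma directly to the family $\{\nu_\gamma:\gamma\in\frags(\ge\delta)\}$, which yields in one stroke both the null set $S$ and a decomposition of $\mu\mrest A$ as an integral of such $\nu_\gamma$'s (so the speed bound $\ge\delta$ is built in from the definition of the class).

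The main obstacle is the passage from the strict inequality $>\delta-1/n$ (which is what Theorem~\ref{thm:alberti_rep_prod} outputs) to the non-strict inequality $\ge\delta$. I expect this to be handled cleanly by the direct Rainwater formulation: the only measurability point to check is that the resulting parametrizing probability $P$ on $\frags(X)$ is regular Borel, which follows because $\frags(X)$ is Polish (as $X$ is complete separable) and the subfamily $\frags(\ge\delta)$ is Borel in $\frags(X)$ by the Borel measurability of $\metdiff\gamma$ and $(\langle w,f\rangle\circ\gamma)'$ in the appropriate product measurable structure.
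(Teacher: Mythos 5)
Your exhaustion argument for producing $S$ (the maximal $\frags(X,f,\cone(w,\alpha),\langle w,f\rangle,\ge\delta)$-null Borel subset of $B$) is correct, and so is the observation that the inclusion $\frags(\ge\delta)\subset\frags(>\delta-1/n)$ holds here — the cone condition forces $\metdiff\gamma>0$ a.e., so the non-strict inequality for $\ge\delta$ upgrades to a strict one for any $\delta-1/n<\delta$.

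Where the proposal does not go through is your first suggested fix for the speed bound. Restricting an Alberti representation $\albrep n.$ to a Borel subset $A_n\subset A$ and then gluing cannot upgrade the speed from $>\delta-1/n$ to $\ge\delta$: the speed constraint is a per-fragment, per-time condition, and $P_n$ may charge fragments whose $\langle w,f\rangle$-speed lies in $(\delta-1/n,\delta)$ on their whole domain. Taking $\albrep n.\mrest A_n=(P_n,\nu_n\mrest A_n)$ keeps exactly those fragments, so no telescoping choice of $A_n$ removes the slow ones. The gluing theorem (Theorem~\ref{thm:alb_glue}) preserves the per-piece speed you feed it; it does not improve it.

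Your second suggested route — applying Rainwater's Lemma directly to the family of $1$-dimensional Hausdorff measures carried by fragments in the \emph{closed} class $\frags(X,f,\cone(w,\alpha),\langle w,f\rangle,\ge\delta)$ — is the correct one, and it is in fact the approach the paper intends: the Lemma is stated without proof as a ``consequence of Rainwater's Lemma'', with pointers to Corollary~5.8 of Bate and Lemma~2.59 of the author's earlier paper, both of which carry out exactly this. You should be aware that there is a bit more to verify than the regularity of $P$: one needs a Borel (or at least universally measurable) structure on the admissible fragment class, a Borel assignment $\gamma\mapsto\nu_\gamma\ll\hmeas 1._\gamma$, and the Borel measurability condition (4) in Definition~\ref{defn:Alberti_rep}. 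All of these are established in the cited references for the $>\sigma$ class and go through unchanged for $\ge\sigma$; the relevant technical input is that the pointwise conditions defining $\frags(\ge\delta)$ are Borel in $\frags(X)$. So the proposal is salvageable, but the working part is the direct Rainwater argument, not the $n\to\infty$ gluing.
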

\par The proof of Theorem \ref{thm:currents_arbitrary_cones} relies on
the following Lemma:
\begin{lem}\label{lem:curr_cone_estimate}
  Let $X$ be as above and let $T$ be a $k$-dimensional metric current in $X$.
  Suppose that $T(\chi_B,\pi_1,\cdots,\pi_k)\ge\eta\cmass T.(B)$, where
  $B$ is Borel and $\pi:X\to\real$ is $1$-Lipschitz and $\eta>0$; then for all
  pairs $(\delta,\alpha)\in(0,\eta)\times(0,\pi/2)$ there is a Borel
  partition $B=A_{e_i}\cup S_{e_i}$ with $\cmass T.\mrest A_{e_i}$
  admitting an Alberti representation in the $\pi$-direction of
  $\cone(e_i,\alpha)$ with $\pi_i$-speed $\ge\delta$ and 
  $\cmass T.(A_{e_i})\ge(\eta-\delta)\cmass T.(B)$.
\end{lem}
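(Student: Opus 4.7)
\noindent\emph{Plan.} The natural strategy is to use Lemma~\ref{lem:rainw_part} to produce the partition and then show that the ``null'' part $S_{e_i}$ contributes little to $T(\chi_B,\pi_1,\ldots,\pi_k)$. Concretely, I apply Lemma~\ref{lem:rainw_part} to $f=\pi\colon X\to\real^k$, $w=e_i$, angle $\alpha$, speed $\delta$, and the set $B$, obtaining a disjoint decomposition $B=A_{e_i}\cup S_{e_i}$ such that $\cmass T.\mrest A_{e_i}$ admits the desired $(1,1+\varepsilon)$-biLipschitz Alberti representation in the $\pi$-direction of $\cone(e_i,\alpha)$ with $\pi_i$-speed $\ge\delta$, while $S_{e_i}$ is $\frags(X,\pi,\cone(e_i,\alpha),\pi_i,\ge\delta)$-null. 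It remains to show the mass estimate
\begin{equation}
  \cmass T.(A_{e_i})\ge(\eta-\delta)\cmass T.(B),
\end{equation}
and by splitting $T(\chi_B,\pi_1,\ldots,\pi_k)=T(\chi_{A_{e_i}},\pi_1,\ldots,\pi_k)+T(\chi_{S_{e_i}},\pi_1,\ldots,\pi_k)$ together with the mass bound $|T(\chi_{A_{e_i}},\pi_1,\ldots,\pi_k)|\le\cmass T.(A_{e_i})$, this reduces to proving the key inequality
\begin{equation}\label{eq:plan_key}
  |T(\chi_{S_{e_i}},\pi_1,\ldots,\pi_k)|\le\delta\,\cmass T.(S_{e_i}).
\end{equation}

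\noindent The heart of the proof is \eqref{eq:plan_key}, and here I would invoke the approximation scheme of Lemma~\ref{lem:dst_appx} applied to $F=\pi$, $w=e_i$, and $K$ a compact subset exhausting $S_{e_i}$ in $\cmass T.$-measure. This yields bounded Lipschitz approximations $\tilde f_{n,m}\xrightarrow{w*}\tilde g_n$ and $\tilde f_n\xrightarrow{w*}\pi_i$ on $S_{e_i}$, where on each piece $S_{n,m,a}$ of a finite Borel partition of $S_{e_i}$ the function $\tilde f_{n,m}$ coincides with
\begin{equation}
  \tilde f_n(x_{n,m,a})+\delta\, d(\cdot,x_{n,m,a})+\cot\alpha\sum_{j\ne i}|\pi_j-\pi_j(x_{n,m,a})|.
\end{equation}
Plugging $\tilde f_{n,m}$ into the $i$-th slot of $T(\chi_{S_{n,m,a}},\pi_1,\ldots,\pi_k)$, the locality of metric currents (Lemma~\ref{lem:locality_derivations}'s analogue for currents) lets me replace $\tilde f_{n,m}$ by the explicit expression above on $S_{n,m,a}$; constants vanish by locality, and the terms $|\pi_j-\pi_j(x_{n,m,a})|$ for $j\ne i$ vanish by the alternating property of $T$ (on the sets $\{\pi_j\gtrless\pi_j(x_{n,m,a})\}$ the absolute value agrees up to sign with $\pi_j$ itself, which already appears among the slots; on $\{\pi_j=\pi_j(x_{n,m,a})\}$ locality gives $0$). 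Only the $\delta\, d(\cdot,x_{n,m,a})$ term survives, and the mass bound applied to the $1$-Lipschitz function $d(\cdot,x_{n,m,a})$ yields
\begin{equation}
  |T(\chi_{S_{n,m,a}},\pi_1,\ldots,\tilde f_{n,m},\ldots,\pi_k)|\le\delta\,\cmass T.(S_{n,m,a}).
\end{equation}
Summing over $a$ gives $|T(\chi_{S_{e_i}},\pi_1,\ldots,\tilde f_{n,m},\ldots,\pi_k)|\le\delta\,\cmass T.(S_{e_i})$.

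\noindent Finally I pass to the limit in the continuity axiom (item (4) of Definition~\ref{defn:metric_currents}): letting $m\to\infty$ then $n\to\infty$, the sequences $\tilde f_{n,m}$ converge weak* to functions that, by the construction in Lemma~\ref{lem:dst_appx}, agree with $\pi_i$ on $S_{e_i}$ up to a $\cmass T.$-null set, so by locality the LHS converges to $|T(\chi_{S_{e_i}},\pi_1,\ldots,\pi_i,\ldots,\pi_k)|$, establishing \eqref{eq:plan_key}. The main obstacle is the alternating-plus-locality argument used to kill the off-diagonal terms $|\pi_j-\pi_j(x_{n,m,a})|$: one must be careful that substituting $|\pi_j-c|$ in the $i$-th slot (with $\pi_j$ already occupying the $j$-th slot) really produces a zero contribution for AK currents, which requires combining Ambrosio--Kirchheim's chain rule for Lipschitz compositions with a partition into $\{\pi_j>c\}$, $\{\pi_j<c\}$, $\{\pi_j=c\}$ and appealing to locality on the last piece. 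A secondary technical point is the limit passage in the continuity axiom, where the $L^\infty$ bounds on $\tilde f_{n,m}$ (guaranteed by the truncation step in Lemma~\ref{lem:dst_appx}) are essential to invoke weak* continuity.
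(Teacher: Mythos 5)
Your proposal is correct and follows essentially the same route as the paper's proof: apply Lemma~\ref{lem:rainw_part} to split off a $\frags(X,\pi,\cone(e_i,\alpha),\pi_i,\ge\delta)$-null part, then use the approximation scheme of Lemma~\ref{lem:dst_appx} together with locality, multilinearity, and the alternating property of metric currents to show that $|T(\chi_{S_{e_i}},\pi_1,\ldots,\pi_k)|\le\delta\cmass T.(S_{e_i})$, from which the mass estimate for $A_{e_i}$ follows by subtracting. The only (cosmetic) difference is ordering: the paper first proves the bound $|T(\chi_K,\pi_1,\ldots,\pi_k)|\le\delta\cmass T.(K)$ for any compact $\frags$-null $K\subset B$ and invokes Lemma~\ref{lem:rainw_part} at the very end, whereas you invoke Lemma~\ref{lem:rainw_part} up front and then estimate on $S_{e_i}$ by exhausting with compact subsets, but the two are equivalent by inner regularity of $\cmass T.$. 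Your treatment of the $\cot\alpha$ terms (split into $\{\pi_j\gtrless c\}$, kill by alternating, handle $\{\pi_j=c\}$ by locality) matches the paper's computation in \eqref{eq:curr_cone_estimate_p2} exactly, and you correctly flag both the weak* limit passage and the need for uniform $L^\infty$ bounds on the $\tilde f_{n,m}$.
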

\begin{proof}
  Without loss of generality, we assume $i=1$.
  Because of Lemma \ref{lem:rainw_part} we will obtain an upper
  bound on $\cmass T.(K)$, where $K\subset B$ is compact and
  $\frags(X,\pi,\cone(e_1,\alpha),\pi_1,\ge\delta)$-null. We apply
  Lemma \ref{lem:dst_appx} and we will use the notation from its
  statement in the remainder of the proof. In particular, we take
  $w=e_1$, $u_i=e_{1+i}$ and $F=(\pi_i)_{i=1}^k$. The following
  estimate is obtained by using the locality axiom ((3) in Definition
  \ref{defn:metric_currents}) and \eqref{eq:dst_appx_s1}:
  \begin{multline}\label{eq:curr_cone_estimate_p1}
    \left|T(\chi_{S_{n,m,a}},\tilde f_{n,m,a},\pi_2,\ldots,\pi_k)\right|\le\delta
    \left|T(\chi_{S_{n,m,a}},\dist\cdot,x_{n,m,a}.,\pi_2,\ldots,\pi_k)\right|\\+\cot\alpha\sum_{\beta>1}
    \left|T(\chi_{S_{n,m,a}}, \left|\pi_\beta-\pi_\beta(x_{n,m,a})\right|,\pi_2,\ldots,\pi_k)\right|;
  \end{multline}
  we now let
  \begin{equation}
    \begin{aligned}
      S_{n,m,a,\beta+}&=\left\{x\in S_{n,m,a}:
        \pi_\beta(x)\ge\pi_\beta(x_{n,m,a})\right\}\\
      S_{n,m,a,\beta-}&=\left\{x\in S_{n,m,a}:
        \pi_\beta(x)<\pi_\beta(x_{n,m,a})\right\},
    \end{aligned}
  \end{equation}
  and conclude that, for $\beta>1$,
  \begin{equation}\label{eq:curr_cone_estimate_p2}
    \begin{split}
      T(\chi_{S_{n,m,a}},
      |&\pi_\beta-\pi_\beta(x_{n,m,a})|,\pi_2,\ldots,\pi_k)
      \\ &= T(\chi_{S_{n,m,a,\beta+}},
      \pi_\beta-\pi_\beta(x_{n,m,a}), \pi_2,\ldots,\pi_k)  \\&\;-
      T(\chi_{S_{n,m,a,\beta-}}, \pi_\beta-\pi_\beta(x_{n,m,a}),
      \pi_2,\ldots,\pi_k)\\
      &= T(\chi_{S_{n,m,a,\beta+}},\pi_\beta,\pi_2,\ldots,\pi_k) \\&\;-
      T(\chi_{S_{n,m,a,\beta-}},\pi_\beta,\pi_2,\ldots,\pi_k)\\&=0
    \end{split}
  \end{equation}
  where in the last inequality we used that currents
  are alternating. Combining \eqref{eq:curr_cone_estimate_p1} and
  \eqref{eq:curr_cone_estimate_p2} we obtain:
  \begin{equation}
    \label{eq:curr_cone_estimate_p3}
    \left|T(\chi_{S_{n,m,a}},\tilde
      f_{n,m,a},\pi_2,\cdots,\pi_k)\right|\le\delta\cmass T.(S_{n,m,a}).
  \end{equation}
 Summing in $a$ and letting $m\nearrow\infty$ we obtain:
  \begin{equation}
    \label{eq:curr_cone_estimate_p4}
    \left|T(\chi_{S_{}},\tilde
      f_{n},\pi_2,\cdots,\pi_k)\right|\le\delta\cmass T.(S_{});
  \end{equation}
  but as $\cmass T.(K\setminus S)=0$:
  \begin{equation}
    \label{eq:curr_cone_estimate_p5}
 \left|T(\chi_K,\tilde f_n,\pi_2,\cdots,\pi_k)\right|\le\delta\cmass T.(K);
  \end{equation}
  letting $n\nearrow\infty$ and using that $\tilde f=\pi_1$ on $K$ we conclude that
  \begin{equation}
    \label{eq:curr_cone_estimate_p6}
\left|T(\chi_K,\pi_1,\pi_2,\cdots,\pi_k)\right|\le\delta\cmass T.(K).
  \end{equation}
  The proof is completed by applying Lemma \ref{lem:rainw_part}.
\end{proof}
\begin{proof}[Proof of Theorem \ref{thm:currents_arbitrary_cones}]
  For $\eta\in(0,1)$ let the sets $B_j$ and the functions $\pi^j$ satisfy the conclusion of
  Lemma \ref{lem:good_ktuples} for $B=X$. Let $\alpha\in(0,\pi/2)$
  be such that the cone fields $\{\cone(e_i,\alpha)\}_{i=1}^k$ are
  independent. For $\delta>0$, Lemma \ref{lem:curr_cone_estimate}
  gives a partition $B_j=A_{j,e_1}\cup S_{j,e_1}$ with $\cmass T.\crest
  A_{j,e_1}$ admitting an Alberti representation in the
  $\pi^j$-direction of $\cone(e_1,\alpha)$ with $\pi^j_1$-speed
  $\ge\delta$ and
  \begin{equation}
    \cmass T.(A_{j,e_1})\ge(\eta-\delta)\cmass T.(B);
  \end{equation}
  proceeding by induction and applying Lemma \ref{lem:good_ktuples}, we
  obtain a partition
  \begin{equation}
    B_j=A_{j,e_1,\ldots,e_k}\cup
    S_{j,e_1,\ldots,e_k}
  \end{equation}
  with $\cmass T.\crest A_{j,e_1,\ldots,e_k}$
  admitting Alberti representations in the $\pi^j$-directions of the
  cone fields
  $\{\cone(e_i,\alpha)\}_{i=1}^k$ and
  \begin{equation}
    \cmass T.(A_{j,e_1,\ldots,e_k})\ge\underbrace{\prod_{i=1}^k(\eta-i\delta)}_{c}\,\cmass T.(B).
  \end{equation}
  If $\delta\in(0,\eta/k)$, $c>0$; as $\cmass T.\crest
  A_{j,e_1,\ldots,e_k}$ admits Alberti representations in the
  $\pi^j$-directions of $k$ independent cone fields, the proof is
  completed by applying Corollary \ref{cor:mu_arb_cone} and an
  exhaustion argument.
\end{proof}
\begin{cor}\label{cor:currents_assouad_bound}
  If $X$ is a metric space with Assouad dimension $\le Q$, then
  \begin{equation}\mcurr k,X.=\{0\}
  \end{equation}
  for $k>Q$; moreover, if $T\in\mcurr k,X.$, the module
  $\wder{\cmass T.}.$ if finitely generated with at most $Q$ generators.
\end{cor}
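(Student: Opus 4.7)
The plan is to combine Theorem~\ref{thm:currents_arbitrary_cones} with the dimensional bound on derivation modules established in \cite{deralb} (the result already referenced above, asserting that $\wder\mu.$ is finitely generated when $\mu$ or the ambient space is doubling, with an explicit count of generators in terms of the Assouad dimension). The heuristic is simple: Theorem~\ref{thm:currents_arbitrary_cones} manufactures independent derivations out of a non-zero current, while finite Assouad dimension caps the number of independent derivations that $\wder\mu.$ can support.

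Concretely, I would take $T \in \mcurr k,X.\setminus\{0\}$ and apply Theorem~\ref{thm:currents_arbitrary_cones} to produce $k$ independent derivations in $\wder{\cmass T.}.$. Since $\cmass T.$ is a finite Radon measure on a space of Assouad dimension at most $Q$ (and Assouad dimension passes to subspaces, so the support of $\cmass T.$ has dimension $\le Q$ as well), the bound from \cite{deralb} gives that $\wder{\cmass T.}.$ is finitely generated with at most $Q$ generators. Because the cardinality of a set of independent elements in a finitely generated module cannot exceed its generating rank, we obtain $k \le Q$. Contrapositively, $\mcurr k,X. = \{0\}$ whenever $k > Q$, which is the first claim. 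The second claim is the same bound applied directly: for every $T \in \mcurr k,X.$, the measure $\cmass T.$ satisfies the hypotheses of the derivation bound, so $\wder{\cmass T.}.$ is generated by at most $Q$ derivations.

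There is no real obstacle here; the proof is a two-line invocation of prior machinery, and the only thing to check is bookkeeping --- namely, that the derivation bound from \cite{deralb} is stated for arbitrary Radon measures on a space of finite Assouad dimension, which it is.
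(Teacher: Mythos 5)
Your proposal is correct and follows exactly the paper's own argument: the paper's proof of this corollary is a one-line invocation of Theorem~\ref{thm:currents_arbitrary_cones} together with the Assouad-dimension bound on $\wder\mu.$ from \cite{deralb}, which is precisely what you do. The only elaboration you add (and it is the right one) is spelling out why $k$ independent derivations force $k \le Q$ via the generating rank.
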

\begin{proof}
  It follows by Theorem \ref{thm:currents_arbitrary_cones} and by
  \sync cor:assouad_bound.\ in \cite{deralb}.
\end{proof}
\par Note that a more general result, which fully exploits the
alternating property of metric currents, was obtained by Z\"ust
\cite[Prop.~2.5]{zust_phd} who showed that $\mcurr k,X.=\{0\}$ for
$k$ strictly larger than the
\emph{Nagata dimension} of the space $X$. The class of spaces with finite Nagata
dimension is larger than the class of spaces with finite Assouad
dimension and the Assouad dimension bounds the Nagata dimension from
above \cite[Thm.~1.1]{ledonne_rajala_nagata}.
\section{A representation formula}
\label{sec:rep_formula}
The goal of this Section is to prove Theorem \ref{thm:gl_curr_rep} and
the representation formula
\eqref{eq:gl_curr_rep_s1} which expresses metric currents in terms of
derivations. We will use some terminology and results from Subsection
\ref{subsec:ext_prod} where, roughly speaking, we construct the
exterior powers of the modules $\wder\mu.$ and $\wform\mu.$. The
dispirited reader may just want to think of expressions like
$D_1\wedge\cdots\wedge D_k$ and $df_1\wedge\cdots\wedge df_k$ as
analogues of measurable $k$-vectors and $k$-covectors fields and keep
in mind that as $\wder\mu.$ and $\wform\mu.$ are
$L^\infty(\mu)$-normed modules, their exterior products can be
constructed in three different categories: Banach spaces,
$L^\infty(\mu)$-modules and $L^\infty(\mu)$-normed modules.
\begin{rem}\label{rem:ext_pairings}
  We construct a bilinear pairing between the $L^\infty(\mu)$-normed
  modules $\lnmodext k,\mu,{\wder\mu.}.$ and $\lnmodext
  k,\mu,{\wform\mu.}.$; for notational simplicity, we will let $\hwder
  k,\mu.=\lnmodext k,\mu,{\wder\mu.}.$ and $\hwform k,\mu.=\lnmodext
  k,\mu,{\wform\mu.}.$. Consider the map:
  \begin{equation}
    \begin{aligned}
      \Phi:(\wder\mu.)^k\times(\wform\mu.)^k&\to L^\infty(\mu)\\
      \left((D_1,\cdots,D_k),(\omega_1,\cdots,\omega_k)\right)&\mapsto\det(\langle
      D_i,\omega_j\rangle)_{i,j=1}^k.
    \end{aligned}
  \end{equation}
  For a fixed $k$-tuple $\Omega=(\omega_1,\cdots,\omega_k)$, the map
  \begin{equation}
    \begin{aligned}
      \Phi_\Omega:(\wder\mu.)^k&\to L^\infty(\mu)\\
(D_1,\cdots,D_k)&\mapsto \Phi((D_1,\cdots,D_k),\Omega)
\end{aligned}
\end{equation}
is alternating $L^\infty(\mu)$-multilinear and satisfies the bound
\begin{equation}
  \label{eq:ext_pairings1}
  \begin{split}
    |\Phi_\Omega(D_1,\cdots,D_k)|&\le\sum_{\sigma\in\text{Perm}(k)}\prod_{i=1}^k
    |\langle D_{\sigma(i)},\omega_i\rangle|\\
    &\le k!\prod_{i=1}^k\locnorm D_i,{\wder\mu.}.\prod_{j=1}^k\locnorm \omega_j,{\wform\mu.}..
  \end{split}
\end{equation}
By the universal property of $\hwder k,\mu.$ we obtain an
$L^\infty(\mu)$-homomorphism $\hat\Phi_\Omega:\hwder k,\mu.\to
L^\infty(\mu)$. Note that the map
\begin{equation}
  \begin{aligned}
    \Psi:(\wform\mu.)^k&\to(\hwder k,\mu.)'\\
    \Omega&\mapsto \hat\Phi_\Omega
  \end{aligned}
\end{equation}
is an alternating $L^\infty(\mu)$-multilinear map with norm at most
$k!$ (by \eqref{eq:ext_pairings1}). By the universal property of
$\hwform k,\mu.$ we obtain a homomorphism $\hat\Psi:\hwform k,\mu.\to (\hwder
k,\mu.)'$ and thus an $L^\infty(\mu)$-bilinear pairing
\begin{equation}
  \label{eq:ext_pairings2}
  \begin{aligned}
    \langle\cdot,\cdot\rangle:\hwder k,\mu.\times\hwform k,\mu.&\to
    L^\infty(\mu)\\
    (\xi,\omega)&\mapsto \hat\Psi(\omega)(\xi),
  \end{aligned}
\end{equation}
satisfying
\begin{equation}
  \label{eq:ext_pairings3}
  |\langle\xi,\omega\rangle|\le k!\locnorm\xi,{\hwder k,\mu.}.\,\locnorm\omega,{\hwform k,\mu.}..
\end{equation}
\par By a similar argument, we can produce a pairing working in the
category of $L^\infty(\mu)$-modules:
\begin{equation}
  \label{eq:pairings4}
  \langle\cdot,\cdot\rangle:\lmodext k,\mu,{\wder\mu.}.\times
  \lmodext k,\mu,{\wform\mu.}.\to L^\infty(\mu)
\end{equation}
which is $L^\infty(\mu)$-bilinear and satisfies:
\begin{equation}
  \label{eq:pairings5}
  \|\langle\xi,\omega\rangle\|_{L^\infty(\mu)}\le k!\|\xi\|_{\lmodext k,\mu,{\wder\mu.}.}\|\omega\|_{\lmodext k,\mu,{\wform\mu.}.}.
\end{equation}
Working in the category of Banach spaces we can produce a pairing
\begin{equation}
  \label{eq:pairings6}
  \langle\cdot,\cdot\rangle:\banext k,{\wder\mu.}.\times
  \banext k,{\wform\mu.}.\to L^\infty(\mu)
\end{equation}
which is $\real$-bilinear an satisfies
\begin{equation}
  \label{eq:pairings7}
  \|\langle\xi,\omega\rangle\|_{L^\infty(\mu)}\le k!\|\xi\|_{\banext k,{\wder\mu.}.}\|\omega\|_{\banext k,{\wform\mu.}.}.
\end{equation}
\par Note that given $(D_1,\cdots, D_k)\in(\wder\mu.)^k$, we can
regard $D_1\wedge\cdots\wedge D_k$ as either an element of $\hwder
k,\mu.$, or of $\lmodext k,\mu,{\wder\mu.}.$ or of $\banext
k,{\wder\mu.}.$. In the sequel, unless specified all three
possibilities are admitted. A similar observation can be applied to an
expression $df_1\wedge\cdots\wedge df_k$ where
$(f_1,\cdots,f_k)\in(\lipfun X.)^k$ and to a pairing $\langle
D_1\wedge\cdots\wedge D_k, df_1\wedge\cdots\wedge df_k\rangle$.
\end{rem}
\par We now prove the local version of Theorem \ref{thm:gl_curr_rep}:
\begin{lem}\label{lem:loc_curr_rep}
  For $T\in\mcurr k,X.$, suppose that the module $\wder{\cmass T.}.$
  is free on the derivations
  $\{D_i\}_{i=1}^N$ which have pseudodual functions
  $\{g_i\}_{i=1}^N\subset\lipalg X.$. Then there are
  $\{\lambda_a\}_{a\in\Lambda_{k,N}}\subset L^\infty(\cmass T.)$ such
  that:
  \begin{equation}\label{eq:loc_curr_rep}
    T(f,\pi_1,\cdots,\pi_k)=\sum_{a\in\Lambda_{k,N}}\int_Xf\lambda_a\left\langle
      D_{a_1}\wedge\cdots\wedge
      D_{a_k},d\pi_1\wedge\cdots\wedge d\pi_k\right\rangle\,d\cmass T.,
  \end{equation}
  where $\Lambda_{k,N}$ denotes the set of ordered $k$-tuples
  consisting of distinct elements of $\{1,\cdots,N\}$.
\end{lem}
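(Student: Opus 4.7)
The plan is to induct on $k$, with Theorem~\ref{thm:one_currents_derivations} serving as both the base case and the engine of the inductive step (the number $N$ of generators is fixed throughout). First, for each $a \in \Lambda_{k, N}$, the set function $E \mapsto T(\chi_E, g_{a_1}, \ldots, g_{a_k})$ is a finite signed Borel measure of total variation at most $\cmass T.$ (we may assume each $g_i$ is $1$-Lipschitz), and its Radon-Nikodym derivative with respect to $\cmass T.$ defines $\lambda_a \in L^\infty(\cmass T.)$ satisfying $T(f, g_{a_1}, \ldots, g_{a_k}) = \int f \lambda_a \, d\cmass T.$ for bounded Borel $f$. The base case $k = 1$ is immediate: by Theorem~\ref{thm:one_currents_derivations}, $T$ corresponds to a derivation whose expansion in the free basis reads $\sum_i \lambda_i D_i$, with coefficients matched via $D_j g_i = \delta_{ij}$.

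For the inductive step, fix Lipschitz $\pi_1, \ldots, \pi_{k-1}$ and observe that $(f, \pi) \mapsto T(f, \pi_1, \ldots, \pi_{k-1}, \pi)$ is a $1$-dimensional metric current with mass bounded by $\prod_j \glip{\pi_j}. \cmass T.$, inheriting the required axioms from $T$. Applying the base case produces $\rho_c \in L^\infty(\cmass T.)$ depending on $\pi_1, \ldots, \pi_{k-1}$ with
\[
T(f, \pi_1, \ldots, \pi_{k-1}, \pi) = \sum_{c = 1}^N \int f \rho_c D_c \pi \, d\cmass T.,
\]
and testing against $\pi = g_c$ identifies $\rho_c$ as the Radon-Nikodym density of $T(\cdot, \pi_1, \ldots, \pi_{k-1}, g_c)$ with respect to $\cmass T.$.

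To compute this density I apply the inductive hypothesis to the $(k-1)$-current $T^{(c)}(f, \pi_1, \ldots, \pi_{k-1}) := T(f, \pi_1, \ldots, \pi_{k-1}, g_c)$, whose mass is absolutely continuous with respect to $\cmass T.$. By Remark~\ref{rem:restr}, $\wder{\cmass{T^{(c)}}.}.$ is canonically identified with a submodule of $\wder{\cmass T.}.$ on which the $\{D_i\}$ remain free with pseudoduals $\{g_i\}$, since both freeness and the pseudodual relation are preserved under $\chi_U$-restriction. Absorbing the Radon-Nikodym factor into the coefficients, induction gives $\rho_c = \sum_{b \in \Lambda_{k-1, N}} \lambda_b^{(c)} \det(D_{b_i} \pi_j)_{i, j = 1}^{k-1}$; testing this representation against $\pi_j = g_{c_j}$ and invoking the alternation of $T$ together with the definition of $\lambda_a$ yields the bookkeeping identity $\lambda_b^{(c)} = \epsilon(b, c) \lambda_{\mathrm{sort}(b, c)}$ for $c \notin \{b_1, \ldots, b_{k-1}\}$ and $\lambda_b^{(c)} = 0$ otherwise, where $\epsilon(b, c)$ is the sign of the permutation sorting $(b_1, \ldots, b_{k-1}, c)$.

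Substituting back yields $T(f, \pi_1, \ldots, \pi_k) = \int f \sum_c \rho_c D_c \pi_k \, d\cmass T.$; reorganizing the double sum over $(b, c)$ into a single sum over $a \in \Lambda_{k, N}$ indexed by the position $m$ of $c$ within $a = \mathrm{sort}(b, c)$ produces precisely the Laplace expansion of $\det(D_{a_i} \pi_j)_{i, j = 1}^k$ along its last column, which gives the desired formula. The main obstacle I expect is the sign reconciliation between $\epsilon(b, c)$ and the Laplace cofactor sign $(-1)^{m + k}$, which ultimately reduces to the elementary identity $(-1)^{k - m} = (-1)^{k + m}$; a secondary but routine point is checking that the free-basis and pseudodual structure descend through Remark~\ref{rem:restr}.
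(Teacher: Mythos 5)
Your proof is correct, and it is built on the same underlying identity as the paper's: applying Theorem~\ref{thm:one_currents_derivations} to the one-dimensional slice $\pi \mapsto T(\cdot, \pi_1,\ldots,\pi_{k-1},\pi)$ and then using $D_ig_j=\delta_{ij}$ yields exactly the recursion $T(f,\pi_1,\ldots,\pi_k)=\sum_c T(fD_c\pi_k,\pi_1,\ldots,\pi_{k-1},g_c)$ that appears as \eqref{eq:loc_curr_rep_p1}. The difference is one of bookkeeping. The paper iterates this raw recursion all $k$ times, arriving at a sum over \emph{all} $k$-tuples $\Lambda'_{k,N}$ of products $D_{a_1}\pi_1\cdots D_{a_k}\pi_k$, and then invokes the alternating property of $T$ once, at the very end, to collapse the sum into $\sum_{a\in\Lambda_{k,N}}$ with the determinant appearing automatically. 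You instead peel off only one factor, feed the resulting $(k-1)$-current $T^{(c)}$ back into the inductive hypothesis (which already returns a $(k-1)\times(k-1)$ determinant), and then reassemble the $k\times k$ determinant by Laplace expansion along the last column. This works — your sign reconciliation $(-1)^{k-m}=(-1)^{k+m}$ is correct — but it costs you two extra layers of overhead that the paper avoids: (a) you must check that freeness of $\wder{\cmass T.}.$ and the pseudodual relation descend to $\wder{\cmass{T^{(c)}}.}.\cong\chi_U\wder{\cmass T.}.$ and absorb the Radon--Nikodym density $d\cmass{T^{(c)}}./d\cmass T.$ into the coefficients at each inductive level (the paper never re-invokes the lemma statement at a lower $k$, so it never faces this); and (b) you must track the cofactor signs $\epsilon(b,c)$ explicitly, whereas in the paper the signs are packaged automatically inside the alternating property of $T$. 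Also note a small notational slip: the coefficient identity should read $\lambda_b^{(c)}\cdot(d\cmass{T^{(c)}}./d\cmass T.)=\epsilon(b,c)\,\lambda_{\mathrm{sort}(b,c)}$, and in ``testing against $\pi_j=g_{c_j}$'' you mean $\pi_j=g_{b'_j}$ for a fixed $b'\in\Lambda_{k-1,N}$, since $c$ is a scalar index. Neither affects correctness; the paper's arrangement simply reaches the same formula with fewer moving parts.
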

\begin{proof}
  Recall from the discussion soon after~\cite[Eq.~2.5]{ambrosio-kirch}
  that for a metric current $T$ the first argument $f$ can be taken to
  be a bounded Borel function or an element of $L^\infty(\cmass
  T.)$. Therefore we will assume that $f\in L^\infty(\cmass T.)$ with
  $|f|\le 1$
    and that each $\pi_i$ is $1$-Lipschitz. Let $\omega=(f,\pi_1,\cdots,\pi_{k-1})$
  so that the current $T\crest\omega\in\mcurr 1,X.$ satisfies $\cmass
  T\crest\omega.\ll\cmass T.$ by \cite[Eq.~2.5]{ambrosio-kirch}. In
  particular, we can also regard $f$ as an element of $L^\infty(\cmass
  {T\crest\omega}.)$ as there is a natural homomorphism
  $L^\infty(\cmass T.)\to L^\infty(\cmass
  {T\crest\omega}.)$ obtained by restricting each $h\in L^\infty(\cmass
  T.)$ to the set where $d\cmass {T\crest\omega}./d\cmass T.\ne0$ (if
  such a set is empty then the measure $\cmass {T\crest\omega}.$ is
  trivial so $L^\infty(\cmass
  {T\crest\omega}.)$ is also trivial).
  \par By
  Theorem \ref{thm:one_currents_derivations} we have:
  \begin{equation}
    T(f,\pi_1,\cdots,\pi_k)=T\crest\omega(\pi_k)=\int_X
    D_{T\crest\omega}\pi_k\,d\cmass T.
  \end{equation}
where $D_{T\crest\omega}=\Der{\cmass T.}.({T\crest\omega})$ is the derivation associated to the
$1$-dimensional current $T\crest\omega$. 
\par By assumption there are bounded Borel functions
$\{\lambda_i\}_{i=1}^N\subset\bborel X.$: 
\begin{equation}
 D_{T\crest\omega}=\sum_{i=1}^N\lambda_iD_i.
\end{equation}
Note also that as the $\{g_i\}_{i=1}^N$ are pseudodual to the
$\{D_i\}_{i=1}^N$ we have $\lambda_i = D_{T\crest\omega}g_i$.
\par We now get:
\begin{equation}
  \label{eq:mina1}
  \begin{split}
    T\crest \omega(f,\pi_k) &= \int_Xf D_{T\crest\omega}\pi_k\,d\cmass
    T\crest\omega. = \sum_{j=1}^N\int_X f D_{T\crest
      \omega}g_j\,D_j\pi_k\,d\cmass T\crest\omega.\\
    &=\sum_{j=1}^NT\crest\omega(fD_j\pi_k,g_j),
  \end{split}
\end{equation}
which establishes:
\begin{equation}\label{eq:loc_curr_rep_p1}
  T(f,\pi_1,\cdots,\pi_k)=\sum_{j=1}^NT(fD_j\pi_k,\pi_1,\cdots,\pi_{k-1},g_j).
\end{equation}
If $\Lambda'_{k,N}$ denotes the set of $k$-tuples on $\{1,\cdots,N\}$,
by using induction in \eqref{eq:loc_curr_rep_p1},
\begin{equation}
  T(f,\pi_1,\cdots,\pi_k)=\sum_{a\in \Lambda'_{k,N}}T(fD_{a_1}\pi_1\cdots D_{a_k}\pi_k,g_{a_1},\cdots,g_{a_k});
\end{equation}
as currents are alternating
\begin{equation}
  T(f,\pi_1,\cdots,\pi_k)=\sum_{a\in \Lambda_{k,N}}T(f\langle
    D_{a_1}\wedge\cdots\wedge
    D_{a_k},d\pi_1\wedge\cdots\wedge d\pi_k\rangle,g_{a_1},\cdots,g_{a_k});
\end{equation}
the map $\psi\in L^1(\cmass T.)\mapsto T(\psi,g_{a_1},\cdots,g_{a_k})$ defines
a linear functional on $L^1(\cmass T.)$ which is represented by some $\lambda_a\in
L^\infty(\cmass T.)$ by the Riesz representation Theorem. We conclude that:
\begin{equation}
    T(f,\pi_1,\cdots,\pi_k)=\sum_{a\in \Lambda_{k,N}}\int_Xf\lambda_a\left\langle
    D_{a_1}\wedge\cdots\wedge
    D_{a_k},d\pi_1\wedge\cdots\wedge d\pi_k\right\rangle\,d\cmass T..
\end{equation}
\end{proof}
We now prove Theorem \ref{thm:gl_curr_rep}:
\begin{proof}[Proof of Theorem \ref{thm:gl_curr_rep}]
  Suppose that $\wder{\cmass T.}.$ has $N$ generators; then by Theorem
  \ref{thm:free_dec} there is an $L^\infty({\cmass T.})$-Borel
  partition of unity $\{U_\beta\}_{\beta\in J}$ such that $J$ is
  finite with at most $N$ elements and $\wder{\cmass T.\mrest
    U_\beta}.$ is free of rank $N_\beta\le N$. Having selected a local
  basis of derivations for each $U_\beta$, we can apply Lemma
  \ref{lem:quantitative_pseudodual} to obtain an $L^\infty({\cmass
    T.})$-Borel partition of unity $\{V_\alpha\}$ such that:
\begin{itemize}
\item The module  $\wder{\cmass T.\mrest
    V_\alpha}.$ has a basis $\{D_{\alpha,i}\}_{i=1}^{N_\alpha}$.
  \item The norms of the derivations $\{D_{\alpha,i}\}_{i=1}^{N_\alpha}$ are
    bounded by a universal constant $C(N)$.
\item There are $1$-Lipschitz functions
  $\{g_{\alpha,j}\}_{j=1}^{N_\alpha}$ pseudodual to the derivations
  $\{D_{\alpha,i}\}_{i=1}^{N_\alpha}$ on $V_\alpha$.
\end{itemize}
The hypotheses of
  Lemma~\ref{lem:loc_curr_rep} are met by the currents $\{T\crest
  V_\alpha\}$ and we have local representations:
  \begin{equation}\label{eq:gl_curr_rep_p1}
    T\crest V_\alpha(f,\pi_1,\cdots,\pi_k)=\sum_{a\in \Lambda_{k,N_\alpha}}\int_{V_\alpha}
    f\lambda_{\alpha, a}\left\langle D_{\alpha,a_1}\wedge\cdots\wedge
    D_{\alpha,a_k}, d\pi_1\wedge\cdots\wedge d\pi_k\right\rangle
  \,d\cmass T.;
\end{equation}
 for any subset $W\subset V_\alpha$ and any index $a\in \Lambda_{k,N_\alpha}$, letting $\pi_i=g_{\alpha,
    a_i}$, we obtain from \eqref{eq:gl_curr_rep_p1} the lower bound
  \begin{equation}
    \cmass T.(W)\ge T\crest
    V_\alpha(\chi_W,g_{\alpha,a_1},\cdots,g_{\alpha,a_k})=
    \int_W\lambda_{\alpha,a}\,d\cmass T.,
  \end{equation} which implies the upper bound
  $\|\lambda_{\alpha,a}\|_{L^\infty(\cmass T.\crest V_\alpha)}\le 1$.
\par Note that we can regard $\Lambda_{k,N_\alpha}$ as a subset of
$\Lambda_{k,N}$ and for
$a_i\in\{1,\cdots,N\}\setminus\{1,\cdots,N_\alpha\}$ we will
improperly use the notation $D_{\alpha,a_i}$ to denote the trivial
derivations. Similarly, if some entry $a_i$ of $a$ is $>N_\alpha$ we
will let $\lambda_{\alpha,a}=0$. Note that
  \begin{align}
    D_{a_i}&=\sum_{\alpha}\chi_{V_\alpha}D_{\alpha,a_i},\quad{\text{($1\le
        i\le
        k$)}}
  \end{align}
  define elements of $\wder{\cmass T.}.$ with norm bounded by
  $C(N)$, and that
  \begin{equation}
    \label{eq:mina2}
    \lambda_{a}=\sum_{\alpha}\chi_{V_\alpha}\lambda_{\alpha,a}
  \end{equation}
  define elements of $L^\infty(\cmass T.)$ of norm at most $1$.
  Therefore
\begin{equation}
  \omega_T=\sum_{a\in \Lambda_{k,N}}\lambda_aD_{a_1}\wedge\cdots\wedge D_{a_k}
\end{equation} defines an element of $\hwder k,{\cmass T.}.$
 with norm at most $(C(N))^k\binom{N}{k}$. By Remark
 \ref{rem:ext_pairings} one can also regard $\omega_T$ as an element
 of either $\lmodext k,{\cmass T.},{\wder{\cmass T.}.}.$ or $\banext
 k,{\wder{\cmass T.}.}.$. 
\par We now observe that:
\begin{equation}
  \begin{split}
    T(f,\pi_1,\cdots,\pi_k)&=\sum_\alpha\left(T\crest
    V_\alpha\right)(f,\pi_1,\cdots,\pi_k)\\
&=\sum_\alpha\sum_{a\in \Lambda_{k,N_\alpha}}\int_{V_\alpha}
    f\lambda_{\alpha, a}\left\langle D_{\alpha,a_1}\wedge\cdots\wedge
    D_{\alpha,a_k}, d\pi_1\wedge\cdots\wedge d\pi_k\right\rangle
\,d\cmass T.\\
&=\sum_\alpha\sum_{a\in \Lambda_{k,N}}\int_{V_\alpha}f\left\langle
D_{a_1}\wedge\cdots\wedge D_{a_k},
d\pi_1\wedge\cdots\wedge d\pi_k\right\rangle\, d\cmass T.\\
&=\sum_\alpha\int_Xf\chi_{V_\alpha}\langle\omega_T,d\pi_1\wedge\cdots\wedge
d\pi_k\rangle\,d\cmass
T.\\
&=\int_Xf\,\langle\omega_T,d\pi_1\wedge\cdots\wedge d\pi_k\rangle\,d\cmass
T.,
  \end{split}
\end{equation}
which proves \eqref{eq:gl_curr_rep_s1}.
\end{proof}
\begin{rem}\label{rem:curr_extension}
  A consequence of Theorem \ref{thm:gl_curr_rep} is that one can
  regard a $k$-dimensional metric current $T$ as a map defined on
  $L^1({\cmass T.})\times\hwform k,{\cmass T.}.$. Moreover, noting
  that if $T\in\mcurr k,\mu.$ one can regard $L^\infty({\cmass T.})$
  (respectively $\hwder k,{\cmass T.}.$, $\hwform k,{\cmass T.}.$) as
  submodules of $L^\infty(\mu)$ (respectively $\hwder k,\mu.$,
  $\hwform k,\mu.$), the current $T$ can be viewed as a map defined on
  $L^\infty(\mu)\times \hwform k,\mu.$ and one can take
  $\omega_T\in\hwder k,\mu.$.
\end{rem}
\begin{rem}\label{rem:williams}
  Note that Theorem \ref{thm:gl_curr_rep} implies
  \cite[Thm.~1.3]{williams-currents}. In fact, if $(X,\mu)$ is a
  differentiability space, by \sync partialderivatives.\  in
  \cite{deralb} the module $\wder\mu.$
  can be identified with the set of bounded measurable sections of the
  Cheeger's measurable tangent bundle $T_\mu X$ (defined in
\cite[pg.~463]{cheeger99}). Then the module $\hwder
  k,\mu.$ coincides with the set of bounded measurable sections of the
  $k$-th exterior power of $T_\mu X$; in this way, we recover
\cite[Thm.~1.3]{williams-currents}.
\end{rem}
For $k\ge 2$, it is not clear how to identify the elements of $\hwder
k,\mu.$ which give rise to currents. However, we have a partial result
concerning normal currents. We start by generalizing the notion of
\emph{precurrents} which was introduced by Williams in the context of
differentiability spaces.
\begin{defn}
  \label{defn:precurrents}
  Suppose that $\mu$ is a finite Radon measure on $X$. Then each $\xi\in\hwder k,\mu.$ defines a $k$-metric functional $T_\xi$
  by:
  \begin{equation}
    \label{eq:precurrents1}
    T_\xi(f,\pi_1,\cdots,\pi_k)=\int_Xf\langle
    \xi,d\pi_1\wedge\cdots\wedge d\pi_k\rangle\,d\mu;
  \end{equation}
  moreover, $T_\xi$ is multilinear in the arguments
  $(f,\pi_1,\cdots,\pi_k)$ and alternating in the arguments
  $(\pi_1,\cdots,\pi_k)$. Note also that \eqref{eq:ext_pairings3} implies
  that $T_\xi$ has finite mass:
  \begin{equation}
    \label{eq:precurrents2}
    \cmass T_\xi.\le k!\,\locnorm \xi,{\hwder k,\mu.}.\,\mu.
  \end{equation}
  We also have that $T_\xi$ is local in the sense that if
  \begin{equation}
    \label{eq:precurrents3}
    \left\{x:\locnorm \xi,{\hwder k,\mu.}.(x)\ne0\right\}\subset\bigcup_{\alpha=1}^kV_\alpha,
  \end{equation}
  where the $V_\alpha$ are Borel sets with $\pi_\alpha$ constant on
  $V_\alpha$, then
  \begin{equation}
    \label{eq:precurrents4}
    T_\xi(f,\pi_1,\cdots,\pi_k)=0.
  \end{equation}
  In fact, by Theorem \ref{thm:ext_pow_normed_module}, for each
  $\varepsilon>0$ we can find $\xi'\in\hwder k,\mu.$ of the form
  \begin{equation}
    \label{eq:precurrents5}
    \xi'=\sum_{i\in I_\xi}D_{i_1}\wedge\cdots\wedge D_{i_k}
  \end{equation}
  with $\|\xi-\xi'\|_{\hwder k,\mu.}\le\varepsilon$. Then
  \eqref{eq:precurrents4} follows because for each $D\in\wder\mu.$,
  $\chi_{V_\alpha}D\pi_\alpha=0$.
  \par We will call $T_\xi$ the \textbf{$k$-precurrent associated to
    $\xi$} and we will denote by $\pcurr k,\mu.$ the set of $k$-precurrents.
\end{defn}
\begin{thm}
  \label{thm:normal_currs}
  Given $\xi\in\hwder k,\mu.$, if the metric functional $\partial
  T_\xi$ has finite mass, then $T_\xi$ is a normal current. If
  $\wder\mu.$ is finitely generated, the set $\ncurr k,\mu.$, which
  consists of the normal currents whose mass is absolutely continuous
  with respect to $\mu$, coincides with the set of those
  $T_\xi\in\pcurr k,\mu.$ whose boundary $\partial T_\xi$ has finite mass.
\end{thm}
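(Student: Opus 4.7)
The plan is to verify the two assertions in turn. Since Definition~\ref{defn:precurrents} already grants $T_\xi$ multilinearity, alternation, locality, and finite mass, the only axiom from Definition~\ref{defn:metric_currents} that requires work is the weak* continuity axiom~(4); once $T_\xi$ is shown to be a metric current, the hypothesis that $\partial T_\xi$ has finite mass immediately promotes it to a normal current, because joint weak* continuity of $T_\xi$ applied with the first argument fixed equal to $1$ transfers continuity to $\partial T_\xi$.

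To verify continuity, I would first record the product rule for $T_\xi$ by combining the Leibniz rule $d(gh) = g\,dh + h\,dg$ in $\wform\mu.$ (Definition~\ref{defn:module_forms}) with $L^\infty(\mu)$-linearity of the pairing~\eqref{eq:ext_pairings2}, which gives
\[
T_\xi(f_0, gh, \pi_2, \ldots, \pi_k) = T_\xi(f_0 g, h, \pi_2, \ldots, \pi_k) + T_\xi(f_0 h, g, \pi_2, \ldots, \pi_k).
\]
Setting $f_0 = 1$ and using $T_\xi(1, \cdot, \ldots) = \partial T_\xi(\cdot, \ldots)$, this rearranges to
\[
T_\xi(f, \pi_1, \pi_2, \ldots, \pi_k) = \partial T_\xi(f \pi_1, \pi_2, \ldots, \pi_k) - T_\xi(\pi_1, f, \pi_2, \ldots, \pi_k),
\]
which together with the mass bounds on $T_\xi$ and $\partial T_\xi$ yields the key estimate
\[
|T_\xi(f, \pi_1, \ldots, \pi_k)| \le \prod_{i=2}^k \glip \pi_i. \Bigl( \int |f\pi_1|\,d\cmass \partial T_\xi. + \glip f. \int |\pi_1|\,d\cmass T_\xi. \Bigr).
\]
Applying this with $\pi_1$ replaced by $\pi_{1,n} - \pi_1$ and invoking dominated convergence delivers continuity in the $\pi_1$ slot; joint weak* continuity then follows by changing arguments one at a time, the change in $f$ being handled directly by the mass bound~\eqref{eq:precurrents2}.

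For the identification of normal currents, the inclusion of precurrents with finite-mass boundary into $\ncurr k,\mu.$ is exactly Part~1, and the absolute continuity $\cmass T_\xi. \ll \mu$ is immediate from~\eqref{eq:precurrents2}. Conversely, let $T \in \ncurr k,\mu.$. Since $\cmass T. \ll \mu$ and $\wder\mu.$ is finitely generated, the identification of $\wder{\cmass T.}.$ with a submodule of $\wder\mu.$ from Remark~\ref{rem:restr} shows that $\wder{\cmass T.}.$ is also finitely generated; Theorem~\ref{thm:gl_curr_rep} then produces $\omega_T \in \hwder k,{\cmass T.}. \subset \hwder k,\mu.$ (via Remark~\ref{rem:curr_extension}) with $T = T_{\omega_T}$, exhibiting $T$ as a precurrent whose boundary has finite mass thanks to normality.

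The main obstacle is the continuity step: the product-rule identity is a direct algebraic consequence of the definitions, but the displayed estimate only delivers continuity cleanly when $\pi_1$ is bounded Lipschitz, whereas $\lipfun X.$ allows unbounded functions. One addresses this by first truncating to bounded Lipschitz and exploiting tightness of the finite Radon measures $\cmass T_\xi.$ and $\cmass \partial T_\xi.$ to pass to the limit, which reduces the problem to a standard dominated convergence argument.
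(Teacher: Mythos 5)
Your proposal follows essentially the same route as the paper: both derive the product-rule identity $T_\xi(f,g,\pi_2,\dots,\pi_k) = \partial T_\xi(fg,\pi_2,\dots,\pi_k) - T_\xi(g,f,\pi_2,\dots,\pi_k)$ (applied with $g = \pi_{1,h}-\pi_1$), use the mass bounds on $T_\xi$ and $\partial T_\xi$ to pass continuity argument-by-argument, and invoke Theorem~\ref{thm:gl_curr_rep} for the converse inclusion when $\wder\mu.$ is finitely generated. Your flagged technical point about unbounded $\pi_1$ and tightness is correct and is a detail the paper leaves implicit; it does not change the structure of the argument.
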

\begin{proof}
  [Proof of Theorem \ref{thm:normal_currs}]
  Assume that the metric functional $\partial T_\xi$ has finite mass. In order to show that
  $T_\xi$ is a metric current, it suffices to check the continuity
  axiom (4) in Definition \ref{defn:metric_currents}. Suppose that $f_h\xrightarrow{\text{w*}}f$ and
  $\pi_{i,h}\xrightarrow{\text{w*}}\pi_i$ for all $1\le i\le k$. Note
  that:
  \begin{equation}
    \label{eq:normal_currs_p1}
    \left|T_\xi(f_h,\pi_{1,h},\cdots,\pi_{k,h})-T_\xi(f,\pi_{1,h},\cdots,\pi_{k,h})\right|\le\prod_{i=1}^k\glip
    \pi_{i,h}.\int_X |f_h-f|\,d\cmass T_\xi.
  \end{equation}
  so that
  \begin{equation}
    \label{eq:normal_currs_p2}
    \lim_{h\to\infty}\left|T_\xi(f_h,\pi_{1,h},\cdots,\pi_{k,h})-T_\xi(f,\pi_{1,h},\cdots,\pi_{k,h})\right|=0.
  \end{equation}
  Moreover, we have:
  \begin{equation}
    \label{eq:normal_currs_p3}    
\begin{split}
  T_\xi(f,\pi_{1,h},\pi_{2,h},\cdots,\pi_{k,h})&-T_\xi(f,\pi_1,\pi_{2,h},\cdots,\pi_{k,h})\\&=\partial
  T_\xi(f(\pi_{1,h}-\pi_1),\pi_{2,h},\cdots,\pi_{k,h})\\&-T_\xi(\pi_{1,h}-\pi_1,f,\pi_{2,h},\cdots,\pi_{k,h});
\end{split}
\end{equation}
as
\begin{align}\label{eq:normal_currs_p4}
  \left|\partial
  T_\xi(f(\pi_{1,h}-\pi_1),\pi_{2,h},\cdots,\pi_{k,h})\right|&\le \prod_{i=2}^k\glip
  \pi_{i,h}.\int_X |f(\pi_{1,h}-\pi_1)|\,d\cmass\partial
  T_\xi.,\\\label{eq:normal_currs_p5}
\left|T_\xi(\pi_{1,h}-\pi_1,f,\pi_{2,h},\cdots,\pi_{k,h})\right|&\le
\glip f.\,\prod_{i=2}^k\glip
  \pi_{i,h}.\int_X|\pi_{1,h}-\pi_1|\,d\cmass T_\xi.,
\end{align}
from \eqref{eq:normal_currs_p3} we have:
\begin{equation}
  \label{eq:normal_currs_p6}
  \lim_{h\to\infty}\left|T_\xi(f,\pi_{1,h},\pi_{2,h},\cdots,\pi_{k,h})-T_\xi(f,\pi_1,\pi_{2,h},\cdots,\pi_{k,h})\right|=0.
\end{equation}
Using that $T_\xi$ is alternating in the last $k$ arguments and
induction in $i$, the previous argument gives:
\begin{equation}
  \label{eq:normal_currs_p7}
  \lim_{h\to\infty}\left|T_\xi(f_h,\pi_{1,h},\pi_{2,h},\cdots,\pi_{k,h})-T_\xi(f,\pi_1,\pi_{2},\cdots,\pi_{k})\right|=0,
\end{equation}
which shows that $T_\xi$ is a metric current. As $\partial T_\xi$ has
finite mass, the current $T_\xi$ is normal. The second part of this Theorem follows
from Theorem \ref{thm:gl_curr_rep} because, if $\wder\mu.$ is finitely
generated, any metric current is a precurrent.
\end{proof}
\section{Applications}
\label{sec:applications}
\subsection{Approximation of $1$-currents by Normal currents}
\label{subsec:normal_currents}
\par The goal of this Subsection is to prove Theorem
\ref{thm:amb_kirch_one}. We make the set theoretic assumption that the
cardinality of any set is an Ulam number so that by
\cite[Lem~2.9]{ambrosio-kirch} the masses of metric currents
are concentrated on countable unions of compact sets. This assumption
is not needed if we consider currents in separable Banach spaces.
\par Let $\curves(X)$ denote the
set of Lipschitz maps from $[0,1]$ to $X$ topologized as a subspace of
$K([0,1]\times X)$. To each $\gamma\in\curves(X)$, one can then associate a
normal current $[\gamma]$ by letting:
\begin{equation}
  \label{eq:constdir_curve_curr}
  [\gamma](fd\pi)=\int_0^1(f\circ\gamma)(t)(\pi\circ\gamma)'(t)\,dt\quad\left((f,\pi)\in\bborel
    X.\times\lipfun X.\right).
\end{equation}
Note that the mass measure of $[\gamma]$ can be bounded by:
\begin{equation}
  \label{eq:constdir_mass}
  \cmass[\gamma].\le\mpush\gamma.\left(\metdiff\gamma\,\cdot\lebmeas\mrest[0,1]\right).
\end{equation}
\begin{lem}\label{lem:pseudodual_banach}
  Let $Z$ be a Banach space and $\mu$ a $\sigma$-finite Radon measure on $Z$. Suppose
  that the derivations $\{D_i\}_{i=1}^k\subset\wder\mu.$ are
  independent. Then there is a Borel $L^\infty(\mu)$-partition of
  unity $V_\alpha$ and there are, for each $\alpha$, derivations
  $\{D_{\alpha,i}\}_{i=1}^k\subset\chi_{V_\alpha}\wder\mu.$ and unit
  norm functionals $\{z^*_{\alpha,j}\}_{j=1}^k\subset Z^*$ such that:
  \begin{enumerate}
  \item The submodule of $\wder\mu.$ generated by the derivations
    $\{D_{\alpha,i}\}_{i=1}^k$ is the same as the submodule generated
    by the derivations
    $\{\chi_{V_\alpha}D_i\}_{i=1}^k$;
      \item The functionals $\{z^*_{\alpha,j}\}_{j=1}^k$ are
        pseudodual to the derivations
    $\{D_{\alpha,i}\}_{i=1}^k$ on $V_\alpha$.
  \end{enumerate}
\end{lem}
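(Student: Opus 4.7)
My plan is to mimic the inductive construction in the proof of Lemma~\ref{lem:quantitative_pseudodual}, replacing every $1$-Lipschitz pseudodual function by a unit-norm functional in $Z^*$. The current statement makes no quantitative demand on the norms of the $D_{\alpha,i}$, so I can drop the tracking of constants that occupied the end of that proof. I induct on $k$: the induction hypothesis $P(k)$ asserts, for a suitable Borel $L^\infty(\mu)$-partition $\{V_\alpha\}$, the existence of unit functionals $z^*_{\alpha,1},\ldots,z^*_{\alpha,k}\in Z^*$ and derivations $\tilde D_{\alpha,1},\ldots,\tilde D_{\alpha,k}\in\chi_{V_\alpha}\wder\mu.$ such that the matrix $(\tilde D_{\alpha,i}z^*_{\alpha,j})$ is upper triangular with diagonal entries at least $1-\varepsilon$ and the submodule generated by the $\tilde D_{\alpha,i}$ contains $\{\chi_{V_\alpha}D_i\}_{i=1}^k$.

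The substantive novelty is the base case $k=1$. After normalizing $D_1$ via Lemma~\ref{lem:normalization_derivation} so that $\locnorm{D_1},{\wder\mu.}.=\chi_{N_{D_1}^c}$, I must show that the class
\begin{equation*}
  \mathcal C_1=\bigl\{W\text{ Borel}:\exists\,z^*\in Z^*,\ \|z^*\|_{Z^*}=1,\ D_1z^*\ge 1-\varepsilon\ \mu\text{-a.e.\ on }W\bigr\}
\end{equation*}
exhausts the support of $D_1$ in $\mu$-measure. For this I would invoke the correspondence between derivations and Alberti representations (Theorem~\ref{thm:taira} read in the reverse direction): the normalized derivation is realized by biLipschitz fragments $\gamma\colon K\to Z$ with $\metdiff\gamma>0$ $\lebmeas$-a.e.\ on $K$. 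In a Banach space, Hahn--Banach provides, for each such fragment at each $t\in K$ where $\gamma$ is metrically differentiable, a unit functional $z^*_{\gamma,t}\in Z^*$ realizing the tangent direction in the sense that $(z^*_{\gamma,t}\circ\gamma)'(t)=\metdiff\gamma(t)$. After descending, if necessary, to the closed linear span of the images of the fragments carrying $\mu$---a separable subspace of $Z$---a measurable selection inside the weak-$*$ metrizable unit ball of its dual consolidates these pointwise data into a single $z^*\in Z^*$ for which $D_1z^*\ge 1-\varepsilon$ on a Borel set $V$ of positive $\mu$-measure. Greedy exhaustion then produces the partition $\{V_\alpha\}$.

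The inductive step $P(k)\Rightarrow P(k+1)$ is the Gram--Schmidt elimination of the original proof with $g_{\alpha,j}$ replaced by $z^*_{\alpha,j}$: set
\begin{equation*}
  D^{(l)}_{k+1}=D^{(l-1)}_{k+1}-\frac{D^{(l-1)}_{k+1}z^*_{\alpha,l}}{\tilde D_{\alpha,l}z^*_{\alpha,l}}\,\tilde D_{\alpha,l}\qquad(1\le l\le k),
\end{equation*}
normalize $D^{(k)}_{k+1}$, and apply the base case to obtain $z^*_{\alpha,k+1}$ and $\tilde D_{\alpha,k+1}$. Finally, the triangular matrix $M_\alpha=(\tilde D_{\alpha,i}z^*_{\alpha,j})_{i,j=1}^k$ has determinant bounded away from zero and is inverted via Cramer's formula to give derivations $D_{\alpha,i}=\sum_j(M_\alpha^{-1})_{i,j}\tilde D_{\alpha,j}$ genuinely pseudodual to $\{z^*_{\alpha,j}\}$ on $V_\alpha$; solving the same relation for the $\tilde D_{\alpha,i}$ shows that the two submodules coincide. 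The \emph{main obstacle} is the base case: converting the fragment-wise Hahn--Banach functionals $z^*_{\gamma,t}$ into a single $z^*\in Z^*$ valid on a positive-measure Borel set. This is the only step where the Banach space hypothesis is essentially used, and requires both the separable reduction and the measurable selection sketched above.
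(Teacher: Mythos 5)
Your inductive framework follows the shape of Lemma~\ref{lem:quantitative_pseudodual}, but the paper's proof of this lemma is not inductive at all, and the base case you identify as the crux is precisely where your argument has real gaps. The paper's route is much shorter: after reducing (via $\sigma$-finiteness and restriction to the closed linear span of $\spt\mu$) to the case where $Z$ is separable and bounded, one picks a countable dense set $\{z_i\}\subset Z$, takes for each $i\ne j$ a unit functional $z^*_{i,j}$ norming $z_i-z_j$, and applies the Stone--Weierstrass theorem for Lipschitz algebras (\cite[Cor.~4.1.9]{weaver_book99}) to conclude that the family $\{z^*_{i,j}\}$ is a countable generating set for $\lipalg Z.$. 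A measure-theoretic selection result (\cite[Prop.~2.35]{derivdiff}) then yields a Borel partition $\{V_\alpha\}$ and, on each cell, $k$ functionals $z^*_{\alpha,1},\dots,z^*_{\alpha,k}$ with $\det(D_iz^*_{\alpha,j})$ bounded away from zero; inverting this matrix gives the $D_{\alpha,i}$. No normalization, no Alberti representation, and no need for any single $z^*$ to achieve $D_1z^*\ge 1-\varepsilon$.

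Your base case has two genuine gaps. The assertion that $\mathcal C_1$ exhausts the support of $D_1$ in $\mu$-measure is not established: $\locnorm D_1,{\wder\mu.}.=1$ says that the supremum of $|D_1g|$ over all bounded $1$-Lipschitz $g$ equals $1$, but it does not directly follow that the supremum restricted to \emph{linear} $g$ of unit norm is also close to $1$; that bridge is exactly what the Stone--Weierstrass argument supplies, and you do not supply a substitute. Second, the passage from the fragmentwise Hahn--Banach functionals $z^*_{\gamma,t}$ to a single $z^*\in Z^*$ valid on a positive-measure Borel set requires both a concentration step (showing that a positive fraction of the pairs $(\gamma,t)$ share a common near-tangent direction in the dual ball) and a measurable selection; you mention these but carry out neither. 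Finally, invoking Theorem~\ref{thm:taira} ``in reverse'' to realize $D_1$ by fragments is circular as stated, since that theorem takes pseudodual functions as input --- which is what you are trying to produce.
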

\begin{proof}
  Note that $\mu$ is concentrated on a $K_\sigma$-set, i.e.~a
  countable union of compact sets; in particular, $\spt\mu$ is
  separable and we can assume that $Z$ is separable by taking the
  closure of the linear span of $\spt\mu$. Up to passing to a Borel
  $L^\infty(\mu)$-partition of unity we can assume that $Z$ is also bounded. Let
  $\{z_i\}\subset Z$ be a countable dense set and for $i\ne j$ choose a
  unit norm linear functional $z^*_{i,j}$ with
  $\langle z^*_{i,j},z_i-z_j\rangle=\|z_i-z_j\|_Z$. By the
  Stone-Weierstrass Theorem for Lipschitz Algebras
  \cite[Cor.~4.1.9]{weaver_book99}, the family of functionals $\{z^*_{i,j}\}_{i,j}$ is a
  countable generating set\footnote{i.e.~for each $f\in\lipalg Z.$
    there is a sequence of polynomials $\{P_n\}\subset\lipalg Z.$ in
    the $z^*_{i,j}$ 
    with $P_n\xrightarrow{\text{w*}}f$.} for $\lipalg Z.$. By
 \cite[Prop.~2.35]{derivdiff} we can find a Borel
 $L^\infty(\mu)$-partition of unity $\{V_\alpha\}$ and for each
 $\alpha$ unit functionals $\{z^*_{\alpha,j}\}_{j=1}^k$ such that, letting
 $M_\alpha=(D_iz^*_{\alpha,j})_{i,j=1}^k$, we have $\det M_\alpha\ne0$ on
 $V_\alpha$. Up to passing to a further Borel partition we can assume
 that for each $\alpha$ there is a $\delta_\alpha>0$ such that:
 \begin{equation}
   \label{eq:pseudodual_banach_p1}
   \left | \det M_\alpha(x) \right
   |\in(\delta_\alpha,2\delta_\alpha)\quad(\forall x\in V_\alpha);
 \end{equation}
 we then let $D_{\alpha,i}=\sum_{j=1}^k(M_\alpha^{-1})_{i,j}D_j$.
\end{proof}
\begin{proof}[Proof of Theorem \ref{thm:amb_kirch_one}] We make the
  following preliminary Observation ({\bf Obs1}): suppose that
  $\sum_kT_k$ is either a finite sum of $1$-currents or a series with
  \begin{equation}
    \sum_k\|T_k\|_{\mcurr 1,Z.}<\infty,
  \end{equation} and suppose also that for each $n$ there is a sequence of
  normal currents $\{N_{k,n}\}\subset\ncurr 1,Z.$ such that
    \begin{equation}
      \lim_{n\to\infty}\|T_k-N_{k,n}\|_{\mcurr 1,Z.}=0;
    \end{equation} then, if we let $T=\sum_kT_k$, there is a sequence of
  normal currents $\{N_n\}\subset\ncurr 1,Z.$  such that
  \eqref{eq:amb_kirch_one_s1} holds.
  \par As $\wder{\cmass T.}.$ is finitely generated, by Theorem
  \ref{thm:free_dec} and ({\bf Obs1}) we can reduce to the case in
  which $\wder{\cmass T.}.$ is free of rank $N$. Applying Lemma
  \ref{lem:pseudodual_banach} and ({\bf Obs1}), we can assume that
  $\wder{\cmass T.}.$ has a basis consisting of derivations
  $\{D_i\}_{i=1}^N$ such that there are unit norm linear functionals
  $\{z^*_j\}_{j=1}^N$ which are pseudodual to the
  $\{D_i\}_{i=1}^N$. Let $z^*=(z^*_j)_{j=1}^N$ and $\{e_i\}_{i=1}^N$
  the standard basis of $\real^N$; by Corollary
  \ref{cor:mu_arb_cone_special} for any $\alpha\in(0,\pi/2)$ the
  measure $\cmass T.$ admits $C$-Lipschitz Alberti representations
  $\{\albrep i.\}_{i=1}^N$ with $\albrep i.$ in the $z^*$-direction of
  $\cone(e_i,\alpha)$ (and with positive $z_i^*$-speed); note that, up
  to taking an $L^\infty(\cmass T.)$-partition of unity and choosing
  $\alpha$ sufficiently small, we can assume that the derivations
  $\{D_{\albrep i.}\}_{i=1}^N$ form a basis of $\wder{\cmass
    T.}.$. Applying Theorem \ref{thm:alberti_banach}, we can assume
  that $\albrep i.=(P_i,\nu_i)$ with $\spt P_i\subset\curves(Z)$ and
  $(\nu_i)_\gamma=h_i\Psi_\gamma$, where $h_i$ is a Borel function on
  $Z$ and $\Psi_\gamma=\mpush\gamma.\lebmeas\mrest[0,1]$. Denoting the
  derivation $\Der{\cmass T.}.(T)$ by $D_T$, there are bounded Borel
  functions $\{\lambda_i\}_{i=1}^N$ such that
  $D_T=\sum_{i=1}^N\lambda_iD_i$; but this implies that
 \begin{equation}
   T=\sum_{i=1}^N\Cur{\cmass T.}.(\lambda_iD_i),
 \end{equation} and by ({\bf Obs1}) we reduce to the case
 in which $T=\lambda D_{\albrep.}$ where $\lambda$ is a bounded Borel
 function and $\albrep.=(P,\nu)$ is a
 $C$-Lipschitz Alberti representation with $\spt
 P\subset \curves(Z)$ and $\nu_\gamma=h\Psi_\gamma$. Let $\mu$
 denote the measure
 \begin{equation}
   \mu=\int_{\curves(Z)}\Psi_\gamma;
 \end{equation} note that $\cmass T.\ll\mu$ and $h\lambda\in
 L^1(\mu)$; as $\lipalg Z.$ is dense in $L^1(\mu)$, we can find, for
 each $\epsi>0$, a function
 $g\in\lipalg Z.$ such that:
 \begin{equation}\label{eq:amb_kirch_one_p1}
   \|g-h\lambda\|_{L^1(\mu)}\le\epsi.
 \end{equation} 
 Note that the metric current $N$ defined by
 \begin{multline}
   N(fd\pi)=\int_{\curves(Z)}dP(\gamma)\int_\gamma
   f\partial_\gamma\pi\,d\Psi_\gamma\\=
   \int_{\curves(Z)}dP(\gamma)\int_{[0,1]}f\circ\gamma(t)\,(\pi\circ\gamma)'(t)\,dt
 \end{multline} is normal and so $N\crest g$ is normal. However,
 \eqref{eq:amb_kirch_one_p1} implies that
 \begin{equation}
   \|N\crest g-T\|_{\mcurr 1,Z.}\le C\|g-h\lambda\|_{L^1(\mu)}\le C\epsi.
 \end{equation}
\end{proof}
\subsection{Alberti representations with constant directions}
\label{subsec:const_dir}
In this Subsection we illustrate a different method to produce Alberti
representations. This method allows to refine the way in which the
direction is specified. In fact, the cone field is replaced by a
vector field and one can also use countably many Lipschitz functions.
This method relies on results of \cite{paolini_acyclic,paolini_one_normal} on
the structure of $1$-dimensional normal currents.
\par We state the Paolini-Stepanov decomposition of normal currents
using parametrized curves: note, however, that in \cite{paolini_one_normal}
the result is stated using non-parametrized curves. Recall also that
the metric space $X$ is assumed Polish. 
\begin{thm}[Corollary 3.3 in \cite{paolini_one_normal}]
  \label{thm:ps-dec}
  Let $N$ be a $1$-dimensional normal current defined on $X$; then there is a finite
  Radon measure $\eta$ on the space $K([0,1]\times X)$ of compact subsets of $[0,1]\times
  X$ which is concentrated on
  $\curves(X)$, and such that:
  \begin{align}
    \label{eq:ps-dec_s1}
    N&=\int_{\curves(X)}[\gamma]\,d\eta(\gamma);\\
    \label{eq:ps-dec_s2}
    \cmass N.&=\int_{\curves(X)}\cmass[\gamma].\,d\eta(\gamma);\\
    \label{eq:ps-dec_s3}
    \cmass N.(X)&=\int_{\curves(X)}l(\gamma)\,d\eta(\gamma),
  \end{align}
  where $l(\gamma)$ denotes the length of $\gamma$ which is given
  by:
  \begin{equation}
    \label{eq:ps-dec_s4}
    l(\gamma)=\int_0^1\metdiff\gamma(t)\,dt.
  \end{equation}
\end{thm}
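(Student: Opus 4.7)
The plan is to follow a Smirnov-type decomposition strategy: regard the $1$-dimensional normal current $N$ as an abstract ``flow'' on $X$ and decompose it into elementary flows carried by parametrized curves $\gamma\in\curves(X)$. I would proceed by first reducing to the acyclic case and then handling cycles as closed loops, and finally reparametrizing everything to the interval $[0,1]$ while tracking length.

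First, I would perform a cycle/acyclic splitting $N=N_{\rm ac}+N_{\rm cyc}$, where $\partial N_{\rm cyc}=0$ and $N_{\rm ac}$ contains no nontrivial subcycle. Existence of this splitting follows from a Zorn-type maximality argument on the partial order ``$S$ is a subcurrent of $T$'' once one defines subcurrents using the local representation in terms of $1$-rectifiable measures (or using the pairing with closed $1$-forms via Theorem~\ref{thm:gl_curr_rep}). For $N_{\rm ac}$, the signed measure $\partial N_{\rm ac}=\mu^+-\mu^-$ has compact support, and by a Hahn-Banach/duality argument there is a Lipschitz potential $u$ with $N_{\rm ac}(du)=\cmass N_{\rm ac}.(X)$, i.e.~$u$ is ``strictly increasing along $N_{\rm ac}$''. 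Slicing $N_{\rm ac}$ by the level sets $\{u=t\}$ produces, via the coarea structure for normal currents, a $1$-parameter family of $0$-currents whose ``flow lines'' one then integrates to obtain arc-length-parametrized curves $\gamma$. A disintegration theorem on $\curves(X)$ yields a finite measure $\eta_{\rm ac}$ supported on $\curves(X)$ realizing $N_{\rm ac}=\int[\gamma]\,d\eta_{\rm ac}(\gamma)$.

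Second, for $N_{\rm cyc}$ one iteratively extracts elementary closed loops: using the fact that cycles in a Polish space can be generated by simple closed curves of finite length, one greedily removes loops of maximal ``efficiency'' until the remaining cyclic part has zero mass; countable additivity allows one to absorb this sequence of loops into a single finite measure $\eta_{\rm cyc}$ on $\curves(X)$. Combining $\eta=\eta_{\rm ac}+\eta_{\rm cyc}$ and rescaling the time-parametrization to $[0,1]$ (after affine reparametrization, so that the tangent measure $\metdiff\gamma(t)\,dt$ gives the intrinsic length) yields~\eqref{eq:ps-dec_s1} and, by the bound~\eqref{eq:constdir_mass}, the inequality $\cmass N.\le\int\cmass[\gamma].\,d\eta$.

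The hard part is upgrading this to the \emph{equality} in~\eqref{eq:ps-dec_s2} and~\eqref{eq:ps-dec_s3}, i.e.~ruling out mass cancellation in the superposition. For~\eqref{eq:ps-dec_s3}, one pairs $N$ with $du$ (with $u$ as above, suitably modified on loops) to get $\cmass N.(X)\ge\int l(\gamma)\,d\eta(\gamma)$, which combined with the reverse inequality from~\eqref{eq:constdir_mass} forces equality. For~\eqref{eq:ps-dec_s2}, one applies the same argument locally: testing against $f\,du$ for arbitrary nonnegative $f\in\lipalg X.$ and using the density of such test functions in $L^1(\cmass N.)$ shows that $\cmass N.$ and $\int\cmass[\gamma].\,d\eta$ agree as measures. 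The delicate point is that the potential $u$ must be chosen so that \emph{every} curve in $\spt\eta_{\rm ac}$ is truly monotone along $u$ and no curve in $\spt\eta_{\rm cyc}$ backtracks on itself---this is precisely where the acyclicity obtained in the first step is essential, and where the Paolini-Stepanov argument does its real work.
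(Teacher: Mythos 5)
The paper does not prove Theorem~\ref{thm:ps-dec}; it is stated as a cited result (Corollary~3.3 of Paolini--Stepanov), so there is no internal proof to compare against, and your proposal must stand on its own.

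Your outline captures the Smirnov/Paolini--Stepanov architecture at the highest level: split $N$ into acyclic and cyclic pieces, represent each as a superposition of parametrized Lipschitz curves, and argue that mass is not lost to cancellation. However, a concrete step fails. You claim that for the acyclic part there exists a single Lipschitz potential $u$ with $N_{\rm ac}(du)=\cmass N_{\rm ac}.(X)$, so that one can slice along the level sets of $u$. This is not true. Take $X=\real^2$ and let $N$ be the sum of the unit-speed oriented segment from $(0,0)$ to $(1,0)$ and the unit-speed oriented segment from $(1,\epsi)$ to $(0,\epsi)$, with $\epsi\in(0,1)$ small. Since the segments are disjoint, $N$ is a normal current with no nontrivial subcycle, hence acyclic. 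Its mass is $\cmass N.(X)=2$, but for a $1$-Lipschitz $u$ to satisfy $N(du)=2$ one would need $u(x,0)=x+c_1$ and $u(x,\epsi)=-x+c_2$ on $[0,1]$, which forces both $|c_1-c_2|\le\epsi$ and $|2+c_1-c_2|\le\epsi$; these are incompatible for $\epsi<1$. Thus the monotone potential does not exist, the coarea-type slicing has no starting point, and the reverse inequality for~\eqref{eq:ps-dec_s3} obtained by pairing $N$ with $du$ collapses as well. The genuine Paolini--Stepanov construction instead extracts curves by an iterated local argument (or as a limit of discretized superpositions), tracking the masses term by term so that~\eqref{eq:ps-dec_s2} survives the passage to the limit; this is exactly the step you defer, and it cannot be replaced by a single global duality. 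The cyclic part raises an analogous difficulty: showing that greedy removal of loops converges and exhausts the cyclic mass also requires a careful exhaustion argument, which Paolini--Stepanov supply but your sketch does not address.
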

Note that the integrals in \eqref{eq:ps-dec_s1} and
\eqref{eq:ps-dec_s2} make sense because the maps
$\gamma\mapsto[\gamma]$ and $\gamma\mapsto\cmass[\gamma].$ are Borel
in the following sense: for each $(f,\pi)\in\bborel
    X.\times\lipfun X.$ and each Borel $E\subset X$, the maps $\gamma\mapsto[\gamma](fd\pi)$ and
    $\gamma\mapsto\cmass\gamma.(E)$ are Borel.
We need to introduce more terminology:
\begin{defn}
  \label{defn:pieces}
  The set of maps $\gamma\in\curves(X)$ with Lipschitz constant at
  most $n$ is a Polish space and is denoted by $\curves_n(X)$. The set
  of Lipschitz maps $\gamma:K\to X$, where $K$ is a nonempty compact
  subset of $[0,1]$, is denoted by $\pieces(X)$ and topologized as a
  subset of $K([0,1]\times X)$. Note that $\pieces (X)$ is a subset of
  $\frags(X)$ and a Borel subset of $K([0,1]\times X)$. The subset of
  maps $\gamma\in\pieces (X)$ with Lipschitz constant at most $n$ is a
  Polish space and is denoted by $\pieces_n(X)$. If
  $(\gamma,\tilde\gamma)\in\curves(X)\times\pieces(X)$ and
  $\gamma|\dom\tilde\gamma=\tilde\gamma$, we say that $\tilde\gamma$
  is \textbf{a piece} of $\gamma$.
\end{defn}
To each $\gamma\in\pieces(X)$, one can
associate a metric current $[\gamma]$ by letting:
\begin{equation}
  \label{eq:constdir_piece_curr}
  [\gamma](fd\pi)=\int_{\dom\gamma}(f\circ\gamma)(t)(\pi\circ\gamma)'(t)\,dt\quad\left((f,\pi)\in\bborel
    X.\times\lipfun X.\right);
\end{equation}
a  modification of the argument in \sync lem:meas_fix.\ 
  in \cite{deralb}
shows that, for each $(f,\pi)\in\bborel
    X.\times\lipfun X.$, the map 
\begin{equation}
  \label{eq:borel_assig}
  \begin{aligned}
    \pieces(X)&\to\real\\
    \gamma&\mapsto[\gamma](fd\pi)
  \end{aligned}
\end{equation}
is Borel. Having fixed an open set $U\subset X$, there is a countable
collection $\mathcal{F}_U$ of $1$-forms $\omega=\sum_if_id\pi_i$ such that, for each
$\gamma\in\pieces(X)$,
\begin{equation}
  \label{eq:mass_borel}
  \cmass\gamma.(U)=\sup_{\omega\in\mathcal{F}_U}[\gamma](\omega);
\end{equation} this implies that, for each Borel $E\subset X$, the map:
\begin{equation}
  \label{eq:mass_borel2}
  \begin{aligned}
    \pieces(X)&\to[0,\infty)\\
    \gamma&\mapsto\cmass[\gamma].(E)
  \end{aligned}
\end{equation} is Borel.
Note also that the mass of the current associated to
$\gamma\in\pieces(X)$ can be bounded from above similarly as in 
\eqref{eq:constdir_mass}:
\begin{equation}
  \label{eq:constdir_mass_piece}
  \cmass[\gamma].\le\mpush\gamma.\left(\metdiff\gamma\,\cdot\lebmeas\mrest\dom\gamma\right).
\end{equation}
\par We now discuss the notion of Alberti representations in the
direction of a vector field $v$. In greater generality, we consider
$l^2$-valued Lipschitz maps, where $l^2$ is the Hilbert space of
$l^2$-summable sequences. In the following, we let $\real^\infty$
denote the product of countably many copies of $\real$ with the
product topology. Note that any map $F:X\to l^2$ is determined by its
components $F_i$; in particular, if $F$ is Lipschitz and
$D\in\wder\mu.$, we can choose a Borel representative of each $DF_i$
and denote by $DF$ the Borel map $DF:X\to\real^\infty$ whose $i$-th
component is $DF_i$. Moreover, we can stipulate that the maps
$DF_i:X\to\real$ are uniformly bounded, with the bound indepedent of
$i$. In the following, this will always be assumed when we apply a
derivation $D\in\wder\mu.$ to a Lipschitz function $F:X\to l^2$. We
finally call a Borel map $v:X\to\real^\infty$, such that the
components $v_i$ are uniformly bounded by some $C>0$, a \textbf{vector
  field}.
\par In connection with the idea of using countably many Lipschitz
maps to control derivations, we point out that the idea has been used
independently by Ambrosio and Trevisan~\cite{ambrosio-lflow} in the study of ODEs
associated to derivations. Note however, that here we consider
derivations with a lesser degree of regularity.
\begin{defn}
  \label{defn:const_alberti}
  Let $F:X\to l^2$ be Lipschitz and $v:X\to\real^\infty$ a vector
  field. Denote by $N_v$ the set where $v$ vanishes:
  \begin{equation}
    \label{eq:const_alberti1}
    N_v=\left\{x\in X: v(x)=0\right\}.
  \end{equation} We say that the Alberti representation $\albrep.=(P,\nu)$ of $\mu\mrest(X\setminus N_v)$ is in the $F$-direction of
  $v$ if for $P$-a.e.~$\gamma$ and $\lebmeas$-a.e.~$t\in\dom\gamma$
  there is a $\lambda=\lambda(\gamma,t)>0$ such that:
  \begin{equation}
    \label{eq:defnconst_alberti2}
    (F\circ\gamma)'(t)=\lambda v\left(\gamma(t)\right).
  \end{equation}
\end{defn}
Given a Lipschitz map $F:X\to l^2$, to produce vector fields $v$ with
$\mu\mrest(X\setminus N_v)$ admitting an Alberti representation in the
$F$-direction of $v$, we will use a special class of derivations.
\begin{defn}
  \label{defn:normal_der}
  A derivation $D\in\wder\mu.$ is called \textbf{normal} if there is
  a Borel $L^\infty(\mu\mrest(X\setminus N_D))$-partition of unity
  $\{U_\alpha\}$ such that for each $\alpha$ there are:
  \begin{enumerate}
  \item An isometric embedding $\iota_\alpha:U_\alpha\to Z_\alpha$ where
    $Z_\alpha$ is a Polish space.
  \item A normal current $N_\alpha$ in $Z_\alpha$ with $\mpush
    \iota_\alpha.(\mu\mrest U_\alpha)\ll\cmass N_\alpha.$.
  \item Denoting by $D_N\in\wder{\cmass N_\alpha.}.$ the derivation
    associated to $N_\alpha$ given by Theorem
    \ref{thm:one_currents_derivations}, there is $\lambda_\alpha\in
    L^\infty(\cmass N_\alpha.)$ with $\lambda_\alpha\ge0$ and
    \begin{equation}
      \label{eq:normal_der1}
      \mpush \iota_\alpha.\chi_{U_\alpha}D=\lambda_\alpha D_{N_\alpha}.
    \end{equation}
  \end{enumerate}
Note that in \eqref{eq:normal_der1} we have used that (2) allows to
identify $\mpush \iota_\alpha.D$ with a derivation in $\wder{\cmass
  N_\alpha.}.$.
\end{defn}
\begin{rem}
  \label{rem:normal_derivations}
  We want to remark that there are many normal derivations. Suppose
  that $\mu$ admits an Alberti representation in the $f$-direction of
  an $n$-dimensional cone field $\cone$. The proof of \sync
  alberti_rep_prod.\ in \cite{deralb}
  allows us to assume that there is an $L^\infty(\mu)$-partition of
  unity $\{K_\alpha\}$ such that, for each $\alpha$:
  \begin{enumerate}
  \item The set $K_\alpha$ is compact and embedds isometrically in
    $S_\alpha$, which is a convex compact subset of some Banach space;
  \item Regarding $\mu\mrest K_\alpha$ as a measure on $S_\alpha$, it
    admits a $1$-Lipschitz Alberti representation $\albrep\alpha.$ in
    the $f$-direction of $\cone$;
  \item The Alberti representation $\albrep\alpha.$ is of the form
    \begin{equation}
      \label{eq:rem_normal_derivations1}
      \mu\mrest K_\alpha = \int_{\frags(S_\alpha)}g_\alpha\,\Psi_\gamma\,dP_\alpha;
    \end{equation}
  \item $g_\alpha$ is a bounded Borel function vanishing on
    $S_\alpha\setminus K_\alpha$;
  \item The probability measure $P_\alpha$ is concentrated on the set
    $\text{\normalfont Lip}_1([0,\tau_\alpha], S_\alpha)$ of
    $1$-Lipschitz maps $[0,\tau_\alpha]\to S_\alpha$, where $\tau_\alpha\in(0,1]$;
  \item $\Psi_\gamma=\mpush\gamma.\lebmeas\mrest[0,\tau_\alpha]$.
  \end{enumerate}
  We can then define a normal current $N_\alpha\in\ncurr 1,S_\alpha.$
  by:
  \begin{equation}
    \label{eq:rem_normal_derivations2}
    N_\alpha=\int_{\frags(S_\alpha)}[\gamma]\,dP_\alpha,
  \end{equation} so that $\mu\mrest K_\alpha\ll\cmass N_\alpha.$ and
  $D_{\albrep\alpha.}=\chi_{\left\{g_\alpha\ne0\right\}} D_{N_\alpha}$ for some nonnegative
  $\lambda_\alpha\in\bborel S_\alpha.$ which vanishes on
  $S_\alpha\setminus K_\alpha$. Thus, the derivation $D\in\wder\mu.$
  defined by $D=\sum_\alpha
  \chi_{K_\alpha}D_{\albrep\alpha.}$ is a normal derivation. Moreover,
  if $\wder\mu.$ is finitely generated, by choosing Alberti
  representations in the directions of independent cone fields, we get
  a generating set for $\wder\mu.$ consisting of normal
  derivations. If $\wder\mu.$ is not finitely generated,
  \catcode`*=11\sync thm:weak*density.\catcode`*=12\ in \cite{deralb} implies that the
  $\lipalg X.$-span of the set of normal derivations is weak* dense in
  $\wder\mu.$. Note that in this case it is necessary to use the
  $\lipalg X.$-span instead of the $L^\infty(\mu)$-span. In fact, if
  $D_1,D_2$ are normal derivations and if $\lambda_1,\lambda_2\in
  L^\infty(\mu)$, then $\lambda_1D_1+\lambda_2D_2$ might not be a
  normal derivation. However, if $\lambda_1$ and $\lambda_2$ are 
  Lipschitz\footnote{more precisely, $\lambda_1$ and $\lambda_2$ have Lipschitz representatives},
  then $\lambda_1D_1+\lambda_2D_2$ is a normal derivation because if
  $N$ is a normal current and $f$ is Lipschitz, then $N\crest f$ is
  still a normal current.
\end{rem}
The goal of this Subsection is the proof of the following Theorem:
\begin{thm}
  \label{thm:const_dir}
  Let $F:X\to l^2$ a Lipschitz map and $D\in\wder\mu.$ a normal
  derivation. Then $\mu\mrest(X\setminus N_{DF})$ admits a $1$-Lipschitz Alberti
  representation in the $F$-direction of $DF$.
\end{thm}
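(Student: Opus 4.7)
The plan is to first reduce the problem to a local situation using the structure of a normal derivation. By Definition \ref{defn:normal_der} there is a Borel $L^\infty(\mu\mrest(X\setminus N_D))$-partition of unity $\{U_\alpha\}$ together with isometric embeddings $\iota_\alpha:U_\alpha\to Z_\alpha$ into Polish spaces and normal currents $N_\alpha\in\ncurr 1,Z_\alpha.$ satisfying $\mpush\iota_\alpha.(\mu\mrest U_\alpha)\ll\cmass N_\alpha.$ and $\mpush\iota_\alpha.\chi_{U_\alpha}D=\lambda_\alpha D_{N_\alpha}$ for a nonnegative $\lambda_\alpha\in L^\infty(\cmass N_\alpha.)$. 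If for each $\alpha$ the measure $\mu\mrest(U_\alpha\setminus N_{DF})$ admits a $1$-Lipschitz Alberti representation in the $F$-direction of $DF$, then Theorem \ref{thm:alb_glue} assembles these into the desired global representation of $\mu\mrest(X\setminus N_{DF})$.

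Fix $\alpha$ and work inside $Z_\alpha$. The second step is to apply Paolini--Stepanov (Theorem \ref{thm:ps-dec}) to decompose
\[
N_\alpha=\int_{\curves(Z_\alpha)}[\gamma]\,d\eta_\alpha(\gamma),\qquad\cmass N_\alpha.=\int\cmass[\gamma].\,d\eta_\alpha.
\]
The natural Alberti-like representation to consider has fragments obtained by restricting each $\gamma$ to compact subsets of $[0,1]$ where $\gamma$ stays in $\iota_\alpha(U_\alpha)$ and $\metdiff\gamma>0$; reweighting by the Radon--Nikodym derivative $d\bigl(\mpush\iota_\alpha.(\mu\mrest U_\alpha)\bigr)/d\cmass N_\alpha.$ and by $\lambda_\alpha$, and then pulling back through $\iota_\alpha^{-1}$, yields a $1$-Lipschitz Alberti representation of $\mu\mrest U_\alpha$. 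The problem is then reduced to verifying the direction condition.

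The main obstacle is to show that for $\eta_\alpha$-a.e.\ $\gamma$ and Lebesgue-a.e.\ $t$ with $\gamma(t)\in\iota_\alpha(U_\alpha\setminus N_{DF})$, the derivative $(F\circ\iota_\alpha^{-1}\circ\gamma)'(t)$ is a positive multiple of $DF\bigl(\iota_\alpha^{-1}(\gamma(t))\bigr)$. A priori this is delicate: distinct curves through a common point could carry non-parallel tangents while $DF$ at the point would be only their appropriately weighted average. To fix this I would invoke Theorem \ref{thm:renorm}: after a biLipschitz deformation of the ambient metric on $X$ (which does not affect $\wder\mu.$ as an $L^\infty(\mu)$-module nor the direction condition up to an a.e.\ identification), the local norm on $\wder\mu.$ becomes strictly convex. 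Since $D_{N_\alpha}$ has unit local norm (Theorem \ref{thm:one_currents_derivations}) and is realized through the Paolini--Stepanov disintegration as an average of the ``tangent derivations'' of the curves $[\gamma]$, strict convexity forces these tangent derivations to point, at $\cmass N_\alpha.$-a.e.\ point, in a single direction. Applying this coordinatewise in $l^2$ (using that only countably many components $F_i$ are involved), one deduces that $(F\circ\iota_\alpha^{-1}\circ\gamma)'(t)$ is a positive multiple of a fixed vector at $\iota_\alpha^{-1}(\gamma(t))$; testing against each $F_i$ identifies that vector with $DF\bigl(\iota_\alpha^{-1}(\gamma(t))\bigr)$.

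Finally, refine the fragments to the Borel set where $DF\circ\iota_\alpha^{-1}\circ\gamma\ne0$ so that the representation is supported on $X\setminus N_{DF}$ and the scalar $\lambda(\gamma,t)$ is strictly positive a.e., and invoke Theorem \ref{thm:alb_glue} to glue the pieces across $\alpha$ into the required $1$-Lipschitz Alberti representation. The hardest step is clearly the strict-convexity argument of the third paragraph: making precise the claim that strict convexity of the local norm forces a single tangent direction almost everywhere, and doing so uniformly across the countably many components of the $l^2$-valued map $F$, is what genuinely requires Theorem \ref{thm:renorm} and is the core of the proof.
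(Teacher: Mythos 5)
Your high-level outline matches the paper's: reduce via the definition of normal derivation plus Theorem \ref{thm:alb_glue} to a single block where $D=\lambda D_N$ for a normal current $N$ in a Polish space (the paper's Lemma \ref{lem:reduction}), apply the Paolini--Stepanov decomposition (Theorem \ref{thm:ps-dec}), and use the renorming (Theorem \ref{thm:renorm}) to get a strictly convex local norm. So far so good, and you correctly identify that the strict-convexity argument is the crux.

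The gap is that the crux is left at the level of an intuition that does not translate into a proof in the form you state it. The slogan that ``strict convexity forces the tangent derivations to align a.e.'' is what one wants, but the paper's mechanism is quite different from what you sketch, and the differences matter. First, the claim cannot be applied ``coordinatewise in $l^2$'': strict convexity of $\elocnorm\,\cdot\,,{\wder\mu.}.$ is a property of the $L^\infty(\mu)$-normed module $\wder\mu.$ as a whole, not of the individual entries $D_{N}F_i$, and there is no meaningful coordinatewise version. Instead the paper builds a countable family of vector fields $w_j$ with the detecting property that whenever $\xi\ne0$ fails to be a positive multiple of $DF(z)$ one has $\langle w_j(z),\xi\rangle>0$ for some $j$ while $\langle w_j(z),DF(z)\rangle\le 0$; these $w_j$ are what replace any attempt at ``testing against each $F_i$.'' Second, the passage from the disintegration of $N$ to the local norm is not by treating $D_N$ as a literal a.e.\ average of curvewise tangents. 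Rather, one defines the bad set $\Omega_{\text{fail}}$ of curves, and for each $(j,n)$ splits $[\gamma]=\Xi_{j,n}(\gamma)+\Xi^c_{j,n}(\gamma)$ so that $T_{j,n}=\int\Xi_{j,n}\,d\eta$ and $T^c_{j,n}=\int\Xi^c_{j,n}\,d\eta$ become \emph{complementary subcurrents} of $N$ with $\cmass N.=\cmass T_{j,n}.+\cmass T^c_{j,n}.$; it is this additivity of masses, fed into Theorem \ref{thm:one_currents_derivations} and Definition \ref{defn:str_conv}, that converts strict convexity into the proportionality of the associated derivations $D_{j,n}$ and $D^c_{j,n}$, from which the contradiction with the choice of $w_j$ follows. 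Third, you ignore a genuine technical obstacle: the sets $\Omega_n(w_j)$ are not Borel, only $\Sigma^1_2$, and one must invoke $\Sigma^1_1$-determinacy to get universal measurability and a measurable selection $\sigma_{j,n}$ of the ``bad piece'' (Lemma \ref{lem:exo_mea}). Without this, the objects $\Xi_{j,n}$, $T_{j,n}$, $T^c_{j,n}$ --- or whatever variant your sketch would need --- are not defined. In short, you have identified the right ingredients and the correct heuristic, but the actual argument (complementary subcurrents via a measurable splitting, the detecting vector fields $w_j$, and the descriptive-set-theoretic measurability) is exactly what your proposal leaves open.
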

The proof of Theorem \ref{thm:const_dir} requires some preparation and
part of it has been split into some intermediate Lemmas.
\begin{lem}
  \label{lem:reduction}
  In proving Theorem \ref{thm:const_dir} we can assume that:
  \begin{enumerate}
  \item The metric space $X$ is a compact subset of a Polish space $Z$.
  \item The map $F:X\to l^2$ is $1$-Lipschitz and extends to a
    $1$-Lipschitz map $F:Z\to l^2$.
  \item There is a normal current $N\in\ncurr 1,Z.$ with $\mu\ll\cmass
    N.$ and $D=\lambda D_N$, where $D_N$ is the derivation
    associated to $N$ given by Theorem
    \ref{thm:one_currents_derivations}, and $\lambda\in
    L^\infty(\cmass N.)$ is nonnegative.
  \item There are constants $0<C_1\le C_2$ such that:
    \begin{equation}
      \label{eq:reduction_s1}
      C_1\le\frac{d\mu}{d\cmass N.}(x)\le C_2\quad(\forall x\in X).
    \end{equation}
  \end{enumerate}
\end{lem}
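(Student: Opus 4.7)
The plan is to prove the reduction by successive applications of the gluing principle for Alberti representations (Theorem~\ref{thm:alb_glue}), peeling off conditions (1)--(4) in turn. I first note that $N_{DF}\supseteq N_D$, so $\mu\mrest(X\setminus N_{DF})$ is concentrated on the set $X\setminus N_D$ where normality of $D$ supplies a partition; every Alberti representation produced on a Borel subset of $X\setminus N_D$ can be reassembled by Theorem~\ref{thm:alb_glue}.

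For the easy reductions: I may assume $F$ is non-constant (else $DF=0$ and the statement is vacuous) and rescale to make $\glip F.=1$, which preserves the $F$-direction of $DF$ up to a positive scalar. By Definition~\ref{defn:normal_der} there are a Borel $L^\infty(\mu\mrest(X\setminus N_D))$-partition of unity $\{U_\alpha\}$, isometric embeddings $\iota_\alpha:U_\alpha\to Z_\alpha$ into Polish spaces, normal currents $N_\alpha\in\ncurr 1,Z_\alpha.$, and nonnegative $\lambda_\alpha\in L^\infty(\cmass N_\alpha.)$ satisfying $\mpush\iota_\alpha.(\mu\mrest U_\alpha)\ll\cmass N_\alpha.$ and $\mpush\iota_\alpha.\chi_{U_\alpha}D=\lambda_\alpha D_{N_\alpha}$. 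Gluing allows me to work with one $\alpha$ at a time, so I push $\mu\mrest U_\alpha$, $D$, and $F$ forward to $Z_\alpha$ and drop the subscript, securing (3). Setting $h:=d\mu/d\cmass N.$, I then partition $\{h>0\}$ into dyadic layers $\{h\in(2^n,2^{n+1}]\}$, $n\in\mathbb{Z}$; on each layer the density lies between $C_1:=2^n$ and $C_2:=2^{n+1}$, so another application of Theorem~\ref{thm:alb_glue} reduces to one layer. Finally, Radon regularity of $\mu$ lets me exhaust such a layer by compact subsets and reduce once more to a single compact $X\subset Z$, yielding (1) and (4).

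The main obstacle is condition (2): arranging that the $1$-Lipschitz $F$ on $X$ extends to a $1$-Lipschitz map on $Z$. Because $l^2$ is not injective as a metric space, coordinate-wise McShane extension need not remain $l^2$-valued, and general Lipschitz-extension theorems into $l^2$ only yield a $C$-Lipschitz extension with $C>1$. My workaround is to enlarge the ambient space rather than extend in place: form $\tilde Z:=Z\times l^2$ with the $\max$-metric and re-embed the compact $X$ isometrically via the graph map $x\mapsto(\iota_\alpha(x),F(x))$, which is an isometry precisely because $F$ is $1$-Lipschitz. The projection $\pi_2:\tilde Z\to l^2$ is then a $1$-Lipschitz extension of $F$ by construction. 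I transplant $N$ to $\tilde Z$ by pushing forward along a Lipschitz section $s:Z\to\tilde Z$, $s(z)=(z,\tilde F(z))$, for any bounded Lipschitz extension $\tilde F:Z\to l^2$ of $F$ (available by a standard Lipschitz-extension statement on compact subsets of Polish spaces, with some finite constant $L$). The resulting $\mpush s.N\in\ncurr 1,\tilde Z.$ inherits absolute continuity of the pushed-forward $\mu$ with density constants adjusted by a factor of $L$, and under the graph isometry $D$ corresponds to $\lambda D_{\mpush s.N}$. Renaming $\tilde Z$, $\pi_2$, and $\mpush s.N$ as $Z$, $F$, and $N$ delivers conditions (1)--(4) simultaneously.
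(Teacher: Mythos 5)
Your peeling-off reductions via the gluing principle are correct and essentially match the paper's: non-vacuity, normalizing $\glip F.$, restricting to a single block $U_\alpha$ of the normal-derivation partition, pushing forward along $\iota_\alpha$, dyadic layering of $d\mu/d\cmass N.$ for (4), and compact exhaustion for (1). The problem is your handling of (2).

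Your product-space trick hinges on the existence of \emph{some} Lipschitz extension $\tilde F\colon Z\to l^2$ of $F$, which you invoke as ``a standard Lipschitz-extension statement on compact subsets of Polish spaces.'' No such statement exists for $l^2$-valued targets in this generality: $l^2$ is not an absolute Lipschitz retract, and coordinate-wise McShane extension can fail to land in $l^2$, since a $1$-Lipschitz $F\colon X\to l^2$ need not have $\sum_i\glip F_i.^2<\infty$ (e.g.\ if $F$ is close to an isometric embedding, every $\glip F_i.$ can be of order one). So the section $s(z)=(z,\tilde F(z))$ is not available, and without it there is nothing to push $N$ forward along. Note also that the graph map $x\mapsto(\iota_\alpha(x),F(x))$ only makes sense for the original compact $X$; you still need $N$ to live on the ambient $\tilde Z$, which is exactly where the missing $\tilde F$ enters.

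The paper sidesteps the extension problem entirely: take McShane extensions $\tilde F_i$ of the components (with $\glip\tilde F_i.=\glip F_i.$), choose $c_i\in(0,1)$ with $\sum_i c_i^2\glip F_i.^2\le1$, and set $G=(c_i\tilde F_i)_i$. Then $G\colon Z\to l^2$ is automatically $1$-Lipschitz and well-defined (anchored at any point of $X$). The map $G$ does \emph{not} extend $F$ --- it extends the componentwise-rescaled $(c_iF_i)_i$ --- but because each coordinate of both $(F\circ\gamma)'(t)$ and $DF(\gamma(t))$ is multiplied by the same positive $c_i$, the conditions $(F\circ\gamma)'(t)=\lambda\,DF(\gamma(t))$ and $(G\circ\gamma)'(t)=\lambda\,DG(\gamma(t))$ are equivalent for $\lambda>0$, so one may simply replace $F$ by $G$. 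Once you have $G$ there is no need for $Z\times l^2$ at all. If you want to keep the graph re-embedding, you must run it with $G|_X$ in place of $F$, and then carry out the same positive-rescaling equivalence --- at which point your argument has collapsed into the paper's, with the product space as unnecessary overhead.
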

\begin{proof}
  The proof makes repeated use of the gluing principle for Alberti
  representations, Theorem \ref{thm:alb_glue}. Let
  $\{U_\alpha,Z_\alpha,N_\alpha,\iota_\alpha\}$ be as in the
  definition of a normal derivation \ref{defn:normal_der}. By taking
  an $L^\infty(\mu\mrest U_\alpha)$-partition of unity of each
  $U_\alpha$, we can assume that the $U_\alpha$ are compact. By the
  gluing principle for Alberti representations (Theorem
  \ref{thm:alb_glue}), it suffices to show that the desired
  representation esists for each $\mu\mrest(U_\alpha\setminus
  N_{DF})$. In the following we can thus write $X$ for $U_\alpha$ and
  drop the index $\alpha$ from the notation. Note also that the vector
  field $DF\circ \iota^{-1}$ can be extended to a vector field $v:Z\to
  \real^\infty$. By \sync compact_reduction.\ in \cite{deralb} one
  can also show that the desired representation exists for $\mpush
  \iota.(\mu\mrest(X\setminus N_{DF}))$; note that in this case the
  direction is determined by the function $F\circ
  \iota^{-1}:\iota(X)\to l^2$. In the following we will then identify
  $\iota(X)$ with $X$, $\mpush \iota.\mu$ with $\mu$, and $\mpush
  \iota. D$ with $D$. We now take a MacShane extension
  \begin{equation}
    \label{eq:reduction_p1}
    \tilde F_i:Z\to \real
  \end{equation} of $F_i$ with the same Lipschitz constant $\glip
  F_i.$ and then choose $c_i\in(0,1)$ such that
  \begin{equation}
    \label{eq:reduction_p2}
    \sum_ic_i^2\glip F_i.^2\le1.
  \end{equation}
  In particular, the map $G:Z\to l^2$ with components $G_i=c_i\tilde F_i$ is
  $1$-Lipschitz. Recalling the discussion before Definition
  \ref{defn:const_alberti}, we also have, after choosing appropriate
  Borel representatives, that the components of the vector field $DG$
  satisfy:
  \begin{equation}\label{eq:reduction_p3}
    DG_i=c_iDF_i.
  \end{equation}
  Consider a fragment $\gamma:K\to X$. As $l^2$ has the Radon-Nikodym
  property, $F\circ\gamma$ and $G\circ\gamma$ are differentiable for
  $t\in Q\subset K$, where the Borel set $Q$ satisfies $\lebmeas(K\setminus Q)=0$. Moreover, at each
  point $t\in Q$ we have that $(F\circ\gamma)'(t)$ and
  $(G\circ\gamma)'(t)$ are determined by the derivatives
  $(F_i\circ\gamma)'(t)$ and $(G_i\circ\gamma)'(t)$ which are related
by
\begin{equation}
  \label{eq:reduction_p4}
  (F_i\circ\gamma)'(t)=c_i(G_i\circ\gamma)'(t).
\end{equation}
In particular, for $\lambda>0$ the following equations are equivalent:
\begin{align}
  \label{eq:reduction_p5}
  (F\circ\gamma)'(t)&=\lambda DF\left(\gamma(t)\right)\\
    \label{eq:reduction_p6}
  (G\circ\gamma)'(t)&=\lambda DG\left(\gamma(t)\right),
\end{align} and so we can replace $F$ with $G$. Finally, we take
another $L^\infty(\mu)$-partition of unity to ensure that (4) holds.
\end{proof}
\par The second ingredient in the proof of Theorem \ref{thm:const_dir}
is the following notion of strict convexity for the local norm in $\wder\mu.$.
\begin{defn}\label{defn:str_conv}
  The local norm $\locnorm\,\cdot\,,{\wder\mu.}.$ on $\wder\mu.$ is called \textbf{strictly
  convex} if the following holds: whenever one has that for derivations
  $D_1,D_2\in\wder\mu.$ and for a Borel set $U$:
  \begin{equation}
    \label{eq:str_conv1}
    \locnorm D_1+D_2,{\wder\mu.}.(x)=\locnorm
    D_1,{\wder\mu.}.(x)+\locnorm D_2,{\wder\mu.}.(x)\quad
    \text{(for $\mu$-a.e.~$x\in U$)},
  \end{equation}
  then there are Borel sets $V_1,V_2$ and nonnegative $\lambda_1\in
  L^\infty(\mu\mrest V_1)$, $\lambda_2\in L^\infty(\mu\mrest V_2)$
  such that one has:
  \begin{align}
    \label{eq:str_conv2}
    \mu\left(U\setminus (V_1\cup V_2)\right) &= 0;\\
    \label{eq:str_conv3}
    \chi_{V_1}D_1&=\lambda_1 D_2;\\
    \label{eq:str_conv4}
    \chi_{V_2}D_2&=\lambda_2 D_1.
  \end{align}
\end{defn}
In Subsection \ref{subsec:renorming} we show (Theorem
\ref{thm:renorm}) that it is always possible to perturb the metric on
$X$ in a biLipschitz way and obtain a strictly convex local norm on
$\wder\mu.$. Therefore, for $\varepsilon>0$, we can assume that the
metric $d$ on $Z$ has been replaced by a metric $\edst , .$ such that:
\begin{equation}
  \label{eq:bilip_renorm}
  d\le\edst , .\le (1+\varepsilon)d,
\end{equation} and $\elocnorm\,\cdot\,,{\wder{\cmass N.}.}.$ is
strictly convex. We now apply Theorem \ref{thm:ps-dec} to obtain
decompositions of $N$ as in \eqref{eq:ps-dec_s1} and
\eqref{eq:ps-dec_s2}. We also construct countably many vector fields
$w_j:Z\to\real^\infty$ such that:
\begin{enumerate}
\item For each $j$, there is $M_j\in\natural$ such that $i>M_j$ implies
  $(w_j)_i=0$, where $(w_j)_i$ is the $i$-th component of $w_j$.
  \item If $DF(z)\ne0$ and $\xi\in\real^\infty\setminus\{0\}$ is not a
    positive multiple of $DF(z)$, then $\langle w_j(z),\xi\rangle>0$ for
    some $j$.
\item For each $z\in Z$, one has $\langle w_j(z),DF(z)\rangle\le0$.
\end{enumerate}
We will denote by $w_0:Z\to\real^\infty$ the null vector field.
\par We now introduce the set $\Omega_{\text{\normalfont fail}}$ of
those curves which, roughly speaking, meet $X$ in a set of positive 
 measure  where the direction of $F\circ\gamma$ fails to be a
positive multiple of $DF$. Specifically, we say that a curve
$\gamma\in\curves(Z)$ belongs to $\Omega_{\text{\normalfont fail}}$ if and only if there is a
piece $\tilde\gamma$ of $\gamma$ such that:
\begin{enumerate}
\item $F\circ\gamma$ is differentiable at each point
  $t\in\dom\tilde\gamma$.
  \item At each point $t\in\dom\tilde\gamma$, the vector $(F\circ\gamma)'(t)$ is
    either $0$ or, if it is nonzero, it is
    not a positive multiple of $DF\circ\gamma(t)$.
    \item The piece $\tilde\gamma$ meets $X\setminus N_{DF}$ in
      positive mass measure: $\cmass[\tilde\gamma].(X\setminus N_{DF})>0$.
\end{enumerate}
In general, the set $\Omega_{\text{\normalfont fail}}$ is not Borel, but,
after completing $\eta$, we will show that it becomes
$\eta$-measurable. The goal is then to show that
$\eta(\Omega_{\text{\normalfont fail}})=0$. Note that the set
$\Omega_{\text{\normalfont fail}}$ is a countable union of the sets
\begin{equation}
  \label{eq:countable_parts}
  \Omega_n(w_j)\subset\curves_n(Z)
\end{equation} defined as follows: $\gamma\in\curves_n(Z)$ belongs to
$\Omega_n(w_j)$ if and only if there is a piece $\tilde\gamma$ of
$\gamma$ such that:
\begin{description}
\item[F1] $F\circ\gamma$ is differentiable at each point
  $t\in\dom\tilde\gamma$.
\item[F2] At each point $t\in\dom\tilde\gamma$, if $j\ne0$ one has
  $\left\langle(F\circ\gamma)'(t),w_j\left(\gamma(t)\right)\right\rangle\ge\frac{1}{n}$,
  and if $j=0$ one has $(F\circ\gamma)'(t)=0$.
\item[F3] The piece $\tilde\gamma$ meets $X\setminus N_{DF}$ in mass measure at least
  $1/n$: $\cmass[\tilde\gamma].(X\setminus N_{DF})\ge\frac{1}{n}$.
\end{description}
We will thus study the measurability properties of each set
$\Omega_n(w_j)$, which is the projection of
\begin{multline}
  \label{eq:omega1}
  \Omega_n^{(1)}(w_j)= \Biggl\{(\gamma,\tilde\gamma)\in\curves_n(Z)\times\pieces_n(Z):\quad\text{$\tilde\gamma$
      is a piece of $\gamma$}\\ \text{and \textbf{(F1)}, \textbf{(F2)} and
      \textbf{(F3)} hold}\Biggr\}
  \end{multline} on $\curves_n(Z)$.
  \begin{lem}
    \label{lem:exo_mea}
    The set $\Omega_n^{(1)}(w_j)$ is of class $\Pi_1^1$,
    i.e.~coanalytic. Thus $\Omega_n(w_j)$ is of class $\Sigma_2^1$
    and, moreover, there is a uniformizing function
    $\sigma_{j,n}:\Omega_n(w_j)\to\Omega_n^{(1)}(w_j)$ which is
    universally measurable and whose graph is of class $\Pi_1^1$.
  \end{lem}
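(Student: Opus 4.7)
The plan is to analyze the descriptive-set-theoretic complexity of $\Omega_n^{(1)}(w_j)$ by decomposing its defining conditions, and then to obtain the complexity of $\Omega_n(w_j)$ and the uniformizer $\sigma_{j,n}$ from standard projection and uniformization theorems.

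First I would isolate the easy (Borel) part. The relation ``$\tilde\gamma$ is a piece of $\gamma$'' is closed in $\curves_n(Z)\times\pieces_n(Z)$, since both spaces carry the topology of Hausdorff convergence of graphs and the relation $\gamma\,|\,\dom\tilde\gamma=\tilde\gamma$ is preserved under limits. Condition \textbf{(F3)} defines a Borel subset of $\pieces_n(Z)$ by the Borel measurability of $\gamma\mapsto\cmass[\gamma].(E)$ established in \eqref{eq:mass_borel} and \eqref{eq:mass_borel2}, applied with $E=X\setminus N_{DF}$.

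Next I would handle the pointwise conditions \textbf{(F1)} and \textbf{(F2)} by introducing the auxiliary set
\begin{equation*}
B=\Bigl\{(\gamma,\tilde\gamma,t)\in\curves_n(Z)\times\pieces_n(Z)\times[0,1]:t\notin\dom\tilde\gamma,\text{ or }\textbf{(F1)}\text{ and }\textbf{(F2)}\text{ hold at }t\Bigr\},
\end{equation*}
and check that $B$ is Borel. The relation $t\in\dom\tilde\gamma$ is closed in the Hausdorff-graph topology; differentiability of $F\circ\gamma$ at $t$ can be expressed, via countably many components, as existence of a limit in $l^2$, which is a $\Pi_3^0$ condition in $(\gamma,t)$ and hence Borel; and the inequality in \textbf{(F2)} involves only the finitely many components $(w_j)_i$ with $i\le M_j$, so it is also Borel in $(\gamma,t)$. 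Therefore the set
\begin{equation*}
\bigl\{(\gamma,\tilde\gamma):(\gamma,\tilde\gamma,t)\in B\text{ for every }t\in[0,1]\bigr\}
\end{equation*}
is the coprojection of a Borel set along the compact Polish space $[0,1]$, hence is $\Pi_1^1$. Intersecting with the Borel conditions from the previous paragraph shows $\Omega_n^{(1)}(w_j)$ is $\Pi_1^1$, and consequently $\Omega_n(w_j)$, being the projection of $\Omega_n^{(1)}(w_j)$ onto its first coordinate, is $\Sigma_2^1$.

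Finally I would produce $\sigma_{j,n}$ by invoking Kond\^o's uniformization theorem: every $\Pi_1^1$ subset of a product of Polish spaces admits a uniformization whose graph is $\Pi_1^1$. Applied to $\Omega_n^{(1)}(w_j)\subseteq\curves_n(Z)\times\pieces_n(Z)$ over the first coordinate, this yields a function $\sigma_{j,n}\colon\Omega_n(w_j)\to\Omega_n^{(1)}(w_j)$ with $\Pi_1^1$ graph; universal measurability follows automatically from the classical Lusin--Sierpi\'nski theorem that $\Pi_1^1$ sets are universally measurable. The main obstacle is the universal quantifier over $t\in\dom\tilde\gamma$ in \textbf{(F1)}--\textbf{(F2)}: it is the reason the set fails to be Borel, and the delicate point is verifying that differentiability of the $l^2$-valued composition $F\circ\gamma$ is Borel in the pair $(\gamma,t)$, which hinges on the convention established before Definition~\ref{defn:const_alberti} that $DF$ is represented by uniformly bounded Borel functions in each coordinate.
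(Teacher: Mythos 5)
Your Borel analysis of the auxiliary set (your $B$, essentially the paper's $\Omega_n^{(2)}(w_j)$), the coprojection over $[0,1]$ to land in $\Pi_1^1$, and the projection to land in $\Sigma_2^1$ all match the paper's route, and your appeal to Kond\^o's theorem for a uniformizing function with $\Pi_1^1$ graph is fine in ZFC. The genuine gap is in the final sentence. The Lusin--Sierpi\'nski theorem says that $\boldsymbol\Sigma_1^1$, hence $\boldsymbol\Pi_1^1$, \emph{sets} are universally measurable; applied here it only tells you that the \emph{graph} of $\sigma_{j,n}$, as a subset of $\curves_n(Z)\times\pieces_n(Z)$, is universally measurable. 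That is not what is needed. For $\sigma_{j,n}$ to be a universally measurable \emph{function} one needs $\sigma_{j,n}^{-1}(B)$ to be universally measurable for every Borel $B$ in the target, and these preimages are projections of the $\Pi_1^1$ set $\graph(\sigma_{j,n})\cap(\curves_n(Z)\times B)$, hence $\boldsymbol\Sigma_2^1$, not $\boldsymbol\Pi_1^1$. Likewise the domain $\Omega_n(w_j)$ is itself only $\boldsymbol\Sigma_2^1$, and one does need it to be universally measurable (the definition of $\Xi_{j,n}$ in \eqref{eq:xi} branches on membership in $\Omega_n(w_j)$). In ZFC, $\boldsymbol\Sigma_2^1$ sets are \emph{not} provably universally measurable: under $V=L$ there is already a $\boldsymbol\Delta_2^1$ set that is not Lebesgue measurable.

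The paper does not claim a ZFC proof here: it explicitly invokes $\boldsymbol\Sigma_1^1$-determinacy, citing \cite[Cor.~36.21]{kechris_desc}, under which every $\boldsymbol\Sigma_2^1$ set is universally measurable; only then do both the domain $\Omega_n(w_j)$ and the $\boldsymbol\Sigma_2^1$-measurable Kond\^o uniformizer become universally measurable. Your proof would be repaired by inserting the same determinacy hypothesis (or any equivalent, e.g.~``$x^\sharp$ exists for every real $x$'') before the last sentence; as written, the jump from ``$\Pi_1^1$ sets are universally measurable'' to ``the function is universally measurable'' is a non sequitur. A small secondary inaccuracy: the Borel measurability of condition \textbf{(F2)} hinges on the construction of the $w_j$ (they are Borel and have only finitely many nonzero components), not on the convention about $DF$; the latter is used elsewhere, but $DF$ does not appear in $\tilde\Omega$.
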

  \begin{proof} We prove the Lemma for $j\ne0$ as the case $j=0$
    requires a minor modification of the argument.
    Consider the set
    $\Omega_n^{(2)}(w_j)\subset\curves_n(Z)\times\pieces_n(Z)\times[0,1]$
    consisting of the triples $(\gamma,\tilde\gamma,t)$ such that:
    \begin{description}
    \item[G1] $\tilde\gamma$ is a piece of $\gamma$.
    \item[G2] $\cmass[\tilde\gamma].(X\setminus N_{DF})\ge\frac{1}{n}$.
    \item[G3] either $t\not\in\dom\tilde\gamma$ or
      $t\in\dom\tilde\gamma$ and $F\circ\gamma$ is differentiable at
      $t$ with $\left\langle(F\circ\gamma)'(t),w_j\left(\gamma(t)\right)\right\rangle\ge\frac{1}{n}$.
    \end{description}
We show that $\Omega_n^{(2)}(w_j)$ is Borel. First note that the set of couples
$(\gamma,\tilde\gamma)$ such that $\tilde\gamma$ is a piece of
$\gamma$ is closed in $\curves_n(Z)\times\pieces_n(Z)$. Second, as the
map $\tilde\gamma\mapsto\cmass[\tilde\gamma].(X\setminus N_{DF})$ is Borel
\eqref{eq:mass_borel2}, the set of pieces with
$\cmass[\tilde\gamma].(X\setminus N_{DF})\ge\frac{1}{n}$ is Borel. Third, the set of
pairs $(\tilde\gamma,t)$ with $t\in\dom\tilde\gamma$ is
closed. Therefore, we have only to show that the set
\begin{equation}
  \label{eq:lem:exo_mea_p1}
  \tilde\Omega=\left\{(\gamma,t)\in\curves_n(Z)\times[0,1]:\text{
      $(F\circ\gamma)'(t)$ exists and $\left\langle(F\circ\gamma)'(t),w_j\left(\gamma(t)\right)\right\rangle\ge\frac{1}{n}$}\right\}
\end{equation} is Borel. Let $\mathscr{S}$ denote a countable dense
set of $l^2$. We then have:
\begin{multline}
  \label{eq:lem:exo_mea_p2}
  \tilde\Omega=\bigcap_{\varepsilon\in\rational\cap(0,1)}\bigcup_{\delta\in\rational\cap(0,1)}\bigcap_{s_1,s_2\in\rational\cap(0,1)}\bigcup_{\xi\in\mathscr{S}}\biggl(
  \curves_n(Z)\times\bigl\{t\in(0,1):\text{$|t-s_1|\ge\delta$}\\\text{or
    $|t-s_2|\ge\delta$}\bigr\}\cup S(\varepsilon,\delta,s_1,s_2,\xi)\biggr),
\end{multline}
where $(\gamma,t)\in S(\varepsilon,\delta,s_1,s_2,\xi)$ if and
only if the following inequalities hold:
\begin{align}
  |t-s_i|&<\delta\quad(i=1,2)\\
  \left\|F\circ\gamma(t)(s_1-s_2)-(t-s_2)F\circ\gamma(s_1)+(t-s_1)F\circ\gamma(s_2)\right\|_{l^2}&\le\varepsilon|t-s_1|\,|t-s_2|\\
  \left\|F\circ\gamma(t)-F\circ\gamma(s_1)-\xi(t-s_1)\right\|_{l^2}&\le\varepsilon|t-s_1|\\
  \left\langle\xi,w_j\left(\gamma(t)\right)\right\rangle&\ge\frac{1}{n}-\varepsilon.
\end{align}
We conclude that $S(\varepsilon,\delta,s_1,s_2,\xi)$ is Borel
and so $\tilde\Omega$ is Borel, which completes the proof that
$\Omega_n^{(2)}(w_j)$ is Borel. Note that $\Omega_n^{(1)}(w_j)$ is the
coprojection of $\Omega_n^{(2)}(w_j)$ on
$\curves_n(Z)\times\pieces_n(Z)$, which implies that
$\Omega_n^{(1)}(w_j)$ is coanalytic. By the definition of the class
$\Sigma_2^1$, as $\Omega_n(w_j)$ is the projection of a conalytic set,
it is of class $\Sigma_2^1$. By the $\Sigma_1^1$-determinacy
\cite[Cor.~36.21]{kechris_desc}, $\Omega_n(w_j)$ is
universally measurable and there is a uniformizing function
$\sigma_{j,n}$ as in the statement of this Lemma.
\end{proof}
We now define maps
\begin{equation}
  \label{eq:xi}
  \begin{aligned}
    \Xi_{j,n}:\curves_n(Z)&\to M_1(Z)\\
    \gamma&\mapsto
    \begin{cases}
      \left[\sigma_{j,n}(\gamma)\right]&\text{if $\gamma\in\Omega_n(w_j)$}\\
      0&\text{otherwise,}
    \end{cases}
  \end{aligned}
\end{equation}
and 
\begin{equation}
  \label{eq:xic}
  \begin{aligned}
    \Xi^c_{j,n}:\curves_n(Z)&\to M_1(Z)\\
    \gamma&\mapsto
    \begin{cases}
      [\gamma]-\left[\sigma_{j,n}(\gamma)\right]&\text{if $\gamma\in\Omega_n(w_j)$}\\
      [\gamma]&\text{otherwise.}
    \end{cases}
  \end{aligned}
\end{equation}
Note that for each $(f,\pi)\in\bborel Z.\times\lipfun Z.$ and each Borel set $E\subset Z$, the maps:
\begin{align}
  \gamma&\mapsto\Xi_{j,n}(\gamma)(fd\pi)\\
  \gamma&\mapsto\Xi^c_{j,n}(\gamma)(fd\pi)\\
  \gamma&\mapsto\cmass\Xi_{j,n}(\gamma).(E)\\
  \gamma&\mapsto\cmass\Xi^c_{j,n}(\gamma).(E)
\end{align}
are universally measurable. In particular, they are $\eta$-measurable,
as we assume that $\eta$ is complete. Moreover, by definition of the
maps $\Xi_{j,n}$ and $\Xi^c_{j,n}$, we have the relation:
\begin{equation}
  \label{eq:sumxi}
  [\gamma]=\Xi_{j,n}(\gamma)+\Xi^c_{j,n}(\gamma);
\end{equation}
this implies that
\begin{equation}
  \label{eq:sumxi2}
  \cmass[\gamma].\le\cmass\Xi_{j,n}(\gamma).+\cmass\Xi^c_{j,n}(\gamma).;
\end{equation}
however, for $\eta$-a.e.~$\gamma$, if $\gamma\in\Omega_n(w_j)$, \eqref{eq:ps-dec_s3} implies that:
\begin{multline}
  \label{eq:sumbal}
  \cmass[\gamma].(Z)=l(\gamma)=\int_0^1\metdiff\gamma(t)\,dt\\=\int_{\dom\sigma_{j,n}(\gamma)}\metdiff\gamma(t)\,dt+\int_{[0,1]\setminus\dom\sigma_{j,n}(\gamma)}\metdiff\gamma(t)\,dt
\\ \ge\cmass\Xi_{j,n}(\gamma).(Z)+\cmass\Xi^c_{j,n}(\gamma).(Z),
\end{multline} and thus, for $\eta$-a.e.~$\gamma$, we have:
\begin{equation}
  \label{eq:sumxieq}
  \cmass[\gamma].=\cmass\Xi_{j,n}(\gamma).+\cmass\Xi^c_{j,n}(\gamma)..
\end{equation}
\begin{lem}
  \label{lem:fail_vanishing}
  For each $n$ and $j$ we have that $\eta(\Omega_n(w_j))=0$.
\end{lem}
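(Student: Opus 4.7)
The plan is to argue by contradiction: assume $\eta(\Omega_n(w_j)) > 0$ and derive a violation of property~(3) of the vector field $w_j$.

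First I split $N$ using the selector. With the $\eta$-measurable maps $(\ref{eq:xi})$--$(\ref{eq:xic})$ define
\begin{equation*}
  N_1(f,\pi) = \int_{\curves_n(Z)} \Xi_{j,n}(\gamma)(f,\pi)\, d\eta(\gamma), \qquad N_2(f,\pi) = \int_{\curves_n(Z)} \Xi^c_{j,n}(\gamma)(f,\pi)\, d\eta(\gamma).
\end{equation*}
One checks that these are $1$-dimensional metric currents of finite mass (the continuity axiom survives integration by dominated convergence). Equation $(\ref{eq:sumxi})$ gives $N = N_1 + N_2$, and $(\ref{eq:sumxieq})$ combined with Paolini--Stepanov yields the key mass identity $\cmass N. = \cmass N_1. + \cmass N_2.$. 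Condition \textbf{(F3)} then forces $\cmass N_1.(X\setminus N_{DF}) \ge \eta(\Omega_n(w_j))/n > 0$.

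Next I invoke strict convexity. Let $h_i = d\cmass N_i./d\cmass N.$, so $h_1+h_2 = 1$ $\cmass N.$-a.e., and let $D_{N_i}$ be the derivations associated with $N_i$ by Theorem \ref{thm:one_currents_derivations}, extended by zero to elements of $\wder{\cmass N.}.$ with local norm $\chi_{\{h_i>0\}}$. A direct computation from $(\ref{eq:curr_def_der1})$ gives $D_N = h_1 D_{N_1} + h_2 D_{N_2}$, and since $\locnorm D_N,{\wder{\cmass N.}.}. = 1$ $\cmass N.$-a.e., the triangle inequality collapses to an equality on $\{h_1+h_2 > 0\}$. Because Theorem \ref{thm:renorm} has rendered the local norm on $\wder{\cmass N.}.$ strictly convex in the sense of Definition \ref{defn:str_conv}, I conclude that on the set $\{h_1>0\}$ the derivation $D_{N_1}$ is a positive $L^\infty$-multiple of $D_N$ (the case $h_2 = 0$ being immediate and the case $h_1,h_2>0$ following from the proportionality of $h_1 D_{N_1}$ and $h_2 D_{N_2}$ given by strict convexity).

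The contradiction follows by pairing with $w_j$. Set $V = (X \setminus N_{DF}) \cap \{h_1 > 0\}$, which has $\cmass N_1.(V) > 0$ by the first step. Writing $D = \lambda D_N$ with $\lambda > 0$ on $X\setminus N_{DF}$, the proportionality above makes $D_{N_1} F(x)$ a positive scalar multiple of $DF(x)$ componentwise on $V$, so property~(3) of $w_j$ forces $\langle w_j, D_{N_1} F\rangle \le 0$ $\cmass N_1.$-a.e.\ on $V$. On the other hand, applying $(\ref{eq:curr_def_der1})$ to each of the finitely many nonzero components $(w_j)_i F_i$ and summing yields
\begin{equation*}
  \int_V \langle w_j, D_{N_1} F\rangle\, d\cmass N_1. = \int_{\Omega_n(w_j)} d\eta(\gamma) \int_{\dom\sigma_{j,n}(\gamma)} \chi_V(\gamma(t))\,\langle w_j(\gamma(t)), (F\circ\gamma)'(t)\rangle\, dt.
\end{equation*}
For $j\ne 0$, condition \textbf{(F2)} bounds the integrand below by $1/n$ on $\sigma_{j,n}(\gamma)^{-1}(V)$, and $(\ref{eq:constdir_mass_piece})$ converts $\cmass N_1.(V)>0$ into a strictly positive lower bound for $\int \lebmeas(\sigma_{j,n}(\gamma)^{-1}(V))\, d\eta$, so the double integral is strictly positive---the desired contradiction. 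The case $j = 0$ is simpler: \textbf{(F2)} forces $D_{N_1}F \equiv 0$ $\cmass N_1.$-a.e., contradicting that it is a positive multiple of the nonvanishing $DF$ on $V$. The main technical point is checking that $N_1, N_2$ are genuine metric currents under the mere universal measurability of $\sigma_{j,n}$ and that the extensions-by-zero of the $D_{N_i}$ carry the claimed local norm, so that the strict convexity of Theorem \ref{thm:renorm} can be invoked cleanly.
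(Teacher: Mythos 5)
Your argument is correct and follows the paper's own proof in all essential respects: decompose $N$ via the selector, establish mass additivity using the Paolini--Stepanov equalities and \eqref{eq:sumxieq}, invoke strict convexity of the renormed local norm to force proportionality of the two pieces, and pair with $w_j$ to contradict property~(3) of the vector field. The only cosmetic differences are that you work with the normalized derivations $D_{N_i}$ (local norm $\chi_{\{h_i>0\}}$) rather than the paper's $D_{j,n}=h_iD_{N_i}$ (local norm $h_i$), and you should write $\elocnorm\cdot,{\wder{\cmass N.}.}.$ rather than $\locnorm\cdot,{\wder{\cmass N.}.}.$ throughout, since by this point the metric $d$ has been replaced by the perturbed $\edst,.$ so that the strictly convex local norm is the one in play.
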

\begin{proof}[Proof of Lemma \ref{lem:fail_vanishing}]
  We argue by contradiction assuming that
  $\eta(\Omega_n(w_j))>0$. Note that:
  \begin{equation}
    \label{eq:fail_vanishing_p1}
    N=\underbrace{\int_{\curves(Z)}\Xi_{j,n}(\gamma)\,d\eta(\gamma)}_{T_{j,n}}+
    \underbrace{\int_{\curves(Z)}\Xi^c_{j,n}(\gamma)\,d\eta(\gamma)}_{T^c_{j,n}},
  \end{equation}
  and, using \eqref{eq:sumxieq},
  \begin{multline}
    \label{eq:fail_vanishing_p2}
    \cmass
    N.(Z)=\int_{\curves(Z)}\cmass[\gamma].(Z)\,d\eta(\gamma)=\int_{\curves(Z)}\cmass\Xi_{j,n}(\gamma).(Z)\,d\eta(\gamma)
    \\+\int_{\curves(Z)}\cmass\Xi^c_{j,n}(\gamma).(Z)\,d\eta(\gamma)\\
    \ge\cmass T_{j,n}.(Z)+\cmass T^c_{j,n}.(Z),
  \end{multline}
  where we used:
  \begin{align}
    \label{eq:fail_vanishing_p3}
    \int_{\curves(Z)}\cmass\Xi_{j,n}(\gamma).(Z)\,d\eta(\gamma)&\ge\cmass
    T_{j,n}.(Z),\\
    \label{eq:fail_vanishing_p4}
    \int_{\curves(Z)}\cmass\Xi^c_{j,n}(\gamma).(Z)\,d\eta(\gamma)&\ge\cmass
    T^c_{j,n}.(Z).
  \end{align}
  In particular, $T_{j,n}$ and $T^c_{j,n}$ are \emph{complementary
  subcurrents} of $N$ because \eqref{eq:fail_vanishing_p2} implies that
  \begin{equation}
    \label{eq:fail_vanishing_p5}
    \cmass N.=\cmass T_{j,n}.+\cmass T^c_{j,n}..
  \end{equation}
  Moreover, we also have that:
  \begin{align}
    \label{eq:fail_vanishing_p6}
    \cmass
    T_{j,n}.&=\int_{\curves(Z)}\cmass\Xi_{j,n}(\gamma).\,d\eta(\gamma),\\
    \label{eq:fail_vanishing_p7}
    \cmass
    T^c_{j,n}.&=\int_{\curves(Z)}\cmass\Xi^c_{j,n}(\gamma).\,d\eta(\gamma).
  \end{align}
  By Theorem \ref{thm:one_currents_derivations} we find derivations
  $D_{j,n},D^c_{j,n}\in\wder{\cmass N.}.$ such that
  \begin{align}
    \label{eq:fail_vanishing_p8}
    T_{j,n}(fd\pi)&=\int_ZfD_{j,n}\pi\,d\cmass N.\\
    \label{eq:fail_vanishing_p9}
    T^c_{j,n}(fd\pi)&=\int_ZfD^c_{j,n}\pi\,d\cmass N.\\
\label{eq:fail_vanishing_p10}
    \cmass T_{j,n}.&=\elocnorm D_{j,n},{\wder{\cmass N.}.}.\,\cmass
N.\\
\label{eq:fail_vanishing_p11}
\cmass T^c_{j,n}.&=\elocnorm D^c_{j,n},{\wder{\cmass N.}.}.\,\cmass
N..
\end{align}
Note that \eqref{eq:reduction_s1} implies that the measures $\cmass
N.\mrest X$ and $\mu$ are in the same measure class and we can thus
identify the rings $L^\infty(\cmass
N.\mrest X)$ and $L^\infty(\mu)$ and the modules $\wder{\cmass
N.\mrest X}.$ and $\wder\mu.$. Having picked a Borel representative
of $\elocnorm D_{j,n},{\wder{\cmass N.}.}.$ and letting
\begin{equation}
  \label{eq:fail_vanishing_p12}
  X_{j,n}=\left\{x\in X\setminus N_{DF}:\elocnorm D_{j,n},{\wder{\cmass N.}.}.(x)>0\right\},
\end{equation}
we show that $\mu(X_{j,n})>0$ by showing that $\cmass T_{j,n}.(X\setminus N_{DF})>0$:
\begin{equation}
  \label{eq:fail_vanishing_p13}
  \cmass T_{j,n}.(X\setminus N_{DF})=\int_{\curves(Z)}\cmass \Xi_{j,n}(\gamma).(X\setminus N_{DF})\,d\eta(\gamma)\ge\frac{1}{n}\eta(\Omega_n(w_j))>0.
\end{equation}
We now combine \eqref{eq:fail_vanishing_p5},
\eqref{eq:fail_vanishing_p10} and \eqref{eq:fail_vanishing_p11} with
 the strict convexity of $\elocnorm\,\cdot\,,{\wder{\cmass N.}.}.$ and
 the fact that $\elocnorm D_{j,n},{\wder{\cmass N.}.}.>0$ on $X_{j,n}$,
 to conclude that there are nonnegative $\lambda^c_{j,n},\lambda_{j,n}\in\bborel Z.$,
 which vanish on $Z\setminus X_{j,n}$ and are such that:
 \begin{align}
   \label{eq:fail_vanishing_p14}
   \lambda^c_{j,n}D^c_{j,n}&=\lambda_{j,n}D_{j,n};\\
   \label{eq:fail_vanishing_p14bis}
   \lambda^c_{j,n}(z)&>0\quad(\forall z\in Z).
 \end{align}
 We then conclude that
 \begin{equation}
   \label{eq:fail_vanishing_p15}
   \lambda^c_{j,n}D_N=(\lambda^c_{j,n}+\lambda_{j,n})D_{j,n}.
 \end{equation}
 If $j=0$ we have $\lambda^c_{0,n}D_NF=0$ which contradicts the fact
 that $\lambda^c_{0,n}DF\ne0$. For $j\ne0$ we argue as follows:
 let $M_j$ be the maximal index such that $(w_j)_{M_j}\ne0$; we
 consider the $1$-form $\omega=\sum_{k=1}^{M_j}(w_j)_k\,dF_k$ and let
 $g$ denote a nonnegative continuous function; we have:
 \begin{equation}
   \label{eq:fail_vanishing_p16}
   \int_Zg\langle w_j,D_{j,n} F\rangle\,d\cmass N.=T_{j,n}(g\omega)=\int_{\curves(Z)}\Xi_{j,n}(\gamma)(g\omega)\,d\eta(\gamma);
 \end{equation}
now, if $\gamma\in\Omega_n(w_j)$,
$\sum_{k=1}^{M_j}(w_j)_k(\gamma(t))\,(F_k\circ\gamma)'(t)\ge1/n$ for
$t\in\dom\sigma_{j,n}$, which implies:
\begin{equation}
  \label{eq:fail_vanishing_p17}
  \int_Zg\langle w_j,D_{j,n} F\rangle\,d\cmass N.\ge\frac{1}{n}\int_{\Omega_n(w_j)}d\eta(\gamma)\int_{\dom\sigma_{j,n}}g\circ\gamma(t)\,dt;
\end{equation}
as the curves in $\Omega_n(w_j)$ are $n$-Lipschitz and because of
\eqref{eq:constdir_mass_piece}, we obtain
\begin{equation}
  \label{eq:fail_vanishing_p18}
  \begin{aligned}
    \int_Zg\langle w_j,D_{j,n} F\rangle\,d\cmass
    N.&\ge\frac{1}{n^2}\int_{\Omega_n(w_j)}d\eta(\gamma)\int_{\dom\sigma_{j,n}}g\circ\gamma(t)\,\metdiff
    \gamma(t)\,dt\\
    &\ge\frac{1}{n^2}\int_{\Omega_n(w_j)}d\eta(\gamma)\int_Zg\,d\cmass\Xi_{j,n}(\gamma).\\
    &=\frac{1}{n^2}\int_Zg\,d\cmass T_{j,n}.\\
    &=\frac{1}{n^2}\int_Zg\elocnorm D_{j,n},{\wder\mu.}.\,d\cmass N..
  \end{aligned}
\end{equation} From \eqref{eq:fail_vanishing_p18} we conclude that
$\langle w_j,D_{j,n} F\rangle>0$ on $X_{j,n}$; moreover, from
\eqref{eq:fail_vanishing_p15} we obtain $\langle w_j,DF\rangle>0$ on
$X_{j,n}$, but this contradicts the choice of $w_j$. Thus, $\eta(\Omega_n(w_j))=0$.
\end{proof}
\begin{proof}[Proof of Theorem \ref{thm:const_dir}]
  By Lemma \ref{lem:fail_vanishing} we have $\eta(\Omega_n(w_j))=0$
  which implies $\eta(\Omega_{\text{\normalfont fail}})=0$. Therefore,
  for $\eta$-a.e.~$\gamma$ and $\lebmeas\mrest\dom\gamma$-a.e.~$t$,
  $(F\circ\gamma)'(t)$ is a positive multiple of $DF(\gamma(t))$. The
  desired Alberti representation is then obtained using the measure
  $\eta$. Specifically, let
  \begin{equation}
    \label{eq:sfail_vanishing_p1}
      \rprm:\curves(Z)\to\frags(Z)
    \end{equation} be a Borel map
    which reparametrizes each $\gamma\in\curves(Z)$ to a $1$-Lipschitz
    map $\rprm:[0,\lceil\glip\gamma.\rceil]\to Z$.  Note that up to
    passing to a Borel $L^\infty(\mu)$-partition of unity we can
    assume that the set $X\setminus N_{DF}$ is compact; we now consider 
    the measure:
    \begin{equation}
      \label{eq:sfail_vanishing_p2}
      \nu_1=\int_{\curves(Z)}\cmass[\rprm(\gamma)].\,d\eta(\gamma)=\int_{\frags(Z)}\cmass[\gamma].\,d(\mpush\rprm.\eta)(\gamma)
    \end{equation}
    and observe that $\cmass N.\ll\nu_1$ and that $\mpush\rprm.\eta$
    is concentrated on the set of $1$-Lipschitz fragments. We now let
    \begin{equation}
      \label{eq:sfail_vanishing_p3}
      \frags(Z,X\setminus N_{DF})=\left\{\gamma\in\frags(Z): \gamma^{-1}(X\setminus N_{DF})\ne\emptyset\right\}
    \end{equation}
    and note that $\frags(Z,X\setminus N_{DF})$ is a closed subset of $\frags(Z)$. An
    argument similar to that of \sync borel_rest.\ in \cite{deralb} shows that the map:
    \begin{equation}
      \label{eq:sfail_vanishing_p4}
      \begin{aligned}
        \frest X\setminus N_{DF}.:\frags(Z,X\setminus N_{DF})&\to\frags(X)\\
        \gamma&\mapsto\gamma|\gamma^{-1}(X\setminus N_{DF})
      \end{aligned}
    \end{equation} is Borel. We now consider the measure
    \begin{equation}
      \label{eq:sfail_vanishing_p5}
      \nu_2=\int_{\frags(Z,X\setminus N_{DF})}\cmass[{\frest
        X\setminus N_{DF}.}(\gamma)].\,d(\mpush\rprm.\eta)(\gamma)=\int_{\frags(X)}\cmass[\gamma].\,\underbrace{d(\mpush{\frest
        X\setminus N_{DF}.}.\mpush\rprm.\eta)(\gamma)}_{\eta_2}
  \end{equation}
  and note that $\mu\ll\nu_2$; an Alberti representation as in the
  statement of this Theorem is then:
  \begin{equation}
    \label{eq:sfail_vanishing_p6}
    \mu=\int_{\frags(X)}(\mpush\rprm.\eta)(\frags(Z,X\setminus N_{DF}))\,\cmass[\gamma\crest\frac{d\mu}{d\nu_2}].\,\frac{d\eta_2(\gamma)}{(\mpush\rprm.\eta)(\frags(Z,X\setminus N_{DF}))}.
  \end{equation}
\end{proof}
\section{Technical tools}
\label{sec:technical}
\subsection{Exterior Products}
\label{subsec:ext_prod}
In this Subsection we define the exterior powers in different
categories:
\begin{itemize}
\item In the category $\bancat$, whose objects are Banach
  spaces and whose morphisms are bounded linear maps;
\item In the category $\lmodcat$, whose objects are
  $L^\infty(\mu)$-modules and whose morphisms are bounded 
  module homomorphisms;
\item In the category $\lnmodcat$, whose objects are
  $L^\infty(\mu)$-normed modules and whose morphisms are bounded
  module homomorphisms.
\end{itemize}
\par In the following, if $Z$ is a Banach space, we will denote by
$Z^*$ its dual. If $Z$ is also an $L^\infty(\mu)$-module, we will
denote by $Z'$ the dual module; note that $Z^*$ and $Z'$ are, in
general, different (Example \ref{exa:lp_algdual_nullity}).
\begin{defn}\label{def:alternating_bana}
  For Banach spaces $Z$ and $W$, let $\alt k,Z,W.$ denote the set of
  alternating multilinear maps $\varphi:Z^k\to W$ which are bounded
  with respect to the norm:
  \begin{equation}
    \|\varphi\|_{\alt
      k,Z,W.}=\sup\left\{\|\varphi(m_1,\cdots,m_k)\|_W:
      \max_{i=1,\cdots,k}\|m_i\|_Z\le1\right\}.
  \end{equation}
\end{defn}
\begin{defn}\label{def:alternating_modules}
  For $L^\infty(\mu)$-modules $M$ and $N$, let $\alt k,M,N.$ denote the
  set of alternating $L^\infty(\mu)$-multilinear maps $\varphi:M^k\to N$ which
  are bounded with respect to the norm:
  \begin{equation}
    \|\varphi\|_{\alt
      k,M,N.}=\sup\left\{\|\varphi(m_1,\cdots,m_k)\|_N:
      \max_{i=1,\cdots,k}\|m_i\|_M\le1\right\}.
  \end{equation}
\end{defn}
\begin{defn}\label{def:ext_pow_bana}
  Let $Z$ be an Banach space. The \textbf{projective $k$-th power
  of $Z$ in the category $\bancat$} is a pair
$(\banext k,Z.,\pi)$, where $\banext k,Z.$ is an
Banach space and $\pi\in\alt k,Z, {\banext k,Z.}.$,
  which satisfies the following universal property: for each
  $\varphi\in\alt k,Z,W.$, where $W$ is an Banach space,
 there is a unique $\hat\varphi\in\hom(\banext k,Z.,W)$
 which makes the following diagram commutative:
  \begin{equation}\label{eq:ext_power_bana_kpower}
    \xy
    (0,20)*+{Z^k}="vk"; (30,20)*+{\banext k,Z.}="ext"; (0,0)*+{W}="w";
    {\ar "vk";"w"}?*!/^2mm/{\varphi};
    {\ar "vk";"ext"}?*!/_2mm/{\pi};
    {\ar@{-->}"ext";"w"}?*!/_2mm/{\hat\varphi}
    \endxy
  \end{equation}
and such that $\|\hat\varphi\|_{\hom(\banext k,Z.,W)}=\|\varphi\|_{\alt k,Z,W.}$.
\end{defn}
\begin{defn}\label{def:ext_pow_module}
  Let $M$ be an $L^\infty(\mu)$-module. The \textbf{projective $k$-th power
  of $M$ in the category $\lmodcat$} is a pair
$(\lmodext k,\mu,M.,\pi)$, where $\lmodext k,\mu,M.$ is an
$L^\infty(\mu)$-module and $\pi\in\alt k,M,{\lmodext k,\mu,M.}.$,
  which satisfies the following universal property: for each
  $\varphi\in\alt k,M,N.$, where $N$ is an $L^\infty(\mu)$-module,
 there is a unique $\hat\varphi\in\hom(\lmodext k,\mu,M.,N)$
 which makes the following diagram commutative:
  \begin{equation}\label{eq:ext_pow_module_kpower}
    \xy
    (0,20)*+{M^k}="vk"; (30,20)*+{\lmodext k,\mu,M.}="ext"; (0,0)*+{N}="w";
    {\ar "vk";"w"}?*!/^2mm/{\varphi};
    {\ar "vk";"ext"}?*!/_2mm/{\pi};
    {\ar@{-->}"ext";"w"}?*!/_2mm/{\hat\varphi}
    \endxy
  \end{equation}
and such that $\|\hat\varphi\|_{\hom(\lmodext k,\mu,M.,N)}=\|\varphi\|_{\alt k,M,N.}$.
\end{defn}
\begin{defn}\label{def:ext_pow_normed_module}
  Let $M$ be an $L^\infty(\mu)$-normed module. The \textbf{projective $k$-th power
  of $M$ in the category $\lnmodcat$} is a pair
$(\lnmodext k,\mu,M.,\pi)$, where $\lnmodext k,\mu,M.$ is an
$L^\infty(\mu)$-normed module and $\pi\in\alt k,M,{\lnmodext k,\mu,M.}.$,
  which satisfies the following universal property: for each
  $\varphi\in\alt k,M,N.$, where $N$ is an $L^\infty(\mu)$-normed module,
 there is a unique $\hat\varphi\in\hom(\lnmodext k,\mu,M.,N)$
 which makes the following diagram commutative:
  \begin{equation}\label{eq:ext_pow_normed_module_kpower}
    \xy
    (0,20)*+{M^k}="vk"; (30,20)*+{\lnmodext k,\mu,M.}="ext"; (0,0)*+{N}="w";
    {\ar "vk";"w"}?*!/^2mm/{\varphi};
    {\ar "vk";"ext"}?*!/_2mm/{\pi};
    {\ar@{-->}"ext";"w"}?*!/_2mm/{\hat\varphi}
    \endxy
  \end{equation}
and such that $\|\hat\varphi\|_{\hom(\lnmodext k,\mu,M.,N)}=\|\varphi\|_{\alt k,M,N.}$.
\end{defn}
\par We now present some illustrative examples. Recall that an \emph{atom}
for a measure $\mu$ is a positive measure set $A$ such that for each
proper subset $B$, $\mu(B)=0$; note that if $A$ is an atom for a Radon
measure $\mu$, $A$ is a singleton. A measure without atoms is called
\emph{non-atomic}; in particular, a Radon measure $\mu$ is non-atomic
if and only if $\mu(\{x\})=0$ for each singleton $\{x\}$. We now
recall the Sierpi\'nski's Theorem \cite[pg.~39]{fryszkowski_dec_book}:
\begin{thm}\label{thm:sierpinski}
  If $\mu$ is a non-atomic measure on a space $X$ with
  $\mu(X)=c<\infty$ and $\Sigma$ is the $\sigma$-algebra of
  $\mu$-measurable sets, then there is a function $S:[0,c]\to\Sigma$
  which is monotone with respect to inclusion and is a right inverse
  of $\mu:\Sigma\to[0,c]$.
\end{thm}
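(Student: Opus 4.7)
The plan is to build the family $S$ by a two-stage process: first establish an intermediate-value property for non-atomic measures, then use it to construct $S$ on a countable dense set of measure values and finally extend by monotone continuity.

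The key intermediate step I would prove is the following lemma (an intermediate value theorem for non-atomic measures): if $\mu$ is non-atomic and finite, and $A \in \Sigma$ satisfies $\mu(A)>0$, then for every $t\in[0,\mu(A)]$ there exists $B\subset A$, $B\in\Sigma$, with $\mu(B)=t$. To prove this I would first show, using a Zorn/exhaustion argument, that for every $\varepsilon>0$ the set $A$ can be partitioned into finitely many (or countably many) measurable subsets each of $\mu$-measure less than $\varepsilon$: one considers maximal disjoint families of sets of measure less than $\varepsilon$, observes that the complement in $A$ of the union has $\mu$-measure $0$ by non-atomicity (else one could split off a piece of small measure), and exhausts. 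Given this refinement lemma, one constructs $B$ by a greedy procedure: at stage $n$, partition the leftover mass into pieces of measure $<2^{-n}$ and add as many as possible while keeping total measure $\le t$; the union over $n$ has measure exactly $t$.

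Granting the lemma, I construct $S$ on dyadic rationals in $[0,c]$ by induction on the denominator. Set $S(0)=\emptyset$ and $S(c)=X$. Having defined $S$ on $D_n=\{kc/2^n:0\le k\le 2^n\}$ so that $S$ is monotone and $\mu(S(kc/2^n))=kc/2^n$, I extend to $D_{n+1}$ by applying the lemma inside each ``gap'': for each $k$, choose a measurable $B$ with $S(kc/2^n)\subset B \subset S((k+1)c/2^n)$ and $\mu(B)=(2k+1)c/2^{n+1}$ (apply the lemma to the difference set $S((k+1)c/2^n)\setminus S(kc/2^n)$, whose $\mu$-measure is $c/2^n$, to find a subset of measure $c/2^{n+1}$, then take the union with $S(kc/2^n)$). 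Let $D=\bigcup_n D_n$; then $S$ is defined and monotone on $D$ with $\mu\circ S=\mathrm{id}_D$.

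Finally, for arbitrary $t\in[0,c]$, define
\begin{equation}
S(t)=\bigcup_{\substack{s\in D\\ s\le t}} S(s).
\end{equation}
Monotonicity in $t$ is immediate from the definition. For the measure identity, the countable union above is an increasing union along $s\nearrow t$ through $D$, so $\mu(S(t))=\lim_{D\ni s\nearrow t}\mu(S(s))=\lim_{D\ni s\nearrow t}s=t$ by continuity of $\mu$ from below. This gives $\mu\circ S=\mathrm{id}_{[0,c]}$, completing the proof.

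The main obstacle is the intermediate value lemma: one must carefully use non-atomicity to produce arbitrarily fine measurable partitions and then run the greedy exhaustion so that equality $\mu(B)=t$ (not merely a supremum) is attained. Once that tool is in hand, the dyadic construction and the limiting step are routine.
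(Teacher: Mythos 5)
Your proposal is correct and complete. The paper itself does not prove this theorem; it simply cites it as Sierpi\'nski's Theorem from Fryszkowski's book, so there is no paper proof to compare against. Your argument is the standard textbook proof: an intermediate-value lemma for non-atomic measures, a dyadic construction of the monotone family, and extension by increasing unions.

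One place where you should spell out a bit more detail is the greedy step in the intermediate-value lemma, where you assert that the union over all stages has measure exactly $t$. The clean way to see this is to quantify the deficit: if at stage $n$ you have $B_n\subset A$ with $\mu(B_n)<t$, apply the refinement lemma to $A\setminus B_n$ (which has measure $\ge t-\mu(B_n)>0$) to produce, by summing small pieces until the running total first exceeds $\tfrac12(t-\mu(B_n))$, a set $C_n\subset A\setminus B_n$ with $\tfrac12(t-\mu(B_n))\le\mu(C_n)\le t-\mu(B_n)$; then $B_{n+1}=B_n\cup C_n$ halves the deficit, so $t-\mu(B_n)\to 0$ geometrically and $B=\bigcup_n B_n$ has $\mu(B)=t$. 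Alternatively, a Zorn's-lemma argument on the family $\{B\subset A:\mu(B)\le t\}$, with maximality combined with non-atomicity, gives the same conclusion more quickly and avoids the bookkeeping. With that clarification, the dyadic construction and the limiting step you describe are routine and correct: for $s\le s'$ in $D$ one has $S(s)\subset S(s')$ by construction, so the union defining $S(t)$ for general $t$ is increasing, and continuity from below gives $\mu(S(t))=t$.
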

\par In the following we will assume $p\in[1,\infty)$.
\begin{exa}
  If $\mu$ is a finite sum of Dirac masses, $L^p(\mu)$ can be
  identified with $L^\infty(\mu)$ and so is free of rank 1.
  \par Suppose that $\mu$ is non-atomic; in particular, by Theorem
  \ref{thm:sierpinski}, given any positive measure set $U$, it is
  possible to find $f\in L^p(\mu\mrest U)$ with $\|f\|_{L^p(\mu)}\le1$
  and $\forall n$ $\mu(x\in U: |f(x)|>n)>0$. Suppose that $L^p(\mu)$
  was generated by $f_1,\cdots,f_N$; then there would be a set of
  positive measure $U$ on which the $f_i$, and hence all the element
  in $L^p(\mu)$ would be uniformly bounded, leading to a
  contradiction.
  \par However, any two elements of $L^p(\mu)$ are linearly dependent
  over $L^\infty(\mu)$. If $f\in L^p(\mu)$ vanishes on a set of
  positive measure $U$, it suffices to note that $f$ is annihilated by
  $\chi_U$. If $f$ and  $g$ are nowhere vanishing, there is a positive measure
  set $U$
  on which $0<c_0<|f|,|g|<c_1<\infty$; then it is
  possible to find $\lambda\in L^\infty(\mu)$ with $\chi_Uf+\lambda
  g=0$. In particular, if $f\in L^p(\mu)$ is nowhere vanishing, the
  algebraic submodule generated by $f$ is dense.
\end{exa}
\begin{exa}\label{exa:lp_algdual_nullity}
  Given an $L^\infty(\mu)$-module $M$, there are two notions of
  dual. The dual module of $M$, $\hom(M,L^\infty(\mu))=M'$ is an
  $L^\infty(\mu)$-normed module. However, the dual Banach space of
  $M$, $M^*$, is also an $L^\infty(\mu)$-module if we let
  \begin{equation}
    \lambda.\varphi(m)=\varphi(\lambda m).
  \end{equation}
  For example, if $M=L^p(\mu)$, then $M^*=L^q(\mu)$.
  \par We show that if $\mu$ is
  non-atomic, then the algebraic dual of $M$ (and hence $M'$) is
  trivial. By replacing $\mu$ by $\mu\mrest U$, where $U$ is a set of
  positive measure, we can assume that $\mu$ is finite, so that
  $L^\infty(\mu)\subset L^p(\mu)$; let $\Phi:L^p(\mu)\to
  L^\infty(\mu)$ be a module homomorphism; supposing that $\Phi(1)\ne0$, we can use
  Theorem \ref{thm:sierpinski} to find $f\in L^p(\mu)$ and
  $\mu$-measurable sets $U_n$ such that:
  \begin{itemize}
  \item for each $n$, $\Phi(1)\chi_{U_n}f\in L^\infty(\mu)$;
  \item for each $n$:
    \begin{equation}
      \mu\left(\left\{x\in U_n: |\Phi(1)\chi_{U_n}f|(x)>n\right\}\right)>0.
    \end{equation}
  \end{itemize}
  Note that
  \begin{equation}
    \chi_{U_n}\Phi(f)=\Phi(\chi_{U_n}f)=\Phi(1)\chi_{U_n}f
  \end{equation} shows that $\Phi(f)\notin L^\infty(\mu)$, a
  contradiction. Thus $\Phi(1)=0$ implying $\Phi=0$. In this case, the
  dual module of $L^p(\mu)$ is trivial.
  \par Suppose now that $\mu$ is a countable sum of Dirac masses:
  $\mu=\sum_nc_n\delta_{p_n}$, so that a function $f$ is in the unit
  ball of $L^p(\mu)$ if and only if
  \begin{equation}\sum_n|f_n|^pc_n\le1\quad(f_n=f(p_n));
  \end{equation}
  let $\varphi\in M'$ and note that for $m\ne n$ one has:
  \begin{equation}
    \label{eq:lp_algdual_nullity_1}
    \chi_{\{p_m\}}\varphi\left(\chi_{\{p_n\}}\right)=\varphi\left(\chi_{\{p_m\}}\cdot\chi_{\{p_n\}}\right)=\varphi(0)=0;
  \end{equation}
  therefore there is a sequence $\{\lambda_n\}\subset\real$ satisfying:
  \begin{equation}
    \label{eq:lp_algdual_nullity_2}
    \varphi\left(\chi_{\{p_n\}}\right)=\lambda_n\chi_{\{p_n\}}.
  \end{equation}
  The sequence $\{\lambda_n\}$ satisfies also the bound:
  \begin{equation}
    \label{eq:lp_algdual_nullity_3}
    |\lambda_n|\le\|\varphi\|\,(c_n)^{\frac{1}{p}}
  \end{equation}
  and, for each $f\in L^p(\mu)$, one has:
  \begin{equation}
    \label{eq:lp_algdual_nullity_4}
    \varphi\left(f\chi_{\{p_n\}}\right)=f_n\lambda_n\,\chi_{\{p_n\}};
  \end{equation}
  we thus conclude that
  \begin{equation}
    \label{eq:lp_algdual_nullity_5}
    \varphi(f)=\sum_{n=1}^\infty f_n\lambda_n\,\chi_{\{p_n\}}.
  \end{equation}
  Conversely, any sequence $\{\lambda_n\}\subset\real$ satisfying
  $\sup_n(c_n)^{-1/p}|\lambda_n|<\infty$ gives rise to a $\varphi\in
  M'$ via (\ref{eq:lp_algdual_nullity_5}). We finally remark that the
  norm of $\varphi$ is determined by the corresponding $\{\lambda_n\}$:
  \begin{equation}
    \label{eq:lp_algdual_nullity_6}
    \|\varphi\|=\sup_n|\lambda_n|(c_n)^{-\frac{1}{p}}.
  \end{equation}
\end{exa}
\begin{exa}\label{exa:alternating_nullity}
  For an $L^\infty(\mu)$-module $N$, $\alt k,L^p(\mu),N.$ is
  trivial for $k\ge2$, while the case $k=1$ has been treated in Example
  \ref{exa:lp_algdual_nullity}. Let $\Omega$ denote the set of those
  $f\in L^p(\mu)\cap L^\infty(\mu)$ such that the set:
  \begin{equation}
    \label{eq:alternating_nullity_1}
    E_f=\left\{x:f(x)\ne0\right\}
  \end{equation}
  has finite measure. Then $\Omega$ is a dense 
  algebraic submodule of $L^p(\mu)$; in particular, $T\in \alt
  k,L^p(\mu),N.$ is determined by its values on $\Omega^k$; now let
  $\{f_1,\cdot,f_k\}\subset \Omega$ and $E=\bigcup_{i=1}^kE_{f_i}$;
  then $\chi_E\in\Omega$ and
  \begin{equation}
    \label{eq:alternating_nullity_2}
    T(f_1,\cdots,f_k)=f_1f_2\cdots f_k\,\cdot T(\chi_E,\chi_E,\cdots,\chi_E)=0
  \end{equation}
  by the alternating property. We thus conclude that $T=0$.
  \par Note that the nullity of $\alt k,L^p(\mu),N.$ for each 
  $L^\infty(\mu)$-normed module $N$ implies that $\lnmodext
  k,\mu,L^p(\mu).=0$. 
\end{exa}
\begin{exa}\label{exa:exterior_non_null_pow}
  Let $\|\cdot\|$ a norm on $\real^n$; on $\bigwedge^k\real^n$ we
  consider the norm:
  \begin{equation}\label{exa:std_norm}
    \|\omega\|=\inf\left\{\sum_{i\in I}\|v_{i_1}\|\cdots\|v_{i_k}\|:
      \omega=\sum_{i\in I}v_{i_1}\wedge\cdots\wedge v_{i_k}\right\};
  \end{equation}
  the fact that $\|\,\cdot\,\|$ is non-degenerate follows either from
  Lemma \ref{lem:ext_norm_char_ban_lnmod} or by modifying the proof of
  Theorem \ref{thm:ext_pow_bana}.
  We will denote by $\mu$ a non-atomic Radon measure.
  \par We claim that
  $\lnmodext k,\mu,L^p(\mu;\real^n).$ is trivial. By the Hahn-Banach
  Theorem, it suffices to show
  that $\alt k,L^p(\mu;\real^n),L^\infty(\mu).$ is trivial; suppose
  that for $U$ a Borel set of finite measure and
  $\{v_i\}_{i=1}^k\subset\real^n$ independent vectors we had
  \begin{equation}
    T(\chi_Uv_1,\cdots,\chi_Uv_k)\ne0
  \end{equation} where $T\in\alt k,L^p(\mu;\real^n),L^\infty(\mu).$;
  arguing as in Example \ref{exa:lp_algdual_nullity}, we would reach a contradiction.
  \par However we show
  that $\lmodext k,\mu,L^k(\mu;\real^n).$ can be identified with
  $L^1(\mu;\bigwedge\nolimits^k\real^n)$.
   By H\"older's inequality, the multilinear alternating map
  \begin{equation}
    \begin{aligned}
      E:(L^k(\mu;\real^n))^k&\to L^1(\mu;\bigwedge\nolimits^k\real^n)\\
      (f_1,\cdots,f_k)&\mapsto f_1\wedge\cdots\wedge f_k
    \end{aligned}
  \end{equation}
  has norm at most $1$. For $\psi\in L^1(\mu)$ define:
  \begin{equation}
    \begin{aligned}
      T_\psi:(\real^n)^k&\to N\\
      (v_1,\ldots,v_k)&\mapsto T(\sgn\psi\,|\psi|^{1/k}v_1,|\psi|^{1/k}v_2,\ldots,|\psi|^{1/k}v_k);
    \end{aligned}
  \end{equation}
  the map $T_\psi$ is multilinear and alternating (as a map of vector
  spaces); let $\hat
  T_\psi:\bigwedge^k\real^n\to N$ denote the corresponding linear map
  given by the universal property of $\bigwedge^k\real^n$.
  Consider $\omega\in\bigwedge^k\real^n$ and, having fixed
  $\varepsilon>0$, write
  \begin{equation}
    \label{eq:exa_exterior_non_null_pow_1}
    \omega=\sum_{i\in I}v_{i_1}\wedge\cdots\wedge v_{i_k}
  \end{equation}
  in a way that satisfies:
  \begin{equation}
    \label{eq:exa_exterior_non_null_pow_2}
    \sum_{i\in I}\|v_{i_1}\|\cdots\|v_{i_k}\|\le\|\omega\|+\varepsilon;
  \end{equation}
  then
  \begin{equation}
    \label{eq:exa_exterior_non_null_pow_3}
    \left\|\hat
      T_\psi(\omega)\right\|\le\|T\|\,\|\psi\|_{L^1(\mu)}\sum_{i\in I}\|v_{i_1}\|\cdots\|v_{i_k}\|;
  \end{equation}
  letting $\varepsilon\searrow0$ we conclude that:
  \begin{equation}
    \label{eq:exa_exterior_non_null_pow_4}
    \left\|\hat T_\psi(\omega)\right\|\le\|T\|\,\|\psi\|_{L^1(\mu)}\,\|\omega\|.
  \end{equation}
  Consider now $\psi_1,\psi_2\in L^1(\mu)$ and let:
  \begin{equation}
    \label{eq:exa_exterior_non_null_pow_5}
    \psi_i^{(n)}=\psi_i\cdot\chi_{|\psi_i|\le n}\cdot\chi_{B(0,n)}\quad(i=1,2);
  \end{equation}
  since $\{\sgn\psi_i^{(n)}\cdot|\psi_i^{(n)}|^{1/k}\}$ converges to
  $\sgn\psi_i\cdot|\psi_i|^{1/k}$ in $L^k(\mu)$, the
  continuity of $T$ implies:
  \begin{equation}
    \label{eq:exa_exterior_non_null_pow_6}
    \hat T_{\psi_1+\psi_2}(\omega)=\lim_{n\to\infty}\hat T_{\psi_1^{(n)}+\psi_2^{(n)}}(\omega);
  \end{equation}
  since $\psi_i^{(n)}\in L^\infty(\mu)$, the multilinearity of $T$
  implies:
  \begin{equation}
    \label{eq:exa_exterior_non_null_pow_7}
    T_{\psi_1^{(n)}+\psi_2^{(n)}}=\hat T_{\psi_1^{(n)}}+\hat T_{\psi_2^{(n)}};
  \end{equation}
  we thus conclude that
  \begin{equation}
    \label{eq:exa_exterior_non_null_pow_8}
    \hat T_{\psi_1+\psi_2}=\hat T_{\psi_1}+\hat T_{\psi_2}.
  \end{equation}
  A similar argument can be also used to show that for $\lambda\in
  L^\infty(\mu)$ one has $\hat
  T_{\lambda\psi}=\lambda \hat T_\psi$.
  \par We now fix a basis $\{\omega_\alpha\}$ of $\bigwedge^k\real^n$
  consisting of simple vectors. The norm:
  \begin{equation}
    \label{eq:exa_exterior_non_null_pow_9}
    \|\omega\|'=\left\{\max|\sigma_\alpha|:\omega=\sum_\alpha\sigma_\alpha\omega_\alpha\right\}
  \end{equation}
  is equivalent to the norm introduced in (\ref{exa:std_norm}) and so any
  $\psi\in
  L^1(\mu;\bigwedge\nolimits^k\real^n)$ can be written as
  \begin{equation}\label{eq:exa_lpwedge_basis_dev1}
    \psi=\sum_\sigma\psi_\alpha\omega_\alpha,
  \end{equation} where $\psi_\alpha\in L^1(\mu)$;
  in particular, we can define $\hat
  T:L^1(\mu;\bigwedge\nolimits^k\real^n)\to N$ by
  \begin{equation}\label{eq:exa_lpwedge_basis_dev1bis}
    \hat T(\psi)=\sum_\alpha\hat T_{\psi_\alpha}(\omega_\alpha),
  \end{equation} and obtain the bound:
  \begin{equation}
    \label{eq:exa_exterior_non_null_pow_10}
    \|\hat T\|\le C\|T\|
  \end{equation}
  where $C$ depends only on $n$, $k$, $\|\,\cdot\,\|'$ and
  $\|\,\cdot\,\|$. Using the density of simple functions in
  $L^k(\mu;\real^n)$ one can show that $\hat T\circ \pi= T$. We now
  prove that
  \begin{equation}
    \label{eq:exa_exterior_non_null_pow_11}
    \|\hat T\|\le \|T\|
  \end{equation}
  by showing that
  \begin{equation}
    \label{eq:exa_exterior_non_null_pow_12}
    \left\|\hat T(\psi)\right\|\le\|T\|\,\|\psi\|_{L^1(\mu;\bigwedge^k\real^n)}
  \end{equation} when $\psi$ is simple. We write
  $\psi=\sum_j\tilde\omega_j\chi_{U_j}$ where
  $\tilde\omega_j=\sum_\alpha\sigma_{j,\alpha}\omega_\alpha$.
  Choosing vectors $\{v_i^{(\alpha)}\}$ such that
  $\omega_\alpha=v_1^{(\alpha)}\wedge\cdots\wedge v_k^{(\alpha)}$, we get:
  \begin{equation}
    \label{eq:exa_exterior_non_null_pow_13}
    \begin{split}
      \hat T(\psi)&=\hat
      T\left(\sum_\alpha\left(\sum_j\sigma_{j,\alpha}\chi_{U_j}\right)\omega_\alpha\right)
      =\sum_\alpha \hat
      T_{\sum_j\sigma_{j,\alpha}\chi_{U_j}}(\omega_\alpha)\\
      &=\sum_\alpha
      T\left(\sgn\left(\sum_j\sigma_{j,\alpha}\chi_{U_j}\right)\left|\sum_j\sigma_{j,\alpha}\chi_{U_j}
          \right|^{1/k}v_1^{(\alpha)},\cdots,\left|\sum_j\sigma_{j,\alpha}\chi_{U_j}
          \right|^{1/k}v_k^{(\alpha)}\right)\\
        &=\sum_\alpha\sum_jT\left(\sgn\sigma_{j,\alpha}\cdot|\sigma_{j,\alpha}|^{1/k}\chi_{U_j}v_1^{(\alpha)},\cdots,
          |\sigma_{j,\alpha}|^{1/k}\chi_{U_j}v_k^{(\alpha)}\right)\\
        &=\sum_jT_{\chi_{U_j}}(\tilde\omega_j);
    \end{split}
  \end{equation}
  so using (\ref{eq:exa_exterior_non_null_pow_4}) we conclude that
  (\ref{eq:exa_exterior_non_null_pow_12}) holds and the proof that
  $L^1(\mu;\bigwedge\nolimits^k\real^n)$ is the exterior $k$-power
  of $L^k(\mu;\real^n)$ is complete.
\end{exa}
\par In the remainder of this section we assume that $\mu$ is a Radon
measure. The following Lemma summarizes some properties of the Banach
space $\alt k,M,N.$.
\begin{lem}\label{lem:alt_module}
  Let $M$, $N$ be $L^\infty(\mu)$-modules; then $\alt k,M,N.$ is an
  $L^\infty(\mu)$-module and it is an $L^\infty(\mu)$-normed module if
  $N$ is an $L^\infty(\mu)$-normed module. Moreover if $M$ and $N$ are
  $L^\infty(\mu)$-normed modules, for $\varphi\in\alt k,M,N.$
  and $\{m_i\}_{i=1}\subset M$
  \begin{equation}\label{eq:lem_alt_mod_s1}
    \locnorm {\varphi(m_1,\cdots,m_k)},N.\le\locnorm\varphi,{\alt
      k,M,N.}.\locnorm m_1,M.\cdots\locnorm m_k,M..
  \end{equation}
\end{lem}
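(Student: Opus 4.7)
The plan is to verify the three assertions in sequence. First, $\alt k,M,N.$ becomes an $L^\infty(\mu)$-module under the pointwise action $(\lambda\cdot\varphi)(m_1,\ldots,m_k):=\lambda\cdot\varphi(m_1,\ldots,m_k)$: the alternating property is trivially preserved, $L^\infty(\mu)$-multilinearity in each $m_i$ follows from that of $\varphi$ together with commutativity of the ring $L^\infty(\mu)$, and the bound $\|\lambda\varphi\|_{\alt k,M,N.}\le\|\lambda\|_{L^\infty(\mu)}\|\varphi\|_{\alt k,M,N.}$ is inherited pointwise from the module bound on $N$.

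For the second assertion, when $N$ is an $L^\infty(\mu)$-normed module I will define a candidate local norm on $\alt k,M,N.$ by declaring $\locnorm\varphi,{\alt k,M,N.}.$ to be the essential infimum, in the lattice $L^\infty(\mu)$, of those nonnegative $\lambda\in L^\infty(\mu)$ for which $\locnorm{\varphi(m_1,\ldots,m_k)},N.\le\lambda\,\locnorm m_1,M.\cdots\locnorm m_k,M.$ holds for all $m_1,\ldots,m_k\in M$. Once it is shown that such $\lambda$ exist and that the infimum is attained, the three algebraic axioms of Definition \ref{defn:local_norm} follow directly from the corresponding axioms on $\locnorm\,\cdot\,,N.$ together with $L^\infty(\mu)$-multilinearity of $\varphi$.

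The main obstacle, and simultaneously the source of both \eqref{eq:lem_alt_mod_s1} and the fourth axiom of Definition~\ref{defn:local_norm}, is to show that $\|\varphi\|_{\alt k,M,N.}$ is itself an admissible $\lambda$. For this, fix $\varphi$ and $m_1,\ldots,m_k$, and for $\epsi>0$ let $V_\epsi$ be the Borel set on which $\locnorm{m_i},M.\ge\epsi$ for every $i$. The element $g_i:=\chi_{V_\epsi}/\locnorm{m_i},M.$ is well-defined in $L^\infty(\mu)$ by the $\epsi$-lower bound, so $\tilde m_i:=g_i\,m_i$ satisfies $\locnorm{\tilde m_i},M.=\chi_{V_\epsi}$ and hence $\|\tilde m_i\|_M\le 1$. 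Applying the Banach-space bound $\|\varphi(\tilde m_1,\ldots,\tilde m_k)\|_N\le\|\varphi\|_{\alt k,M,N.}$ and pulling the scalars $g_i$ past $\varphi$ by $L^\infty(\mu)$-multilinearity yields the pointwise estimate $\chi_{V_\epsi}\,\locnorm{\varphi(m_1,\ldots,m_k)},N.\le\|\varphi\|_{\alt k,M,N.}\,\locnorm m_1,M.\cdots\locnorm m_k,M.$. Letting $\epsi\downarrow 0$ covers the set on which every $\locnorm{m_i},M.$ is positive; on the complement some $\locnorm{m_i},M.$ vanishes on a subset $U$, and $\chi_U m_i=0$ in $M$ then forces $\chi_U\varphi(m_1,\ldots,m_k)=0$ by multilinearity, making the inequality trivial there. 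This yields $\locnorm\varphi,{\alt k,M,N.}.\le\|\varphi\|_{\alt k,M,N.}$ in $L^\infty(\mu)$, while the reverse inequality $\|\varphi\|_{\alt k,M,N.}\le\|\,\locnorm\varphi,{\alt k,M,N.}.\,\|_{L^\infty(\mu)}$ is immediate by evaluating against $m_i$ with $\|m_i\|_M\le 1$.
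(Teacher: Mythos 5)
Your argument for the inequality \eqref{eq:lem_alt_mod_s1} is correct and takes a genuinely different route from the paper's. The paper's proof reduces to the scalar-valued case by using the duality result \cite[Cor.~6]{weaver00} to produce a norming functional $\Phi_{m_1,\ldots,m_k}\in N'$, forms $\xi=\langle\Phi_{m_1,\ldots,m_k},\varphi(\cdot)\rangle\in\alt k,M,L^\infty(\mu).$, and then runs a partition-of-unity estimate. Your rescaling device — replacing $m_i$ by $\tilde m_i=(\chi_{V_\epsi}/\locnorm{m_i},M.)\,m_i$ so that $\locnorm{\tilde m_i},M.=\chi_{V_\epsi}$, applying the Banach-space bound, and pulling the scalars through $\varphi$ — is more elementary and avoids the dual module entirely. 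The edge case where some $\locnorm{m_i},M.$ vanishes is handled correctly. This is a clean alternative.

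There is, however, a genuine gap in your proof of the middle assertion. The lemma claims that $\alt k,M,N.$ is an $L^\infty(\mu)$-\emph{normed} module whenever $N$ is, with no hypothesis beyond $M$ being an $L^\infty(\mu)$-module. But your candidate local norm is defined via the constraint $\locnorm{\varphi(m_1,\ldots,m_k)},N.\le\lambda\,\locnorm{m_1},M.\cdots\locnorm{m_k},M.$, and the right-hand side only makes sense when $M$ itself carries a local norm — i.e.\ when $M$ is $L^\infty(\mu)$-normed. So your construction only establishes the second assertion under the stronger hypothesis that \emph{both} $M$ and $N$ are $L^\infty(\mu)$-normed (which is precisely the hypothesis for \eqref{eq:lem_alt_mod_s1}, and is what occurs in all the paper's applications, but is not what the lemma states). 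The paper sidesteps this by not constructing the local norm explicitly at all: it verifies the max property $\|\varphi\|_{\alt k,M,N.}=\max\left(\|\chi_U\varphi\|_{\alt k,M,N.},\|\chi_{X\setminus U}\varphi\|_{\alt k,M,N.}\right)$ using only the local norm on $N$, and then invokes the abstract characterization \cite[Thm.~2]{weaver00}, which supplies the local norm without any reference to $M$. To close the gap you should either add that max-property argument for the general case, or explicitly restrict your construction to the case where $M$ is also $L^\infty(\mu)$-normed. A smaller omission: you do not argue that $\alt k,M,N.$ is complete, and you implicitly use that the admissible set is downward directed under pointwise minimum so that its essential infimum is attained; both are routine but worth a sentence.
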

\begin{proof}[Proof of Lemma \ref{lem:alt_module}]
  The fact that $\alt k,M,N.$ is a Banach space with the norm
  $\|\cdot\|_{\alt k,M,N.}$ follows from a standard argument. For
  $(\varphi,\lambda)\in\alt k,M,N.\times L^\infty(\mu)$ the product
  $\lambda\varphi$ can be defined by:
  \begin{equation}
    \lambda\varphi(m_1,\cdots,m_k)=\varphi(m_1,\cdots,\lambda
    m_i,\cdots,m_k)\quad\text{(any choice of $i$)}
  \end{equation}
  which makes $\alt k,M,N.$ an $L^\infty(\mu)$-module.
  \par If $N$ is an $L^\infty(\mu)$-normed module, for a
  $\mu$-measurable subset $U\subset X$, we have
  \begin{multline}
    \|\varphi\|_{\alt
      k,M,N.}=\sup_{\|m_i\|_M\le1}\|\varphi(m_1,\cdots,m_k)\|_N\\
    =
    \sup_{\|m_i\|_M\le1}\max\left(\|\chi_U\varphi(m_1,\cdots,m_k)\|_N,
      \| \chi_{X\setminus U}\varphi(m_1,\cdots,m_k)\|_N\right)\\
    =\max\left(\sup_{\|m_i\|_M\le1}\|(\chi_U\varphi)(m_1,\cdots,m_k)\|,
      \sup_{\|m_i\|_M\le1}\|(\chi_{X\setminus
        U}\varphi)(m_1,\cdots,m_k)\| \right)\\
    =\max\left(\|\chi_U\varphi\|_{\alt k,M,N.},\|\chi_{X\setminus U}\varphi\|_{\alt k,M,N.}\right);
  \end{multline}
  by \cite[Thm.~2]{weaver00} $\alt k,M,N.$ is an
  $L^\infty(\mu)$-normed module.
  \par We now show \eqref{eq:lem_alt_mod_s1} under the assumption that
  $M$ and $N$ are $L^\infty(\mu)$-normed modules. By
  \cite[Cor.~6]{weaver00} we can find $\Phi_{m_1,\cdots,m_k}\in N'$
  with $\|\Phi_{m_1,\cdots,m_k}\|_{N'}\le1$ and
  \begin{equation}
    \locnorm{\varphi(m_1,\cdots,m_k)},N.=\left\langle\Phi_{m_1,\cdots,m_k},\varphi(m_1,\cdots,m_k)\right\rangle;
  \end{equation}
  let $\xi\in\alt k,M,L^\infty(\mu).$ be defined by
  \begin{equation}
    \xi(\tilde m_1,\cdots,\tilde
    m_k)=\left\langle\Phi_{m_1,\cdots,m_k},\varphi(\tilde
      m_1,\cdots,\tilde m_k)\right\rangle;
  \end{equation}
  for $\varepsilon>0$ we can find an $L^\infty(\mu)$-partition of
  unity $\{U_\alpha\}$ such that for $x\in U_\alpha$ and $1\le i\le k$,
  \begin{align}
    \label{eq:lem_alt_mod_p1_sub1}
    \locnorm\xi,{\alt
      k,M,L^\infty(\mu).}.(x)&\in\left(\|\chi_{U_\alpha}\xi\|_{\alt k,M,L^\infty(\mu).}-\epsi, \|\chi_{U_\alpha}\xi\|_{\alt k,M,L^\infty(\mu).}\right];\\ \label{eq:lem_alt_mod_p1_sub2}
    \locnorm
    m_i,M.(x)&\in\left(\|\chi_{U_\alpha}m_i\|_{M}-\epsi, \|\chi_{U_\alpha}m_i\|_{M}\right].
  \end{align}
Using the definition of norm in $\alt k,M,L^\infty(\mu\mrest
U_\alpha).$ and \eqref{eq:lem_alt_mod_p1_sub1} and
\eqref{eq:lem_alt_mod_p1_sub2},
\begin{equation}\label{eq:lem_alt_mod_p2}
  \begin{split}
    \xi(m_1,\cdots,m_k)&=\sum_{\alpha}\chi_{U_\alpha}\xi(m_1,\cdots,m_k)\\
    &=\sum_{\alpha}(\chi_{U_\alpha}\xi)(\chi_{U_\alpha}m_1,\cdots,\chi_{U_\alpha}m_k)\\
    &\le\sum_{\alpha}\chi_{U_\alpha}\|\chi_{U_\alpha}\xi\|_{\alt
      k,M,L^\infty(\mu).}\|\chi_{U_\alpha}m_1\|_{M}\cdots\|\chi_{U_\alpha}m_k\|_M\\
    &\le\sum_{\alpha}\chi_{U_\alpha}\left(\locnorm\xi,{\alt
        k,M,L^\infty(\mu).}.+\epsi\right)\prod_{i=1}^k
    \left(\locnorm m_i,M.+\epsi\right)\\
    &=\left(\locnorm\xi,{\alt
        k,M,L^\infty(\mu).}.+\epsi\right)\prod_{i=1}^k
    \left(\locnorm m_i,M.+\epsi\right).
  \end{split}
\end{equation} Note that \eqref{eq:lem_alt_mod_s1} follows from
\eqref{eq:lem_alt_mod_p2} letting $\epsi\searrow0$ provided we show
\begin{equation}
  \label{eq:lem_alt_mod_p3}
  \locnorm\xi,{\alt k,M,L^\infty(\mu).}.\le\locnorm\varphi,{\alt k,M,N.}..
\end{equation} As for each $\mu$-measurable $U$ we have
\begin{equation}
  \|\chi_U\xi\|_{\alt k,M,L^\infty(\mu).}\le\|\chi_U\varphi\|_{\alt k,M,N.},
\end{equation} \eqref{eq:lem_alt_mod_p3} holds.        
\end{proof}
We now prove the existence of the exterior powers in the category $\bancat$.
\begin{thm}\label{thm:ext_pow_bana}
For $Z$ a Banach space, the $k$-th exterior
power in the category $\bancat$ exists and can be realized as a
closed subspace of the dual space $\alt k,M,\real.^*$;
moreover,
 the algebraic $k$-th exterior power $\bigwedge^kZ$
 is dense in $\banext k,Z.$. 
\end{thm}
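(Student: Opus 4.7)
The plan is to construct $\banext k,Z.$ as the closure of the image of a natural map $\pi:Z^k\to \alt k,Z,\real.^*$, and then to verify the universal property using Hahn--Banach to factor alternating multilinear maps into arbitrary Banach spaces.

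First I would define, for each $k$-tuple $(z_1,\ldots,z_k)\in Z^k$, the evaluation functional $\pi(z_1,\ldots,z_k)\in \alt k,Z,\real.^*$ by
\begin{equation*}
  \pi(z_1,\ldots,z_k)(\psi)=\psi(z_1,\ldots,z_k),\qquad \psi\in\alt k,Z,\real..
\end{equation*}
This is alternating and multilinear with $\|\pi(z_1,\ldots,z_k)\|\le\|z_1\|\cdots\|z_k\|$. I would denote by $\bigwedge^k Z$ the linear span of $\{\pi(z_1,\ldots,z_k)\}$ in $\alt k,Z,\real.^*$ (writing $z_1\wedge\cdots\wedge z_k:=\pi(z_1,\ldots,z_k)$), and define $\banext k,Z.$ to be its norm closure. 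Density of $\bigwedge^k Z$ in $\banext k,Z.$ is then built into the construction.

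Next I would verify the universal property. Given $\varphi\in\alt k,Z,W.$ for a Banach space $W$, I would first define $\hat\varphi$ on the algebraic exterior power by $\hat\varphi(\sum_i v_{i_1}\wedge\cdots\wedge v_{i_k})=\sum_i\varphi(v_{i_1},\ldots,v_{i_k})$. The essential point---and the place where Hahn--Banach enters---is showing $\hat\varphi$ is well defined: if $\omega:=\sum_i\pi(v_{i_1},\ldots,v_{i_k})$ vanishes in $\alt k,Z,\real.^*$, then for every $w^*\in W^*$ the composition $w^*\circ\varphi$ lies in $\alt k,Z,\real.$, so
\begin{equation*}
  w^*\!\left(\sum_i\varphi(v_{i_1},\ldots,v_{i_k})\right)=\omega(w^*\circ\varphi)=0,
\end{equation*}
and Hahn--Banach forces $\sum_i\varphi(v_{i_1},\ldots,v_{i_k})=0$. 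The same computation yields the norm bound
\begin{equation*}
  \|\hat\varphi(\omega)\|_W=\sup_{\|w^*\|\le 1}|\omega(w^*\circ\varphi)|\le \|\omega\|_{\alt k,Z,\real.^*}\,\|\varphi\|_{\alt k,Z,W.},
\end{equation*}
so $\hat\varphi$ is bounded of norm at most $\|\varphi\|$ on $\bigwedge^kZ$ and extends uniquely by continuity and density to $\hat\varphi\in\hom(\banext k,Z.,W)$. The reverse inequality $\|\hat\varphi\|\ge\|\varphi\|$ follows directly from $\hat\varphi\circ\pi=\varphi$ together with $\|\pi(z_1,\ldots,z_k)\|\le\|z_1\|\cdots\|z_k\|$. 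Uniqueness of $\hat\varphi$ is immediate from density of $\bigwedge^kZ$ in $\banext k,Z.$ together with the factorization $\hat\varphi\circ\pi=\varphi$.

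The one point that requires a brief sanity check---and which I view as the only real subtlety---is non-degeneracy of the norm on $\bigwedge^kZ$ induced from $\alt k,Z,\real.^*$: a nonzero algebraic $k$-vector must produce a nonzero functional on $\alt k,Z,\real.$, otherwise $\banext k,Z.$ would fail to contain a faithful copy of $\bigwedge^kZ$. This is handled exactly as indicated after~\eqref{exa:std_norm}: given a nonzero $\omega=\sum_i v_{i_1}\wedge\cdots\wedge v_{i_k}\in\bigwedge^kZ$, one applies Hahn--Banach in $Z$ to choose $\{z_j^*\}\subset Z^*$ separating enough vectors so that the alternating form $\psi(x_1,\ldots,x_k)=\det(z_i^*(x_j))$ satisfies $\psi(\omega)\ne0$. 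With non-degeneracy in place, the constructed pair $(\banext k,Z.,\pi)$ satisfies the stated universal property and the isometry $\|\hat\varphi\|=\|\varphi\|$, completing the proof.
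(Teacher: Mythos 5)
Your construction is correct and follows the paper's argument: define the evaluation map $\pi:Z^k\to\alt k,Z,\real.^*$, take the closure of the span, and verify the universal property via Hahn--Banach by passing through $w^*\circ\varphi\in\alt k,Z,\real.$ for $w^*\in W^*$; the non-degeneracy/injectivity step via dual bases and determinant forms is likewise the paper's argument. The only cosmetic difference is that you establish well-definedness of $\hat\varphi$ by the same $w^*\circ\varphi$ duality estimate that gives boundedness, whereas the paper first identifies the abstract algebraic exterior power with its image under the injection $E$ and then invokes the algebraic universal property for well-definedness --- the same calculation, organized slightly differently.
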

\begin{proof}[proof of Theorem \ref{thm:ext_pow_bana}]
  For $\varphi\in\alt k,Z,\real.$ let
  $\tilde\varphi:\bigwedge^kZ\to\real$ denote the unique linear map
  corresponding to $\varphi$ given by the universal property of
  $\bigwedge^kZ$. In particular, we obtain a map $E$ from
  $\bigwedge^kZ$ to the algebraic dual of $\alt k,Z,\real.$ by letting
  $\langle E(w),\varphi\rangle=\tilde\varphi(w)$. We now show that
  $E(w)$ is a bounded functional. Let
  \begin{equation}
    w=\sum_{i\in I}z_{i_1}\wedge\cdots\wedge z_{i_k}
  \end{equation} and note that
  \begin{equation}\label{eq:ext_pow_bana1}
    \begin{split}
      \left\|\sum_{i\in I}z_{i_1}\wedge\cdots\wedge
        z_{i_k}\right\|_{(\alt k,Z,\real.)^*}&=\sup_{\|\varphi\|_{\alt k,Z,\real.}\le 1}\left|    \left\langle\sum_{i\in I}z_{i_1}\wedge\cdots\wedge
          z_{i_k}
          ,\varphi\right\rangle\right|\\
      &\le\sup_{\|\varphi\|_{\alt
          k,Z,\real.}\le
        1}\sum_{i\in I}\left|\varphi(z_{i_1},\cdots,z_{i_k})\right|\\
      &\le\sum_{i\in I}\|z_{i_1}\|_X\cdots\|z_{i_k}\|_X.
    \end{split}
  \end{equation}
  \par We now show that $E$ is injective; suppose $w\ne0$; let $Z_0$
  denote the linear span of $\Omega=\{z_{i_j}:{j=1,\ldots,k; i\in
    I}\}$ so that $Z_0$ is a finite dimensional vector space of
  dimension $L\ge k$. Having chosen a basis
  $\{v_\alpha\}_{\alpha=1}^L$ of $Z_0$, without loss of generality we
  can assume that
  \begin{equation}
    w=\sum_{j\in\Lambda_{k,L}}c_j v_{j_1}\wedge\cdots\wedge v_{j_k}
  \end{equation} with $c_{(1,\ldots,k)}\ne0$. If 
  $\{v^*_{\alpha}\}_{\alpha=1}^L$ is the dual basis of $\{v_\alpha\}_{\alpha=1}^L$, by
  the Hahn-Banach Theorem the functionals $v^*_{\alpha}$ can be extended to elements of $Z^*$; in
  particular,
  \begin{equation}
    \begin{aligned}
      \Xi:Z^k&\to\real\\
      (z_1,\cdots,z_k)&\mapsto\det(\left(\langle v^*_\alpha,z_i\rangle\right)_{\alpha,i=1}^k)
    \end{aligned}
  \end{equation} defines an element of $\alt k,Z,\real.$ and
  \begin{equation}
    \langle E(w),\Xi\rangle=c_{(1,\cdots,k)}\ne0
  \end{equation} showing that $E$ is injective.
  \par We can thus identify $\bigwedge^kZ$  with a linear subspace of $\alt
k,Z,\real.^*$ and we will denote its completion in the
$\|\cdot\|_{(\alt k,Z,\real.)^*}$ norm by $\banext k,Z.$. The map
$\pi$ is defined by
\begin{equation}
  \pi(z_1,\cdots,z_k)=z_1\wedge\cdots\wedge z_k;
\end{equation}
note that $\pi$ is alternating and multilinear and \eqref{eq:ext_pow_bana1} shows that
it is bounded. Let $\varphi\in\alt k,Z,W.$ and define
$\hat\varphi:\bigwedge^kZ\to W$ by
\begin{equation}
\hat\varphi\left(\sum_{i\in I}z_{i_1}\wedge\cdots\wedge
    z_{i_k}\right)=\sum_{i\in I}\varphi(z_{i_1},\cdots,z_{i_k});
\end{equation}
this is well-defined because $\varphi$ is alternating multilinear and
because of the universal property of $\bigwedge^kZ$. In order to
show that $\hat\varphi$ has a unique extension $\hat\varphi:\banext
k,Z.\to W$, it suffices to show that $\hat\varphi$  is
bounded:
\begin{equation}\label{eq:ext_pow_bana2}
  \begin{split}
    \left\|\hat\varphi\left(\sum_{i\in I}z_{i_1}\wedge\cdots\wedge
    z_{i_k}\right)\right\|_W=\sup_{w^*\in
      W^*:\|w^*\|_{W^*}\le1}\left\langle
    w^*,\hat\varphi\left(\sum_{i\in I}z_{i_1}\wedge\cdots\wedge
    z_{i_k}\right)\right\rangle\\
    =\|\varphi\|_{\alt k,Z,W.}\sup_{\substack{w^*\in
      W^*:\\ \|w^*\|_{W^*}\le1}}\sum_{i\in I}\left\langle w^*,\frac{1}{
    \|\varphi\|_{\alt k,Z,W.}}\varphi(z_{i_1},\cdots,z_{i_k})\right\rangle\\
    \le\|\varphi\|_{\alt k,Z,W.}\sup_{\substack{\tau\in\alt k,Z,\real.:\\ \|\tau\|_{\alt k,Z,\real.}\le1}}
    \left\langle\tau,\sum_{i\in I}z_{i_1}\wedge\cdots\wedge
    z_{i_k}\right\rangle\\
    \le\|\varphi\|_{\alt k,Z,W.}         \left\|\sum_{i\in I}z_{i_1}\wedge\cdots\wedge
    z_{i_k}\right\|_{(\alt k,Z,\real.)^*}.
  \end{split}
\end{equation}
\par Note that \eqref{eq:ext_pow_bana2} shows that
\begin{equation}
\|\hat\varphi\|_{\hom({\banext k,Z.},W)}\le \|\varphi\|_{\alt k,Z,W.};
\end{equation}
for the reverse inequality,
observe that for each $\epsi>0$, there are $z_i\in Z$
$(i\in\{1,\cdots,k\})$ such that $\|z_i\|_Z\le1$ and
\begin{equation}
  \|\varphi\|_{\alt k,Z,W.}<\epsi+\|\varphi(z_1,\cdots, z_k) \|_Z;
\end{equation}
but 
\begin{equation}
\varphi(z_1,\cdots, z_k)=\hat\varphi(z_1\wedge\cdots\wedge z_k)
\end{equation} and by \eqref{eq:ext_pow_bana1} 
\begin{equation}
  \left\|z_1\wedge\cdots\wedge z_k\right\|_{\extpow k.Z}\le1;
\end{equation} thus
\begin{equation}
\|\varphi\|_{\alt k,Z,W.}<\epsi+\|\hat\varphi\|_{\hom({\banext k,Z.},W)}.  
\end{equation}
\end{proof}
We now turn to the existence of exterior powers in the category $\lnmodcat$.
\begin{thm}\label{thm:ext_pow_normed_module}
For $M$ an $L^\infty(\mu)$-normed module, the $k$-th exterior
power in the category $\lnmodcat$ exists and can be realized as a
closed submodule of the dual module $\alt k,M,L^\infty(\mu).'$;
moreover,
 the algebraic $k$-th exterior power ${}_{L^\infty(\mu)}\bigwedge^kM$
 is dense in $\lnmodext k,\mu,M.$. 
\end{thm}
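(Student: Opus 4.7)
The plan is to follow closely the proof of Theorem~\ref{thm:ext_pow_bana}, but to replace every scalar-duality argument with its $L^\infty(\mu)$-normed-module analogue, using the fact from \cite[Cor.~6]{weaver00} that the dual module $N'$ separates points of any $L^\infty(\mu)$-normed module $N$ (and in fact computes the local norm). This is what allows the Hahn--Banach style arguments to go through in the module category, where the usual scalar Hahn--Banach theorem is not directly available.

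First I would define an embedding
\begin{equation}
E\colon {}_{L^\infty(\mu)}\bigwedge\nolimits^k M \longrightarrow \alt k,M,L^\infty(\mu).'
\end{equation}
as follows: each $\varphi\in\alt k,M,L^\infty(\mu).$ factors uniquely through the algebraic exterior power to a module homomorphism $\tilde\varphi\colon {}_{L^\infty(\mu)}\bigwedge^k M\to L^\infty(\mu)$, and we set $\langle E(w),\varphi\rangle=\tilde\varphi(w)$. Using the local bound \eqref{eq:lem_alt_mod_s1} from Lemma~\ref{lem:alt_module}, for any representation $w=\sum_{i\in I}m_{i_1}\wedge\cdots\wedge m_{i_k}$ and any $\varphi$ with $\locnorm\varphi,{\alt k,M,L^\infty(\mu).}.\le 1$,
\begin{equation}
|\langle E(w),\varphi\rangle|\le \sum_{i\in I}\locnorm m_{i_1},M.\cdots\locnorm m_{i_k},M.,
\end{equation}
so the map $\tau\colon E({}_{L^\infty(\mu)}\bigwedge^k M)\to L^\infty(\mu)$ defined by $\tau(E(w))$ being the pointwise supremum over the unit ball of $\alt k,M,L^\infty(\mu).$ is bounded locally by the ``projective'' expression
\begin{equation}
\locnorm w,{\lnmodext k,\mu,M.}.:=\inf\Bigl\{\sum_{i\in I}\locnorm m_{i_1},M.\cdots\locnorm m_{i_k},M.:w=\sum_i m_{i_1}\wedge\cdots\wedge m_{i_k}\Bigr\}.
\end{equation}
Taking essential supremum gives a norm on $E({}_{L^\infty(\mu)}\bigwedge^k M)$, inherited from the dual module.

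Next I would prove that $E$ is injective, which is the main obstacle. Suppose $w\ne 0$; choose a representation and let $M_0\subset M$ be the finitely-generated submodule containing the entries. The issue is that, unlike for Banach spaces, the dual module of $M_0$ need not split off a basis via Hahn--Banach. The remedy is to use \cite[Cor.~6]{weaver00}: there exists $\Phi\in M'$ of local norm at most $1$ that peaks on a distinguished entry. One builds $k$ such functionals $\Phi_1,\ldots,\Phi_k\in M'$ and defines
\begin{equation}
\Xi(m_1,\ldots,m_k)=\det\bigl(\langle\Phi_i,m_j\rangle\bigr)_{i,j=1}^k \in \alt k,M,L^\infty(\mu).,
\end{equation}
and shows that after an appropriate partition-of-unity localization one has $\langle E(w),\Xi\rangle\ne 0$. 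This identifies ${}_{L^\infty(\mu)}\bigwedge^k M$ with a submodule of $\alt k,M,L^\infty(\mu).'$, and we take $\lnmodext k,\mu,M.$ to be its closure there; the resulting local norm makes it an $L^\infty(\mu)$-normed module (closed submodules of $L^\infty(\mu)$-normed modules are again $L^\infty(\mu)$-normed).

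Finally I would verify the universal property. The map $\pi(m_1,\ldots,m_k)=m_1\wedge\cdots\wedge m_k\in\lnmodext k,\mu,M.$ is alternating $L^\infty(\mu)$-multilinear and bounded by the local bound above. Given $\varphi\in\alt k,M,N.$ with $N$ an $L^\infty(\mu)$-normed module, factor it through the algebraic exterior power to a module homomorphism $\hat\varphi$, and extend to $\lnmodext k,\mu,M.$ via the duality argument of \eqref{eq:ext_pow_bana2}, now replacing $W^*$ with $N'$ and invoking \cite[Cor.~6]{weaver00} to recover the local norm: for each simple $w=\sum_i m_{i_1}\wedge\cdots\wedge m_{i_k}$,
\begin{equation}
\locnorm\hat\varphi(w),N.=\sup_{\Psi\in N',\;\locnorm\Psi,{N'}.\le 1}\langle\Psi,\hat\varphi(w)\rangle\le\locnorm\varphi,{\alt k,M,N.}.\,\locnorm w,{\lnmodext k,\mu,M.}.,
\end{equation}
so $\hat\varphi$ extends by continuity to $\lnmodext k,\mu,M.$ with $\locnorm\hat\varphi,{\hom(\lnmodext k,\mu,M.,N)}.\le\locnorm\varphi,{\alt k,M,N.}.$. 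The reverse inequality follows by evaluating on pure wedges $m_1\wedge\cdots\wedge m_k$ whose local norm is bounded by $\prod_i\locnorm m_i,M.$, giving equality of norms and hence the universal property. Density of the algebraic exterior power in $\lnmodext k,\mu,M.$ is then automatic from the construction.
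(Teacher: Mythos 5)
Your overall architecture (embed the algebraic exterior power into $\alt k,M,L^\infty(\mu).'$ via the evaluation map $E$, take the closure, verify the universal property by adapting \eqref{eq:ext_pow_bana2} with $N'$ in place of $W^*$) matches the paper's proof, which explicitly ``is now completed as in Theorem \ref{thm:ext_pow_bana}'' once injectivity of $E$ is established. The gap is precisely in that injectivity step, which you correctly flag as the main obstacle but do not actually resolve.

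Invoking \cite[Cor.~6]{weaver00} to produce peaking functionals $\Phi_1,\dots,\Phi_k\in M'$ for a few chosen entries $m_{i_j}$ does not suffice. The problem is the same one you would face even in the finite-dimensional vector space case: an arbitrary representation $w=\sum_{i\in I}m_{i_1}\wedge\cdots\wedge m_{i_k}$ has entries that may be wildly linearly dependent, and a determinant built from functionals that merely norm-attain on individual $m_{i_j}$ has no reason to evaluate to something nonzero on $w$. You need to first rewrite $w$ in terms of a genuine \emph{basis}, and only then read off a nonzero coefficient via the dual basis. The phrase ``after an appropriate partition-of-unity localization'' is carrying all the weight and is exactly where the argument is missing.

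What the paper does instead, and what you need: form the finitely generated submodule $M_0\subset M$ spanned by the entries $\Omega=\{m_{i_j}\}$; apply \cite[Lem.~9]{weaver00} to obtain a measurable partition $\{U_i\}$ on which $\chi_{U_i}M_0$ is a \emph{free} $L^\infty(\mu\mrest U_i)$-module of rank $i$; since $w\ne 0$, pick $L\ge k$ with $\chi_{U_L}w\ne 0$, choose a basis $\{\tilde m_i\}_{i=1}^L$ of $\chi_{U_L}M_0$, and write $\chi_{U_L}w=\sum_{j\in\Lambda_{k,L}}\lambda_j\,\tilde m_{j_1}\wedge\cdots\wedge\tilde m_{j_k}$ with some $\lambda_{(1,\ldots,k)}\ne 0$. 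Then use \cite[Thm.~10]{weaver00} to shrink to $V\subset U_L$ where the coordinate-reading maps $\xi_i:\chi_V M_0\to L^\infty(\mu)$ are bounded (this is where the uniform norm comparison $C p_x(v)\ge\|v\|_\infty$ comes in). Finally extend the $\xi_i$ to all of $M$ using the Hahn--Banach theorem for $L^\infty(\mu)$-normed modules, \cite[Thm.~5]{weaver00} --- not Cor.~6 --- and set $\Xi=\det(\langle\xi_i,\cdot\rangle)$. Then $E(w)(\chi_V\Xi)=\chi_V\lambda_{(1,\ldots,k)}\ne 0$. So the correct ingredient is the triple ``free decomposition (Lem.~9) + norm comparison on free modules (Thm.~10) + module Hahn--Banach extension (Thm.~5)''; Cor.~6 alone cannot replace this.
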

\begin{proof}[Proof of Theorem \ref{thm:ext_pow_normed_module}]
  Part of the proof is similar to the Banach space case
  (Theorem \ref{thm:ext_pow_bana}). For $\varphi\in\alt k,M,L^\infty(\mu).$
  let $\tilde\varphi:{}_{L^\infty(\mu)}\bigwedge^kM\to L^\infty(\mu)$ denote
  the unique module homomorphism corresponding to $\varphi$ given by
  the universal property of ${}_{L^\infty(\mu)}\bigwedge^kM$. The same
  estimate \eqref{eq:ext_pow_bana1} used in the Banach space case 
  shows that the map:
  \begin{equation}
    E:{}_{L^\infty(\mu)}\bigwedge^kM\to\alt k,M,L^\infty(\mu).'
  \end{equation} sending $w\in{}_{L^\infty(\mu)}\bigwedge^kM$ to the
  functional $E(w)$ satisfying
  \begin{equation}
    \langle E(w),\varphi\rangle=\tilde\varphi(w),
  \end{equation} is well-defined.
  \par We now show that $E$ is injective. Let
  \begin{equation}
    w=\sum_{i\in I}m_{i_1}\wedge\cdots\wedge m_{i_k}\ne0
  \end{equation} and $M_0$ the $L^\infty(\mu)$-submodule of $M$
  generated by the finite set
  \begin{equation}\Omega=\{m_{i_j}:{j=1,\ldots,k; i\in I}\}. 
  \end{equation}
  By \cite[Lem.~9]{weaver00} there are disjoint measurable sets
  $\{U_i\}_{i=1}^{\#\Omega}$ such that
  \begin{equation}1=\sum_{i=1}^{\#\Omega}\chi_{U_i},
  \end{equation}
  and if $\mu(U_i)>0$, then
  $\chi_{U_i}M_0$, regarded as an $L^\infty(\mu\mrest U_i)$-module, is
  free of rank $i$; as we are assuming $w\ne0$, $\chi_{U_L}w\ne0$ for some
  index $L\ge k$. Let $\{\tilde m_i\}_{i=1}^L$ a basis of
  $\chi_{U_L}M_0$ over $L^\infty(\mu\mrest U_i)$; without loss of
  generality, we can assume that
  \begin{equation}
    \chi_{U_L}w=\sum_{j\in \Lambda_{k,N}}\lambda_j \tilde
    m_{j_1}\wedge\cdots \wedge \tilde m_{j_k}, 
  \end{equation} with $\lambda_{(1,\cdots,k)}\ne0$. Moreover, by
  \cite[Thm.~10]{weaver00} we can choose a
  measurable $V\subset U_L$ with $\chi_{V}\lambda_{(1,\cdots,k)}\ne0$
  and find $C>0$ such that, if we define for $x\in V$
  \begin{equation}
    \begin{aligned}
      p_x:\real^L&\to(0,\infty)\\
      v&\mapsto\locnorm \sum_{i=1}^Lv_i\tilde m_i,M.(x),
    \end{aligned}
  \end{equation} then $p_x$ is a norm satisfying
  \begin{equation}\label{eq:ext_pow_normed_module1}
    Cp_x(v)\ge\|v\|_{\infty}\quad(\forall (x,v)\in V\times\real^L).
  \end{equation} Note that functions in $L^\infty(\mu\mrest V)$ can be
  canonically extended to $L^\infty(\mu)$ because we can indentify
  $L^\infty(\mu\mrest V)$ with $\chi_{V}L^\infty(\mu)$; the maps
  \begin{equation}
    \begin{aligned}
      \xi_i:\chi_{V}M_0&\to L^\infty(\mu)\quad(i=1,\ldots,L)\\
      \sum_{i=1}^L\lambda_i\tilde m_i&\mapsto\lambda_i,
    \end{aligned}
  \end{equation} are bounded linear functionals by
  \eqref{eq:ext_pow_normed_module1}. By the Hanh-Banach
  Theorem \cite[Thm.~5]{weaver00} the $\{\xi_i\}$ can be extended to elements
  of $M'$; in particular,
  \begin{equation}
    \begin{aligned}
      \Xi:M^k&\mapsto L^\infty(\mu)\\
      (m_1,\ldots,m_k)&\mapsto\det(\left(\langle\xi_i,m_j\rangle\right)_{i,j=1}^k)
    \end{aligned}
  \end{equation} defines an element of $\alt k,M,L^\infty(\mu).$ and
  \begin{equation}
    E(w)(\chi_{V}\Xi)=\chi_V\lambda_{(1,\ldots,k)}\ne0
  \end{equation} showing that $E$ is injective. The proof is now
  completed as in Theorem \ref{thm:ext_pow_bana}.
\end{proof}
We now provide a characterization of the norms in the exterior powers.
\begin{lem}\label{lem:ext_norm_char_ban_lnmod}
  For $Z$ a Banach space, if $w\in\bigwedge^kZ\hookrightarrow\banext
  k,Z.$
  \begin{equation}
    \label{eq:ext_norm_char_ban_lnmod_s1}
    \|w\|_{\banext k,Z.}=\inf\left\{\sum_{i\in
        I}\|z_{i_1}\|_Z\cdots\|z_{i_k}\|_Z: w=\sum_{i\in I}z_{i_1}\wedge\cdots\wedge
      z_{i_k}\right\}.
  \end{equation}
  \par If $M$ is an $L^\infty(\mu)$-normed module, for each
  \begin{equation}w\in{}_{L^\infty(\mu)}\bigwedge^kM\hookrightarrow\lnmodext
    k,\mu, M.,
  \end{equation}
  \begin{multline}
    \label{eq:ext_norm_ban_lnmod_s2}
    \|w\|_{\lnmodext k,\mu,M.}=\inf\biggl\{
      \left\|\sum_{i\in I}\locnorm m_{i_1},M.\cdots\locnorm
        m_{i_k},M.\right\|_{L^\infty(\mu)}:\\ w=\sum_{i\in I}m_{i_1}\wedge\cdots\wedge m_{i_k}\biggr\};
  \end{multline}
  moreover, if $w=\sum_{i\in I}m_{i_1}\wedge\cdots\wedge m_{i_k}$,
  \begin{equation}
    \label{eq:ext_norm_ban_lnmod_s3}
    \locnorm w,{\lnmodext k,\mu,M.}.\le\sum_{i\in I}\locnorm
    m_{i_1},M.\cdots\locnorm m_{i_k},M..
  \end{equation}
\end{lem}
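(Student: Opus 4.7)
The plan is to establish both \eqref{eq:ext_norm_char_ban_lnmod_s1} and \eqref{eq:ext_norm_ban_lnmod_s2} by a universal-property argument, and then to deduce the local norm bound \eqref{eq:ext_norm_ban_lnmod_s3} from \eqref{eq:ext_norm_ban_lnmod_s2}. Denote the right-hand side of \eqref{eq:ext_norm_char_ban_lnmod_s1}, respectively of \eqref{eq:ext_norm_ban_lnmod_s2}, by $\nu(w)$. In both cases the easy direction $\|w\|\le\nu(w)$ is immediate: in the Banach case it is the estimate \eqref{eq:ext_pow_bana1} used in the proof of Theorem~\ref{thm:ext_pow_bana}; in the normed module case, pairing $w\in{}_{L^\infty(\mu)}\bigwedge^kM$ with an element $\varphi\in\alt k,M,L^\infty(\mu).$ of norm at most $1$ and using the pointwise inequality \eqref{eq:lem_alt_mod_s1} bounds $\|E(w)(\varphi)\|_{L^\infty(\mu)}$ by $\bigl\|\sum_i\locnorm m_{i_1},M.\cdots\locnorm m_{i_k},M.\bigr\|_{L^\infty(\mu)}$ for every representation of $w$.

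For the reverse inequality in the Banach case, first observe that $\nu$ is a seminorm by construction and in fact a norm since $\|\cdot\|_{\banext k,Z.}\le\nu$, combined with the injectivity of $\bigwedge^kZ\hookrightarrow\banext k,Z.$ shown in Theorem~\ref{thm:ext_pow_bana}. Let $B$ be the Banach space completion of $(\bigwedge^kZ,\nu)$. The map $\pi:Z^k\to B$ sending $(z_1,\ldots,z_k)$ to $z_1\wedge\cdots\wedge z_k$ is alternating multilinear and satisfies $\|\pi(z_1,\ldots,z_k)\|_B=\nu(z_1\wedge\cdots\wedge z_k)\le\|z_1\|_Z\cdots\|z_k\|_Z$, so $\pi\in\alt k,Z,B.$ has norm at most $1$. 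The universal property of $\banext k,Z.$ then yields a bounded linear map $\hat\pi:\banext k,Z.\to B$ of norm at most $1$ which restricts to the identity on $\bigwedge^kZ$, giving $\nu(w)=\|\hat\pi(w)\|_B\le\|w\|_{\banext k,Z.}$.

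The main obstacle is running this argument in the $L^\infty(\mu)$-normed module category, since to apply the universal property of $\lnmodext k,\mu,M.$ the completion $B$ of $({}_{L^\infty(\mu)}\bigwedge^kM,\nu)$ must carry an $L^\infty(\mu)$-normed module structure. Boundedness of the natural action $\lambda(m_1\wedge\cdots\wedge m_k)=(\lambda m_1)\wedge m_2\wedge\cdots\wedge m_k$ with respect to $\nu$ is clear. The essential point is Weaver's criterion \cite[Thm.~2]{weaver00}: for every $\mu$-measurable $U$ one must have $\nu(w)=\max(\nu(\chi_Uw),\nu(\chi_{X\setminus U}w))$. The inequality $\ge$ follows by applying $\chi_U$, respectively $\chi_{X\setminus U}$, to the first factor of any representation of $w$ and using $\locnorm \chi_Um,M.=\chi_U\locnorm m,M.$ to ensure the resulting local-norm sum lies in $\chi_UL^\infty(\mu)$; the reverse inequality is obtained by concatenating near-optimal representations of $\chi_Uw$ and $\chi_{X\setminus U}w$, whose local-norm sums have disjoint essential supports and hence combine as a maximum in $L^\infty(\mu)$. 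These identities pass to the completion $B$ by continuity, so $B$ is a normed module and the universal property of $\lnmodext k,\mu,M.$ gives $\nu(w)\le\|w\|_{\lnmodext k,\mu,M.}$ exactly as in the Banach case.

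Finally, \eqref{eq:ext_norm_ban_lnmod_s3} follows from \eqref{eq:ext_norm_ban_lnmod_s2} applied to restrictions: for every $\mu$-measurable $U$, $\chi_Uw=\sum_i(\chi_Um_{i_1})\wedge m_{i_2}\wedge\cdots\wedge m_{i_k}$, so \eqref{eq:ext_norm_ban_lnmod_s2} yields $\|\chi_Uw\|_{\lnmodext k,\mu,M.}\le\bigl\|\chi_U\sum_i\locnorm m_{i_1},M.\cdots\locnorm m_{i_k},M.\bigr\|_{L^\infty(\mu)}$. Since the local norm in an $L^\infty(\mu)$-normed module is characterized by the identity $\|\chi_Uw\|_{\lnmodext k,\mu,M.}=\|\chi_U\locnorm w,{\lnmodext k,\mu,M.}.\|_{L^\infty(\mu)}$ holding for every measurable $U$, this inequality forces \eqref{eq:ext_norm_ban_lnmod_s3} $\mu$-a.e.
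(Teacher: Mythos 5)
Your proof is correct and follows essentially the same route as the paper's: establishing the easy inequality via \eqref{eq:ext_pow_bana1} and \eqref{eq:lem_alt_mod_s1}, reversing it through the universal property applied to the completion of the algebraic exterior power under the candidate norm, and in the module case verifying Weaver's $\max$-criterion by concatenating near-optimal representations of $\chi_U w$ and $\chi_{U^c}w$. The only cosmetic difference is the order of steps: the paper proves \eqref{eq:ext_norm_ban_lnmod_s3} directly first and uses it to get the easy half of \eqref{eq:ext_norm_ban_lnmod_s2}, whereas you prove \eqref{eq:ext_norm_ban_lnmod_s2} in full and then deduce \eqref{eq:ext_norm_ban_lnmod_s3} from the restriction identity $\|\chi_U w\| = \|\chi_U\locnorm w,{\lnmodext k,\mu,M.}.\|_{L^\infty(\mu)}$; both orderings are sound.
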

\begin{proof}[Proof of Lemma \ref{lem:ext_norm_char_ban_lnmod}]
  For $Z$ a Banach space, define for $w\in\bigwedge^kZ$
  \begin{equation}
    \gamma(w)=\inf\left\{\sum_{i\in
        I}\|z_{i_1}\|_Z\cdots\|z_{i_k}\|_Z: w=\sum_{i\in I}z_{i_1}\wedge\cdots\wedge
      z_{i_k}\right\};
  \end{equation}
  then $\gamma(w)$ is a seminorm and \eqref{eq:ext_pow_bana1} shows that
  \begin{equation}
    \label{eq:ext_norm_char_ban_lnmod_p1}
    \|w\|_{\banext k,Z.}\le\gamma(w);
  \end{equation} in particular \eqref{eq:ext_norm_char_ban_lnmod_p1}
  shows that $\gamma$ is a norm on $\bigwedge^kZ$ and the same
  argument used in the proof of Theorem \ref{thm:ext_pow_bana}
  (compare \eqref{eq:ext_pow_bana2}) shows
  that the completion of $\bigwedge^kZ$ in the $\gamma$-norm satisfies
  the universal property characterizing $\banext k,Z.$; thus
  $\|w\|_{\banext k,Z.}=\gamma(w)$.
  \par Let $M$ an $L^\infty(\mu)$-normed module; we first show
  \eqref{eq:ext_norm_ban_lnmod_s3}. It suffices to show that for
  each $U$ $\mu$-measurable,
  \begin{equation}\label{eq:ext_norm_char_ban_lnmod_p2}
    \|\chi_U w\|_{\lnmodext k,\mu,M.}\le\left\|\chi_U\sum_{i\in I}\locnorm
      m_{i_1},M.\cdots\locnorm m_{i_k},M.\right\|_{L^\infty(\mu)};
  \end{equation} from the definition of $\|\cdot\|_{\lnmodext
    k,\mu,M.}$ (proof of Theorem \ref{thm:ext_pow_normed_module}) we
  can find, for each $\epsi>0$, an alternating map $\varphi\in\alt k,M,L^\infty(\mu).$ with norm
  at most $1$ and satisfying:
  \begin{equation}
    \|\chi_Uw\|_{\lnmodext k,\mu,M.}\le\|\tilde\varphi(\chi_Uw)\|_{L^\infty(\mu)}+\epsi;
  \end{equation} but \eqref{eq:lem_alt_mod_s1} implies
  \begin{equation}
    \left|\tilde\varphi(\chi_Uw)\right|\le\chi_U\sum_{i\in I}\locnorm
    m_{i_1},M.\cdots\locnorm m_{i_k},M.,
  \end{equation}from which we obtain
  \eqref{eq:ext_norm_char_ban_lnmod_p2} taking the essential $\sup$
  and letting $\epsi\searrow0$. To show
  \eqref{eq:ext_norm_ban_lnmod_s2} let
  \begin{equation}
    \gamma(w)=\inf\biggl\{
    \left\|\sum_{i\in I}\locnorm m_{i_1},M.\cdots\locnorm
      m_{i_k},M.\right\|_{L^\infty(\mu)}:\\ w=\sum_{i\in I}m_{i_1}\wedge\cdots\wedge m_{i_k}\biggr\};
  \end{equation} then $\gamma(w)$ is a seminorm on
  ${}_{L^\infty(\mu)}\bigwedge^kM$. Note that
  \eqref{eq:ext_norm_ban_lnmod_s3} implies $\|\cdot\|_{\lnmodext
    k,\mu,M.}\le\gamma$, so that $\gamma$ is a norm; the proof of
  Theorem \ref{thm:ext_pow_normed_module} implies that the completion $Y$
  of ${}_{L^\infty(\mu)}\bigwedge^kM$ in the $\gamma$-norm satisfies
  the universal property defining $\lnmodext k,\mu,M.$ provided
  that $Y$ is an $L^\infty(\mu)$-normed module. To show that $Y$ is an
  $L^\infty(\mu)$-normed module it suffices to
  show that for a $\mu$-measurable set $U$,
  \begin{equation}\label{eq:ext_norm_char_ban_lnmod_p3}
    \gamma(w)=\max(\gamma(\chi_Uw),\gamma(\chi_{U^c}w)).
  \end{equation} Having shown \eqref{eq:ext_norm_char_ban_lnmod_p3},
  uniqueness of $\lnmodext k,\mu,M.$ will
  imply that $\|\cdot\|_{\lnmodext k,\mu,M.}=\gamma$. To show
  \eqref{eq:ext_norm_char_ban_lnmod_p3}, for $\epsi>0$ let
  \begin{align}
    \chi_Uw&=\sum_{i\in
      I_U}\chi_Um^{(1)}_{i_1}\wedge\cdots\wedge\chi_Um^{(1)}_{i_k},\\
    \chi_{U^c}w&=\sum_{i\in I_{U^c}}\chi_{U^c}m^{(2)}_{i_1}\wedge\cdots\wedge\chi_{U^c}m^{(2)}_{i_k},
  \end{align} with
  \begin{align}
    \left\|\sum_{i\in
        I_U}\locnorm \chi_Um^{(1)}_{i_1},M.\cdots\locnorm
      \chi_Um^{(1)}_{i_k},M.\right\|_{L^\infty(\mu)}&<\gamma(\chi_Uw)+\epsi\\
    \left\|\sum_{i\in
        I_{U^c}}\locnorm \chi_{U^c}m^{(2)}_{i_1},M.\cdots\locnorm \chi_{U^c}m^{(2)}_{i_k},M.\right\|_{L^\infty(\mu)}&<\gamma(\chi_{U^c}w)+\epsi;
  \end{align} without loss of generality (introducing null terms) we
  can assume that $I_U=I_{U^c}=I$ so that
  \eqref{eq:ext_norm_char_ban_lnmod_p3} follows
  observing that
  \begin{equation}
    w=\sum_{i\in I}(\chi_Um^{(1)}_{i_1}+\chi_{U^c}m^{(2)}_{i_1})\wedge\cdots\wedge(\chi_Um^{(1)}_{i_k}+\chi_{U^c}m^{(2)}_{i_k})
  \end{equation} and letting $\epsi\searrow0$.
\end{proof}
\par There are also pairings between exterior powers:
\begin{lem}\label{lem:ext_bana_prod_pairings}
  Suppose $Z$ is a Banach space; the bilinear mapping
  \begin{equation}
    \wedge:\bigwedge\nolimits^k Z\times \bigwedge\nolimits^l Z\to\bigwedge\nolimits^{k+l}Z
  \end{equation} which on pairs of simple multivectors is given by:
  \begin{equation}
    \wedge:((z_1,\cdots,z_k),(u_1,\cdots,u_l))\mapsto
    z_1\wedge\cdots\wedge z_k\wedge u_1\wedge\cdots\wedge u_l, 
  \end{equation} extends to a bounded bilinear map
  \begin{equation}
    \wedge:\banext k,Z.\times \banext l,Z.\to\banext k+l,Z.
  \end{equation} satisfying
  \begin{equation}
    \label{eq:ext_bana_prod_pairings_s1}
    \|\omega_1\wedge \omega_2\|_{\banext
      k+l,Z.}\le\|\omega_1\|_{\banext k,Z.}\,\|\omega_2\|_{\banext l,Z.}.
  \end{equation}
  \par Suppose $M$ is an $L^\infty(\mu)$-module; for $1\le i\le k$,
  the bilinear mapping (in the category $\bancat$)
  \begin{equation}
    \begin{aligned}
      \Phi_i:L^\infty(\mu)\times\bigwedge\nolimits^kM&\to\bigwedge\nolimits^kM\\
      \left(\lambda,\sum_{j\in J}m_{j_1}\wedge\cdots\wedge
        m_{j_k}\right)&\mapsto \sum_{j\in J}m_{j_1}\wedge\cdots\wedge
      \lambda m_{j_i}\wedge\cdots \wedge m_{j_k}
    \end{aligned}
  \end{equation}
  extends to a bounded bilinear map
  \begin{equation}
    \Phi_i:L^\infty(\mu)\times\banext k,M.\to\banext k,M.
  \end{equation}
  satisfying
  \begin{equation}
    \label{eq:ext_bana_prod_pairings_s2}
    \|\Phi_i(\lambda,\omega)\|_{\banext
      k,Z.}\le\|\lambda\|_{L^\infty(\mu)}\,\|\omega\|_{\banext k,Z.}.
  \end{equation}
\end{lem}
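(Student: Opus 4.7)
The plan is to read off both bilinear estimates from the projective description of the norm on the Banach exterior powers furnished by the first half of Lemma~\ref{lem:ext_norm_char_ban_lnmod}, and then extend by continuity from the dense algebraic exterior powers to $\banext k,Z.$, $\banext k+l,Z.$, $\banext k,M.$.

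For the first statement, the algebraic wedge product on the graded algebra $\bigwedge^* Z$ gives a well-defined bilinear map $\bigwedge^k Z \times \bigwedge^l Z \to \bigwedge^{k+l} Z$. Fix $\omega_1 \in \bigwedge^k Z$ and $\omega_2 \in \bigwedge^l Z$ and $\varepsilon > 0$. By Lemma~\ref{lem:ext_norm_char_ban_lnmod} I would choose representations
\begin{equation*}
\omega_1 = \sum_{i \in I} z_{i_1} \wedge \cdots \wedge z_{i_k}, \qquad \omega_2 = \sum_{j \in J} u_{j_1} \wedge \cdots \wedge u_{j_l}
\end{equation*}
with $\sum_{i \in I} \|z_{i_1}\|_Z \cdots \|z_{i_k}\|_Z \le \|\omega_1\|_{\banext k,Z.} + \varepsilon$ and analogously for $\omega_2$. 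Expanding the algebraic product,
\begin{equation*}
\omega_1 \wedge \omega_2 = \sum_{(i,j) \in I \times J} z_{i_1} \wedge \cdots \wedge z_{i_k} \wedge u_{j_1} \wedge \cdots \wedge u_{j_l},
\end{equation*}
which lies in $\bigwedge^{k+l} Z \hookrightarrow \banext k+l,Z.$. Applying the infimum formula from Lemma~\ref{lem:ext_norm_char_ban_lnmod} to this presentation gives
\begin{equation*}
\|\omega_1 \wedge \omega_2\|_{\banext k+l,Z.} \le \bigl(\|\omega_1\|_{\banext k,Z.} + \varepsilon\bigr)\bigl(\|\omega_2\|_{\banext l,Z.} + \varepsilon\bigr),
\end{equation*}
and I would let $\varepsilon \searrow 0$. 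This shows that $\wedge$ is bilinear and bounded on the dense product $\bigwedge^k Z \times \bigwedge^l Z \subset \banext k,Z. \times \banext l,Z.$, so it extends uniquely and continuously to a bounded bilinear map satisfying \eqref{eq:ext_bana_prod_pairings_s1}.

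For the second statement, I would use the same template. For $\lambda \in L^\infty(\mu)$ and a simple term $m_{j_1} \wedge \cdots \wedge m_{j_k}$, the $L^\infty(\mu)$-module axiom \eqref{eq:boundedaction} on $M$ gives
\begin{equation*}
\|m_{j_1}\|_M \cdots \|\lambda m_{j_i}\|_M \cdots \|m_{j_k}\|_M \le \|\lambda\|_{L^\infty(\mu)} \prod_{r=1}^{k} \|m_{j_r}\|_M.
\end{equation*}
Summing over a presentation $\omega = \sum_{j \in J} m_{j_1} \wedge \cdots \wedge m_{j_k} \in \bigwedge^k M$ that nearly realizes $\|\omega\|_{\banext k,M.}$ and appealing again to Lemma~\ref{lem:ext_norm_char_ban_lnmod}, I obtain $\|\Phi_i(\lambda, \omega)\|_{\banext k,M.} \le \|\lambda\|_{L^\infty(\mu)}(\|\omega\|_{\banext k,M.} + \varepsilon)$, and $\varepsilon \searrow 0$ then density/continuity yield the bounded bilinear extension with \eqref{eq:ext_bana_prod_pairings_s2}.

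There is no real obstacle: the entire argument is an application of the projective norm formula together with the standard principle that a bounded bilinear map on a dense product extends uniquely to the completion. The only point that requires mild care is that the algebraic wedge and scalar multiplication $\Phi_i$ are well-defined on the algebraic exterior powers before any completion is taken; this is a consequence of the universal property of $\bigwedge^k$ in the category of vector spaces and does not need the Banach structure. Once these maps are in hand on a dense subspace and the above estimates are established, the extensions and the norm bounds \eqref{eq:ext_bana_prod_pairings_s1} and \eqref{eq:ext_bana_prod_pairings_s2} follow automatically.
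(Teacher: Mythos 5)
Your proposal is correct and takes essentially the same approach as the paper, which simply invokes the projective norm formula \eqref{eq:ext_norm_char_ban_lnmod_s1} from Lemma~\ref{lem:ext_norm_char_ban_lnmod}; you have merely filled in the routine details (near-optimal presentations, the product expansion, and density/continuity extension) that the paper leaves implicit.
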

\begin{proof}[Proof of Lemma \ref{lem:ext_bana_prod_pairings}]
  It follows from the first part of Lemma
  \ref{lem:ext_norm_char_ban_lnmod}; in particular,
  \eqref{eq:ext_bana_prod_pairings_s1} and
  \eqref{eq:ext_bana_prod_pairings_s2}  follow from
  \eqref{eq:ext_norm_char_ban_lnmod_s1}. 
\end{proof}
\par We now turn to the existence of the exterior power in the
category $\lmodcat$.
\begin{thm}\label{thm:ext_pow_module}
  For $M$ an $L^\infty(\mu)$-module, the $k$-th exterior power in the
  category $\lmodcat$ exists and can be realized as a quotient space
  of $\banext k,M.$ (in $\bancat$) by the closure of the linear span
  of the set
  \begin{equation}\label{eq:ext_pow_module_s1}
    \left\{\Phi_i(\lambda,\omega)-\Phi_j(\lambda,\omega): 1\le i,j\le
      k,\; \lambda\in L^\infty(\mu), \omega\in\bigwedge\nolimits^kM\right\}.
  \end{equation}
\end{thm}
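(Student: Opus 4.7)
Let $N\subset\banext k,M.$ denote the closure of the linear span of the set in~\eqref{eq:ext_pow_module_s1}, and set $Q=\banext k,M./N$ with the quotient norm from $\bancat$; write $[\omega]$ for the class of $\omega\in\banext k,M.$ in $Q$. The plan is to define an $L^\infty(\mu)$-module structure on $Q$, exhibit an alternating $L^\infty(\mu)$-multilinear map $\pi\colon M^k\to Q$, and then verify the universal property by descending the one already established in $\bancat$.

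For the module structure, I would set $\lambda\cdot[\omega]=[\Phi_1(\lambda,\omega)]$. The operation is well-defined on $Q$ because by the very definition of $N$ the element $\Phi_1(\lambda,\omega)-\Phi_i(\lambda,\omega)$ belongs to $N$ for every $i$ and every $\omega\in\bigwedge^k M$, and this extends to arbitrary $\omega\in\banext k,M.$ by the boundedness estimate \eqref{eq:ext_bana_prod_pairings_s2} and density of $\bigwedge^k M$ in $\banext k,M.$ (Theorem~\ref{thm:ext_pow_bana}). Bilinearity and the bound $\|\lambda\cdot[\omega]\|_Q\le\|\lambda\|_{L^\infty(\mu)}\|[\omega]\|_Q$ follow at once from \eqref{eq:ext_bana_prod_pairings_s2}. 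The associative law $\lambda(\mu\cdot[\omega])=(\lambda\mu)\cdot[\omega]$ I would check first on simple wedges, where it reduces to the identity $\lambda(\mu m_1)\wedge m_2\wedge\cdots\wedge m_k=(\lambda\mu)m_1\wedge m_2\wedge\cdots\wedge m_k$ in $M$, and then extend by continuity and density. Then $Q$ is an $L^\infty(\mu)$-module in the sense of \eqref{eq:boundedaction}.

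Next I define $\pi(m_1,\ldots,m_k)=[m_1\wedge\cdots\wedge m_k]$. It is alternating and $\real$-multilinear because the Banach-space wedge $m_1\wedge\cdots\wedge m_k$ already has these properties in $\banext k,M.$. The $L^\infty(\mu)$-linearity in the first slot is immediate from the definition of the action: $\pi(\lambda m_1,m_2,\ldots,m_k)=[\Phi_1(\lambda,m_1\wedge\cdots\wedge m_k)]=\lambda\cdot\pi(m_1,\ldots,m_k)$; linearity in any other slot follows after noting $[\Phi_i(\lambda,\cdot)]=[\Phi_1(\lambda,\cdot)]$ on $Q$. The estimate $\|\pi(m_1,\ldots,m_k)\|_Q\le\|m_1\|_M\cdots\|m_k\|_M$ is inherited from $\banext k,M.$.

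For the universal property, let $\varphi\in\alt k,M,P.$ with $P\in\lmodcat$. Since $\varphi$ is in particular alternating and $\real$-multilinear, Theorem \ref{thm:ext_pow_bana} produces a unique $\tilde\varphi\in\hom(\banext k,M.,P)$ (in $\bancat$) with $\|\tilde\varphi\|=\|\varphi\|_{\alt k,M,P.}$. I claim $\tilde\varphi$ annihilates $N$: for every simple wedge $\omega=m_1\wedge\cdots\wedge m_k$, the $L^\infty(\mu)$-multilinearity of $\varphi$ gives
\[
\tilde\varphi\bigl(\Phi_i(\lambda,\omega)\bigr)=\varphi(m_1,\ldots,\lambda m_i,\ldots,m_k)=\lambda\varphi(m_1,\ldots,m_k)=\tilde\varphi\bigl(\Phi_j(\lambda,\omega)\bigr),
\]
and density of $\bigwedge^k M$ in $\banext k,M.$ together with \eqref{eq:ext_bana_prod_pairings_s2} and the boundedness of $\tilde\varphi$ promotes this identity to all $\omega\in\banext k,M.$, so $\tilde\varphi$ vanishes on the generators of $N$ and hence on its closure. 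Thus $\tilde\varphi$ descends to a bounded linear $\hat\varphi\colon Q\to P$ with $\|\hat\varphi\|\le\|\tilde\varphi\|=\|\varphi\|_{\alt k,M,P.}$. That $\hat\varphi$ is an $L^\infty(\mu)$-module homomorphism is again verified first on classes of simple wedges, where $\hat\varphi(\lambda\cdot[\omega])=\tilde\varphi(\Phi_1(\lambda,\omega))=\lambda\varphi(m_1,\ldots,m_k)=\lambda\hat\varphi([\omega])$, and then extended by density and the continuity of scalar multiplication in $P$. The reverse norm inequality $\|\varphi\|_{\alt k,M,P.}\le\|\hat\varphi\|_{\hom(Q,P)}$ is immediate, since $\varphi(m_1,\ldots,m_k)=\hat\varphi\circ\pi(m_1,\ldots,m_k)$ and $\|\pi(m_1,\ldots,m_k)\|_Q\le\|m_1\|_M\cdots\|m_k\|_M$. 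Uniqueness of $\hat\varphi$ is standard, since $\pi(M^k)$ generates a dense $L^\infty(\mu)$-submodule of $Q$. The main point requiring care is ensuring independence of the index $i$ when defining the action and then propagating identities from $\bigwedge^k M$ to $\banext k,M.$ via density and \eqref{eq:ext_bana_prod_pairings_s2}; everything else is bookkeeping.
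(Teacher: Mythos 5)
Your proposal follows exactly the same route as the paper's proof: descend the $\bancat$ universal property from Theorem~\ref{thm:ext_pow_bana} to the quotient by $\bar{\mathcal Q}$, equip the quotient with the $L^\infty(\mu)$-action via $\Phi_i$, and read off the norm equality from the fact that the quotient map has norm $\le1$. Most of your extra detail (annihilation of the generators by $\tilde\varphi$, the alternating $L^\infty(\mu)$-multilinearity of $\pi$, the norm bookkeeping, uniqueness via density) is correct and exactly what the paper leaves implicit.

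There is one lacuna worth closing. You justify that the action $\lambda\cdot[\omega]=[\Phi_1(\lambda,\omega)]$ is well-defined by noting that $\Phi_1(\lambda,\omega)-\Phi_i(\lambda,\omega)\in N$. That shows \emph{index-independence} of the formula, i.e.~$[\Phi_1(\lambda,\omega)]=[\Phi_i(\lambda,\omega)]$, but not that the value is independent of the \emph{representative} of $[\omega]$. For the latter you need $\Phi_1(\lambda,\cdot)$ to map $\bar{\mathcal Q}$ into $\bar{\mathcal Q}$, and this requires an extra observation: for $\eta\in\bigwedge^k M$ one has the commutation $\Phi_1\bigl(\lambda,\Phi_i(\mu',\eta)\bigr)=\Phi_i\bigl(\mu',\Phi_1(\lambda,\eta)\bigr)$ (check on simple wedges, extend linearly), hence
\[
\Phi_1\bigl(\lambda,\Phi_i(\mu',\eta)-\Phi_j(\mu',\eta)\bigr)=\Phi_i\bigl(\mu',\Phi_1(\lambda,\eta)\bigr)-\Phi_j\bigl(\mu',\Phi_1(\lambda,\eta)\bigr),
\]
which is again a generator of $\mathcal Q$; boundedness of $\Phi_1(\lambda,\cdot)$ from \eqref{eq:ext_bana_prod_pairings_s2} then gives $\Phi_1(\lambda,\bar{\mathcal Q})\subset\bar{\mathcal Q}$. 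The paper asserts well-definedness without proof, so this is a fix to your elaboration rather than a divergence from the paper's strategy; with it in place your argument is complete.
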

\begin{proof}[Proof of Theorem \ref{thm:ext_pow_module}]
  Let $\mathcal{Q}$ denote the linear span of the set
  \eqref{eq:ext_pow_module_s1}. If $\varphi\in\alt k,M,N.$, where $N$
  is an $L^\infty(\mu)$-module, let $\tilde\varphi:\banext k,M.\to N$
  denote the corresponding map given by the universal property of
  $\banext k,M.$; note that $\tilde\varphi$ annihilates
  $\mathcal{Q}$. Moreover, $\banext k,M./\mathcal{\bar Q}$ becomes an
  $L^\infty(\mu)$-module letting
  \begin{equation}
    \lambda.[\omega]=[\Phi_i(\lambda,\omega)]\quad\text{($(\lambda,\omega)\in
      L^\infty(\mu)\times \banext k,M.$ and $1\le i\le k$)}.
  \end{equation} If we let $\pi'$ denote the composition of
  $\pi:M^k\to\banext k,M.$ with the quotient map $\banext
  k,M.\to\banext k,M./\mathcal{\bar Q}$, then $\pi'\in\alt k,M,{\banext
    k,M./\mathcal{\bar Q}}.$; similarly, if we let
  $\hat\varphi:\banext k,M./\mathcal{\bar Q}\to N$ the map induced by
  $\tilde\varphi$, then $\hat\varphi\in\hom(\banext k,M./\mathcal{\bar
    Q},N)$. Note that $\hat\varphi\circ\pi'=\varphi$ and that
  uniqueness of $\hat\varphi$ follows from uniqueness of
  $\tilde\varphi$. Finally, as $\|[\omega]\|_{\banext
    k,M./\mathcal{\bar Q}}\le\|\omega\|_{\banext k,M.}$,
  $\|\hat\varphi\|_{\hom(\banext k,M./\mathcal{\bar
      Q},N)}=\|\tilde\varphi\|_{\hom(\banext k,M.,N)}= \|\varphi\|_{\alt k,M,N.}$.
\end{proof}
\begin{rem}\label{rem:ext_compatibility}
  Note that if $M$ is an $L^\infty(\mu)$-module, we have an
  $\real$-linear surjection
  \begin{equation}
    \banext k,M.\to\lmodext k,\mu,M.
  \end{equation}
with norm at most $1$; similarly, if $M$ is an $L^\infty(\mu)$-normed
module, we have an $L^\infty(\mu)$-linear surjection
\begin{equation}
  \lmodext k,\mu,M.\to\lnmodext k,\mu,M.
\end{equation} with norm at most $1$.
\end{rem}
\subsection{Alberti representations in Banach spaces}
\label{subsec:alberti_banach}
In this Subsection we prove a refinement for the production of Alberti
representations in Banach spaces when the speed and direction are
specified using bounded linear maps.
\begin{thm}\label{thm:alberti_banach}
  Suppose that $Z$ is a separable Banach space, $\mu$ is a Radon measure on $Z$
  and suppose that $f:Z\to\real^q$ and $g:Z\to\real$ are bounded
  linear maps. Let $\cone(w,\alpha)$ be a $q$-dimensional cone field on
  $Z$ and $\delta:Z\to(0,\infty)$ a Borel map; then the following are
  equivalent:
  \begin{enumerate}
  \item The measure $\mu$ admits an Alberti representation in the
    $f$-direction of $\cone(w,\alpha)$ with $g$-speed $>\delta$.
  \item The measure $\mu$ admits a
    $(\delta/\|g\|_{Z^*},1)$-biLipschitz Alberti representation
    $\albrep.=(P,\nu)$ in the $f$-direction of $\cone(w,\alpha)$ with
    $g$-speed $>\delta$ and such that $\spt P\subset\curves(Z)$ and
    $\nu_\gamma=h\Psi_\gamma$ where $h$ is a Borel function on $Z$ and
    \begin{equation}
      \label{eq:alberti_banach_s1}
      \Psi_\gamma=\mpush\gamma.\lebmeas\mrest [0,1].
    \end{equation}
  \end{enumerate}
\end{thm}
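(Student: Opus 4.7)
The implication (2)~$\Rightarrow$~(1) is immediate: by Definitions~\ref{defn:alberti_direction}--\ref{defn:alberti_constants}, a $(\delta/\|g\|_{Z^*},1)$-biLipschitz Alberti representation with the stated direction and speed is, in particular, an Alberti representation of the type required in~(1).

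For (1)~$\Rightarrow$~(2), the plan is to exploit linearity of $f$ and $g$ together with the Banach-space structure of $Z$ to promote fragments to full curves on $[0,1]$. Starting from an Alberti representation $(P,\nu)$ of the form given in~(1), Theorem~\ref{thm:alberti_rep_prod} lets us assume it is $(1,1+\eta)$-biLipschitz for an arbitrary $\eta>0$. A further Borel $L^\infty(P)$-decomposition (using the Lebesgue density theorem and openness of $\cone(w,\alpha)$) reduces to the case where each fragment $\gamma\colon K\to Z$ satisfies $\lebmeas(K)/(\sup K-\inf K)\ge 1-\eta$ and $(f\circ\gamma)'(t)$ is confined, for a.e.~$t\in K$, to a compact subcone of $\cone(w,\alpha)$ around a fixed direction $v_0$.

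Next, for each such fragment, reparametrize via the biLipschitz map $u(t)=g(\gamma(t))$, set $\tilde\gamma=\gamma\circ u^{-1}$ on $u(K)$, and extend $\tilde\gamma$ linearly in~$Z$ across every connected gap of $u(K)$ to obtain a Lipschitz map $\bar\gamma$ on the full interval $[\inf u(K),\sup u(K)]$. The key point is that linearity of $g$ forces $g(\bar\gamma(s))=s$ identically on this interval, so $(g\circ\bar\gamma)'(s)=1$ and
\[
\|\bar\gamma(s_1)-\bar\gamma(s_2)\|_Z \ge \|g\|_{Z^*}^{-1}|s_1-s_2|
\]
automatically. Linearity of $f$ identifies $(f\circ\bar\gamma)'(s)$ on each gap $(s_1,s_2)$ with the constant secant $(f(\tilde\gamma(s_2))-f(\tilde\gamma(s_1)))/(s_2-s_1)$; by the density-point decomposition and openness of $\cone(w,\alpha)$, these secants lie inside $\cone(w,\alpha)$, so the direction condition is preserved on the entire extended domain. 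A Borel decomposition ensuring that all $\bar\gamma$ share the same $u$-length $L\approx\delta/(1+\eta)$, followed by an affine rescaling to $[0,1]$, yields curves in $\curves(Z)$ with biLipschitz constants converging to $(\delta/\|g\|_{Z^*},1)$ as $\eta\to 0$.

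For the common density, put $\Lambda=\int_{\curves(Z)}\Psi_\gamma\,dP(\gamma)$, a Radon measure on~$Z$. The biLipschitz property makes $\Psi_\gamma$ comparable to $\hmeas 1._\gamma$, so $\nu_\gamma\ll\Psi_\gamma$ and hence $\mu\ll\Lambda$; setting $h=d\mu/d\Lambda$, the identity $\mu=\int_{\curves(Z)} h\Psi_\gamma\,dP(\gamma)$ shows that $(P,h\Psi)$ is the required Alberti representation, with $\nu_\gamma$ taking the stated form. The main obstacle is ensuring the secant slopes on interpolation gaps remain in the open cone $\cone(w,\alpha)$; this is handled by the careful Borel decomposition selecting fragments with high Lebesgue density in~$K$ and with tightly-controlled $f$-direction.
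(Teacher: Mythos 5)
Your structural plan differs from the paper's: the paper runs a Rainwater-style exhaustion argument using the lemma on biLipschitz decompositions from \cite{deralb}, decomposing $\mu=\mu'+\mu\mrest F$ with $\mu'$ already of the required form and $F$ null for each of the compact families $\mathcal{G}_n\subset\curves(\convgeo)$; then it shows $F$ is $\hmeas 1._\gamma$-null for every fragment $\gamma$ satisfying (1), which forces $\mu(F)=0$. You instead try to modify the given Alberti representation's fragments directly into curves. Both proofs share the same core idea (linearity of $f,g$ plus the Banach/convex structure lets you fill in gaps), but the paper's exhaustion scheme avoids having to track measurability of the modification $\gamma\mapsto\bar\gamma$, redistribution of $\nu_\gamma$ across subdivided pieces, and the normalization of domains (the paper fixes $[0,1]$ once and for all, extending each fragment into $\convgeo$ along a fixed direction $z$, rather than cutting each curve to a common $u$-length and rescaling as you propose).

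The genuine gap is in your claimed control of the secants across interpolation gaps. You assert that high Lebesgue density of $K$ in its convex hull, together with openness of $\cone(w,\alpha)$ and a.e.\ confinement of $(f\circ\gamma)'$ to a compact subcone, forces the secant $\bigl(f(\gamma(t_2))-f(\gamma(t_1))\bigr)/\bigl(g(\gamma(t_2))-g(\gamma(t_1))\bigr)$ across a gap $(t_1,t_2)$ of $K$ into $\cone(w,\alpha)$. That fails: density of $K$ and a.e.\ derivative conditions say nothing about the vector $\gamma(t_2)-\gamma(t_1)$ across a gap, since the increment is not an integral over any subset of $K$. Even a $(1,1)$-biLipschitz fragment with $\lebmeas(K)/(\sup K-\inf K)$ close to $1$ can have $f(\gamma(t_2))-f(\gamma(t_1))$ of the wrong sign across a single interior gap; openness of the cone does not rescue this. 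The same problem shows $u=g\circ\gamma$ need not even be monotone on $K$ — so your reparametrization is not well-defined — and that your Lipschitz bound for $\tilde\gamma$ fails across gaps. The paper handles exactly this by combining Lusin's theorem (to make $(f\circ\gamma)'$ continuous on a compact $K_0$) with Egorov's theorem (to make the pointwise differentiability \emph{uniform} on $K_1\subset K_0$), and then subdividing $K_1$ into pieces of diameter $<1/(2n_1)$; only then does the uniform estimate $\|f(\gamma(t))-f(\gamma(s))-w_j(t-s)\|\le\varepsilon_0|t-s|$ control the secant on every gap of $K_{1,j}$ and push it into the cone. To repair your argument you would need to insert exactly this Lusin–Egorov reduction and small-diameter subdivision (for both $f\circ\gamma$ and $g\circ\gamma$) before the reparametrization step.
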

\begin{proof}[Proof of Theorem \ref{thm:alberti_banach}]
  It suffices to show that (1) implies (2). For the moment, we assume
  that the functions $w$, $\alpha$ and $\delta$ are constant and that
  the set $\spt\mu$ is compact. By rescaling $g$ and $\delta$, we can
  assume that $\|g\|_{Z^*}=1$. Note that $\spt\mu$ must contain a
  fragment $\gamma$ with $(g\circ\gamma)'(t)>\delta\metdiff\gamma(t)$
  and $(f\circ\gamma)'(t)\in\cone(w,\alpha)$ for
  $\lebmeas\mrest\dom\gamma$-a.e.~$t$. In particular, there is a
  vector $z\in Z$ in the unit sphere of $Z$ satisfying $g(z)\ge\delta+1/n_0$ and
  $f(z)\in\bar\cone(w,\alpha-1/n_0)$ for some $n_0$. Let
  $\convgeo$ denote the closed convex hull of $\spt\mu\cup(\spt\mu+z)$
  in $Z$ and note that $\convgeo$ is compact. For $n\in\natural$
 let $\mathcal{G}_n$ denote
  the compact set of all $(\delta,1)$-biLipschitz maps $\gamma:[0,1]\to
  \convgeo$ satisfying:
  \begin{align}
    \sgn(t-s)\left(f\circ\gamma(t)-f\circ\gamma(s)\right)&\in\bar\cone(w,\alpha-1/n)\\
    \sgn(t-s)\left(g\circ\gamma(t)-g\circ\gamma(s)\right)&\ge(\delta+1/n)|t-s|.
  \end{align} 
  \par Applying \sync biLip_dis.\ in \cite{deralb} repeatedly,
  we obtain a decomposition $\mu=\mu'+\mu\mrest F$ where $\mu'$ has an
  Alberti representation of the desired form and $F\subset\spt\mu$ is an
  $F_{\sigma\delta}$ which is $\mathcal{G}_n$-null for every
  $n$. We elucidate the first two steps of the induction: one first
  writes $\mu=\mu_{\mathcal{G}_1}+\mu\mrest F_1$ where
  $\mu_{\mathcal{G}_1}$ admits an Alberti representation whose
  probability measure $P_1$ is concentrated on $\mathcal{G}_1$ and
  where $F_1$ is an $F_\sigma$-set which is $\mathcal{G}_1$-null. As a consequence, the measures
  $\mu\mrest F_1$ and $\mu_{\mathcal{G}_1}$ are singular. In the second
  step one applies \sync biLip_dis.\ in \cite{deralb} to $\mu\mrest
  F_1$ obtaining $\mu\mrest F_1$ = $\mu_{\mathcal{G}_2}+\mu\mrest F_2$
  where $\mu_{\mathcal{G}_2}$ admits an Alberti representation whose
  probability measure $P_2$ is concentrated on $\mathcal{G}_2$ and
  where $F_2\subset F_1$ is an $F_\sigma$-set which is
  $\mathcal{G}_2$-null (and also $\mathcal{G}_1$-null being a subset
  of $F_1$). One continues in this way and
  at the end one lets $\mu'=\sum_{n=1}^\infty\mu_{\mathcal{G}_n}$ and $F=\bigcap_nF_n$.
  \par We now show that for each fragment $\gamma\in\frags(\spt\mu)$ in the
  $f$-direction of $\cone(w,\alpha)$ and with $g$-speed $>\delta$, the
  set $F$ is $\hmeas 1._\gamma$-null; by (1), this will imply that $\mu(F)=0$.
  \par Let $\gamma$ be such a fragment and assume that it is
  $L$-Lipschitz. Note that, if we find countably many compact sets
  $K_\alpha\subset\dom\gamma$ with $\hmeas 1._{\gamma|K_\alpha}(F)=0$
  and $\lebmeas(\dom\gamma\setminus\bigcup_\alpha K_\alpha)=0$,
    then $\hmeas 1._\gamma(F)=0$. This allows to use Egorov and Lusin's
  Theorems to simplify the discussion.
  \par Concretely, fix $\varepsilon>0$ and use Lusin's Theorem
  \cite[Thm.~7.1.13]{bogachev_measure} to find a compact set $K_0\subset\dom\gamma$ with
  $\lebmeas(\dom\gamma\setminus K_0)<\varepsilon$ and such that
  $(f\circ\gamma)'$ is continuous when restricted to $K_0$. Applying
  Egorov's Theorem \cite[Thm.~7.1.12]{bogachev_measure} we can find another compact set
  $K_1\subset K_0$ with $\lebmeas (K_0\setminus K_1)<\varepsilon$ and
  such that:
  \begin{equation}
    \label{eq:heike1}
    \lim_{n\to\infty}\sup_{\substack{s,t\in K_1\\ 0<|s-t|\le 1/n}}\frac{\|
      f(\gamma(t)) - f(\gamma(s)) - (f\circ\gamma)'(t)(t-s)
      \|}
    {|t-s|} = 0.
  \end{equation}
  Havin fixed $\varepsilon_0>0$ can then choose $n_1$ such that:
  \begin{align}
    \sup_{\substack{s,t\in K_1\\ 0<|s-t|\le 1/n_1}}\frac{\|
      f(\gamma(t)) - f(\gamma(s)) - (f\circ\gamma)'(t)(t-s)
      \|}
    {|t-s|} &< \frac{\varepsilon_0}{2}\\
    \sup_{\substack{s,t\in K_1\\ 0\le |s-t|\le
        1/n_1}}\|(f\circ\gamma)'(t) - (f\circ\gamma)'(s)\|&<\frac{\varepsilon_0}{2},
  \end{align}
  and subdivide $K_1$ into finitely many compact subsets
  $\{K_{1,j}\}_j$ of diameter $<1/(2n_1)$. Having chosen for each $j$
    a $t_j\in K_{1,j}$ and having let $w_j=(f\circ\gamma)'(t_j)$ we
    conclude that:
    \begin{equation}
      \label{eq:heike2}
      \|f(\gamma(t)) - f(\gamma(s))
      -w_j(t-s)\|\le\varepsilon_0|t-s|\quad(\forall t,s\in K_{1,j}).
    \end{equation}
    \par Thus the previous argument shows that after subdividing the
    domain of $\gamma$ we can assume that $\gamma$ is in the
  $f$-direction of $\bar\cone(w,\alpha-1/n_2)$ for some
  $n_2\in\natural$ that can be made arbitrarily large choosing $n_1$ 
  appropriately. Passing to a further subdivision and applying a
  similar argument to the function $g$, we can also assume that
  $\gamma$ has $g$-speed $\ge\delta+1/n_2$. Finally, letting
  $I_\gamma$ denote the minimal interval containing $\dom\gamma$,
  applying the Lebesgue Differentiation Theorem and passing to a
  further subdivision of $\dom\gamma$ (and restricting $\gamma$) 
  we can assume that any point in $I_\gamma$ is
  within distance $\diam(I_\gamma)/(n_2(L+1000+q^8))$ from a point of
  $\dom\gamma$. For further details we refer to the argument of 
  \sync alberti_rep_prod.\ in \cite{deralb}.
    \par We now use the fact that $\convgeo$ is convex and that the
  functions $f$ and $g$ are linear to extend $\gamma$ to an
  $(L+1)$-Lipschitz map $\tilde\gamma:I_\gamma\to Z$. In fact,
  following \cite[2.79]{deralb} on each component $(u,v)$ of
  $I_\gamma\setminus\dom\gamma$ we let:
  \begin{equation}
    \label{eq:heike3}
    \tilde\gamma(t) = \frac{t - u}{v - u}\gamma(v) + \frac{v - t}{v - u}\gamma(u).
  \end{equation}
  Using the linearity of $f$ and $g$ we also conclude that
  $\tilde\gamma$ is in the $f$-direction of
  $\bar\cone(w,\alpha-1/(2n_2))$ with $g$-speed $\ge\delta+1/(2n_2)$.
  \par Now, after
  precomposing $\tilde\gamma$ with an affine map and dividing $I_\gamma$
  into smaller subintervals, we can reduce to the case in which
  $\tilde\gamma$ is $1$-Lipschitz,
  $I_\gamma\subset[0,1]$ and the left extremum of $I_\gamma$ is $0$. 
  Letting $t_0$ denote the right extremum of
  $I_\gamma$, we extend $\tilde\gamma$ to $[t_0,1]$ by letting
  $\tilde\gamma|[t_0,1]$ be the segment joining $\tilde\gamma(t_0)$ to
  $\tilde\gamma(t_0)+(1-t_0)z$. Note that
  $\metdiff\tilde\gamma\le 1$ and, letting $n_3=\max(n_0,n_1,n_2)$, we have
  $(g\circ\tilde\gamma)'\ge\delta+1/(2n_3)$ and
  $(f\circ\tilde\gamma)'\in\bar\cone(w,\alpha-1/(2n_3))$. In particular,
  $\tilde\gamma\in\mathcal{G}_{n_3}$ which implies
 $\hmeas 1._{\tilde\gamma}(F)=0$ and then $\hmeas 1._\gamma(F)=0$.
\par The case in which $\spt\mu$ is not compact and the functions $w$,
$\alpha$ and $\delta$ are not constant, is treated by using Egorov and
Lusin's Theorems like in the last part of the proof of \sync alberti_rep_prod.\ in \cite{deralb}.
\end{proof}
\subsection{Renorming}\label{subsec:renorming}
The goal of this Subsection is the proof of the following result about
renorming the module $\wder\mu.$ by taking a biLipschitz deformation
of the metric on $X$.
\begin{thm}
  \label{thm:renorm} Let $(X,d)$ be a Polish space and $\mu$ a Radon
  measure on $X$. For each $\varepsilon>0$ there is a metric $\edst , .$
  which satisfies
  \begin{equation}
    \label{eq:renorm_s1}
    d\le\edst , .\le (1+\epsi)d
  \end{equation}
  and such that the corresponding local norm
  $\elocnorm\,\cdot\,,{\wder\mu.}.$ is strictly convex.
\end{thm}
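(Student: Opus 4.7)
The plan is to perturb $\metric$ by adding a small $\ell^2$-type pseudometric pulled back from a countable separating family of bounded Lipschitz functions on $X$, and so to transfer the strict convexity of the $\ell^2$-norm to the local norm on derivations. Since $X$ is Polish, fix a countable collection $\{f_n\}_{n\in\natural}\subset\lipalg X.$ of bounded $1$-Lipschitz functions whose linear span together with the constants is weak$*$-dense in $\lipalg X.$; the truncated distance functions $f_{n,k}(x)=\min(\dist x,x_n.,k)$ with $\{x_n\}$ dense in $X$ provide such a family. Choose weights $c_n>0$ with $\sum_n c_n\le\varepsilon^2$, set $\Phi_n(x)=\sqrt{c_n}f_n(x)$, and $\Phi=(\Phi_n)_n:X\to\ell^2$. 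Since each $f_n$ is $1$-Lipschitz, $\|\Phi(x)-\Phi(y)\|_{\ell^2}\le\varepsilon\dist x,y.$, so
\[
\edst x,y. = \dist x,y. + \|\Phi(x)-\Phi(y)\|_{\ell^2}
\]
is a metric on $X$ (being the sum of $\metric$ and a pseudometric) satisfying $\metric\le\edst,.\le(1+\varepsilon)\metric$.

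The technical heart is the identity
\[
\elocnorm D,{\wder\mu.}. = \locnorm D,{\wder\mu.}. + \mathcal{L}(D),\qquad \mathcal{L}(D):=\Bigl(\sum_n c_n(Df_n)^2\Bigr)^{1/2},
\]
valid $\mu$-a.e.\ for every $D\in\wder\mu.$. The inequality $\ge$ is easy: for every bounded $\metric$-$1$-Lipschitz $g$ and every $\xi\in\ell^2$ with $\|\xi\|_{\ell^2}\le 1$, Cauchy-Schwarz shows that $g+\sum_n\xi_n\sqrt{c_n}f_n$ is $\edst$-$1$-Lipschitz; taking the supremum over countable dense subfamilies of such $g$ and $\xi$ with appropriate pointwise sign choices gives the lower bound. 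For the reverse inequality I would extend an arbitrary $\edst$-$1$-Lipschitz function $f$ via McShane's theorem to $\tilde f:X\times\ell^2\to\real$ that is $1$-Lipschitz for the sum metric $((x,v),(y,w))\mapsto\dist x,y.+\|v-w\|_{\ell^2}$ and satisfies $f=\tilde f\circ(\mathrm{id},\Phi)$, and then invoke a chain-rule estimate for $D$ applied to the composition: the first-coordinate contribution is bounded by $\locnorm D,{\wder\mu.}.$ since $\tilde f(\cdot,v)$ is $\metric$-$1$-Lipschitz for each $v$, while the second-coordinate contribution is bounded by $\mathcal{L}(D)$ because the slope of $\tilde f(x,\cdot)$ lies in the unit ball of $(\ell^2)^*=\ell^2$ and pairs with $D\Phi=(\sqrt{c_n}Df_n)_n$ via Cauchy-Schwarz.

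Strict convexity then follows quickly. If $\elocnorm D_1+D_2,{\wder\mu.}.=\elocnorm D_1,{\wder\mu.}.+\elocnorm D_2,{\wder\mu.}.$ on a Borel set $U$, substituting the identity and using the automatic triangle inequalities for both $\locnorm\,\cdot\,,{\wder\mu.}.$ and $\mathcal{L}$ forces each summand to saturate the triangle inequality on $U$; strict convexity of the $\ell^2$-norm then implies that at $\mu$-a.e.\ $x\in U$ either $D_1\Phi(x)=0$ or $(D_2f_n(x))_n=\alpha(x)(D_1f_n(x))_n$ for some $\alpha(x)\ge 0$. Weak$*$-continuity of derivations combined with weak$*$-density of $\{f_n\}$ upgrades this to $D_2=\alpha D_1$ as derivations wherever $D_1\ne 0$; splitting $U$ by whether $\alpha\le 1$ or $\alpha\ge 1$ and handling the loci where $D_1$ or $D_2$ vanishes separately produces the Borel sets $V_1,V_2$ and bounded nonnegative $\lambda_1,\lambda_2$ required by Definition~\ref{defn:str_conv}. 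The main obstacle is making the chain-rule upper bound rigorous for a Weaver derivation applied to $\tilde f\circ(\mathrm{id},\Phi)$: this calls for either approximating $\tilde f$ by cylinder functions depending on finitely many $\ell^2$-coordinates and passing to a weak$*$-limit, or combining locality of $D$ with a Rademacher-type differentiation statement for $\ell^2$-valued Lipschitz maps on $X$.
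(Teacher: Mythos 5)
Your overall strategy coincides with the paper's: perturb the metric by an $\ell^2$-type pseudometric built from a countable generating family $\{\psi_n\}\subset\lipalg X.$ (the paper takes $\psi_n$ $1$-Lipschitz vanishing at a basepoint, with weights $1/n$; your truncated distance functions with weights $\sqrt{c_n}$ are a cosmetic variant), prove an exact decomposition of the $\edst$-local norm as $\locnorm D,{\wder\mu.}.+\varepsilon\|D\Phi\|_{\ell^2}$, and then leverage the strict convexity of the $\ell^2$ norm, via a measurable-selection argument, to verify Definition~\ref{defn:str_conv}. The final strict-convexity step is also essentially the same as the paper's (the paper invokes Lusin--Novikov to produce the uniformizing $\sigma_i$ and to split into $V_1,V_2$; your ``split by $\alpha\lessgtr1$'' is the same move).

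For the inequality $\elocnorm D,{\wder\mu.}.\ge\locnorm D,{\wder\mu.}.+\varepsilon\|D\Phi\|_{\ell^2}$ you produce test functions of the form $g+\langle\xi,\Phi\rangle$ with $\xi\in\ell^2$, $\|\xi\|_{\ell^2}\le1$, constant. This works: one takes the lattice supremum in $L^\infty(\mu)$ over a countable family of $d$-$1$-Lipschitz $g$ attaining $\locnorm D,{\wder\mu.}.$ and over a countable $\ell^2$-dense set of $\xi$; since $D(g+\langle\xi,\Phi\rangle)=Dg+\langle\xi,D\Phi\rangle$ (weak* continuity of $D$, and the $\psi_n$ vanish at the basepoint so the series converges), the two sups decouple. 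This is a \emph{different and more elementary route} than the paper's Lemma~\ref{lem:local_norms}, which instead constructs test functions $f_\alpha-\varepsilon\Psi_{M_\alpha}(\gamma_x(r_x),\cdot)$ with the witness point $\gamma_x(r_x)$ supplied by an Alberti representation (Lemma~\ref{lem:dir_density}). Your linear-functional test functions avoid the Alberti representation machinery, which seems to be a genuine simplification as far as this particular lemma is concerned.

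The gap is in the reverse inequality $\elocnorm D,{\wder\mu.}.\le\locnorm D,{\wder\mu.}.+\varepsilon\|D\Phi\|_{\ell^2}$. You propose McShane-extending an arbitrary $\edst$-$1$-Lipschitz $f$ to $\tilde f:X\times\ell^2\to\real$ and then invoking a ``chain rule'' splitting $Df$ into a first-coordinate part bounded by $\locnorm D,{\wder\mu.}.$ and a second-coordinate part bounded by $\|D\Phi\|_{\ell^2}$. But there is no such decomposition for a Weaver derivation applied to $\tilde f\circ(\mathrm{id},\Phi)$ when $\tilde f$ is merely Lipschitz: the only chain rule available is the $C^1$ one (Lemma~\ref{lem:change_var}), and the mollification/cylinder-approximation route you mention would still require controlling $D$ on Lipschitz limits pointwise, which is precisely what is at stake. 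The paper sidesteps this entirely by observing that the local norm is determined by the distance functions alone: it suffices to bound $\left|D\edst x,\cdot.\right|$, and $\edst x,\cdot.=d(x,\cdot)+\varepsilon\Psi(x,\cdot)$ decomposes by hand, with $\Psi_M(x,\cdot)$ on the region $\{\Psi_M(x,\cdot)\ge\delta\}$ equal to a $C^1$ function of $\Phi_M$ to which Lemma~\ref{lem:change_var} applies (and locality handles $\{\Psi_M(x,\cdot)=0\}$). You should replace the McShane/chain-rule step by this reduction to $\edst$-distance functions: it is the one point where your proposal, as written, does not go through.
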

We now fix some notation that will be used throught this
Subsection. We let $\{\psi_n\}$ be a countable generating set for the
Lipschitz algebra $\lipalg X.$ where $\psi_1$ is the constant function
equal to
$1$, and where we assume that for $n>1$ each function
$\psi_n$ is $1$-Lipschitz and vanishes at a fixed basepoint $\tilde x$. We then introduce the pseudometrics
\begin{align}
  \label{eq:semimetrics}
  \Psi(x,y)&=\left\|\left(\frac{\psi_n(x)-\psi_n(y)}{n}\right)_n\right\|_{l^2}\\
  \Psi_M(x,y)&=\left(\sum_{n=1}^M\frac{(\psi_n(x)-\psi_n(y))^2}{n^2}\right)^{1/2},
\end{align}
and observe that $\Psi_M\le\Psi\le \frac{\pi}{\sqrt{6}}d$.
We also define functions
\begin{equation}
    \label{eq:first_fun}
\begin{aligned}
    \Phi: X&\to l^2\\
    x&\mapsto\left(\frac{\psi_n(x)}{n}\right)_n
    \end{aligned}
\end{equation} and 
\begin{equation}
  \label{eq:second_fun}
\begin{aligned}
  \Phi_M: X &\to \real^M\\
  x&\mapsto\left(\frac{\psi_n(x)}{n}\right)_{n=1}^M,
\end{aligned}
\end{equation}
and observe that $\Phi$ and $\Phi_M$ are
$\frac{\pi}{\sqrt{6}}$-Lipschitz with respect to the distance $d$.
We finally let
\begin{equation}
  \label{eq:edist}
  \edst , . = d + \varepsilon \Psi
\end{equation} so that
\begin{equation}\label{eq:biLip}
  d\le\edst , . \le \left(1+\varepsilon\frac{\pi}{\sqrt{6}}\right) d.
\end{equation} Note that, given a derivation $D$, after choosing a
Borel representative for each $D\psi_n$, we obtain Borel
maps\footnote{The Borel $\sigma$-algebras for the strong and the weak
  topologies on $l^2$ coincide}
\begin{equation}
  \label{eq:borel_phi}
  \begin{aligned}
    D\Phi: X&\to l^2\\
    x&\mapsto \left(\frac{D\psi_n(x)}{n}\right)_n,
  \end{aligned}
\end{equation}
and 
\begin{equation}
  \label{eq:borel_phiM}
  \begin{aligned}
    D\Phi_M: X&\to \real^M\\
    x&\mapsto\left(\frac{D\psi_n(x)}{n}\right)_{n=1}^M.
  \end{aligned}
\end{equation}
We will now prove that the local norm
$\elocnorm\,\cdot\,,{\wder\mu.}.$ corresponding to the distance $\edst
, .$ is strictly convex. We start with the following Lemma, which is
essentially folklore and whose proof is included for completeness.
\begin{lem}
  \label{lem:change_var}
If $g\in C^1(\real^k)$ and the functions $\{\psi_i\}_{i=1}^k$ are in
$\lipalg X.$, then for any derivation $D\in\wder\mu.$ it follows that
\begin{equation}
  \label{eq:change_var}
  Dg(\psi_1,\cdots,\psi_k)=\sum_{l=1}^k\frac{\partial g}{\partial y^l}(\psi_1,\cdots,\psi_k)D\psi_l.
\end{equation}
\end{lem}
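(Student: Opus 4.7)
The plan is to prove the chain rule first for polynomials, where it follows from the Leibniz rule built into the definition of a derivation, and then to extend to $C^1$ functions by a polynomial approximation argument using the weak* continuity of $D$ together with the fact that each $\psi_i \in \lipalg X.$ is bounded.

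For the polynomial case, note first that $D$ annihilates constants, since $D1 = D(1\cdot 1) = 2D1$ gives $D1=0$. By linearity of $D$, it suffices to verify the formula for monomials $y^\alpha = y_1^{\alpha_1}\cdots y_k^{\alpha_k}$, which follows from iterating the product rule:
\begin{equation*}
D(\psi_1^{\alpha_1}\cdots\psi_k^{\alpha_k}) = \sum_{l=1}^k \alpha_l \psi_l^{\alpha_l-1}\prod_{j\ne l}\psi_j^{\alpha_j}\,D\psi_l,
\end{equation*}
which is precisely \eqref{eq:change_var} for $g(y)=y^\alpha$. Linearity then gives the identity for arbitrary polynomials $p$.

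For general $g\in C^1(\real^k)$, since the $\psi_i$ are bounded by some $M>0$, the map $\Psi=(\psi_1,\ldots,\psi_k)$ takes values in the compact cube $K=[-M,M]^k$. Fix a smooth cutoff $\chi\in C_c^\infty(\real^k)$ with $\chi\equiv 1$ on a neighbourhood of $K$ and replace $g$ by $\tilde g = g\chi\in C^1_c(\real^k)$; this substitution changes neither $g(\psi_1,\ldots,\psi_k)$ nor the partials $(\partial g/\partial y^l)(\psi_1,\ldots,\psi_k)$. By standard Stone–Weierstrass (or mollification followed by Taylor expansion) there exist polynomials $p_n$ with $p_n\to\tilde g$ and $\nabla p_n\to\nabla\tilde g$ uniformly on $K$.

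It remains to pass to the limit. The composites $p_n(\psi_1,\ldots,\psi_k)$ converge pointwise to $g(\psi_1,\ldots,\psi_k)$, and their Lipschitz constants are bounded by $(\sup_K\|\nabla p_n\|)\cdot\max_i\glip\psi_i.$, which is uniformly bounded. Hence $p_n(\psi_1,\ldots,\psi_k)\xrightarrow{\text{w*}}g(\psi_1,\ldots,\psi_k)$ in $\lipalg X.$, and the weak* continuity of $D$ yields weak* convergence of $Dp_n(\psi_1,\ldots,\psi_k)$ to $Dg(\psi_1,\ldots,\psi_k)$ in $L^\infty(\mu)$. On the other hand, uniform convergence $\partial p_n/\partial y^l\to\partial g/\partial y^l$ on $K$ implies norm convergence in $L^\infty(\mu)$ of $(\partial p_n/\partial y^l)(\psi_1,\ldots,\psi_k)$ to $(\partial g/\partial y^l)(\psi_1,\ldots,\psi_k)$; multiplying by $D\psi_l\in L^\infty(\mu)$ preserves this convergence. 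Since \eqref{eq:change_var} holds for each $p_n$, uniqueness of weak* limits gives it for $g$. The mild obstacle is the compatibility of topologies on the two sides—weak* convergence on the left and $L^\infty$-norm convergence on the right—but weak* limits are unique in $L^\infty(\mu)$, so once both sides converge to some limits the identity persists.
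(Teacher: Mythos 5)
Your proof is correct, but it follows a genuinely different route from the paper's. You first establish the chain rule for polynomials by iterating the Leibniz rule built into the definition of a derivation, then upgrade to $C^1$ by $C^1$-uniform polynomial approximation on the compact cube $K$ containing $\psi(X)$, and pass to the limit using weak* continuity of $D$ on the left and $L^\infty$-norm convergence on the right. The paper instead approximates $g$ by functions that are affine on the simplices of an iterated barycentric subdivision of a simplex $S\supset\psi(X)$, and uses the \emph{locality} property of derivations (Lemma~\ref{lem:locality_derivations}) to compute $Df_n$ piece by piece, obtaining $Df_n=\langle V_{n,\Delta},D\psi\rangle$ on $\psi^{-1}(\Delta)$. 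Both arguments then converge with the same weak*-limit step. Your approach is attractive because it draws only on the algebraic Leibniz rule, which is the most primitive axiom of a derivation, and bypasses locality entirely; the cutoff-and-polynomial step is elementary and self-contained. The paper's piecewise-affine argument, by contrast, makes the geometric role of locality explicit and has the slight advantage that the affine approximants and their gradients $V_{n,\Delta}$ are given directly by the construction, rather than invoking an external $C^1$ density theorem. One small gap in your write-up worth making explicit: the weak* topology on $\lipalg X.$ requires uniform boundedness of both the Lipschitz constants \emph{and} the sup norms of the $p_n\circ\Psi$; the latter holds because $p_n\to\tilde g$ uniformly on $K$, but you should state it.
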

\begin{proof}[Proof of Lemma \ref{lem:change_var}]
  The idea of the proof is essentially based on
  \cite[Thm.~3.5(i)]{ambrosio-kirch}. As the functions
  $\{\psi_i\}_{i=1}^k$ are bounded, letting $\psi:X\to\real^k$ be the
  Lipschitz function whose $i$-th component is $\psi_i$,
  there is a $k$-dimensional simplex $S$ (we take simplices to
    be closed) centred about the origin such
  that $\psi(X)$ lies in the interior of $S$. Using that $g\in C^1(\real^k)$,
  it is possible to construct Lipschitz functions $g_n:S\to\real$ such
  that:
  \begin{enumerate}
  \item there is $M_n\in\natural$ such that, if $S^{M_n}$ denotes the
    $M_n$-th iterated barycentric subdivision of $S$, the function
    $g_n$ is affine linear on each simplex $\Delta\in S^{M_n}$:
    \begin{equation}
      \label{eq:change_var_p1}
      g_n(v)=\langle V_{n,\Delta}, v\rangle + c_{n,\Delta}\quad(v\in\Delta).
    \end{equation}
  \item For each simplex $\Delta\in S^{M_n}$ one has
    \begin{align}
      \sup_{v\in\Delta}\left|g(v)-g_n(v)\right|&\le\frac{1}{n}\\      \label{eq:change_var_p2}
      \sup_{v\in\Delta}\left\|V_{n,\Delta}-\nabla g(v)\right\|_2&\le\frac{1}{n}.
    \end{align}
  \end{enumerate}
We now let
\begin{align}
  f(x)&=g\left(\psi_1(x),\cdots,\psi_k(x)\right)\\
  f_n(x)&=g_n\left(\psi_1(x),\cdots,\psi_k(x)\right),
\end{align} and observe that as $f_n|\psi^{-1}(\Delta)$ agrees with
the function
\begin{equation}
  x\mapsto \langle V_{n,\Delta}, \psi(x)\rangle + c_{n,\Delta},
\end{equation} the locality property of derivations implies that
\begin{equation}
  \label{eq:change_var_p3}
Df_n(x)=\langle V_{n,\Delta},D\psi(x)\rangle
\end{equation} for $\mu\mrest\psi^{-1}(\Delta)$-a.e.~$x$. As
$f_n\xrightarrow{\text{w*}} f$, \eqref{eq:change_var} follows from
\eqref{eq:change_var_p3} and \eqref{eq:change_var_p2}.
\end{proof}
\par The following Lemma is a key step in the proof of Theorem \ref{thm:renorm}.
\begin{lem}
  \label{lem:dir_density}
  Let $F:X\to\real^M$ be Lipschitz, $D\in\wder\mu.$ and $\theta:X\to(0,\pi/2)$ a Borel
  map. Let
  \begin{equation}
    \label{eq:dir_density}
    V_F=\{x:DF(x)\ne0\};
  \end{equation}
  then $\mu\mrest V_F$ admits an Alberti representation in the
  $F$-direction of $\cone\left(\frac{DF}{\|DF\|_2},\theta\right)$.
\end{lem}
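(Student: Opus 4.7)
The plan is to invoke the nullity criterion (condition (3) of Theorem~\ref{thm:alberti_rep_prod}) after reducing by Borel $L^\infty$-partition to a setting where the direction $v = DF/\|DF\|_2$ is nearly constant. The core quantitative input is the approximation scheme of Lemma~\ref{lem:dst_appx}, combined with the locality and weak-$*$ continuity of $D$.

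First I would partition $V_F$ into countably many Borel pieces $V$ on which the following uniform bounds hold for constants $w\in\mathbb S^{M-1}$ and $\varepsilon_0,c_1,c_2,C,\theta_0>0$: (i) $\|v(x)-w\|_{2}<\varepsilon_0$ for all $x\in V$, (ii) $c_1\le\|DF\|_2\le c_2$, (iii) $\locnorm D,{\wder\mu.}.\le C$, and (iv) $\theta\ge\theta_0$. By the gluing principle (Theorem~\ref{thm:alb_glue}) it suffices to produce the Alberti representation on each such $V$. Choosing $\varepsilon_0$ sufficiently small compared with $\theta_0$ gives the containment $\cone(w,\theta_0/2)\subset\cone(v(x),\theta(x))$ on $V$; hence any Alberti representation of $\mu\mrest V$ in the $F$-direction of the constant cone field $\cone(w,\alpha)$ with $\alpha=\theta_0/2$ will automatically be in the desired pointwise cone field.

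Setting $G=\langle w,F\rangle$ and picking $\delta>0$ to be determined, by Theorem~\ref{thm:alberti_rep_prod} it suffices to prove that every Borel set $S\subset V$ which is $\frags(X,F,\cone(w,\alpha),G,>\delta)$-null satisfies $\mu(S)=0$. Applied to such an $S$, Lemma~\ref{lem:dst_appx}, with $u_\beta$ an orthonormal basis of $w^\perp$, yields bounded Lipschitz $\tilde f_n\xrightarrow{w*}\tilde f$ (with $\tilde f=G$ on $S$) and further approximations $\tilde f_{n,m}\xrightarrow{w*}\tilde g_n$ (with $\tilde g_n=\tilde f_n$ on $S$) such that on each Borel piece $S_{n,m,a}$ of the partition of $S$,
\[
\tilde f_{n,m}=\tilde f_n(x_{n,m,a})+d_{\delta,\alpha}(\cdot,x_{n,m,a}).
\]
Invoking the locality Lemma~\ref{lem:locality_derivations}, on $S_{n,m,a}\cap V$ we have $\chi_{S_{n,m,a}\cap V}\,D\tilde f_{n,m}=\chi_{S_{n,m,a}\cap V}\,D\bigl[d_{\delta,\alpha}(\cdot,x_{n,m,a})\bigr]$. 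Writing out the definition of $d_{\delta,\alpha}$ and using that the distance function is $1$-Lipschitz and that $|D|h||\le|Dh|$ for Lipschitz $h$, we get the pointwise bound
\[
\bigl|D\bigl[d_{\delta,\alpha}(\cdot,x_0)\bigr]\bigr|\le \delta\,\locnorm D,{\wder\mu.}.+\cot\alpha\sum_{\beta}|\langle u_\beta,DF\rangle|.
\]
On $V$ the perpendicular components satisfy $|\langle u_\beta,DF\rangle|=\|DF\|_2|\langle u_\beta,v\rangle|\le c_2\varepsilon_0$, so $|D\tilde f_{n,m}|\le\delta C+(M-1)c_2\varepsilon_0\cot\alpha$ on $S\cap V$.

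To conclude, integrate $D\tilde f_{n,m}$ against $\chi_{S\cap V}$, summing over $a$, and take $m\to\infty$ then $n\to\infty$; weak-$*$ continuity of $D$ and locality ($\chi_S D\tilde g_n=\chi_S D\tilde f_n$, $\chi_S D\tilde f=\chi_S DG$) give
\[
\int_{S\cap V}DG\,d\mu\le\bigl(\delta C+(M-1)c_2\varepsilon_0\cot\alpha\bigr)\mu(S\cap V).
\]
Since $DG=\langle w,DF\rangle\ge\|DF\|_2\cos\varepsilon_0\ge c_1/2$ on $V$ (for small $\varepsilon_0$), choosing $\varepsilon_0$ first so that $(M-1)c_2\varepsilon_0\cot\alpha<c_1/8$ and then $\delta$ so that $\delta C<c_1/8$ gives $(c_1/2)\mu(S\cap V)\le(c_1/4)\mu(S\cap V)$, hence $\mu(S\cap V)=0$. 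The main obstacle is the quantitative choreography of the reduction: the angle $\varepsilon_0$ must be chosen small in terms of $\theta_0$, the ratio $c_1/c_2$ and $\cot(\theta_0/2)$, while keeping the Borel partition countable so that the gluing principle applies. The remaining verifications—that $|D|h||\le|Dh|$, that the truncation in Lemma~\ref{lem:dst_appx} does not disturb the values on $S_{n,m,a}$, and that locality may be applied on each piece—are direct consequences of the tools already recalled in Subsections~\ref{subsubsec:derivations} and~\ref{subsec:corresp}.
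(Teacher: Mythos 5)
Your proof follows essentially the same route as the paper's: after a Borel $L^\infty(\mu)$-partition reducing to a nearly constant direction $w$ and uniform bounds on $\|DF\|_2$, $\locnorm D,{\wder\mu.}.$ and $\theta$, you invoke the nullity criterion of Theorem~\ref{thm:alberti_rep_prod}, use the approximation scheme of Lemma~\ref{lem:dst_appx} to bound $\chi_S D\langle w,F\rangle$ from above, contradict the lower bound coming from $\|DF\|_2\ge c_1$, and glue by Theorem~\ref{thm:alb_glue}. The paper's version tracks an explicit speed estimate \eqref{eq:dir_density_p11} and presents the countable partition in a specific three-stage order so that the smallness parameters $\varepsilon_l^{(1)},\varepsilon_l^{(2)}$ can be chosen depending on the already-fixed $s_l,\theta_l,c_l$, but the underlying argument — choose the direction tolerance $\varepsilon_0$ and speed $\delta$ small after fixing $c_1,c_2,\theta_0$, then push the pointwise bound on $D\tilde f_{n,m}$ through the chain $\tilde g_n$, $\tilde f_n$, $\tilde f$ by locality and weak-$*$ continuity of $D$ — is the same.
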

\begin{proof}[Proof of Lemma \ref{lem:dir_density}]
The proof is essentially based on the argument used in
\sync lem:vector_alberti.\ in \cite{deralb} and details are included for
completeness. 
  We consider a Borel $L^\infty(\mu\mrest V_F)$-partition of unity
  $\left\{V^{(0)}_l\right\}_{l\in\natural}$ such that, for each $l$, there is a pair
    $(s_l,\theta_l)\subset(0,\infty)\times(0,\pi/2)$ with:
    \begin{align}
      \label{eq:dir_density_p1}
      \locnorm D,{\wder\mu\mrest V_F.}.(x)&\in(s_l,2s_l)\quad(\forall
      x\in V^{(0)}_l)\\
      \label{eq:dir_density_p2}
      \theta(x)&\in(\theta_l,2\theta_l)\quad(\forall
      x\in V^{(0)}_l);
    \end{align}
we further subdivide the $\left\{V^{(0)}_l\right\}_{l\in\natural}$ to
obtain  a Borel $L^\infty(\mu\mrest V_F)$-partition of unity
  $\left\{V^{(1)}_l\right\}_{l\in\natural}$ such that, for each $l$,
  \eqref{eq:dir_density_p1} and \eqref{eq:dir_density_p2} hold and
  there are $c_l>0$ and $\varepsilon_l^{(1)}\in(0,c_l/2)$ such that:
  \begin{equation}
    \label{eq:dir_density_p3}
    \left\|DF(x)\right\|_2\in(c_l,c_l+\varepsilon_l^{(1)})\quad(\forall
    x\in V^{(1)}_l);
  \end{equation}
note that the values of each $\varepsilon_l^{(1)}$ will be chosen later
depending on the corresponding values of $s_l$ and $\theta_l$ which
were obtained in the previous step. We finally subdivide the
$\left\{V^{(1)}_l\right\}_{l\in\natural}$ to obtain a Borel $L^\infty(\mu\mrest V_F)$-partition of unity
  $\left\{V^{(2)}_l\right\}_{l\in\natural}$ such that, for each $l$,
  \eqref{eq:dir_density_p1}, \eqref{eq:dir_density_p2} and
  \eqref{eq:dir_density_p3} hold and there are
  $w_l\in\mathbb{S}^{M-1}$ and
  $\varepsilon_l^{(2)}\in(0,\varepsilon_l^{(1)})$ such that:
  \begin{align}
    \label{eq:dir_density_p4}
    \cone(w_l,\theta_l/2)&\subset
    \cone\left(\frac{DF(x)}{\left\|DF(x)\right\|_2},\theta_l\right)\quad(\forall
    x\in V^{(2)}_l)\\
    \label{eq:dir_density_p5}
    \left\|\frac{DF(x)}{\left\|DF(x)\right\|_2}-w_l\right\|_2&\le\varepsilon_l^{(2)}\quad(\forall
    x\in V^{(2)}_l);
  \end{align}
note that the values of each $\varepsilon_l^{(2)}$ will be chosen later
depending on the corresponding values of $s_l$, $\theta_l$, $c_l$ and
$\varepsilon_l^{(1)}$ which
were obtained in the previous steps. We now estimate the error in
approximating $DF$ by $c_lw_l$ on $V^{(2)}_l$:
\begin{equation}
  \label{eq:dir_density_p6}
  \begin{aligned}
  \left\|DF-c_lw_l\right\|_2&\le\left\|DF-\|DF\|_2w_l\right\|_2+\left\|\,\|DF\|_2w_l-c_lw_l\right\|_2
  \\ &\le
  \|DF\|_2\,\left\|\frac{DF(x)}{\left\|DF(x)\right\|_2}-w_l\right\|_2+\|DF\|_2-c_l\\
  &\le\underbrace{(c_l+\varepsilon_l^{(1)})\varepsilon_l^{(2)}+\varepsilon_l^{(1)}}_{\eta_l}.
\end{aligned}
\end{equation}
In particular, if $u$ is a unit vector orthogonal to $w_l$, 
\begin{equation}
  \label{eq:dir_density_p7}
  \chi_{V^{(2)}_l}\left|D\langle
    u,F\rangle\right|=\chi_{V^{(2)}_l}\left|\langle u,
    DF-w_lc_l\rangle\right|\le\frac{\eta_l}{s_l}\locnorm D,{\wder\mu.}..
\end{equation}
We now suppose that the Borel set $S_l\subset V^{(2)}_l$ is
$\frags(X,F,\tilde\delta_l,w_l,\theta_l/2)$-null; using
\eqref{eq:dir_density_p7} and Lemma \ref{lem:dst_appx} 
(compare also \sync lem:loc_estimate_dist.\ and
  \sync lem:loc_estimate_fnull.\ in \cite{deralb} for details) we obtain
\begin{equation}
  \label{eq:dir_density_p8}
  \chi_{S_l}\left|D\langle
    w_l,F\rangle\right|\le\left(\tilde\delta_l+(M-1)\frac{\eta_l}{s_l}\cot(\theta_l/2)\right)\locnorm D,{\wder\mu.}.;
\end{equation}
on the other hand, we have
\begin{equation}
  \label{eq:dir_density_p9}
  \chi_{V^{(2)}_l}D\langle w_l,F\rangle\ge\chi_{V^{(2)}_l}(c_l-\eta_l).
\end{equation} In particular, if $\mu(S_l)>0$ we have
\begin{equation}
  \label{eq:dir_density_p10}
  \tilde\delta_l\ge\frac{c_l-\eta_l}{2s_l}-(M-1)\frac{\eta_l}{s_l}\cot(\theta_l/2);
\end{equation} this implies that $\mu\mrest V^{(2)}_l$ admits an
Alberti representation $\albrep l.$ in the $F$-direction of
$\cone(w_l,\theta_l/2)$ with $F$-speed
\begin{equation}
  \label{eq:dir_density_p11}
  \ge\delta_l=\frac{c_l-2\eta_l}{2s_l}-(M-1)\frac{\eta_l}{s_l}\cot(\theta_l/2),
\end{equation} provided that $\delta_l$ is positive. Note that
\begin{equation}
  \label{eq:dir_density_p12}
  \delta_l=\frac{1}{2s_l}\left(c_l-2(c_l+\varepsilon_l^{(1)})\varepsilon_l^{(2)}-2\varepsilon_l^{(1)}\right)-(M-1)\frac{(c_l+\varepsilon_l^{(1)})\varepsilon_l^{(2)}+\varepsilon_l^{(1)}}{s_l}\cot(\theta_l/2);
\end{equation} if at each step the
$\varepsilon_l^{(1)}$ and $\varepsilon_l^{(2)}$ are chosen sufficiently
small, one can ensure that $\delta_l>0$. The proof is completed by
gluing together the $\{\albrep l.\}$ (Theorem \ref{thm:alb_glue}) and
using \eqref{eq:dir_density_p4}.
\end{proof}
\begin{lem}
  \label{lem:local_norms}
  The local norms $\locnorm\,\cdot\,,{\wder\mu.}.$ and
  $\elocnorm\,\cdot\,,{\wder\mu.}.$ are related by the following
  equation:
  \begin{equation}
    \label{eq:local_norms_s1}
    \elocnorm D,{\wder\mu.}.=\locnorm
    D,{\wder\mu.}.+\varepsilon\left\|D\Phi\right\|_{l^2}\quad(\forall D\in\wder\mu.).
  \end{equation}
\end{lem}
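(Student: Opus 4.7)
The plan is to establish the two pointwise inequalities in \eqref{eq:local_norms_s1} separately, using the characterization of each local norm as the essential supremum of $|Dg|$ over bounded $g\in\lipalg X.$ that are $1$-Lipschitz in the relevant metric.

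For $\elocnorm D,{\wder\mu.}.\ge\locnorm D,{\wder\mu.}.+\varepsilon\|D\Phi\|_{l^2}$, I would test $D$ against functions of the form $g = h + \varepsilon\sum_{n=1}^N a_n\psi_n/n$, suitably truncated to be bounded, where $h$ is bounded and $d$-$1$-Lipschitz and $a = (a_1,\dots,a_N)\in\real^N$ has $\|a\|_{l^2}\le 1$. The triangle inequality
\[
|g(x)-g(y)|\le |h(x)-h(y)| + \varepsilon|a\cdot(\Phi(x)-\Phi(y))|\le d(x,y)+\varepsilon\|a\|_{l^2}\Psi(x,y)\le\edst x,y.
\]
shows $g$ is $\edst\cdot,\cdot.$-$1$-Lipschitz, and $Dg = Dh+\varepsilon\,a\cdot D\Phi_N$. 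Using a Borel $L^\infty(\mu)$-partition of unity $\{U_\alpha\}$ on which $D\Phi/\|D\Phi\|_{l^2}$ is essentially constant and well-approximated by some finitely-supported unit vector $a^{(\alpha)}$, one picks on each piece a $d$-$1$-Lipschitz $h_\alpha$ with $Dh_\alpha$ close to $\locnorm D,{\wder\mu.}.$ and sign aligned to $a^{(\alpha)}\cdot D\Phi$, yielding $|Dg_\alpha|\ge\locnorm D,{\wder\mu.}.+\varepsilon\|D\Phi\|_{l^2}-o(1)$ on $U_\alpha$; passing to the essential supremum then gives the lower bound.

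For $\elocnorm D,{\wder\mu.}.\le\locnorm D,{\wder\mu.}.+\varepsilon\|D\Phi\|_{l^2}$, given a bounded $\edst\cdot,\cdot.$-$1$-Lipschitz $g$, I would McShane-extend $g$ along the isometric embedding $x\mapsto(x,\Phi(x))$ to $G\colon X\times l^2\to\real$, which is $1$-Lipschitz with respect to the sum metric $d+\varepsilon\|\cdot\|_{l^2}$ and satisfies $g(x)=G(x,\Phi(x))$. For each fixed $u\in l^2$ the slice $G(\cdot,u)$ is bounded and $d$-$1$-Lipschitz (hence $|D[G(\cdot,u)]|\le\locnorm D,{\wder\mu.}.$), while $G(x,\cdot)$ is $\varepsilon$-Lipschitz on $l^2$ for each $x$. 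After truncating $\Phi$ to $\Phi_M$ and mollifying $G$ in the second variable, I obtain $G_M^\delta$ of class $C^\infty$ in $u$ with $\|\nabla_u G_M^\delta\|_2\le\varepsilon$, still $d$-$1$-Lipschitz in $x$ for each frozen $u$. On a fine Borel $L^\infty(\mu)$-partition $\{U_\alpha\}$ on which both $\Phi_M\approx u^{(\alpha)}$ and $\nabla_u G_M^\delta(\cdot,\Phi_M(\cdot))\approx v^{(\alpha)}$ (with $\|v^{(\alpha)}\|_2\le\varepsilon$), approximate $g_M^\delta = G_M^\delta(\cdot,\Phi_M(\cdot))$ pointwise by the piecewise
\[
\tilde g_M^\delta(x) \;=\; G_M^\delta(x,u^{(\alpha)}) + v^{(\alpha)}\cdot(\Phi_M(x)-u^{(\alpha)}),\quad x\in U_\alpha,
\]
which on each $U_\alpha$ is the sum of the $d$-$1$-Lipschitz function $G_M^\delta(\cdot,u^{(\alpha)})$ and an $\varepsilon$-multiple of a linear combination of $\psi_n/n$'s, the latter with $D$-image of absolute value $\le\varepsilon\|D\Phi_M\|_{l^2}$ by Cauchy--Schwarz. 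Invoking Lemma \ref{lem:change_var} piecewise and using locality (Lemma \ref{lem:locality_derivations}) gives $|D\tilde g_M^\delta|\le\locnorm D,{\wder\mu.}.+\varepsilon\|D\Phi_M\|_{l^2}$; weak-star continuity of $D$, the uniform control of $g_M^\delta-\tilde g_M^\delta$ on fine partitions (via $C^1$-smoothness in $u$), and $\|D\Phi_M\|_{l^2}\nearrow\|D\Phi\|_{l^2}$ then pass the bound to $g$ in the limits $M\to\infty$ and $\delta\to 0$.

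The main obstacle lies in the upper bound: the Borel selection of the base points $(x^{(\alpha)},u^{(\alpha)},v^{(\alpha)})$ and the piecewise linearization must be arranged so that the residual errors in $g_M^\delta-\tilde g_M^\delta$ vanish uniformly on bounded subsets as the partition refines, and so that the approximants $\tilde g_M^\delta$ converge weak-star in $\lipalg X.$ to $g$ as $M\to\infty$ and $\delta\to 0$. This measure-theoretic bookkeeping parallels approximation arguments earlier in the paper (in particular those of Subsection \ref{subsec:corresp}), but requires additional care because of the infinite-dimensional target of $\Phi$ and the need to coordinate the smoothing scale $\delta$ with the truncation index $M$.
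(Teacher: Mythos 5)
Your lower bound is correct and in fact simpler than the paper's own argument. The paper builds its $\edst , .$-$1$-Lipschitz test function as $f_\alpha - \varepsilon\Psi_{M_\alpha}(\gamma_x(r_x),\cdot)$ and invokes Alberti representations (Lemma~\ref{lem:dir_density}) solely to locate a base point $p=\gamma_x(r_x)$ so that $-\Psi_{M_\alpha}(p,\cdot)$ linearizes near $x$ with gradient roughly parallel to $D\Phi_{M_\alpha}(x)$. Your observation that the genuinely linear function $g = h + \varepsilon\, a\cdot\Phi_N$ (with $a\in\real^N$, $\|a\|_2\le1$) is already $1$-Lipschitz for $\edst , .$, and that aligning $a$ with $D\Phi_N$ on a suitable Borel cell makes $Dg$ close to $\locnorm D,{\wder\mu.}. + \varepsilon\|D\Phi\|_{l^2}$ there, bypasses the whole Alberti-representation step; this is a genuine simplification.

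Your upper bound, however, has a real gap and not merely bookkeeping. The piecewise linearization $\tilde g_M^\delta$ is a Borel function that is Lipschitz on each cell $U_\alpha$ but in general discontinuous across cells, so it is not an element of $\lipalg X.$ and $D\tilde g_M^\delta$ is simply undefined; the inequality $|D\tilde g_M^\delta|\le\locnorm D,{\wder\mu.}. + \varepsilon\|D\Phi_M\|_{l^2}$ has no meaning. What you do have is a bound on each $|Dg^{(\alpha)}|$ with $g^{(\alpha)} = G_M^\delta(\cdot,u^{(\alpha)}) + v^{(\alpha)}\cdot(\Phi_M(\cdot)-u^{(\alpha)})$ a genuine global Lipschitz function; but $g_M^\delta$ and $g^{(\alpha)}$ only \emph{approximately} agree on $U_\alpha$, whereas locality (Lemma~\ref{lem:locality_derivations}) requires exact agreement. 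Uniform closeness of two Lipschitz functions does not control the difference of their $D$-images: one can have $\|f_1-f_2\|_\infty$ arbitrarily small while $|D(f_1-f_2)|$ stays of order $\glip f_1-f_2.$, and weak* continuity of $D$ cannot be invoked along objects that lie outside the domain of $D$.

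The paper closes the upper bound more cheaply: since the local norm of a derivation is already determined by the distance functions $\edst p,\cdot.$ for $p$ in a countable dense set (the standard McShane envelope $+$ locality $+$ weak* continuity argument), it suffices to bound $|D\edst p,\cdot.| = |Dd(p,\cdot) + \varepsilon D\Psi(p,\cdot)|$; because $\Psi_M(p,\cdot)$ is a $C^1$ function of $\Phi_M$ away from its zero set (and $D$ kills it on the zero set by locality), Lemma~\ref{lem:change_var} and Cauchy--Schwarz give $|D\Psi_M(p,\cdot)|\le\|D\Phi_M\|_2$, and $M\to\infty$ finishes. If you prefer to stay close to your McShane-extension route, the way to repair it is to subtract rather than approximate: show that $h_\alpha := g_M^\delta - v^{(\alpha)}\cdot\Phi_M$, restricted to a sufficiently small $U_\alpha$, is $d$-$(1+\eta)$-Lipschitz on $U_\alpha$; McShane-extend $h_\alpha|_{U_\alpha}$ to a global $d$-$(1+\eta)$-Lipschitz $\tilde h_\alpha$; then $\tilde h_\alpha + v^{(\alpha)}\cdot\Phi_M$ \emph{agrees exactly} with $g_M^\delta$ on $U_\alpha$, so locality does apply and yields $\chi_{U_\alpha}|Dg_M^\delta|\le(1+\eta)\locnorm D,{\wder\mu.}. + \varepsilon\|D\Phi_M\|_2$.
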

\begin{proof}[Proof of Lemma \ref{lem:local_norms}]
  We first show that
  \begin{equation}
    \label{eq:local_norms_p1}
    \elocnorm D,{\wder\mu.}.\le\locnorm D,{\wder\mu.}.+\varepsilon\|D\Phi\|_{l^2}
  \end{equation} by showing that, for each $x\in X$, the distance
  function $\edst x,\cdot.$ satisfies
  \begin{equation}
     \label{eq:local_norms_p2}
     \left|D\edst x,\cdot.\right|\le\locnorm D,{\wder\mu.}.+\varepsilon\|D\Phi\|_{l^2}.
 \end{equation}
 Without loss of generality, we can assume that $X$ is bounded.
  Let $\emdst , .=d+\varepsilon\Psi_M$ and observe that the sequence of Lipschitz
  functions $\{\emdst x,\cdot.\}_{M\in\natural}$ converges to
  $\edst x,\cdot.$, in the weak*-topology, as $M\nearrow\infty$. As
  $d(x,\cdot)$ is $1$-Lipschitz with respect to $d$, we have:
  \begin{equation}
    \label{eq:local_norms_p3}
    \left|Dd(x,\cdot)\right|\le\locnorm D,{\wder\mu.}..
  \end{equation}
  On the closed set $C_0=\left\{y:\Psi_M(x,y)=0\right\}$, one has
  $D\Psi_M(x,\cdot)=0$ by locality of derivations. For $\delta>0$ consider the closed set
  $C_\delta=\left\{y:\Psi_M(x,y)\ge\delta\right\}$. We can find a
  function $g:\real^M\to(0,\infty)$ of class $C^1(\real^M)$ such that,
  if for a $v\in\real^M$ one has 
  \begin{equation}
    \label{eq:local_norms_p4}
    \left(\sum_{n=1}^M\frac{|v_n|^2}{n^2}\right)^{1/2}\ge\frac{\delta}{2},
  \end{equation} then
  \begin{equation}
    \label{eq:local_norms_p5}
    g(v)=\left(\sum_{n=1}^M\frac{|v_n|^2}{n^2}\right)^{1/2}.
  \end{equation}
In particular, on $C_\delta$, the function $\Psi_M(x,\cdot)$ coincides with
\begin{equation}
  \label{eq:local_norms_p6}
  g\left(\psi_1(\cdot)-\psi_1(x),\cdots,\psi_M(\cdot)-\psi_M(x)\right),
\end{equation} and Lemma \ref{lem:change_var} gives
\begin{equation}
  \label{eq:local_norms_p7}
  D\Psi_M(x,y)=\frac{1}{\Psi_M(x,y)}\sum_{n=1}^M\frac{\psi_n(y)-\psi_n(x)}{n}\,\frac{D\psi_n(y)}{n}
\end{equation} for $\mu\mrest C_\delta$-a.e.~$y$. Using the Cauchy
inequality and a sequence $\delta_n\searrow0$, we conclude that
\begin{equation}
  \label{eq:local_norms_p8}
  \left|D\Psi_M(x,\cdot)\right|\le\left\|D\Phi_M\right\|_2.
\end{equation} Combining \eqref{eq:local_norms_p3} and
\eqref{eq:local_norms_p8} we obtain \eqref{eq:local_norms_p2} and so
\eqref{eq:local_norms_p1} is proved.
\par We now show that
\begin{equation}
  \label{eq:local_norms_p9}
      \elocnorm D,{\wder\mu.}.\ge\locnorm
      D,{\wder\mu.}.+\varepsilon\|D\Phi\|_{l^2}, 
\end{equation} and we will assume that a Borel representative has been
chosen for each $D\psi_n$. We first consider the Borel set $V_0$ where
$\left\|D\Phi\right\|_{l^2}=0$. Having fixed $\eta>0$, we take a Borel
$L^\infty(\mu\mrest V_0)$-partition of unity
$\left\{U_\alpha\right\}$ such that, for each $\alpha$, there is a
function $f_\alpha$ which is $1$-Lipschitz  with respect to the
distance $d$ and satisfying:
\begin{equation}
  \label{eq:local_norms_p10}
  \chi_{U_\alpha}Df_\alpha\ge(1-\eta)\chi_{U_\alpha}\locnorm D,{\wder\mu.}.;
\end{equation} this implies that
\begin{equation}
  \label{eq:local_norms_p10b}
  \chi_{V_0}\elocnorm D,{\wder\mu.}.\ge(1-\eta)\chi_{V_0}\locnorm D,{\wder\mu.}..
\end{equation} We now consider the Borel set $V_1$ where
$\left\|D\Phi\right\|_{l^2}>0$. For each $\eta>0$, we take an
$L^\infty(\mu\mrest V_1)$-partition of unity
$\left\{U_\alpha\right\}$, where each set $U_\alpha$ is compact and
such that for each $\alpha$ there is a
quadruple $(f_\alpha,M_\alpha,\theta_\alpha,\delta_\alpha)$ satisfying:
\begin{description}
\item[(P1)] The function $f_\alpha$ is $1$-Lipschitz with respect to
  the distance $d$, $M_\alpha$ is a
  natural number, $\theta_\alpha\in(0,\pi/2)$, and $\delta_\alpha>0$.
\item[(P2)] The following inequality holds \begin{equation}
  \label{eq:local_norms_p10c}
  \chi_{U_\alpha}Df_\alpha\ge(1-\eta)\chi_{U_\alpha}\locnorm D,{\wder\mu.}..
\end{equation}
\item[(P3)] The Borel functions $\left\|D\Phi\right\|_{l^2}$ and
  $\left\|D\Phi_{M_\alpha}\right\|_2$ are continuous on
  $U_\alpha$ and satisfy
  \begin{equation}
    \label{eq:local_norms_p11}
    \left\|D\Phi_{M_\alpha}\right\|_2\ge(1-\eta)\left\|D\Phi\right\|_{l^2}\ge\delta_\alpha>0.
  \end{equation}
\item[(P4)] For all $x,y\in U_\alpha$, if
  $u\in\cone\left(\frac{D\Phi_{M_\alpha}(x)}{\left\|D\Phi_{M_\alpha}(x)\right\|_2},2\theta_\alpha\right)\cap
  {\mathbb S}^{M_\alpha-1}$, then
  \begin{equation}
    \label{eq:local_norms_p12}
    \left\langle u,D\Phi_{M_\alpha}(y)\right\rangle\ge(1-\eta)\left\|D\Phi_{M_\alpha}(y)\right\|_2.
  \end{equation}
\end{description}
By Lemma \ref{lem:dir_density} the measure $\mu\mrest U_\alpha$ admits
an Alberti representation in the $\Phi_{M_\alpha}$-direction of the cone
field
$\cone\left(\frac{D\Phi_{M_\alpha}}{\left\|D\Phi_{M_\alpha}\right\|_2},\theta_\alpha\right)$;
in particular, for $\mu\mrest U_\alpha$-a.e.~$x$, there is a fragment
$\gamma_x\in\frags(U_\alpha)$ such that:
\begin{enumerate}
\item $0$ is a Lebesgue density point of $\dom\gamma_x$ and
  $\gamma_x(0)=x$.
\item There is a
  $v_x\in\cone\left(\frac{D\Phi_{M_\alpha}(x)}{\left\|D\Phi_{M_\alpha}(x)\right\|_2},\theta_\alpha\right)$
  with
  \begin{equation}
    \label{eq:local_norms_p13}
    \Phi_{M_\alpha}\left(\gamma(r)\right)=\Phi_{M_\alpha}(x)+v_xr+o(r).
  \end{equation}
\end{enumerate}
In particular, there are $r_x,R_x>0$ such that for each $y\in\ball x,R_x.\cap U_\alpha$\footnote{the
  ball can be taken either with respect to $d$ or $\edst , .$.}, one has
\begin{equation}
  \label{eq:local_norms_p14}
  \frac{\Phi_{M_\alpha}\left(\gamma_x(r_x)\right)-\Phi_{M_\alpha}(y)}{\left\|
    \Phi_{M_\alpha}\left(\gamma_x(r_x)\right)-\Phi_{M_\alpha}(y)\right\|_2}\in\cone\left(\frac{D\Phi_{M_\alpha}(x)}{\left\|D\Phi_{M_\alpha}(x)\right\|_2},2\theta_\alpha\right).
\end{equation}  Let
\begin{equation}
  \label{eq:local_norms_p15}
  \tilde f_\alpha=f_\alpha-\varepsilon
  \Psi_{M_\alpha}\left(\gamma_x(r_x),\cdot\right),
\end{equation} and observe that $\tilde f_\alpha$ is $1$-Lipschitz
with respect to the distance $\edst , .$ and that
\begin{equation}
  \label{eq:local_norms_p16}
  D\tilde f_\alpha= D f_\alpha-\varepsilon
  D\Psi_{M_\alpha}\left(\gamma_x(r),\cdot\right);
\end{equation} an argument similar to that used to prove
\eqref{eq:local_norms_p7} shows that for $\mu\mrest(U_\alpha\cap\ball
x,R_x.)$-a.e.~$y$,
\begin{multline}
   \label{eq:local_norms_p17}
 D\Psi_{M_\alpha}\left(\gamma_x(r_x),y\right)=-\frac{\left\langle\Phi_{M_\alpha}\left(\gamma_x(r_x)\right)
     - \Phi_{M_\alpha}(y),D\Phi_{M_\alpha}(y)\right\rangle}{\left\|\Phi_{M_\alpha}\left(\gamma_x(r_x)\right)
- \Phi_{M_\alpha}(y)\right\|_2}\\
\le-(1-\eta)\left\|D\Phi_{M_\alpha}\right\|_2,
\end{multline} where in the last step we used
\eqref{eq:local_norms_p14} and \textbf{(P4)}. 
Combining \eqref{eq:local_norms_p17} with \textbf{(P2)} we obtain
\begin{equation}
  \label{eq:local_norms_p18}
\chi_{U_\alpha}D\tilde f_\alpha\ge(1-\eta)\chi_{U_\alpha}\locnorm
D,{\wder\mu.}.+\varepsilon(1-\eta)^2\chi_{U_\alpha}\|D\Phi\|_{l^2},
\end{equation} which implies
\begin{equation}
  \label{eq:local_norms_p19}
\chi_{V_1}\elocnorm
D,{\wder\mu.}.\ge(1-\eta)\chi_{V_1}\locnorm
D,{\wder\mu.}.+\varepsilon(1-\eta)^2\chi_{V_1}\|D\Phi\|_{l^2};
\end{equation} letting $\eta\searrow0$ in \eqref{eq:local_norms_p19} and
\eqref{eq:local_norms_p10b}, \eqref{eq:local_norms_p9} follows.
\end{proof}
\begin{proof}[Proof of Theorem \ref{thm:renorm}]
  Because of \eqref{eq:biLip}, we just need to show that the local
  norm $\elocnorm\,\cdot\,,{\wder\mu.}.$ associated to $\edst , .$ is
  strictly convex. Consider derivations $D_1,D_2\in\wder\mu.$ and
  suppose that for $\mu\mrest U$~a.e.~$x\in U$ one has:
  \begin{equation}
    \label{eq:renorm_p1}
    \elocnorm D_1+D_2,{\wder\mu.}.(x)=\elocnorm
    D_1,{\wder\mu.}.(x)+\elocnorm D_2,{\wder\mu.}.(x);
  \end{equation}
by Lemma \ref{lem:local_norms} we have
\begin{multline}
  \label{eq:renorm_p2}
  \locnorm
  D_1+D_2,{\wder\mu.}.(x)+\varepsilon\left\|D_1\Phi(x)+D_2\Phi(x)\right\|_{l^2}
  = \locnorm
  D_1,{\wder\mu.}.(x)+\varepsilon\left\|D_1\Phi(x)\right\|_{l^2} \\+
  \locnorm D_2,{\wder\mu.}.(x)+\varepsilon\left\|D_2\Phi(x)\right\|_{l^2};
\end{multline}
because
\begin{align}
  \locnorm
  D_1+D_2,{\wder\mu.}.&\le \locnorm
  D_1,{\wder\mu.}.+ \locnorm D_2,{\wder\mu.}.\\
\left\|D_1\Phi+D_2\Phi\right\|_{l^2} & \le
\left\|D_1\Phi \right\|_{l^2} + \left\|D_2\Phi \right\|_{l^2},
\end{align} after choosing Borel representatives of $D_1\Phi$ and
$D_2\Phi$, we find a Borel $V\subset U$ with $\mu(U\setminus V)=0$ and
such that:
\begin{equation}
  \label{eq:renorm_p3}
  \left\|D_1\Phi(x)+D_2\Phi(x)\right\|_{l^2} =
\left\|D_1\Phi(x) \right\|_{l^2} + \left\|D_2\Phi(x)
\right\|_{l^2}\quad(\forall x\in V).
\end{equation}
The strict convexity of the norm on $l^2$ implies that for each $x\in
V$ the vectors $D_1\Phi(x)$ and $D_2\Phi(x)$ are linearly dependent. Let
\begin{align}
  \tilde V_1&=\left\{(x,\lambda)\in V\times[-1,1]: D_1\Phi(x)=\lambda
    D_2\Phi(x)\right\}\\
  \tilde V_2&=\left\{(x,\lambda)\in V\times[-1,1]: D_2\Phi(x)=\lambda D_1\Phi(x)\right\};
\end{align}
then $\tilde V_1$ and $\tilde V_2$ are Borel subsets of
$X\times[-1,1]$ and, denoting by $V_i$ the projection of $\tilde V_i$
on $X$, we have $V=V_1\cup V_2$. Note that for each $x$ the section
$(\tilde V_i)_x$ is compact; in particular, by the Lusin-Novikov
Uniformization Theorem \cite[Thm.~18.10]{kechris_desc} 18.10, the
sets $V_1$ and $V_2$ are Borel and admit Borel uniformizing functions
$\sigma_i:V_i\to[-1,1]$. In particular,
\begin{align}
  \label{eq:renorm_p4}
  \chi_{V_1}D_1\Phi&=\sigma_1 \chi_{V_1}D_2\Phi\\\label{eq:renorm_p5}
  \chi_{V_2}D_2\Phi&=\sigma_2 \chi_{V_2}D_1\Phi;
\end{align}
as the $\{\psi_n\}$ generate $\lipalg X.$, \eqref{eq:renorm_p4} and
\eqref{eq:renorm_p5} imply that
(\ref{eq:str_conv2})--~(\ref{eq:str_conv4}) hold by letting
$\lambda_1=\chi_{V_1}\sigma_1$ and $\lambda_2=\chi_{V_2}\sigma_2$.
\end{proof}
\bibliographystyle{alpha}
\bibliography{curr_alb_biblio}
\end{document}